\newtheorem{theorem}{Theorem}[section]
\newtheorem{lemma}[theorem]{Lemma}
\newtheorem{proposition}[theorem]{Proposition}
\theoremstyle{definition}
\newtheorem{definition}[theorem]{Definition}}
\theoremstyle{definition}
\theoremstyle{remark}
\newtheorem{remark}[theorem]{Remark}}
\title{Six-vertex models and the GUE-corners process}
\date{\today}
\author{Evgeni Dimitrov}
\begin{document}

\maketitle 

\begin{abstract}
In this paper we consider a class of probability distributions on the six-vertex model from statistical mechanics, which originate from the higher spin vertex models of \cite{BP}. We define operators, inspired by the Macdonald difference operators, which extract various correlation functions, measuring the probability of observing different arrow configurations. 
The development of our operators is largely based on the properties of a remarkable family of symmetric rational functions, which were previously studied in \cite{Bor14}. \\

For the class of models we consider, the correlation functions can be expressed in terms of multiple contour integrals, which are suitable for asymptotic analysis. For a particular choice of parameters we analyze the limit of the correlation functions through a steepest descent method. Combining this asymptotic statement with some new results about Gibbs measures on Gelfand-Tsetlin cones and patterns, we show that the asymptotic behavior of our six-vertex model near the boundary is described by the GUE-corners process.
\end{abstract}

\tableofcontents

\section{Introduction and main results}\label{Section1}

The exact formulation of our model and the main results of this paper are given in Section \ref{Section1.2}. The section below provides some literary context for our work.

%
\subsection{Preface}\label{Section1.1} The six-vertex model is a well-known exactly solvable lattice model of equilibrium statistical mechanics. The study of its properties is a rich subject, which has enjoyed many exciting developments during the last half-century (see, e.g., \cite{Bax}, \cite{Res10}, and the references therein). Fixing particular boundary conditions and weights, connects the six-vertex model to a number of combinatorial objects like alternating sign matrices and domino tilings \cite{EKLP}. The six-vertex model and certain higher spin generalizations of it have been linked to a large class of integrable probabilistic models that belong to the KPZ universality class in $1$ + $1$ dimensions - this was first observed in \cite{Gwa} and studied more recently in  \cite{BCG14,BP,CP15}. These recent advances have spurred new interest in vertex models and the development of tools to analyze them. 

The main subject of the paper is the (vertically inhomogeneous) six-vertex model in a half-infinite strip. We will work with a particular weight parametrization, introduced in \cite{Bor14}, whose origin lies in the Yang-Baxter equation, and which corresponds to the so-called {\em ferroelectric regime} \cite{Bax}. The partition function of this model is described by a remarkable family of symmetric rational functions $\mathsf{F}_\lambda$, parametrized by non-negative signatures $\lambda = \lambda_1 \geq \lambda_2 \geq ...\geq \lambda_N \geq 0$. These functions form a one-parameter generalization of the classical Hall-Littlewood polynomials \cite{Mac} and enjoy many of the same structural properties \cite{Bor14}. In a recent paper \cite{BP}, the authors derive many useful features of the functions $\mathsf{F}_\lambda$, which allow them to obtain integral representations for certain multi-point $q$-moments of the inhomogeneous higher spin six vertex model in infinite volume. Such formulas are well-known to be a fruitful source of asymptotic results and were recently utilized to study the asymptotics of various stochastic six-vertex models \cite{Amol1, Amol2, Bor16}.

In this paper we will develop a different approach to study the vertically inhomogeneous six-vertex model, which is based on a new class of operators $D^k_N$. These operators act diagonally on the functions $\mathsf{F}_\lambda$, whenever $\lambda$ has distinct parts and can be used to derive formulas for the probability of observing certain arrow configurations in different locations of the model. These observables were very recently investigated for the six-vertex model with domain wall boundary condition (DWBC) in \cite{CPS} under the name of {\em generalized emptiness formation probability} (GEFP). The derivation of the formulas in \cite{CPS} is based on the quantum inverse scattering method and heavily depends on the chosen boundary condition. On the other hand, our operators approach is more generic and can be applied to a much larger class of models. As discussed in \cite{CPS} the GEFP can be used to understand macroscopic frozen regions in the six-vertex model with DWBC and it is our hope that the operators we develop can be used to address similar questions for more general six-vertex models.

It is believed that the asymptotic behavior of the six-vertex model is similar to the {\em dimer models}, i.e. random lozenge tilings, plane partitions and domino tilings cf. \cite{Ke} (see also \cite{CLP, EKLP,J02,KOS}). One of the (conjectural) features of a large six-vertex model is the formation of the so-called limit shape or arctic curve - a well-investigated phenomenon in dimer models. The properties of the arctic curve for the DWBC six-vertex model were investigated in \cite{CP09}, \cite{CP10} and \cite{CPZJ}; for more general boundary conditions see \cite{Res10} and \cite{Z1}. To the author's knowledge, the exact form of the arctic curve is still conjectural (except in very special cases); however, it matches the numerical simulations in \cite{AR} and \cite{SZ}. The (conjectural) analogy between the six-vertex and dimer models further suggests that one should observe the GUE-corners process near the point separating two frozen regions in the limit shape. This is known to be the case for certain tiling problems on the plane: see, e.g., \cite{JN06}, \cite{Nor09} and \cite{OR}; and was also proved for the six-vertex model with DWBC under the uniform measure in \cite{Gor14}.

The main goal of this paper is to use the correlation functions obtained from our operators to analyze a particular class of homogeneous six-vertex models as the system size becomes large. There are two natural ways to understand the probability distributions that we analyze. On the one hand, one can view them as stochastic six-vertex models on the half-infinite strip with a particular choice of boundary data, which is related to a special class of symmetric functions, considered in \cite{BP}. Alternatively, these probabilities distributions describe the marginal law of a discrete time Markov process on vertex models, which is started from the stochastic six-vertex model of \cite{BCG14}, and whose dynamics is described by certain sequential update rules.
For the models we consider we show that certain configurations of holes (absence of arrows or empty edges) weakly converge to the GUE-corners process as the size of the system tends to infinity. We view the latter as the main result of this paper and the exact statement is given in Theorem \ref{theorem2}. The proof is based on the formulas obtained from our operators as well as a classification result, which identifies the GUE-corners process as the unique probability measure that satisfies the continuous Gibbs property (see Definition \ref{definitionGT}) and has the correct marginal distribution on the right edge.

One of the advantages of the model we consider is that it can be exactly sampled efficiently - the algorithm is described in Section \ref{Section8}. Based on simulations from this algorithm we believe that the six-vertex model we consider has a  limit-shape phenomenon - see Figure \ref{S1_5}. At this time, our methods do not seem to be strong enough to prove the existence of macroscopic frozen regions. The essential difficulty is that the relevant formulas we obtain to access the limit shape involve an increasing number of contour integrals, which makes the asymptotic analysis fairly hard. On the other hand, we can analyze our formulas when only finitely many contours are present using steepest descent arguments. This allows us to consider the special point separating two frozen regions and it is there that we identify the GUE-corners process. It would be very interesting to rigorously verify the arctic curve for the six-vertex model and prove the conjectural analogy with dimer models.

We now turn to describing our model and main results.

%
\subsection{Problem statement and main results}\label{Section1.2} For $N \in \mathbb{N}$ we let $\mathcal{P}_N$ denote the collection of $N$ up-right paths drawn in the sector $D_N :=\mathbb{Z}_{\geq 0} \times \{1,...,N\}$ of the square lattice, with all paths starting from a left-to-right arrow entering each of the points $\{(0,m): 1 \leq m \leq N\}$ on the left boundary and all paths exiting from the top boundary. We assume that no two paths are allowed to share a horizontal or vertical piece. For $\omega \in \mathcal{P}_N$ and $k = 1,...,N$ we let $\lambda^k(\omega) = \lambda^k_1 \geq \lambda^k_2 \geq \cdots \geq \lambda^k_k$ be the ordered $x$-coordinates of the intersection  points of $\omega$ with the horizontal line $y = k + 1/2$. Let $\mathsf{Sign}_k^+$ denote the set of signatures $\lambda$ of length $k$ with $\lambda_k \geq 0$, then $\lambda^k(\omega) \in \mathsf{Sign}_k^+$ for all $\omega \in \mathcal{P}_N$ and $k = 1,...,N$. The condition that no two paths share a horizontal piece, implies that $\lambda^k$ satisfy the interlacing property 
$$\lambda^{k+1}_1 \geq \lambda^{k}_1 \geq \lambda^{k+1}_{2} \geq \cdots \geq \lambda^k_k \geq \lambda^{k+1}_{k+1} \mbox{ for $k = 1,...,N-1$},$$
while the condition that no vertical pieces are shared implies $\lambda^k_1 > \lambda^k_2 > \cdots > \lambda^k_k$. See Figure \ref{S1_1}.\\

\begin{figure}[h]
\centering
\begin{minipage}{.5\textwidth}
  \centering
  \includegraphics[width=0.9\linewidth]{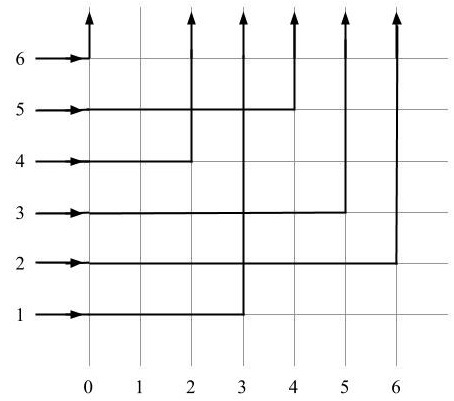}
\captionsetup{width=.9\linewidth}
\vspace{3.8mm}
  \caption{A path collection $\omega \in \mathcal{P}_N$ with $N = 6$. In this example $\lambda^4_1 = 6$, $\lambda^4_2 = 5$, $\lambda^4_3 = 3$ and $\lambda^4_4 = 2$}
\label{S1_1}	
\end{minipage}%
\begin{minipage}{.5\textwidth}
  \centering
\vspace{0mm}
\scalebox{0.5}{\includegraphics{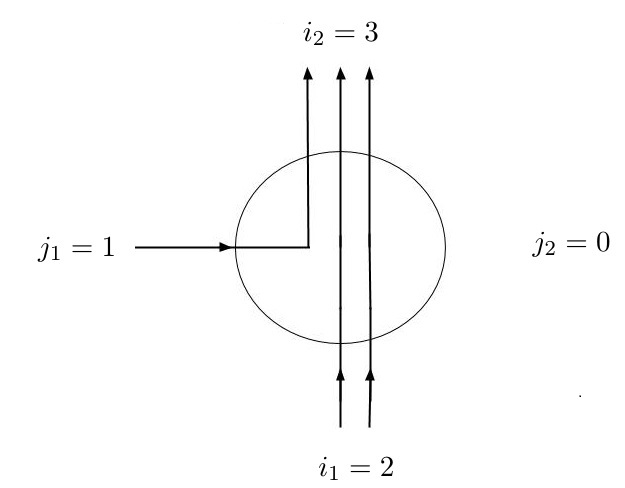}}
\captionsetup{width=.9\linewidth}
 \caption{Incoming and outgoing vertical and horizontal arrows at a vertex, denoted by $(i_1,j_1; i_2, j_2) = (2,1; 3, 0)$}
\label{S1_2}
\end{minipage}
\end{figure}

We encode arrow configurations at a vertex through the numbers $(i_1,j_1; i_2, j_2)$, representing the number of arrows coming from the bottom and left of the vertex, and leaving from the top and right, respectively (see Figure \ref{S1_2}). Let us fix a parameter $s$ and $N$ indeterminates $u_1,...,u_N$, called {\em spectral parameters}. For a spectral parameter $u$, we define the following vertex weights
\begin{equation}\label{VertW}
\begin{split}
w_{u}(0,0;0,0) = 1, \hspace{2mm}   &w_u(1,0;1,0) = \frac{1 - s^{-1}u}{1 - su}, \hspace{2mm} w_u(1,0;0,1) = \frac{(1 - s^2)u}{1 - su} \\
w_{u}(0,1;1,0) = \frac{1 - s^{-2}}{1 - su}, \hspace{2mm} &w_u(0,1; 0,1) = \frac{u - s}{1 - su}, \hspace{2mm} w_u(1,1;1,1) = \frac{u - s^{-1}}{1 - su},
\end{split}
\end{equation}
and set all other vertex weights to zero. The choice of the above parametrization is made after \cite{Bor14}, where higher spin versions of the above vertex weights were considered. Those weights depend on two parameters $s,q$ and they are closely related to the matrix elements of the higher spin $R$-matrix associated with $U_q(\widehat{sl_2})$. Formulas for the higher spin weights are present in (\ref{weights1}) later in the text, and those in (\ref{VertW}) are obtained by setting $q = s^{-2}$. Given $\omega \in \mathcal{P}_N$, we let $\omega(i,j)$ denote the arrow configuration at the vertex $(i,j) \in D_N$ and note that we have six possible arrow configurations for $\omega(i,j)$, corresponding to the weights in (\ref{VertW}).

In addition, let us consider a function $f: \mathsf{Sign}_N^+ \rightarrow \mathbb{R}$. With the above data we define the {\em weight} of a path configuration $\omega$ as
\begin{equation}\label{PathW}
\mathcal{W}^f(\omega):= \prod_{i = 0}^{\infty} \prod_{j = 1}^N w_{u_j}(\omega(i,j)) \times f(\lambda^N(\omega)).
\end{equation}
We observe that all but finitely many of $\omega(i,j)$ equal $(0,0;0,0)$, which by (\ref{VertW}) has weight $1$ and so the product in (\ref{PathW}) is a well-defined rational function. Suppose that for a certain choice of parameters and function $f$ the weights $\mathcal{W}^f(\omega)$ are non-negative, not all $0$ and their sum
$$Z^f := \sum_{\omega \in \mathcal{P}_N} \mathcal{W}^f(\omega) < \infty,$$
then we may define a probability measure on $\mathcal{P}_N$ through $\mathbb{P}^f(\omega) = \frac{\mathcal{W}^f(\omega)}{Z^f}$. The function $f$ can be interpreted as a condition for the top boundary of an arrow configuration on $D_N$, complementing the other boundary conditions of no arrows entering from the bottom, all arrows entering from the left and no arrows propagating to infinity on the right. For example, taking $f(\lambda)$ to be zero unless $\lambda_{N-i + 1} = i -1$ for $i = 1,...,N$ corresponds to the (vertically) inhomogeneous six-vertex model with domain wall boundary condition \cite{Kor82}.\\

As discussed in Section \ref{Section1.1}, one of the novelties of this paper is the development of particular operators $D^k_m$, which can be used to extract a set of observables for measures on $\mathcal{P}_N$ of the form $\mathbb{P}^f$ above. The operators $D_m^k$ are inspired by the Macdonald difference operators, which have been used successfully in deriving asymptotic statements about random plane partitions, directed polymers and particle systems \cite{BorCor, BorCor2, BCF, BCR, ED, FerVet }. To give an example, the first order operator $D^1_m$ acts on functions in $m$ variables and is given by
$$D_m^1 := \sum_{i = 1}^m \prod_{j = 1, j\neq i}^m\left( \frac{u_j - qu_i}{u_j - u_i} \frac{u_j - s}{u_j - sq}\right) \frac{1 - s^2}{1-su_i} T_{u_i,s},$$
where $T_{u_i,s}F(u_1,...,u_m) = F(u_1,...,u_{i-1},s,u_{i+1},...,u_m)$. The formula for the general operator $D_m^k$ is given in Definition \ref{DiffOp}.

As will be explained later in Sections \ref{Section2} and \ref{Section3}, the probability distribution $\mathbb{P}^f$ is related to certain symmetric functions $\mathsf{F}_{\lambda}(u_1,...,u_m)$, parametrized by non-negative signatures $\lambda$. The key property of $D^k_m$ is that they act diagonally on $\mathsf{F}_{\lambda}(u_1,...,u_m)$, whenever $\lambda$ has distinct parts and satisfy
$$D_m^k\mathsf{F}_\lambda(u_1,...,u_m) = {\bf 1}_{\{ \lambda_{m} = 0, \lambda_{m-1} = 1,..., \lambda_{m-k+1} = k-1\}}\mathsf{F}_\lambda(u_1,...,u_m).$$
The above relation is essentially sufficient to prove that for $1 \leq k \leq m \leq N$ we have
\begin{equation}\label{S1eqSpec}
\mathbb{P}^f \left( \{ \omega \in \mathcal{P}_N: \lambda^m_m(\omega) = 0,...,  \lambda^m_{m-k+1}(\omega) = k-1 \} \right) = \frac{D^k_m Z^f}{Z^f},
\end{equation}
where we remark that the partition function $Z^f$ is a function of the variables $u_1,...,u_N$ and $D^k_m$ acts on the first $m$ variables. In words, the above expresses the probability of observing $k$ vertical arrows going from $(m,i)$ to $(m+1,i)$ for $i = 0,...,k-1$, in terms of the partition function $Z^f$ and the result of $D^k_m$ acting on it. 

The validity of (\ref{S1eqSpec}) can be established for a fairly general class of boundary functions $f$; however, in order for the formula to be useful one needs to understand the action of our operators on the partition function $Z^f$.  For general boundary conditions the partition function may not have a closed form or the action of the operators might not be clear. One particular class of functions, on which $D^m_k$ act well are functions that have the product form $F(u_1,...,u_m) = \prod_{i = 1}^m g(u_i)$. Such functions are eigenfunctions for $D^m_k$ with eigenvalues expressed through $k$-fold contour integrals - see Lemmas \ref{LemmaSC} and \ref{LemmaMC}. Whenever a model has a partition function in such a form (this can be achieved by fixing appropriate boundary conditions $f$ and is the case for the models we study in this paper) our method leads to contour integral representations for the probabilities in  (\ref{S1eqSpec}). In general, such representations are useful for asymptotic analysis as one has a lot of freedom in deforming contours and using steepest descent methods.  \\

In what follows we write down the general form of a function $f$ that we will consider and explain the probabilistic meaning of this choice. Define 
$$\mathsf{G}^c_\lambda(\rho) :=  (-1)^N{\bf 1}_{\{n_0 = 0\}}\prod_{i = 1}^{\infty}{\bf 1}_{\{ n_i \leq 1 \}}\prod_{j = 1}^{N}\left(-s \right)^{\lambda_j},$$ 
where $\lambda = 0^{n_0}1^{n_1} \cdots $ is the multiplicative expression for $\lambda$ (see Section \ref{Section2.1}). For an $M$-tuple of real parameters $(v_1,...,v_M)$ we define $f$ as
$$f(\lambda) = \mathsf{G}^c_\lambda(\rho, v_1,...,v_M)= \sum_{ \mu \in \mathsf{Sign}^+_N} \mathsf{G}^c_\mu(\rho) \mathsf{G}^c_{\lambda / \mu}(v_1,...,v_M),$$
where $\mathsf{G}^c_{\lambda / \mu}(v_1,...,v_M)$ is given by Definition \ref{defG} below.

If $M = 0$ we have that $f(\lambda) = \mathsf{G}^c_\lambda(\rho)$ and
$$Z^f = \sum_{\omega \in \mathcal{P}_N} \mathcal{W}^f(\omega) = (s^{-2};s^{-2})_N \prod_{i = 1}^N \frac{1 - s^{-1} u_i}{1 - su_i},$$
where we recall that $(a;q)_n$ denotes the $q$-Pochhammer symbol and equals $(a;q)_n = (1 - a)(1 - aq)\cdots(1 - aq^{n-1})$.  The latter identity is understood as an equality of formal power series and was derived in \cite{BP}. Fixing $s > 1$ and $u_i > s$ has the effect that $\mathcal{W}^f(\omega)  \geq 0$ and that the above identity holds numerically as well. In particular, for this choice of $f$, we have a well-defined probability distribution $\mathbb{P}^f$ on $\mathcal{P}_N$. The latter measure is the (vertically) inhomogeneous stochastic six-vertex model (see Section 6.5 in \cite{BP}). Further setting $u_i = u > s$ for $i = 1,...,N$, one arrives at the stochastic six-vertex model of \cite{BCG14} (see also \cite{BP}).

Given the above discussion, one can understand $f(\lambda) = \mathsf{G}^c_{\lambda / \mu}(v_1,...,v_M)$ as a certain many-parameter generalization of the boundary function of the previous models. As will be explained in Section \ref{Section2} we have for this choice of $f$ that
$$Z^f = \sum_{\omega \in \mathcal{P}_N} \mathcal{W}^f(\omega) =  (s^{-2};s^{-2})_N \prod_{i = 1}^N \left( \frac{1 - s^{-1} u_i}{1 - su_i} \prod_{j = 1}^M \frac{1 - s^{-2}u_i v_j}{1 - u_iv_j}\right) ,$$
where the equality is in the sense of formal power series. As before, we set $s > 1$, $u_i > s$ and, in addition, assume $v_j > 0$ are such that $u_iv_j < 1$ for $i = 1,...,N$ and $j = 1,...,M$. Under these conditions one can show that  $\mathcal{W}^f(\omega)  \geq 0$ and the above identity holds numerically as well. In particular, for this choice of $f$, we have a well-defined probability distribution $\mathbb{P}^f$ on $\mathcal{P}_N$, denoted by $\mathbb{P}_{{\bf u,v}}$. This is the main probabilistic object we will study.

For $m = 0,...,M$ we let $\mathbb{P}_{{\bf u,v}_m}$ denote the above probability distribution, where ${\bf v}_m = (v_1,...,v_m)$. Then one can interpret the distribution $\mathbb{P}_{{\bf u,v}_m}$ as the time $m$ distribution of a Markov chain $\{X_m\}_{m = 0}^M$, whose dynamics is governed by sequential update rules. For more details and an exact formulation we refer the reader to Section \ref{Section8} below as well as Section 6 in \cite{BP}. For a pictorial description of how the configurations $X_m$ evolve as time increases see  Figure \ref{S1_3}. Our primary interest is in understanding the large-time behavior of $X_m$ and we investigate this by studying the measure $\mathbb{P}_{{\bf u,v}}$ as both $M$ and $N$ tend to infinity.
\begin{figure}[h]
\centering
\scalebox{0.4}{\includegraphics{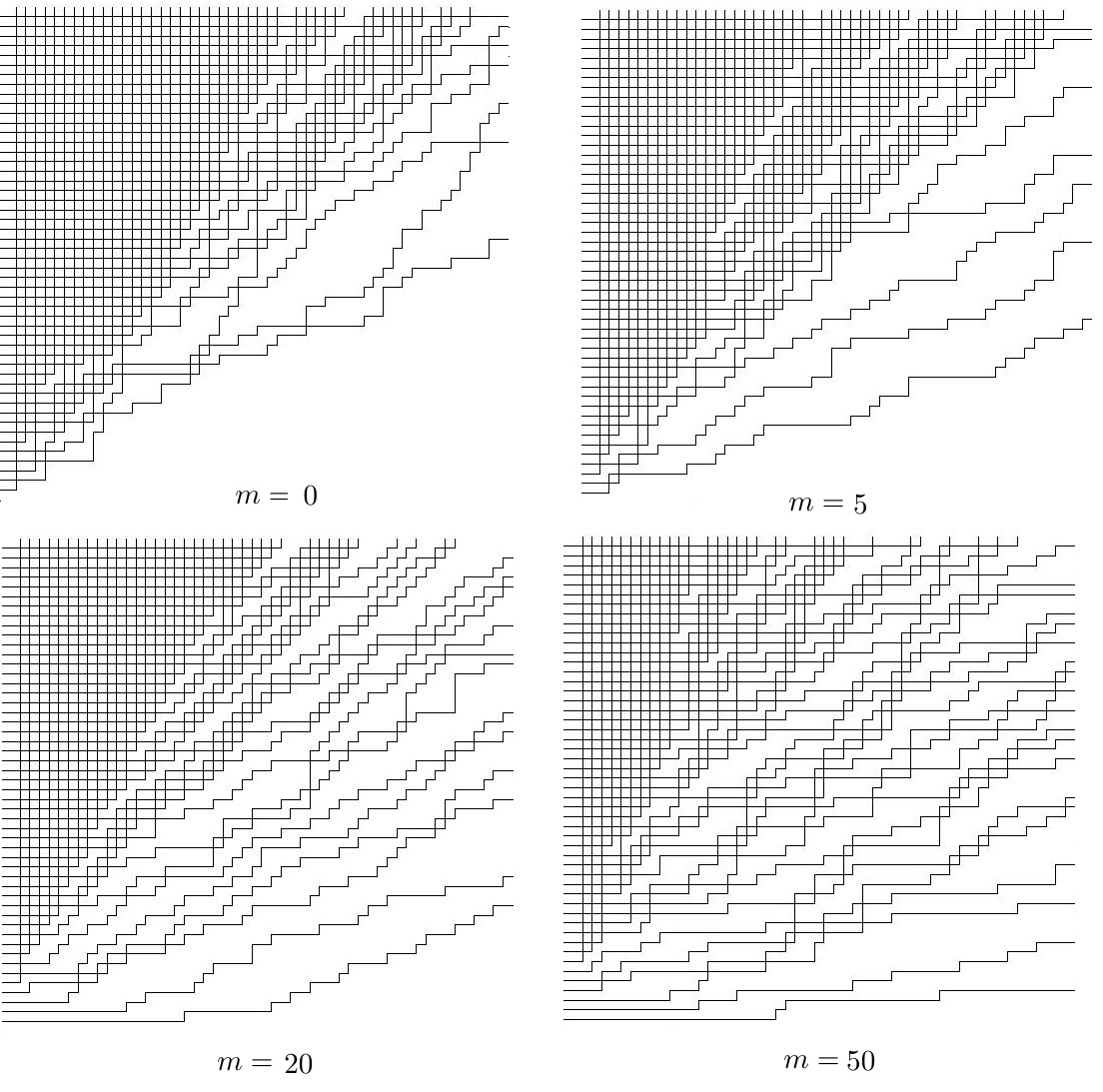}}
\caption{Random paths in $\mathcal{P}_N$, sampled from the Markov chain $X_m$ at times $m = 0,5,20$ and $50$. In this example $N = M =  50$, $s^{-2} = 0.7$, $u_i = u$ for $i = 1,...,N$ and $v_j = v$ for $j = 1,...,M$, where $u = 1.5$ and $v = 0.4$  }
\label{S1_3}
\end{figure}

 While most of the results we have in mind can readily be extended to more general parameter choices for ${\bf u}$ and ${\bf v}$ we keep our discussion simple and assume that $u_i = u$ for $i = 1,...,N$ and $v_j = v$ for $j = 1,...,M$. The resulting measure is denoted by $\mathbb{P}_{u,v}^{N,M}$ (the measure also depends on the parameter $s$ but we suppress it from the notation). The first result about this measure is the following.

\begin{theorem}\label{theorem1}
Suppose $s > 1$, $\frac{s + s^3}{2} > u > s$ and $v \in (0,u^{-1})$. Let $a = \frac{v^{-1} (u - s^{-1}) (u-s)}{u(v^{-1}-s^{-1})(v^{-1} - s)} > 0$ and suppose $\gamma > a$. Let $N(M) \geq \gamma\cdot M$ for all $M >> 1$ and consider the measure $\mathbb{P}_{u,v}^{N,M}$ on $\mathcal{P}_N$, defined above. Then for every $k \in \mathbb{N}$, we have that 
\begin{equation}\label{main0}
\lim_{M \rightarrow \infty} \mathbb{P}_{u,v}^{N,M} \left( \{ \omega \in \mathcal{P}_N : \lambda^N_{N - i + 1}(\omega) = i, 1\leq i \leq k \} \right) = 1.
\end{equation}
\end{theorem}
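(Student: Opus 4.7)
The approach starts from the operator identity (\ref{S1eqSpec}), which at $m = N$ rewrites the probability of a ``packed'' top-boundary configuration as the ratio $D_N^k Z^f / Z^f$. In the homogeneous case $u_i = u$, $v_j = v$ the partition function takes the rank-one product form
$$Z^f = (s^{-2};s^{-2})_N\left( \frac{1-s^{-1}u}{1-su}\right)^{N} \left( \frac{1-s^{-2}uv}{1-uv}\right)^{NM},$$
so $Z^f$ is an eigenfunction of precisely the kind that Lemmas \ref{LemmaSC} and \ref{LemmaMC} handle. The event in Theorem \ref{theorem1}, $\{\lambda^N_{N-i+1}=i,\ 1\le i\le k\}$, is shifted by one from the event $\{\lambda^m_{m-i+1}=i-1\}$ naturally detected by $D_m^k$; I would reconcile this by exploiting the fact that the factor $(1-s^{-1}u)$ in the one-variable function $g(u)$ forces $g(s)=0$, so once the operator acts on $Z^f$ the apparent zero interacts with a pole of the contour-integrand and shifts the effective ``packed'' positions from $\{0,\dots,k-1\}$ to $\{1,\dots,k\}$.

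Using Lemmas \ref{LemmaSC} and \ref{LemmaMC} this yields a $k$-fold nested contour-integral representation of the form
$$\mathbb{P}_{u,v}^{N,M}\bigl(\lambda^N_{N-i+1}=i,\ 1\le i\le k\bigr) = \frac{1}{(2\pi \mathbf{i})^k}\oint\!\cdots\!\oint \prod_{r=1}^k H_{N,M}(z_r)\prod_{r<r'} K(z_r,z_{r'})\,dz_1\cdots dz_k,$$
with $H_{N,M}$ collecting all factors depending on $N,M$ (coming from $Z^f$ and its shifted counterpart) and $K$ an explicit symmetric cross-factor. Writing $H_{N,M}(z) = \exp\bigl(M\,\Phi_\gamma(z) + o(M)\bigr)$ under the scaling $N\ge \gamma M$, the asymptotic behavior as $M\to\infty$ is governed by the standard multidimensional steepest-descent apparatus applied to $\Phi_\gamma$.

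The next step is to identify the critical points of $\Phi_\gamma$ and deform each $z_r$-contour to a steepest descent path through the relevant dominant saddle, tracking any residues picked up along the way. The number $a$ in the theorem is precisely the threshold value of $\gamma$ at which the dominant critical point of $\Phi_\gamma$ collides with a pole of $H_{N,M}$ coming from the factors $(1-sz)^{-1}$ and $(1-vz)^{-1}$. For $\gamma > a$ the contours can be collapsed so that a single residue contribution survives (and evaluates to $1$) while the remainder decays as $O(e^{-cM})$. Since $k$ is fixed, only finitely many integration variables are involved and the multivariate steepest descent is tractable.

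The principal obstacle is the geometric step: exhibiting a concrete deformation of all $k$ contours through the correct saddle while avoiding extraneous residues, and verifying that $\operatorname{Re}\Phi_\gamma$ attains its supremum along the contour only at the dominant saddle. A more delicate bookkeeping step is the interplay between the zero $g(s) = 0$ and the pole $1/(1-sz)$ in $H_{N,M}$, which is what effectively produces the one-unit shift between the event detected by the raw operator identity and the event appearing in Theorem \ref{theorem1}. The value of $a$ in the hypothesis reflects precisely the threshold at which this saddle--pole interaction changes nature, and it is on the side $\gamma > a$ that the integral concentrates at the residue producing the limit~$1$.
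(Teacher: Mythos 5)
Your proposal identifies the right ingredients (the operators $D^k_m$, the product form of $Z^f$, and a contour-integral steepest descent), but the mechanism you propose for the one-unit shift does not work and the steepest-descent step is set up at the wrong scale, so there is a genuine gap.

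First, the shift. You observe correctly that $g(s) = 0$ when $Z^f = \prod_i g(u_i)$ with $g(u)$ containing the factor $(1 - s^{-1}u)$, and you suggest this zero ``interacts with a pole and shifts the effective packed positions.'' But the zero at $s$ is not a feature to exploit---it is an obstruction. Lemmas \ref{LemmaSC} and \ref{LemmaMC} require the one-variable function to be \emph{non-vanishing} in a neighbourhood of an interval containing $s$; with $g(s)=0$ those lemmas simply do not apply. Moreover, $D^k_N$ acting on any function vanishing at $u_i = s$ produces $0$ (each $T_{u_i,s}$ evaluates at $s$), and that is consistent with the fact that $D^k_N$ detects the event $\{\lambda^N_N = 0,\dots,\lambda^N_{N-k+1} = k-1\}$, which has probability zero under $\mathbb{P}_{\mathbf{u},\mathbf{v}}$ because $f(\lambda;\mathbf{v},\rho)$ is supported on signatures with all parts strictly positive. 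What the paper actually does (in the proof of Proposition \ref{corePropProb}) is pass to the measure $\mathbb{P}^g$ obtained by deleting the zeroth column of $\omega$. This map shifts $\lambda^{m} \mapsto \lambda^{m} - (1)^{m}$, so the event $\{\lambda^N_{N-i+1}=i\}$ becomes the event $D^k_N$ detects, \emph{and simultaneously} the new partition function $Z^g(\mathbf{u}) = \prod_i \prod_j \frac{1-qu_iv_j}{1-u_iv_j}$ no longer vanishes at $u_i = s$, so the contour lemmas do apply. The shift and the removal of the zero are two sides of the same coin and are handled by one clean construction, not by a saddle--pole interaction.

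Second, the asymptotics. You propose to do a steepest descent directly on the $k$-fold integral representing $\mathbb{P}^{N,M}_{u,v}(\lambda^N_{N-i+1}=i,\, 1\le i\le k)$, which corresponds to setting $m_r = N$ in (\ref{coreProb}). That produces an exponent of the form $\gamma M\, g(z) + M\, [\text{$v$-part}]$ whose derivative at $z = s$ is $(\gamma - a)\, b_1 \neq 0$, so the critical point is no longer at $z = s$, the pole of $(z_r - s)^{-(k-r+1)}$ sits away from the saddle, and the whole analysis changes character; you would need to locate the new saddle, prove a global steepest-descent estimate, account for residues crossed in the deformation, and show everything sums to $1$---none of which is established. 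The paper avoids all of this by proving (Proposition \ref{propEq5} and Lemma \ref{keyProbLemma}) a fixed-$x$ limit $\mathbb{P}^{N,M}_{u,v}\bigl(\tfrac{Y^i_i - aM}{c\sqrt{M}} \le x,\; i=1,\dots,k\bigr) \to \mathbb{P}(\lambda^k_k \le x)$, where the scaling $m_i = aM + c x_i\sqrt{M}$ keeps the saddle pinned at $z = s$. Then it observes that the event in Theorem \ref{theorem1} equals $\{Y^k_k \le N\}$, that $N \ge \gamma M > aM + cx\sqrt{M}$ for large $M$, and lets $x \to \infty$. This monotonicity argument is the piece your proposal is missing: it converts a fixed-scaling weak-convergence statement into the ``probability tends to $1$'' statement without redoing the saddle-point analysis at the boundary of the scaling window.
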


\begin{remark}
We choose $s > 1$ and $u > s$ to ensure non-negativity of the weights defining $\mathbb{P}_{u,v}^{N,M}$. This choice of parameters lands our Gibbs measure in the {\em ferroelectric regime} of the six-vertex model \cite{Bax} and $s > 1$ covers the entire range of the ferroelectric region - see also the discussion in Section \ref{Section6.1}. One requires $v \in (0, u^{-1})$ in order to ensure finiteness of the partition function and non-negativity of the weights. The condition $\frac{s + s^3}{2} > u$ is technical and assumed in order to simplify some arguments later in the text.
\end{remark}

Informally, Theorem \ref{theorem1} states that the probability $\mathbb{P}_{u,v}^{N,M}$ concentrates on path configurations, which have outgoing vertical arrows at locations $(i, N)$ for $i = 1,..., k$, where $k \in \mathbb{N}$ is fixed but arbitrary, and no such arrow at $(0,N)$. Let us consider such a path configuration $\omega$ and denote by $A_j = \{ (j, i) : i = 1,...,N\}$ the vertical slice of $D_N$ at location $j$. We observe that the left and bottom boundary conditions on $\omega$ imply that there are exactly $N$ arrows going into the set $A_0$ and no vertical outgoing arrow from $(0, N)$. The conservation of arrows over the region $A_0$, implies that all $N$ arrows must leave from the right boundary of $A_0$, and so each arrow that enters $(0,i)$ must continue horizontally (see Figure \ref{S1_4}). When we consider $A_1$, we see that there are still $N$ arrows going in, however, one arrow leaves at $(1,N)$ and so the conservation of arrows implies that there are $N-1$ arrows leaving $A_1$ to the right and entering $A_2$. In general, there will be $N- j + 1$ arrows going into region $A_j$ and one arrow leaving from the top, implying that there are $N-j$ arrows leaving from the right and entering $A_{j+1}$. Let us denote by $Y^j_1 < Y^j_2 < \cdots < Y_j^j$, the ordered vertical positions of the $j$ vertices in $A_j$, that have no outgoing horizontal arrow (alternatively, the vertical coordinates of the empty horizontal edges between $A_j$ and $A_{j+1}$) - see Figure \ref{S1_4}. A direct consequence of the up-right path direction, implies that $Y_i^j$ satisfy the interlacing property
$$Y^{j+1}_1 \leq Y^{j}_1 \leq Y^{j+1}_{2} \leq \cdots \leq Y^j_j \leq Y^{j+1}_{j+1} \mbox{ for $j = 1,...,k-1$}.$$
The above definition can readily be extended to $\omega \in \mathcal{P}_N$, which do not satisfy the condition $\lambda^N_{N - i + 1}(\omega) = i, 1\leq i \leq k$ as follows. We set $Y^j_i$ to be the $i$-th smallest $y$-coordinate of a vertex in $A_j$ with no horizontal outgoing arrow, or $Y^j_i = +\infty$ if the number of such vertices is less than $i$. In this way, we obtain an extended random vector $Y(N, M;k)(\omega) := (Y_i^j)_{1 \leq i \leq j; 1\leq j \leq k} \in (\mathbb{N}\cup \{\infty\})^{\frac{k(k+1)}{2}}$. The statement of Theorem \ref{theorem1} is that with probability going to $1$, the interlacing array $Y(N, M;k)(\omega)$ is actually finite. \\

\vspace{-4mm}
\begin{figure}[h]
\centering
\scalebox{0.70}{\includegraphics{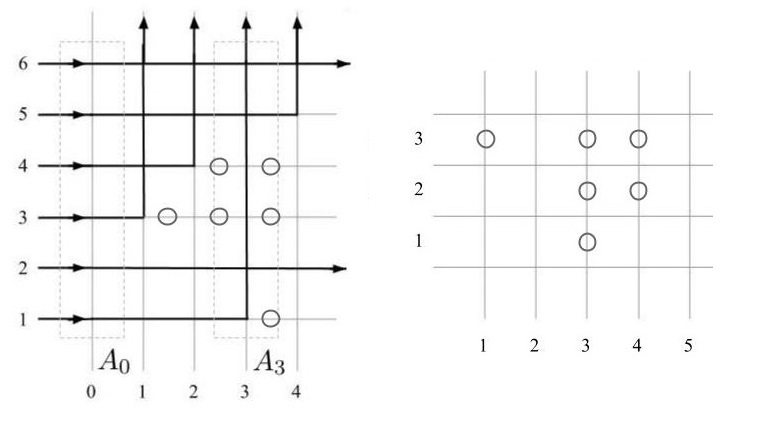}}
\caption{The left figure shows a path collection $\omega$, such that $\lambda^N_{N-i+1}(\omega) = i$ for $i = 1,...,k$ with $N = 6$ and $k = 3$. Circles indicate the positions of the empty edges. The right figure shows the array $(Y_i^j)_{1 \leq i \leq j \leq 3}$; $j$ varies vertically and position is measured horizontally. In this case  $Y_1^1 = Y_1^2 = Y_2^3 = 3$, $Y_2^2 = Y_3^3 = 4$, $Y_1^3= 1$.}
\label{S1_4}
\end{figure}

Recall that the Gaussian Unitary Ensemble (GUE) of rank $k$ is the ensemble of random Hermitian matrices $X = \{X_{ij}\}_{i,j = 1}^k$ with probability density (proportional to) $\exp(-Trace(X^2)/2)$, with respect to Lebesgue measure. For $r = 1,..., k$  we let $\lambda_1^r  \leq \lambda_2^r \leq \cdots \leq \lambda_r^r$ denote the eigenvalues of the top-left $r \times r$ corner $\{X_{ij}\}_{i,j = 1}^r$. The joint distribution of $\lambda_i^j$ $i = 1,...,j$, $j = 1,...,k$ is known as the {\em GUE-corners} process of rank $k$ (sometimes called the GUE-minors process).
The following theorem is the main result of this paper.

\begin{theorem}\label{theorem2}
Assume the same notation as in Theorem \ref{theorem1}, put $q = s^{-2}$ and fix $k \in \mathbb{N}$. Consider the sequence $Y(N, M;k)(\omega)$ with $\omega$ distributed according to $\mathbb{P}_{u,v}^{N,M}$. Then the random vectors
\begin{equation}\label{main1}
 \frac{1}{c\sqrt{M}}\left( Y(N, M;k) - aM \cdot {\bf 1}_{\frac{k(k+1)}{2}} \right)
\end{equation}
converge weakly to the GUE-corners process of rank $k$ as $M \rightarrow \infty$. In the above equation ${\bf 1}_K$ is the vector of $\mathbb{R}^K$ with all entries equal to $1$ and $c = (2a_2)^{1/2}b_1^{-1}$, with
$$a_2 = \frac{(1-q)v^{-1}}{(v^{-1}-s)(v^{-1}-sq)} \left[ \frac{(q+1)s - 2v^{-1}}{(v^{-1}-s)(v^{-1}-sq)} - \frac{(q+1)s - 2u}{(u-s)(u-sq)} \right] \mbox{ and }  b_1 = \frac{1}{u-s} - \frac{1}{q^{-1}u - s}.$$
\end{theorem}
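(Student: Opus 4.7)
The plan is to identify the weak limit of (\ref{main1}) via a uniqueness argument: any subsequential limit will be shown to be a continuous Gibbs measure (in the sense of Definition \ref{definitionGT}) whose right-edge marginal is the GUE eigenvalue distribution at level $k$; the classification statement alluded to in the introduction then forces the limit to equal the GUE-corners process of rank $k$. I would carry this out in three substantive stages: (a) establish a prelimit Gibbs property of the discrete $Y$-arrays and transfer it to any weak limit point, (b) compute the limit of the top-row marginal $(Y_1^k,\dots,Y_k^k)$ via the contour integrals delivered by the operators $D_m^k$, and (c) invoke the classification to conclude. Tightness of the rescaled array is largely a byproduct: Theorem \ref{theorem1} makes the array almost surely finite, and interlacing controls every lower coordinate once the top row is concentrated.

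\emph{Prelimit Gibbs property.} Because the model is homogeneous ($u_i\equiv u$) and the weights (\ref{VertW}) come from a Yang--Baxter-compatible parametrization, a direct vertex-by-vertex count shows that the weight (\ref{PathW}) of any path configuration with top row $Y^k=(Y_1^k,\dots,Y_k^k)$ depends only on $Y^k$ and not on $(Y^1,\dots,Y^{k-1})$: the $Y^j$-dependence of each individual column $A_j$ exactly cancels the $Y^j$-dependence of column $A_{j+1}$, because shifting one turning point up by a unit converts a $(0,1;0,1)$ vertex into a $(1,1;1,1)$ vertex in one column while being compensated in the adjacent column by the reverse replacement. Consequently the conditional law of $(Y^1,\dots,Y^{k-1})$ given $Y^k$ is the discrete uniform measure on the Gelfand--Tsetlin cone with top row $Y^k$. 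Under the diffusive rescaling of (\ref{main1}), this uniform measure converges weakly to Lebesgue measure on the corresponding continuous cone, and this is exactly the continuous Gibbs property of Definition \ref{definitionGT}; every subsequential limit of (\ref{main1}) therefore satisfies it.

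\emph{Top-row marginal via steepest descent.} For the top-row asymptotics I would use (\ref{S1eqSpec}) together with a straightforward generalization that detects the joint positions of $k$ empty edges at arbitrary locations rather than at $0,1,\dots,k-1$. Applying $D_N^k$ to $Z^f$ in the product form
\[
Z^f=(s^{-2};s^{-2})_N\prod_{i=1}^N\left(\frac{1-s^{-1}u_i}{1-su_i}\prod_{j=1}^M\frac{1-s^{-2}u_iv_j}{1-u_iv_j}\right)
\]
and invoking Lemmas \ref{LemmaSC}--\ref{LemmaMC} produces a $k$-fold contour integral of the form
\[
\oint\cdots\oint K(z_1,\dots,z_k)\prod_{i=1}^k e^{M\,S(z_i;\gamma)}\,\frac{dz_1\cdots dz_k}{(2\pi\mathbf{i})^k},
\]
where $K$ is a Cauchy-type cross-kernel arising from the Macdonald-like structure and $S(\cdot;\gamma)$ is the phase function built from the logarithms of the product factors above, with $N=\gamma M$. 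Substituting $Y_j^k=aM+c\sqrt{M}\,x_j$, one verifies that $S$ has a unique saddle $z_\ast$ on the relevant contour; the constants of the theorem are identified with $a$ (forced by the saddle-point equation after a Legendre transform), $a_2=\tfrac12 S''(z_\ast)$ (the Gaussian variance coefficient), and $b_1$, the derivative at $z_\ast$ of the Cauchy-type factor that governs the Vandermonde scaling $c=(2a_2)^{1/2}b_1^{-1}$. Deforming each contour to a steepest-descent path through $z_\ast$ and applying Laplace's method reduces the integral to a $k$-fold Gaussian whose cross-kernel localizes to a Vandermonde factor $\prod_{i<j}(x_j-x_i)$; together with the conjugate contribution this produces the squared Vandermonde $\prod_{i<j}(x_i-x_j)^2$ characteristic of the GUE eigenvalue density. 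Hence the rescaled top row converges weakly to the $k$-point GUE eigenvalue distribution.

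Combining the three ingredients with the classification of continuous Gibbs measures on Gelfand--Tsetlin patterns identifies the limit of (\ref{main1}) as the GUE-corners process of rank $k$. The main obstacle is the steepest-descent analysis: the integrand has poles on the loci $\{su_i=1\}$ and $\{u_iv_j=1\}$ inherited from (\ref{VertW}) and $Z^f$, and the saddle $z_\ast=s$ sits at the boundary of the domain of convergence, so contour deformation must be performed carefully. The technical hypothesis $(s+s^3)/2>u$ is precisely what lets the contours be pushed past the offending poles while collecting only the residues that contribute to the limit. Carrying out this contour surgery uniformly in $k$ complex variables, and verifying that the subleading corrections do not destroy the Gaussian-times-Vandermonde structure, is where the bulk of the technical work will lie.
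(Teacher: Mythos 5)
Your plan shares the paper's high-level architecture—establish a Gibbs property that survives the diffusive limit, compute an explicit marginal by steepest descent, and close with a classification of continuous Gibbs measures—but two of the load-bearing steps contain substantive errors.

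First, the claim that the conditional law of $(Y^1,\dots,Y^{k-1})$ given $Y^k$ is the discrete \emph{uniform} measure on the Gelfand--Tsetlin cone is false for this model. Your cancellation argument (that an upward shift of one turning point trades a $(0,1;0,1)$ for a $(1,1;1,1)$ vertex, compensated in the adjacent column) holds only for moves in the interior of a run of empty vertical edges; at the two endpoints of each run the four affected vertex types change in a way that does change the weight, because the weights in (\ref{SVW}) are not all equal (e.g.\ $w_5 w_6 \neq w_1 w_2$ when $u>s>1$). The conditional distribution therefore satisfies what the paper calls the six-vertex Gibbs property (Definition \ref{DGP}) with genuinely non-constant weights, \emph{not} the discrete Gibbs property of \cite{Gor14}. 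Passing from the six-vertex Gibbs property to the continuous Gibbs property in the scaling limit is a real theorem (Proposition \ref{S5Prop}, resting on Lemmas \ref{LemmaBound} and \ref{LemmaWeak}), not an exact algebraic identity, and in your proposal that entire layer is missing.

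Second, you propose to extract the law of the \emph{top row} $(Y_1^k,\dots,Y_k^k)$ by applying $D_N^k$ and generalizing to detect empty edges at arbitrary positions, and you then identify the resulting limit as the GUE eigenvalue density $\propto\prod(x_i-x_j)^2e^{-\sum x_i^2/2}$. But the operator $D_m^k$ only tests whether $\lambda^m$ has parts exactly $0,1,\dots,k-1$; there is no "straightforward generalization" that locates empty edges at arbitrary heights. What the product $D^1_{m_1}\cdots D^k_{m_k}$ actually computes, via the equivalence $\{\lambda^m_m=1,\dots,\lambda^m_{m-k+1}=k\}=\{Y_k^k\le m\}$, is the joint CDF of the \emph{right edge} $(Y_1^1,\dots,Y_k^k)$, and the corresponding limit (Lemma \ref{keyProbLemma}) is the law of $(\lambda_1^1,\dots,\lambda_k^k)$ — a Warren-type density, not the GUE eigenvalue density. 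The classification result (Proposition \ref{pGibbs}) is accordingly tailored to the right-edge marginal. This also undercuts your tightness argument: you treat tightness of the top row as given and deduce the rest by interlacing, but in fact only the right edge is directly controlled; tightness of $Y^k_1$ (the leftmost entry of the top row) is not forced by interlacing and requires the six-vertex Gibbs bound of Lemma \ref{LemmaBound} to rule out mass escaping downward, which is exactly Step 2 of the paper's proof.
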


Results similar to Theorem \ref{theorem2} are known for models of random Young diagrams and random tilings, see \cite{Bar01, JN06, Nor09, OR}. Moreover, for random lozenge tilings the GUE-corners process is believed to be a universal scaling limit near the point separating two frozen regions (also called a {\em turning point}) \cite{JN06,OR}. We believe, although we cannot prove, that in our model the GUE-corners process also appears near the point separating two frozen regions. At this time, our methods do not seem to be strong enough to verify a limit-shape phenomenon; however, simulation results seem to indicate that this is indeed the case. Figure \ref{S1_5} illustrates randomly sampled elements according to $\mathbb{P}_{u,v}^{N,M}$ with different parameter choices and we refer the reader to Section \ref{Section8} for a detailed description of the sampling algorithm. As can be seen on Figure \ref{S1_5}, there is a macroscopic frozen region, made of $(0,1;0,1)$ vertices in the bottom left corner and another one, made of $(1,1;1,1)$ vertices in the top left corner. The two regions are separated by a disordered region containing all six types of vertices.

\begin{figure}[h]
\centering
\scalebox{1}{\includegraphics{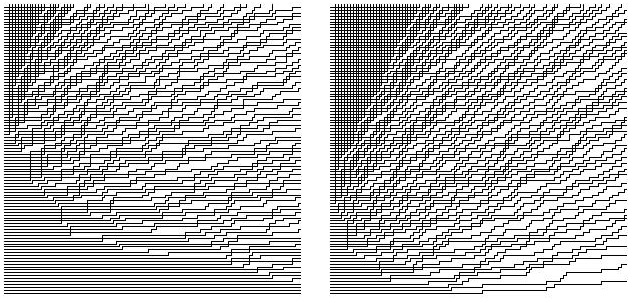}}
\caption{Random paths in $\mathcal{P}_N$, sampled according to $\mathbb{P}_{u,v}^{N,M}$ with $N = M = 100$. For the left picture $s^{-2}  = 0.8$, $u = 1.2$ and $v = 0.8$; for the right $s^{-2}  = 0.5$, $u = 1.5$ and $v = 0.6$.  }
\label{S1_5}
\end{figure}

Recently, \cite{Gor14} proved the equivalent of Theorem \ref{theorem2} for the six-vertex model with DWBC under the uniform distribution, and conjectured that a similar statement should hold for a larger set of measures. In this sense, Theorem \ref{theorem2} establishes such an extension to a more general class of six-vertex models with prescribed boundary data. \\

\begin{figure}[h]
\centering
\scalebox{1}{\includegraphics{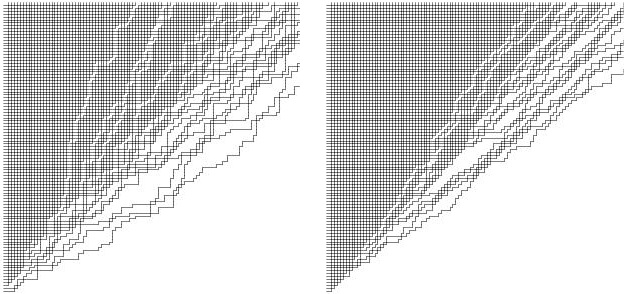}}
\caption{Random paths in $\mathcal{P}_N$, sampled according to $\mathbb{P}^f$ with $N = 100$ when $f(\lambda) = \mathsf{G}^c_\lambda(\rho)$. For the left picture $s^{-2} = 0.8$, $u = 1.5$; for the right $s^{-2} =  0.5$, $u = 2$  }
\label{S1_6}
\end{figure}

\begin{figure}[h]
\centering
\scalebox{0.4}{\includegraphics{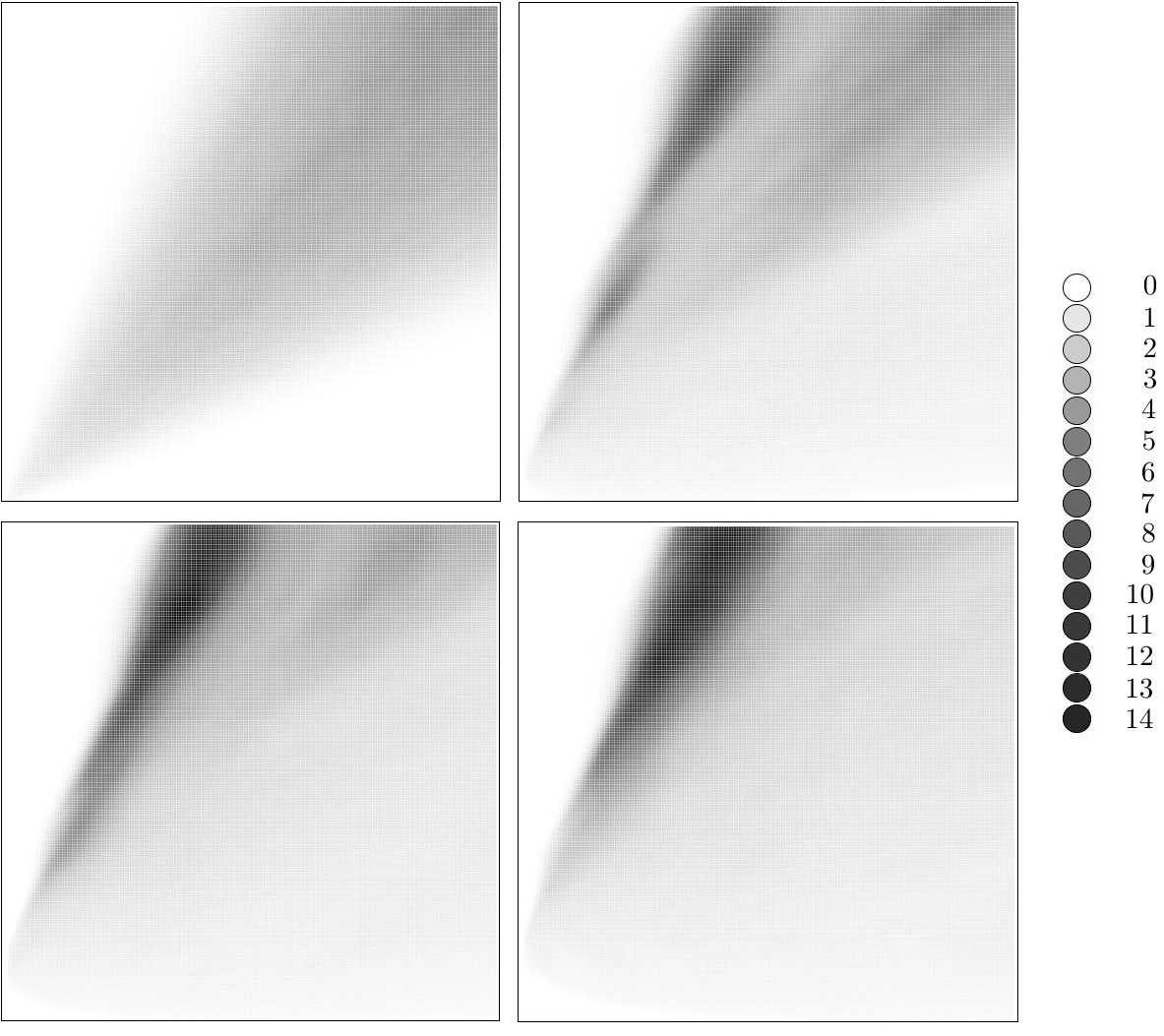}}
\caption{Variance of the height function at different locations for $2000$ samples from $\mathbb{P}_{u,v}^{N,M}$ . For the above simulations $s^{-2} = 0.5, u = 3,v = 0.2$ and $N = 200$ and $M = 0, 30, 60$ and $90$ for the top-left, top-right, bottom-left and bottom-right diagrams respectively. The variance-to-shade correspondence is indicated on the right.   }
\label{S1_7}
\end{figure}

Recall that one way to interpret the measure $\mathbb{P}_{u,v}^{N,m}$ is as the time $m$ distribution of a certain discrete time Markov chain, which at time $0$ is distributed as the stochastic six-vertex model of \cite{BCG14}. In \cite{BCG14} it was shown that configurations sampled from $\mathbb{P}_{u,v}^{N,0}$ converge to a certain deterministic cone-like limit shape (see Figure \ref{S1_6} for sample simulations). Comparing Figures \ref{S1_5} and \ref{S1_6}, we see that the stochastic dynamics has lead to a change in the limit shape. What is remarkable is that Theorem \ref{theorem2} indicates that the bulk fluctuations change as well. For the stochastic six-vertex model it is known that the fluctuations of the height function\footnote{ The height function $h(x,y)$ of the six-vertex model is defined as the number of paths that cross the horizontal line through $y$ to the right or at the point $x$.}  in the bulk are governed by the GUE Tracy-Widom distribution \cite{BCG14}. On the other hand, the bulk fluctuations of the GUE-corners process are described by the Gaussian Free Field (GFF) \cite{Bor10}. Theorem \ref{theorem2} suggests that the stochastic dynamics has transformed height fluctuations from KPZ-like to GFF-like. 

A possible explanation of the above phenomenon was suggested to us by Alexei Borodin and Fabio Toninelli and goes as follows. At large times one has both KPZ and GFF statistics within the model, but they manifest themselves in different portions of the configurations. As path configurations evolve, the KPZ region is pushed away from the origin and in its place GFF statistics emerge. We motivate the latter explanation with some simulations in Figure \ref{S1_7}. One distinguishing feature between KPZ and GFF statistics is the order of growth of the fluctuations, which are algebraic in the former and logarithmic in the latter case. We expect that the variance of the height function in the KPZ region to be of order $N^{2/3}$, while in the GFF region to be of order $\log (N) $. The latter implies that we can use the height variance as a proxy for distinguishing the different regions in our model and the results are presented in Figure \ref{S1_7}. As can be seen, there is indeed a high-variance cone, which is moving away from the origin and a very low variance region takes its place. It would be very interesting to verify that both GFF and KPZ fluctuations coexist in our model, since to our knowledge such a phenomenon has not been observed in other settings.\\

We end this section by briefly outlining the key ideas that go into proving Theorem \ref{theorem2}. The first key observation is that for $\omega \in \mathcal{P}_N$ one has $\lambda^m_m(\omega) = 1,...,  \lambda^m_{m-k+1}(\omega) = k $ if and only if $Y_k^k(\omega) \leq m$. Using this observation and our operators $D^k_m$, we express $\mathbb{P}_{{\bf u}, {\bf v}}(Y_k^k \leq m)$ and more generally $\mathbb{P}_{{\bf u}, {\bf v}}( Y_1^1 \leq m_1, ..., Y_k^k \leq m_k)$ in terms of certain $k$-fold contour integrals. These formulas for the joint cumulative distribution functions (CDFs) of the random vector $(Y_1^1,...,Y_k^k)$ are suitable for asymptotic analysis and can be used to show that under the translation and rescaling of Theorem \ref{theorem2}, this vector converges weakly to $(\lambda_1^1,...,\lambda_k^k)$, where $\lambda_i^j$ $i = 1,...,j$, $j = 1,...,k$ is the GUE-corners process of rank $k$. Using the six-vertex Gibbs property (see Section \ref{Section6.2}) and our convergence result for $(Y_1^1,...,Y_k^k)$, we show that the sequence of random interlacing arrays $Y(N, M;k)$ under the translation and rescaling of Theorem \ref{theorem2} is tight and any subsequential limit satisfies the continuous Gibbs property (see Definition \ref{definitionGT}). The final ingredient, in the proof is a classification result, which identifies the GUE-corners process as the unique probability measure on interlacing arrays that satisfies the continuous Gibbs property and has the correct distribution on the right edge. This shows that any weak subsequential limit of $Y(N, M;k)$ is in fact the GUE-corners process of rank $k$, which together with tightness proves Theorem \ref{theorem2}.

%
\subsection{Outline and acknowledgements} The introductory section above formulated the problem statement and gave the main results of the paper. In Section \ref{Section2} we study the measure $\mathbb{P}_{{\bf u, v}}$ and derive formulas for its finite dimensional distributions. In Section \ref{Section3} we define our operators $D^k_m$ and prove several of their properties. In Section \ref{Section4} we obtain integral formulas for the probabilities $\mathbb{P}_{u,v}^{N,M}( Y_1^1 \leq m_1, ..., Y_k^k \leq m_k)$, study their asymptotics and prove Theorem \ref{theorem1}. In Section \ref{Section5} we study probability measures on Gelfand-Tsetlin cones, which satisfy the continuous Gibbs property. In Section \ref{Section6} we study probability measures on Gelfand-Tsetlin patterns, which satisfy what we call the six-vertex Gibbs property. The proof of Theorem \ref{theorem2} is given in Section \ref{Section7}. In Section \ref{Section8} we describe an exact sampling algorithm for the measure $\mathbb{P}_{{\bf u}, {\bf v}}$. \\

I wish to thank my advisor, Alexei Borodin, for suggesting this problem to me and for his continuous help and guidance. I also thank Vadim Gorin for numerous helpful discussions.

%
\section{Measures on up-right paths } \label{Section2} 
In this section we provide some results about $\mathbb{P}_{{\bf u, v}}$. In particular, we show that it arises as a limit of measures on non-negative signatures, studied in Section 6 of \cite{BP}, and is a well-defined probability measure on the set of oriented up-right paths drawn in the region $D_N= \mathbb{Z}_{\geq 0} \times \{1,...,N\}$. We also provide explicit formulas for its marginal distributions.  In what follows we adopt the notation from \cite{BP} and summarize some of the results from the same paper.

\subsection{Symmetric rational functions}\label{Section2.1} We start by introducing some necessary notation. A {\em signature} of length $N$ is a sequence $\lambda = (\lambda_1 \geq \lambda_2 \geq \cdots \geq \lambda_N), \lambda_i \in \mathbb{Z}$. The set of all signatures of length $N$ is denoted by $\mathsf{Sign}_N$, and $\mathsf{Sign}^+_N$ is the set of signatures with $\lambda_N \geq 0$. We agree that $\mathsf{Sign}_0 = \mathsf{Sign}^+_0$ consists of the single empty signature $\varnothing$ of length $0$. We also denote by $\mathsf{Sign}^+:= \sqcup_{N \geq 0} \mathsf{Sign}^+_N$ the set of all non-negative signatures. An alternative representation of a signature $\mu \in \mathsf{Sign}^+$ is through the multiplicative notation $\mu = 0^{m_0} 1^{m_1} 2^{m_2} \cdots$, which means that $m_j = |\{i : \mu_i = j\}|$ is the number of parts in $\mu$ that are equal to $j$ (also called {\em multiplicity} of $j$ in $\mu$). We also recall the $q$-Pochhammer symbol $(a;q)_n := (1 - a)(1-aq) \cdots (1-aq^{n-1})$.\\

In what follows, we want to define the {\em weight} of a finite collection of up-right paths in some region $D$ of $\mathbb{Z}^2$, which will be given by the product of weights of all vertices that belong to the path collection. Throughout this paper we will always assume that the weight of an empty vertex is $1$ and so alternatively the weight of a path configuration can be defined through the product of the weights of all vertices in $D$. Figures \ref{S1_1} and \ref{S1_3} give examples of collections of up-right paths, see also Figure \ref{S2_1} below.

The configuration at a vertex is determined by four numbers $(i_1, j_1; i_2, j_2)$, representing the number of arrows that enter the vertex from below and right, and that leave from the top and left respectively (see Figure 2). Vertex weights are thus functions of those four variables. We postulate that a configuration $(i_1, j_1; i_2, j_2)$ must satisfy $i_1, j_1, i_2, j_2 \geq 0$, $j_1, j_2 \in \{0,1\}$ and $i_1 + j_1 = i_2 + j_2$ (otherwise its weight is $0$).

We will consider two sets of special vertex weights. They are both defined through two parameters $s,q$ (which are fixed throughout this section) as well as an additional {\em spectral parameter} $u$. We assume all parameters are generic complex numbers, and for the most part ignore possible singularities of the expressions below. The first set of vertex weights is explicitly given by 
\begin{equation}\label{weights1}
\begin{split}
&w_u(g,0;g,0) = \frac{1 - sq^gu}{1-su}, \hspace{20mm} w_u(g+1,0; g,1) = \frac{(1-s^2q^g)u}{1-su}\\
&w_u(g,1;g,1) = \frac{u - sq^g}{1-su}, \hspace{22mm} w_u(g,1;g+1,0) = \frac{1 - q^{g+1}}{1-su},
\end{split}
\end{equation}
where $g$ is any non-negative integer. All other weights are assumed to be zero. We also define the following {\em conjugated} vertex weights
\begin{equation}\label{weights2}
\begin{split}
&w^c_u(g,0;g,0) = \frac{1 - sq^gu}{1-su}, \hspace{20mm} w^c_u(g+1,0; g,1) = \frac{(1-q^{g+1})u}{1-su}\\
&w^c_u(g,1;g,1) = \frac{u - sq^g}{1-su}, \hspace{22mm} w^c_u(g,1;g+1,0) = \frac{1 - s^2q^{g}}{1-su},
\end{split}
\end{equation}
where as before $g \in \mathbb{Z}_{\geq 0}$ and all other weights are zero. We remark that the weights are non-zero only if $j_1, j_2 \in \{0,1\}$, which implies that the multiplicity of the horizontal edges is bounded by $1$. For more background and motivation for this particular choice of weights we refer the reader to Section 2 of \cite{BP}.\\

Let us fix a number $n \in \mathbb{N}$, $n$ indeterminates $u_1,...,u_n$ and the region $D_n = \mathbb{Z}_{\geq 0} \times \{1,...,n\}$. Let $\omega$ be a finite collection of up-right paths in $D_n$, which end in the top boundary, but are allowed to start from the left or bottom boundary of $D_n$. By $\omega(i,j)$ we denote the arrow configuration of the vertex at location $(i,j) \in D_n$. Then the weight of $\omega$ with respect to the two sets of weights above is defined by
$$\mathcal{W}(\omega) = \prod_{i = 0}^\infty \prod_{j = 1}^n w_{u_j}(\omega(i,j)), \hspace{20mm} \mathcal{W}^c(\omega) = \prod_{i = 0}^\infty \prod_{j = 1}^n w^c_{u_j}(\omega(i,j)).$$
We notice that by (\ref{weights1}) and (\ref{weights2}) $w_u(0,0;0,0) = 1 = w_u^c(0,0;0,0)$ and since all but finitely many vertices are empty, the products above are in fact finite. With the above notation we define the following partition functions.

\begin{definition}\label{defG}
Let $N,n \in \mathbb{Z}_{\geq 0}$, $\lambda, \mu \in \mathsf{Sign}_N^+$ and $u_1,...,u_n \in \mathbb{C}$ be given. Let $\mathcal{P}^c_{\lambda/\mu}$ be the collection of up-right paths $\omega$, which
\begin{itemize}
\item start with $N$ vertical edges $(\mu_i,0) \rightarrow (\mu_i,1)$, $i = 1,...,N$;
\item end with $N$ vertical edges $(\lambda_i, n) \rightarrow (\lambda_i, n+1)$, $i = 1,...,N$.
\end{itemize}
Then we define
$$\mathsf{G}^c_{\lambda / \mu}(u_1,...,u_n) := \sum_{\omega \in \mathcal{P}^c_{\lambda/\mu}} \mathcal{W}^c(\omega).$$
\end{definition}
{\raggedleft We will also use the abbreviation $\mathsf{G}^c_\lambda$ for $\mathsf{G}^c_{\lambda / (0,0,...,0)}$. For the second set of weights we have a similar definition.}

\begin{definition}\label{defF}
Let $N,n \in \mathbb{Z}_{\geq 0}$, $\mu \in \mathsf{Sign}_N^+$, $\lambda \in \mathsf{Sign}^+_{N+n}$ and $u_1,...,u_n \in \mathbb{C}$ be given. Let $\mathcal{P}_{\lambda/\mu}$ be the collection of up-right paths, which
\begin{itemize}
\item start with $N$ vertical edges $(\mu_i,0) \rightarrow (\mu_i,1)$, $i = 1,...,N$ and with $n$ horizontal edges $(-1,y) \rightarrow (0,y)$, $y = 1,...,n$;
\item end with $N +n$ vertical edges $(\lambda_i,n) \rightarrow (\lambda_i,n+1)$, $i = 1,...,N+n$.
\end{itemize}
Then we define
$$\mathsf{F}_{\lambda / \mu}(u_1,...,u_n) := \sum_{\omega \in \mathcal{P}_{\lambda/\mu}} \mathcal{W}(\omega).$$
\end{definition}
{\raggedleft We will also use the abbreviation $\mathsf{F}_\lambda = \mathsf{F}_{\lambda/\varnothing}$. Path configurations that belong to $\mathcal{P}_{\lambda/\mu}$ and $\mathcal{P}^c_{\lambda/\mu}$ are depicted in Figure \ref{S2_1}.} \\

\vspace{-4mm}
\begin{figure}[h]
\centering
\scalebox{0.6}{\includegraphics{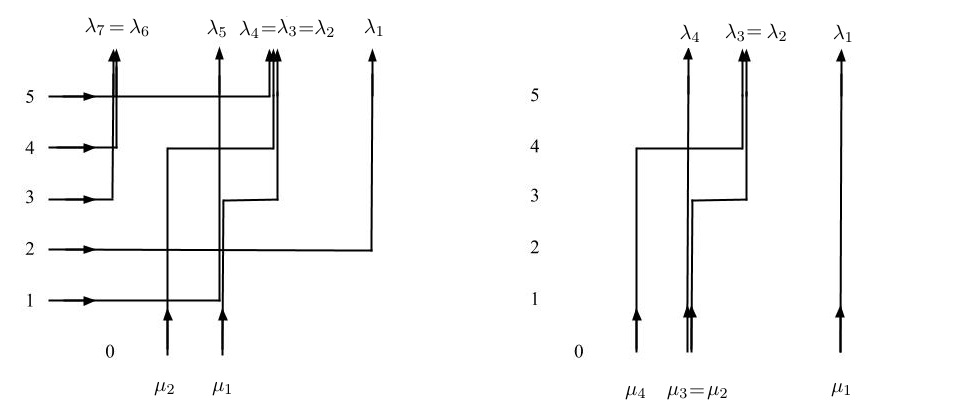}}
\caption{Path collections belonging to $\mathcal{P}_{\lambda/\mu}$ (left) and $\mathcal{P}_{\lambda/\mu}^c$ (right).}
\label{S2_1}
\end{figure}

In the definitions above we define the weight of a collection of paths to be $1$, if it has no interior vertices. Also, the weight of an empty collection of paths is $0$. We now summarize some of the properties of the functions $\mathsf{G}^c_{\lambda/ \mu}$ and $\mathsf{F}_{\lambda/\mu}$ in a sequence of propositions; see Section 4 of \cite{BP} for details.

\begin{proposition}\label{shifting}
Let $N,n,k \in \mathbb{Z}_{\geq 0}$, $\mu \in \mathsf{Sign}_N^+$, $\lambda \in \mathsf{Sign}^+_{N+n}$ and $u_1,...,u_n \in \mathbb{C}$ be given. Suppose $\mu_N \geq k$ and $\lambda_{N+n} \geq k$, and denote by $\mu - (k)^N$ and $\lambda - (k)^{N+n}$ the signatures with parts $\mu_i - k$ and $\lambda_i - k$ respectively. Then we have
\begin{equation}
\mathsf{F}_{\lambda / \mu}(u_1,...,u_n)  = \left( \prod_{i = 1}^n \frac{u_i - s}{1 - su_i} \right)^k \mathsf{F}_{\lambda - (k)^{N+n} / \mu - (k)^N}(u_1,...,u_n).
\end{equation}
\end{proposition}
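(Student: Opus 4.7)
The plan is to show that the hypotheses $\mu_N\ge k$ and $\lambda_{N+n}\ge k$ rigidly force every $\omega\in\mathcal{P}_{\lambda/\mu}$ to carry identical ``frozen'' content in the first $k$ columns $x=0,1,\ldots,k-1$: at each vertex $(x,y)$ with $x<k$ the arrow configuration must be $(0,1;0,1)$, i.e.\ a single horizontal arrow passing straight through. Granting this, the contribution of these $nk$ vertices to $\mathcal{W}(\omega)$ is the constant $\prod_{y=1}^n\bigl(\frac{u_y-s}{1-su_y}\bigr)^k$ by (\ref{weights1}). The remainder of $\omega$, living in the half-plane $x\ge k$, corresponds under the horizontal shift $x\mapsto x-k$ to an arbitrary element of $\mathcal{P}_{\lambda-(k)^{N+n}/\mu-(k)^N}$, and this bijection is weight-preserving because the vertex weights in (\ref{weights1}) depend on the row parameter $u_j$ but not on the column index. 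Summing over $\omega$ and factoring then yields the identity.

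To establish the frozen prefix I would induct on the column index $x$. Fix $x\le k-1$ and denote by $V_y$ the vertical-edge multiplicity between $(x,y)$ and $(x,y+1)$. Because $\mu_N\ge k>x$, no vertical arrow enters column $x$ from below, so $V_0=0$; because $\lambda_{N+n}\ge k>x$, no vertical arrow exits through the top, so $V_n=0$. The inductive hypothesis for column $x-1$, or, in the base case $x=0$, the definition of $\mathcal{P}_{\lambda/\mu}$, guarantees that each vertex $(x,y)$ receives a horizontal left-input equal to $1$. The local arrow-conservation at $(x,y)$ then reads $V_y=V_{y-1}+1-d_y$ where $d_y\in\{0,1\}$ denotes the right-output; telescoping from $V_0=0$ to $V_n=0$ yields $\sum_{y=1}^n d_y=n$, which forces $d_y=1$ and hence $V_y\equiv 0$ in every row. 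The unique nonzero entry in the weight list (\ref{weights1}) compatible with $(V_{y-1},1;V_y,1)=(0,1;0,1)$ is $w_{u_y}(0,1;0,1)=\frac{u_y-s}{1-su_y}$, and moreover each such vertex emits a rightward arrow, supplying the horizontal left-input required by the inductive step at column $x+1$.

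I expect no substantive obstacle beyond the clean execution of the telescoping argument above. The single subtle point is that this argument leans on all three pieces of boundary data simultaneously (the empty bottom boundary and empty top boundary of each of the first $k$ columns, together with the universal unit horizontal left-input at column $0$); weakening any one of them would admit configurations in which horizontal arrows turn up within columns $x<k$ and are later consumed by a rightward turn. Once the frozen prefix is in place, the column-shift bijection and the multiplicative structure of $\mathcal{W}(\omega)$ make the factorization automatic, giving the stated shifting identity. An analogous argument would handle the corresponding statement for $\mathsf{G}^c_{\lambda/\mu}$ should it be needed later.
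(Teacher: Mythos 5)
Your proof is correct. The paper does not actually supply a proof of this proposition (it cites Section 4 of \cite{BP}), so there is no in-text argument to compare against; your combinatorial frozen-column argument is the natural and complete one. The telescoping step is watertight: since $V_y - V_{y-1} = 1 - d_y \ge 0$, the sequence $V_y$ is nondecreasing, and $V_0 = V_n = 0$ forces $V_y \equiv 0$, $d_y \equiv 1$ in every column $x < k$; the column-shift bijection is then weight-preserving precisely because the weights in (\ref{weights1}) carry no column dependence.
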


\begin{proposition}\label{symmetricFun}
The functions $\mathsf{F}_{\lambda/\mu}(u_1,...,u_n)$ and $\mathsf{G}^c_{\lambda/\mu}(u_1,...,u_n)$ defined above are rational symmetric functions in the variables $u_1,...,u_n$.
\end{proposition}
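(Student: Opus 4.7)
The proof has two parts: (i) rationality, which is essentially bookkeeping, and (ii) symmetry, for which I would use a Yang--Baxter / row-swap argument. Adjacent transpositions generate $S_n$, so it suffices to show that swapping $u_i \leftrightarrow u_{i+1}$ leaves $\mathsf{F}_{\lambda/\mu}$ and $\mathsf{G}^c_{\lambda/\mu}$ unchanged.

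\textbf{Rationality.} The weights in \eqref{weights1} and \eqref{weights2} are rational functions of the spectral parameter $u$ (and in $s,q$). For fixed boundary data, I claim that only finitely many path configurations contribute. Since every path moves only up or right and must exit through the top at one of the columns $\lambda_1,\ldots,\lambda_{N+n}$ (respectively $\lambda_1,\ldots,\lambda_N$), no vertex strictly to the right of $\max_i \lambda_i$ can carry an arrow. Similarly, only rows $1,\ldots,n$ carry nontrivial configurations. Thus each partition function is a finite sum of finite products of rational functions, hence a rational function of $u_1,\ldots,u_n$.

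\textbf{Symmetry via a train/Yang--Baxter argument.} The weights \eqref{weights1}--\eqref{weights2} are built from the same $R$-matrix underlying the higher spin six-vertex model, so there is an associated cross-vertex (intertwiner) with weights $R_{u_{i+1},u_i}(\,\cdot\,)$ satisfying the Yang--Baxter equation with rows of parameters $u_i$ and $u_{i+1}$. The plan is:

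\begin{enumerate}[leftmargin=*]
\item Attach an $R$-vertex at the far \emph{right} end of the two horizontal strips with parameters $u_i$ and $u_{i+1}$. Because only finitely many vertices in these strips carry arrows, and because paths never leak out to the right (up-right monotonicity plus finite exit columns), both incoming configurations of the $R$-vertex are forced to be $(0,0)$. Consequently the $R$-vertex contributes the factor $R_{u_{i+1},u_i}(0,0;0,0)=1$ and can be inserted for free.
\item Now repeatedly apply the local Yang--Baxter relation to ``slide'' the $R$-vertex leftward through the two strips one column at a time. Each application rearranges the weights locally but preserves the full sum.
\item After the $R$-vertex has been moved to the far \emph{left} end of the two strips, the two rows have effectively had their spectral parameters interchanged. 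It remains to check that the $R$-vertex contribution on the left boundary is also trivial (a constant independent of the configuration, which can then be cancelled). For $\mathsf{G}^c_{\lambda/\mu}$ the left boundary carries no horizontal arrows, so the $R$-vertex is forced into its $(0,0;0,0)$ state. For $\mathsf{F}_{\lambda/\mu}$ the two rows have one horizontal arrow each entering from the left, fixing the $R$-vertex input, so its weight is again a definite scalar. Equating the two sides yields the symmetry under $u_i \leftrightarrow u_{i+1}$.
\end{enumerate}

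\textbf{Main obstacle.} The conceptual picture is standard, but the technical heart is step 3 for $\mathsf{F}_{\lambda/\mu}$: one must pick the right normalization of $R$ (the one provided by \cite{Bor14}) so that the boundary $R$-vertex on the left and the one ``absorbed'' on the right produce matching scalars, leaving no residual factor. Equivalently, one must verify that the chosen $R$-matrix intertwines the row-transfer operators for parameters $(u_i,u_{i+1})$ with those for $(u_{i+1},u_i)$ in a way compatible with the half-infinite horizontal boundary. Once this is set up, the induction on adjacent transpositions delivers symmetry in all $n$ variables and completes the proof.
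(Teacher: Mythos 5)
The paper does not prove this proposition itself; it explicitly defers to Section~4 of \cite{BP}, and your Yang--Baxter ``train'' argument is precisely the mechanism used there, so you are on the right track. The rationality part is complete as stated (paths are monotone up-right and cannot travel right of $\lambda_1$ nor above row $n$). The one symmetry point needing explicit verification --- which you correctly flag as the technical heart --- is that the cross-vertex yields the same scalar at both ends of the sweep: at the far right it is forced into state $(0,0;0,0)$, while at the far left it is forced into $(0,0;0,0)$ for $\mathsf{G}^c_{\lambda/\mu}$ but into $(1,1;1,1)$ for $\mathsf{F}_{\lambda/\mu}$, so one needs the identity $R(0,0;0,0)=R(1,1;1,1)$ for the fundamental $U_q(\widehat{sl_2})$ $R$-matrix that intertwines two spin-$1/2$ horizontal lines, and this holds in the normalization adopted in \cite{Bor14} and \cite{BP}. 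With that identity read off the explicit $R$-matrix, your induction over adjacent transpositions gives the symmetry in all $n$ variables.
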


\begin{proposition}\label{Branching}
{\bf 1.} For any $N, n_1, n_2 \in \mathbb{Z}_{\geq 0}$, $\mu \in \mathsf{Sign}_N^+$ and $\lambda \in \mathsf{Sign}_{N + n_1 + n_2}^+$, one has
\begin{equation}\label{branchF}
\mathsf{F}_{\lambda / \mu}(u_1,...,u_{n_1+ n_2}) = \sum_{ \kappa \in \mathsf{Sign}_{N + n_1}^+ }\mathsf{F}_{\lambda / \kappa}(u_{n_1 + 1},...,u_{n_1 + n_2}) \mathsf{F}_{\kappa / \mu}(u_1,...,u_{n_1}).
\end{equation}
{\bf 2.} For any $N, n_1, n_2 \in \mathbb{Z}_{\geq 0}$ and $\lambda, \mu \in \mathsf{Sign}_{N}^+$, one has
\begin{equation}
\mathsf{G}^c_{\lambda/\mu}(u_1,...,u_{n_1 + n_2}) = \sum_{ \kappa \in \mathsf{Sign}_{N }^+ }\mathsf{G}^c_{\lambda / \kappa}(u_{n_1 + 1},...,u_{n_1 + n_2}) \mathsf{G}^c_{\kappa / \mu}(u_1,...,u_{n_1}).
\end{equation}
\end{proposition}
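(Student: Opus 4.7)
The plan is to prove both parts by a direct bijective/partition-function argument, since by Definitions \ref{defG} and \ref{defF} each of $\mathsf{F}_{\lambda/\mu}$ and $\mathsf{G}^c_{\lambda/\mu}$ is just a sum of products of local vertex weights over the appropriate set of up-right path collections. The key observation is that the region $\mathbb{Z}_{\geq 0} \times \{1,\dots, n_1+n_2\}$ can be cleanly cut along the horizontal line $y = n_1 + \tfrac12$, and both the set of admissible path configurations and the associated weight $\mathcal{W}$ (or $\mathcal{W}^c$) respect this cut.

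First I would fix a configuration $\omega$ contributing to the left-hand side and look at its intersections with the slice $y = n_1 + \tfrac12$. Along this slice, each path carries a single vertical edge $(\kappa_i, n_1) \to (\kappa_i, n_1+1)$ for some ordered sequence of $x$-coordinates $\kappa_1 \geq \kappa_2 \geq \cdots$. I would then verify that $\kappa$ is automatically a signature of the correct length: for the $\mathsf{F}$-case, the bottom half of $\omega$ starts with $N$ vertical inputs from $\mu$ and absorbs $n_1$ horizontal inputs from the left, so exactly $N + n_1$ paths must exit through the slice, giving $\kappa \in \mathsf{Sign}^+_{N+n_1}$; for the $\mathsf{G}^c$-case, no horizontal inputs enter from the left, so the number of paths is preserved and $\kappa \in \mathsf{Sign}^+_N$.

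Once $\kappa$ is identified, the bottom half $\omega^{\mathrm{bot}} := \omega \big|_{y \leq n_1}$ becomes an admissible configuration from $\mu$ to $\kappa$ over $n_1$ rows with spectral parameters $u_1, \dots, u_{n_1}$, and the top half $\omega^{\mathrm{top}} := \omega\big|_{y \geq n_1+1}$ (shifted downward by $n_1$) is an admissible configuration from $\kappa$ to $\lambda$ over $n_2$ rows with parameters $u_{n_1+1}, \dots, u_{n_1+n_2}$. Conversely, any pair $(\omega^{\mathrm{bot}}, \omega^{\mathrm{top}})$ compatible on a common $\kappa$ glues back uniquely to a configuration in $\mathcal{P}_{\lambda/\mu}$ (resp.\ $\mathcal{P}^c_{\lambda/\mu}$). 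Since the vertex weights (\ref{weights1})--(\ref{weights2}) depend only on the local arrow configuration and on the spectral parameter of the row containing the vertex, the weight factorizes as
\begin{equation*}
\mathcal{W}(\omega) \;=\; \mathcal{W}(\omega^{\mathrm{bot}}) \cdot \mathcal{W}(\omega^{\mathrm{top}}),
\end{equation*}
and analogously for $\mathcal{W}^c$. Summing over $\omega^{\mathrm{bot}}$, then $\omega^{\mathrm{top}}$, then over the intermediate signature $\kappa$ gives exactly the right-hand sides of the two identities.

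There is no serious obstacle here; the statement is a structural identity built into the definitions. The only point that requires care is confirming that the admissibility conditions at the slice $y = n_1 + \tfrac12$ match up on both sides, i.e.\ that the possible boundary data along this slice are precisely indexed by $\kappa \in \mathsf{Sign}_{N+n_1}^+$ (respectively $\mathsf{Sign}_N^+$) and nothing else. This is ensured by the conservation law $i_1 + j_1 = i_2 + j_2$ imposed on every vertex, together with the hypothesis that $j_1, j_2 \in \{0,1\}$, which forces the $x$-coordinates of crossings to be distinct only in the sense required for a signature. This is also precisely what allows Proposition \ref{Branching} to iterate to produce the full product-formula expansions of $\mathsf{F}_\lambda$ and $\mathsf{G}^c_\lambda$ used elsewhere in the paper.
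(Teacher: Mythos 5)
Your proof is correct and is the standard cut-and-glue argument for branching rules of vertex-model partition functions. The paper itself gives no proof but cites Section 4 of \cite{BP}, where exactly this decomposition along the slice $y = n_1 + \tfrac12$ is used; the only thing worth noting is that your final remark about $j_1,j_2\in\{0,1\}$ controls horizontal multiplicity, not vertical, so the $x$-coordinates of the crossings need not be distinct at all, and $\kappa$ is just a weakly decreasing non-negative signature, which is exactly what membership in $\mathsf{Sign}^+_{N+n_1}$ (resp.\ $\mathsf{Sign}^+_N$) requires.
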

{\raggedleft The properties of the last proposition are known as {\em branching rules}.}

\begin{definition}
We say that two complex numbers $u,v \in \mathbb{C}$ are {\em admissible} with respect to the parameter $s$ if $\left| \frac{u - s}{1 - su} \cdot \frac{v -s }{1 - sv}  \right| < 1.$
\end{definition}

\begin{proposition}\label{PPieri} Let $u_1,...,u_N$ and $v_1,...,v_K$ be complex numbers such that $u_i,v_j$ are admissible for all $i = 1,...,N$ and $j = 1,...,K$. Then for any $\lambda, \nu \in \mathsf{Sign}^+$ one has
\begin{equation}\label{CauchyId}
\sum_{\kappa \in \mathsf{Sign}^+} \hspace{-3mm} \mathsf{G}^c_{\kappa / \lambda} (v_1,...,v_K)\mathsf{F}_{\kappa/\nu}(u_1,...,u_N) = \prod_{i = 1}^N \prod_{j = 1}^K \frac{1 - qu_iv_j}{1 - u_iv_j}\hspace{-3mm} \sum_{ \mu \in \mathsf{Sign}^+} \hspace{-3mm}\mathsf{F}_{\lambda /\mu}(u_1,...,u_N) \mathsf{G}^c_{\nu / \mu} (v_1,...,v_K).
\end{equation}
\end{proposition}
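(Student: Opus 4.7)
The plan is to prove this skew Cauchy identity in the standard two-step fashion used for vertex-model symmetric functions: first establish the single-variable case ($N = K = 1$) by a Yang--Baxter row-exchange argument, then bootstrap to arbitrary $N$ and $K$ via the branching rules of Proposition \ref{Branching} together with the symmetry of Proposition \ref{symmetricFun}.

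For the base case $N=K=1$, I would show
\begin{equation*}
\sum_{\kappa \in \mathsf{Sign}^+} \mathsf{G}^c_{\kappa/\lambda}(v)\, \mathsf{F}_{\kappa/\nu}(u) \;=\; \frac{1-quv}{1-uv}\sum_{\mu \in \mathsf{Sign}^+} \mathsf{F}_{\lambda/\mu}(u)\, \mathsf{G}^c_{\nu/\mu}(v)
\end{equation*}
by recognizing both sides as partition functions on a common two-row lattice with vertex weights $w_u$ (for the $F$-row) and $w^c_v$ (for the $G^c$-row), stacked in opposite orders: on the left the $F$-row sits above the $G^c$-row, on the right the order is reversed. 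The boundary data $\lambda$ and $\nu$ are fixed on the outer vertical edges, and the interior signature ($\kappa$ or $\mu$) is summed out. A local $R$-matrix intertwining the $w_u$ and $w^c_v$ weights (available because both arise from the same $U_q(\widehat{sl_2})$ family, cf.\ Section \ref{Section1.2}) allows the two rows to be commuted past each other. Propagating this $R$-matrix from the left end of the strip all the way along the rows reduces it on the semi-infinite right end to a scalar, which one computes to be exactly $\frac{1-quv}{1-uv}$.

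To obtain the general identity I would iterate. Using the branching rule (\ref{branchF}) to peel off $u_N$ from both $\mathsf{F}_{\kappa/\nu}(u_1,\dots,u_N)$ and $\mathsf{F}_{\lambda/\mu}(u_1,\dots,u_N)$, and the analogous branching to peel off the $v_j$'s from $\mathsf{G}^c$, I can rewrite both sides of (\ref{CauchyId}) as nested sums in which each pair $(u_i,v_j)$ is exchanged by one application of the single-variable identity. Each exchange produces the factor $\frac{1-qu_iv_j}{1-u_iv_j}$; doing this for every pair $(i,j)$ builds up the product appearing on the right-hand side. Symmetry (Proposition \ref{symmetricFun}) guarantees that the order in which variables are peeled off is immaterial.

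The main obstacle is convergence: the sums over $\kappa$ and $\mu$ range over the infinite set $\mathsf{Sign}^+$, and the induction above repeatedly swaps infinite summations. This is precisely the role of the admissibility hypothesis. Proposition \ref{shifting} says that $\mathsf{F}_{\kappa/\mu}(u)$ picks up a factor of $\left(\tfrac{u-s}{1-su}\right)^k$ when all parts of $\kappa$ and $\mu$ shift by $k$, and an analogous identity for $\mathsf{G}^c$ produces the matching factor $\left(\tfrac{v-s}{1-sv}\right)^k$. Combined, these give geometric decay in the common shift of intermediate signatures governed by $\left|\tfrac{u-s}{1-su}\cdot\tfrac{v-s}{1-sv}\right|<1$, which justifies Fubini at every step of the induction and legitimizes the infinite-strip $R$-matrix propagation in the base case. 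Pinning down this convergence estimate and the precise scalar produced by the $R$-matrix propagation is the technical heart of the argument; everything else is bookkeeping.
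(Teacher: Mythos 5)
The paper does not prove Proposition \ref{PPieri} itself; it is quoted from Section 4 of \cite{BP} (together with Propositions \ref{shifting}--\ref{qGeom1}) as part of a summary of known properties. Your sketch --- establish the $N=K=1$ case by a Yang--Baxter/R-matrix row-exchange on a two-row semi-infinite strip, compute the scalar factor $\tfrac{1-quv}{1-uv}$ produced by pushing the cross vertex to infinity, then iterate with the branching rule (\ref{branchF}) and symmetry (Proposition \ref{symmetricFun}), invoking admissibility to control the infinite sums over intermediate signatures --- is indeed the route taken in \cite{BP} and is consistent with what the paper implicitly defers to.
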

\begin{remark}
Equation (\ref{CauchyId}) is called the {\em skew Cauchy identity} for the symmetric functions $\mathsf{F}_{\lambda / \mu}$ and $\mathsf{G}^c_{\lambda/ \mu}$ because of its similarity with the skew Cauchy identities for Schur, Hall-Littlewood, or Macdonald symmetric functions \cite{Mac}. The sum on the right-hand side (RHS) of (\ref{CauchyId}) has finitely many non-zero terms and is thus well-defined. The left-hand side (LHS) can have infinitely many non-zero terms, but part of the statement of the proposition is that if the variables are admissible, then this sum is absolutely converging and numerically equals the right side.
\end{remark}
A special case of (\ref{CauchyId}), when $\lambda = \varnothing$ and $\nu = (0,0,...,0)$ leads us to the {\em Cauchy identity}
\begin{equation}\label{CauchyS2}
\sum_{ \nu \in \mathsf{Sign}_N^+} \mathsf{F}_\nu(u_1,...,u_N) \mathsf{G}^c_\nu(v_1,...,v_K) = (q;q)_N \prod_{i = 1}^N \left( \frac{1}{1 - su_i} \prod_{j = 1}^K \frac{1 - qu_i v_j}{1 - u_iv_j}\right) .
\end{equation}

We end this section with the {\em symmetrization formulas} for $\mathsf{G}^c_\nu$ and $\mathsf{F}_\mu$ and also formulas for the functions when the variable set forms a geometric progression with parameter $q$.
\begin{proposition}\label{symmF}
{\bf 1.} For any $N \in \mathbb{Z}_{\geq 0}$, $\mu \in \mathsf{Sign}_N^+$ and $u_1,...,u_N \in \mathbb{C}$, one has
\begin{equation}
\mathsf{F}_{\mu}(u_1,...,u_{N}) = \frac{(1-q)^N}{\prod_{i = 1}^N(1 -su_i)}\sum_{\sigma \in S_N} \sigma \left( \prod_{1 \leq \alpha < \beta \leq N} \frac{u_{\alpha} - q u_\beta}{u_\alpha - u_\beta} \left( \frac{ u_i - s}{1 - su_i}\right)^{\mu_i}\right).
\end{equation}
{\bf 2.} Let $n \geq 0$ and $\mathsf{Sign}_n^+ \ni \nu = 0^{n_0}1^{n_1}2^{n_2}\cdots$. Then for any $N \geq n - n_0$ and $u_1,...,u_N \in \mathbb{C}$ we have
\begin{equation}
\begin{split}
\mathsf{G}^c_{\nu}(u_1,...,u_{N}) = \frac{(1-q)^N(q;q)_n}{\prod_{i = 1}^N(1 - su_i)(q;q)_{N- n + n_0}(q;q)_{n_0}} \prod_{k = 1}^{\infty} \frac{(s^2;q)_{n_k}}{(q;q)_{n_k}} \\ 
\times \sum_{\sigma \in S_N} 
\sigma \left( \prod_{1 \leq \alpha < \beta \leq N} \frac{u_{\alpha} - q u_\beta}{u_\alpha - u_\beta} \left( \frac{ u_i - s}{1 - su_i}\right)^{\nu_i} \prod_{i = 1}^{n- n_0}\frac{u_i}{u_i - s}\prod_{j = n-k+1}^N(1 -sq^{n_0}u_j)\right).
\end{split}
\end{equation}
In both equations above, $S_N$ denotes the permutation group on $\{1,...,N\}$ and an element $\sigma \in S_N$ acts on the expression by permuting the variable set to $u_{\sigma(1)},...,u_{\sigma(N)}$. By agreement, we set $\nu_j = 0$ if $j > n$. If $N < n-n_0$, then $\mathsf{G}^c_{\nu}(u_1,...,u_{N})$ is equal to $0$.
\end{proposition}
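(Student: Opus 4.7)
The plan is to prove both identities by induction on $N$, using the branching rule (Proposition \ref{Branching}) to peel off the last spectral parameter at each step. A convenient feature is that both sides of each identity are symmetric in $u_1,\ldots,u_N$ — the left-hand sides by Proposition \ref{symmetricFun} and the right-hand sides by construction of the symmetrization over $S_N$ — so it suffices to match the two rational functions as symmetric functions, which gives some flexibility in grouping terms.

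For Part 1, I would first handle the base case $N = 1$ by direct computation from the path picture: a single horizontal arrow enters row $1$, passes through each vertex $(x,1)$ with $x < \mu_1$ in the configuration $(0,1;0,1)$, and turns upward at $(\mu_1,1)$ in the configuration $(0,1;1,0)$. Multiplying the weights from (\ref{weights1}) yields $\mathsf{F}_{(\mu_1)}(u_1) = \tfrac{1-q}{1-su_1}\bigl(\tfrac{u_1-s}{1-su_1}\bigr)^{\mu_1}$, matching the right-hand side. For the inductive step, I would expand $\mathsf{F}_\mu(u_1,\ldots,u_N) = \sum_\kappa \mathsf{F}_{\mu/\kappa}(u_N)\,\mathsf{F}_\kappa(u_1,\ldots,u_{N-1})$ and observe that the single-variable skew function $\mathsf{F}_{\mu/\kappa}(u_N)$ is supported only on $\kappa$ interlacing with $\mu$, with an explicit product form coming from the unique admissible path across row $N$. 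Inserting the inductive hypothesis for $\mathsf{F}_\kappa(u_1,\ldots,u_{N-1})$ and collapsing the sum over $\kappa$ reduces to a Vandermonde-type partial-fraction identity of the form $\sum_i \prod_{j \neq i} \tfrac{u_j - q u_i}{u_j - u_i}\cdot(\cdots)_i$, which reassembles the summands into the symmetrization over $S_N$.

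For Part 2 I see two natural routes. The cleaner one is to derive the formula from Part 1 by using the Cauchy identity (\ref{CauchyS2}): pick $K = N$ auxiliary variables $v_1,\ldots,v_K$, substitute the Part 1 formula for $\mathsf{F}_\nu$ on the left-hand side, and extract the coefficient of a distinguished monomial in the $v_j$'s by residue/contour calculus. The product structure $\prod_{i,j}(1-qu_iv_j)/(1-u_iv_j)$ on the right-hand side of (\ref{CauchyS2}) then pins down the rational form of $\mathsf{G}^c_\nu$. The alternative is a direct induction on $N$ with the branching rule for $\mathsf{G}^c$ applied to the conjugated weights (\ref{weights2}); this is combinatorially heavier because columns can carry multiple paths, and it is precisely this column multiplicity that produces the extra factor $\prod_k (s^2;q)_{n_k}/(q;q)_{n_k}$ and the multiplicity-dependent prefactor $(q;q)_n/((q;q)_{N - n + n_0}(q;q)_{n_0})$.

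The main obstacle I expect is the algebraic bookkeeping in Part 2. The prefactors outside the sum, together with the seemingly asymmetric factors $\prod_{i=1}^{n-n_0} u_i/(u_i-s)$ and $\prod_j (1 - sq^{n_0}u_j)$ inside the sum, must recombine after averaging over $S_N$ into a genuinely symmetric rational function; proving that this collapse really occurs is a higher-spin analogue of the partial-fraction identity used in Part 1 and is the delicate step. I would expect the most transparent verification to come from identifying both sides with the Bethe-ansatz wavefunctions of the underlying $U_q(\widehat{sl_2})$ vertex model, which is the viewpoint developed in \cite{Bor14, BP} and which converts the required identity into a repeated application of the Yang-Baxter equation.
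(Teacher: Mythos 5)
The paper does not prove Proposition~\ref{symmF}; it states it together with several other properties of $\mathsf{F}_{\lambda/\mu}$ and $\mathsf{G}^c_{\lambda/\mu}$ and defers all proofs to ``Section 4 of \cite{BP}'' (which in turn relies on \cite{Bor14}). There is therefore no proof in this paper against which to compare your attempt. That said, your outline for Part~1 is in the same spirit as the cited original: an induction on $N$ via the branching rule $\mathsf{F}_\mu(u_1,\dots,u_N)=\sum_\kappa \mathsf{F}_{\mu/\kappa}(u_N)\mathsf{F}_\kappa(u_1,\dots,u_{N-1})$, peeling off one spectral parameter at a time, and your base case $N=1$ is computed correctly from the vertex weights.

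However, what you have written is an outline with the actual work left undone. For Part~1, the crux is not the set-up but the resummation: one must write down the one-variable skew function $\mathsf{F}_{\mu/\kappa}(u_N)$ explicitly (it is supported on $\mu_{i+1}\le\kappa_i\le\mu_i$ and its value depends on the multiplicities of $\mu$ and the gaps between $\mu$ and $\kappa$ through $q$-Pochhammer factors), insert the inductive hypothesis, and show that the double sum over $\kappa$ and $S_{N-1}$ collapses into the symmetrization over $S_N$; this requires a $q$-deformed partial-fraction identity that you gesture at but do not state or prove, and it is precisely where the identity could fail. For Part~2, the proposed Cauchy-identity shortcut is not viable as stated: the identity (\ref{CauchyS2}) is a single equation equating an infinite sum over $\nu$ to a product, and one cannot isolate $\mathsf{G}^c_\nu$ from it by ``extracting the coefficient of a distinguished monomial'' --- that requires a genuine biorthogonality or Plancherel-type inversion, which is a substantial extra input (and is itself proved in \cite{BP} \emph{using} formulas of the kind you are trying to establish, so the logic risks circularity). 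The alternative route via the explicit relation between $\mathsf{G}^c_\nu$ and $\mathsf{F}$ for a suitably shifted signature, which is how the prefactors $(q;q)_n/\bigl((q;q)_{N-n+n_0}(q;q)_{n_0}\bigr)$ and $\prod_k (s^2;q)_{n_k}/(q;q)_{n_k}$ actually arise, is acknowledged to be ``combinatorially heavier'' and not pursued. So the proposal identifies a plausible and historically accurate strategy, but the delicate steps --- the ones you yourself flag as the obstacle --- are missing.
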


\begin{proposition}\label{qGeom1}
{\bf 1.} For any $N \in \mathbb{Z}_{\geq 0}$, $\mu \in \mathsf{Sign}_N^+$ and $u \in \mathbb{C}$, one has
\begin{equation}
\mathsf{F}_{\mu}(u,qu,...,q^{N-1}u) = (q;q)_N\prod_{i = 1}^N \left ( \frac{1}{1 - sq^{i-1}u}  \left( \frac{ uq^{i-1} - s}{1 - sq^{i-1}u}\right)^{\mu_i}\right).
\end{equation}
{\bf 2.} Let $n \geq 0$ and $\mathsf{Sign}_n^+ \ni \nu = 0^{n_0}1^{n_1}2^{n_2}\cdots$. Then for any $N \geq n - n_0$ and $u \in \mathbb{C}$ we have
\begin{equation}
\begin{split}
\mathsf{G}^c_{\nu}(u,qu,...,q^{N-1}u) =  \prod_{k = 1}^\infty \frac{(s^2;q)_{n_k}}{(q;q)_{n_k}} \cdot\frac{(q;q)_N (su;q)_{N+n_0}(q;q)_n  \prod_{i = 1}^N \left( \frac{1}{1 - sq^{i-1}u} \left( \frac{q^{i-1}u - s}{1 - sq^{i-1}u}\right)^{\nu_j} \right)}{(q;q)_{N-n+n_0}(su;q)_n(q;q)_{n_0}(su^{-1};q^{-1})_{n-n_0}}.
\end{split}
\end{equation}
\end{proposition}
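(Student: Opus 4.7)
The plan is to derive both identities from the symmetrization formulas in Proposition \ref{symmF} by specializing $u_j = q^{j-1}u$. The crucial observation is that under this specialization, the antisymmetric factor $\prod_{\alpha<\beta}\frac{u_{\sigma(\alpha)}-qu_{\sigma(\beta)}}{u_{\sigma(\alpha)}-u_{\sigma(\beta)}}$ forces all but one term in the sum over $\sigma\in S_N$ to vanish. Indeed, after specialization the numerator $u_{\sigma(\alpha)} - qu_{\sigma(\beta)}$ equals $q^{\sigma(\alpha)-1}u\bigl(1 - q^{\sigma(\beta)-\sigma(\alpha)+1}\bigr)$, which vanishes precisely when $\sigma(\alpha)=\sigma(\beta)+1$. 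Taking $\alpha=\sigma^{-1}(k+1)$ and $\beta=\sigma^{-1}(k)$, the product vanishes whenever $\sigma^{-1}(k+1)<\sigma^{-1}(k)$ for some $k$; the only way to avoid this for every $k\in\{1,\ldots,N-1\}$ is $\sigma=\mathrm{id}$. The denominators $u_{\sigma(\alpha)}-u_{\sigma(\beta)}$ are nonzero for generic $q$, so cannot cancel any of these zeros.

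For Part 1, I insert $\sigma=\mathrm{id}$ into the symmetrization formula for $\mathsf{F}_\mu$. Writing $m=\beta-\alpha$, the Vandermonde product telescopes as
\begin{equation*}
\prod_{1\leq\alpha<\beta\leq N}\frac{1-q^{\beta-\alpha+1}}{1-q^{\beta-\alpha}}
=\prod_{\alpha=1}^{N-1}\prod_{m=1}^{N-\alpha}\frac{1-q^{m+1}}{1-q^m}
=\prod_{\alpha=1}^{N-1}\frac{1-q^{N-\alpha+1}}{1-q}
=\frac{(q;q)_N}{(1-q)^N}.
\end{equation*}
Combining this with the prefactor $(1-q)^N/\prod_i(1-sq^{i-1}u)$ cancels the $(1-q)^N$ and leaves $(q;q)_N$; multiplying by the surviving signature factor $\prod_i\bigl((q^{i-1}u-s)/(1-sq^{i-1}u)\bigr)^{\mu_i}$ reproduces exactly the stated identity.

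For Part 2, the same vanishing argument isolates the identity contribution in the symmetrization formula for $\mathsf{G}^c_\nu$. The telescoped Vandermonde again yields $(q;q)_N/(1-q)^N$ and cancels the $(1-q)^N$ in the prefactor; the signature factor specializes to $\prod_i\bigl((q^{i-1}u-s)/(1-sq^{i-1}u)\bigr)^{\nu_i}$; and the two auxiliary products in the symmetrization formula become the explicit specializations $\prod_{i=1}^{n-n_0}\frac{q^{i-1}u}{q^{i-1}u-s}$ and a product of terms of the form $(1-sq^{n_0+j-1}u)$. The main obstacle is the purely algebraic bookkeeping required to reorganize these specialized products, together with the combinatorial prefactors $(q;q)_N$, $(q;q)_n$, $(q;q)_{N-n+n_0}^{-1}$, $(q;q)_{n_0}^{-1}$ and $\prod_k(s^2;q)_{n_k}/(q;q)_{n_k}$, into the compact form $(su;q)_{N+n_0}/\bigl[(su;q)_n(su^{-1};q^{-1})_{n-n_0}\bigr]$ appearing on the right-hand side. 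This is a routine but delicate exercise that relies on repeated application of the split $(a;q)_{n+m}=(a;q)_n(aq^n;q)_m$ and the reflection identity $(a;q)_n = (-a)^n q^{\binom{n}{2}} (a^{-1};q^{-1})_n$, and it is where the bulk of the proof lies.
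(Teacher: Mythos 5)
The paper does not supply its own proof of this proposition; it is cited from Section~4 of \cite{BP} along with the rest of Propositions~\ref{shifting}--\ref{qGeom1}. So there is no in-paper argument to compare against, only the correctness of your proposal.

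Your approach is the natural one and almost certainly the one used in \cite{BP}: specialize the symmetrization formula of Proposition~\ref{symmF} at $u_j=q^{j-1}u$. Your vanishing argument is correct and the key point: for $\sigma\neq\mathrm{id}$ there is some $k$ with $\sigma^{-1}(k+1)<\sigma^{-1}(k)$, and taking $\alpha=\sigma^{-1}(k+1)$, $\beta=\sigma^{-1}(k)$ kills the numerator factor $u_{\sigma(\alpha)}-qu_{\sigma(\beta)}$, while none of the denominators or auxiliary factors can produce a compensating pole at the generic specialization. Your telescoping of the Vandermonde to $(q;q)_N/(1-q)^N$ is also correct, and Part~1 is fully and correctly derived.

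For Part~2 you give an outline rather than a proof, explicitly deferring the reorganization of the specialized products. For the record, the remaining algebra does close as you anticipate: the auxiliary product $\prod_{j=n-n_0+1}^{N}(1-sq^{n_0}u_j)$ becomes $\prod_{m=n+1}^{N+n_0}(1-sq^{m-1}u)=(su;q)_{N+n_0}/(su;q)_n$, and $\prod_{i=1}^{n-n_0}\frac{q^{i-1}u}{q^{i-1}u-s}=1/(su^{-1};q^{-1})_{n-n_0}$, which, combined with the telescoped Vandermonde and the combinatorial prefactors already present in Proposition~\ref{symmF}, yields exactly the stated right-hand side. (Note that Proposition~\ref{symmF} as printed has a typo, $\prod_{j=n-k+1}^{N}$ should read $\prod_{j=n-n_0+1}^{N}$, and the target formula writes $\nu_j$ where $\nu_i$ is meant; both should be silently corrected.) So the proposal is sound but incomplete: to be a proof of Part~2 rather than a plan, the two displays above need to appear explicitly.
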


\subsection{The measure $\mathbb{P}_{{\bf u, v}}$}\label{Section2.2} As discussed in Section \ref{Section1.2} the main probabilistic object we study is the measure $\mathbb{P}_{{\bf u, v}}$ on up-right paths in the half-infinite strip that share no horizontal or vertical pieces. The purpose of this section is to properly define it. 

Let us briefly explain the main steps of the construction of $\mathbb{P}_{{\bf u, v}}$. We begin by considering the bigger space of all up-right paths in the half-infinite strip that share no horizontal piece but are allowed to share vertical pieces. For each such collection of paths we define its weight and show that these weights are absolutely summable and their sum has a product form. Afterwards we specialize one parameter in those weights and perform a limit transition for some of the other parameters. This procedure has the effect of killing the weight of those path configurations that share a vertical piece. Consequently, we are left with weights that are non-zero only for six-vertex configurations, are absolutely summable and their sum has a product form. We check that each weight is non-negative, and define $\mathbb{P}_{{\bf u, v}}$ as the quotient of these weights with the partition function.\\

We fix positive integers $N, M$, $J$, and $K = M + J$, as well as real numbers $q \in (0,1)$ and $s> 1$. In addition, we suppose ${\bf u} = (u_1,...,u_N)$ and ${\bf w} = (w_1,...,w_K)$ are positive real numbers, such that $\max_{i,j} u_i w_j < 1$ and $u:=\min_i u_i > s$. One readily verifies that the latter conditions ensure that $u_i, w_j$ are admissible with respect to $s$ for $i = 1,...,N$ and $j = 1,...,K$.

Let us go back to the setup of Section \ref{Section1.2}. We let $\mathcal{P}'_N$ be the collection of $N$ up-right paths drawn in the sector $D_N = \mathbb{Z}_{\geq 0} \times \{1,...,N\}$ of the square lattice, with all paths starting from a left-to-right arrow entering each of the points $\{ (0,m): 1 \leq m \leq N\}$ on the left boundary and all paths exiting from the top boundary of $D_N$. We still assume that no two paths share a horizontal piece, but sharing vertical pieces is allowed. As in Section \ref{Section1.2} we let $\mathcal{P}_N \subset \mathcal{P}_N'$ be those collections of paths that do not share vertical pieces. For $\omega \in \mathcal{P}'_N$ and $k = 1,...,N$ we let $\lambda^k(\omega) \in \mathsf{Sign}_k^+$ denote the ordered $x$-coordinates of the intersection of $\omega$ with the horizontal line $y = k + 1/2$. We denote by $\omega(i,j)$ the arrow configuration at the vertex in position $(i,j) \in D_N$. We also let $f : \mathsf{Sign}_N^+ \rightarrow \mathbb{R}$ be given by 
$$f(\lambda; {\bf w}):= \sum_{ \mu \in \mathsf{Sign}_N^+} \mathsf{G}^c_{\mu}(w_1,...,w_J) \mathsf{G}^c_{\lambda/ \mu}(w_{J+1},...,w_{K}).$$
With the above data, we define the weight of a collection of paths $\omega$ by
$$\mathcal{W}^f_{{\bf u}, {\bf w}}(\omega) = \prod_{i = 1}^N \prod_{j = 1}^\infty w_{u_i}(\omega(i,j)) \times f(\lambda^N(\omega); {\bf w}).$$
If we perform the summation over $\mu$ and use Proposition \ref{Branching} we see that $f(\lambda; {\bf w}) = \mathsf{G}^c_{\lambda}(w_1,...,w_K)$. This together with Definition \ref{defF} implies that 
$$\mathcal{W}^f_{{\bf u}, {\bf w}}(\omega) =  \mathsf{F}_{\lambda^{1}(\omega)}(u_1) \mathsf{F}_{\lambda^{2}(\omega) / \lambda^{1}(\omega)}(u_2) \cdots \mathsf{F}_{\lambda^{N}(\omega) / \lambda^{N-1}(\omega)}(u_N)  \times \mathsf{G}^c_{\lambda^N(\omega)}(w_1,...,w_K).$$
Using the branching relations for $\mathsf{F}_{\lambda/ \mu}$ from Proposition \ref{Branching} and performing the sum over $\lambda^1,...,\lambda^{N-1}$ we obtain $ \mathsf{F}_{\lambda^N(\omega)}(u_1,...,u_N) \mathsf{G}^c_{\lambda^N(\omega)}(w_1,...,w_K)$. A final summation over $\lambda^N$ and application of the Cauchy identity (\ref{CauchyS2}) leads us to
$$\sum_{\omega \in \mathcal{P}_N'}\mathcal{W}^f_{{\bf u}, {\bf w}}(\omega) = (q;q)_N \prod_{i = 1}^N \left( \frac{1}{1 - su_i} \prod_{j = 1}^K \frac{1 - qu_i w_j}{1 - u_iw_j}\right)=:Z^f({\bf u}; {\bf w}).$$
In view of the admissability conditions satisfied by ${\bf u}$ and ${\bf w}$, the above sum is in fact absolutely convergent, hence the particular order of summation we chose is irrelevant. We remark that the weights $\mathcal{W}^f_{{\bf u}, {\bf w}}(\omega)$ are real and not necessarily non-negative, but they are absolutely summable and their sum equals the above expression.\\

We next wish to specialize some of the variables $w_i$ and relabel the others, in addition we fix $s = q^{-1/2}$. Set $w_i = q^{i-1} \epsilon$ for $i = 1,...,J$ and put $v_j = w_{j + J}$ for $j = 1,..., M$. Here $\epsilon > 0$ is chosen sufficiently small so that the admissibility condition is maintained.

\begin{remark}\label{remarkG}
Choosing $s = q^{-1/2}$ has the effect that if $\mu \in \mathsf{Sign}^+_{N}$ has distinct parts and $\lambda \in \mathsf{Sign}^+_N$ then $\mathsf{G}^c_{\lambda / \mu}(u_1,...,u_n) = 0$, unless $\lambda$ has distinct parts. Indeed, suppose that $k = \lambda_i = \lambda_{i+1}$ for some $i \in \{1,..., N - 1\}$. Let $\omega' \in \mathcal{P}^c_{\lambda / \mu}$ (see Definition \ref{defG}). For $j = 1,...,n+1$ denote by $a_j$ the number of arrows from $(k,j-1)$ to $(k,j)$. As the number of horizontal arrows entering or leaving a given vertex is $0$ or $1$, we see that $a_{j+1} \in \{ a_j, a_j - 1, a_j + 1\}$ for $j = 1,...,n$. Our assumption on $\lambda$ and $\mu$ implies that $a_1 \leq 1$, while $a_{n+1} \geq 2$, thus for some $j \in \{1,...,n\}$ we must have $a_j = 1$ and $a_{j + 1} = 2$. Consequently, any $\omega' \in \mathcal{P}^c_{\lambda / \mu}$ contains a vertex of the form $(1,1;2,0)$. By (\ref{weights2}) the conjugated weight of such a vertex equals $w_u^c(1,1;2,0) = \frac{1-s^2q}{1 -su} = 0$ if $s = q^{-1/2}$. We conclude that $\mathcal{W}^c(\omega') = 0$ for any $\omega' \in \mathcal{P}^c_{\lambda / \mu}$, which by Definition \ref{defG} implies $\mathsf{G}^c_{\lambda/ \mu}(u_1,...,u_n) = 0$.\\
\end{remark}

\begin{remark}\label{remarkF}
A similar argument to the one presented in Remark \ref{remarkG} shows that $s = q^{-1/2}$ has the effect that if $\mu \in \mathsf{Sign}_N^+$, $\lambda \in \mathsf{Sign}^+_{N+n}$ and $\lambda$ has distinct parts then $\mathsf{F}_{\lambda / \mu}(u_1,...,u_n) = 0$ unless $\mu$ has distinct parts.
\end{remark}

We investigate how the new choice of parameters affects the function $f$ and begin with the following useful lemma.
\begin{lemma}\label{GLemma} Suppose $J \geq N$, $q = (0,1)$, $s = q^{-1/2}$ and $\nu \in Sign_N^+$ with $\nu = 0^{n_0} 1^{n_1}2^{n_2} \cdots$. Then for any $v \in (0,s^{-1})$ we have
\begin{equation}\label{GForm}
\begin{split}
\mathsf{G}^c_{\nu}(v,qv,q^2,...,q^{J-1}v) = \frac{(q;q)_N(-q)^{n_0 - N}}{(q;q)_{n_0}}  \frac{(sv;q)_{N - n_0}}{(sv;q)_N}\frac{1}{(sv^{-1}; q^{-1})_{N - n_0}} \times \\
(q^{J -N + n_0 + 1};q)_{N-n_0} (svq^J;q)_{n_0}\prod_{j = 1}^{N-n_0}\left( \frac{1}{1 - svq^{j-1}} \left( \frac{vq^{j-1} - s}{ 1 - svq^{j-1}}\right)^{\nu_j} \right),
\end{split}
\end{equation}
when $n_i \leq 1$ for $i \geq 1$ and $0$ otherwise.
\end{lemma}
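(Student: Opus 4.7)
The plan is to deduce the formula directly from Proposition \ref{qGeom1}(2) after the specialization $s=q^{-1/2}$, using only elementary $q$-Pochhammer manipulations. First I would invoke Proposition \ref{qGeom1}(2) with the substitutions $n \leftrightarrow N$ (length of the signature) and $N \leftrightarrow J$ (number of variables) and $u \leftrightarrow v$, which is legal because the hypothesis $J \geq N$ is stronger than the required $J \geq N - n_0$. This yields
$$\mathsf{G}^c_\nu(v,qv,\ldots,q^{J-1}v) = \left(\prod_{k=1}^\infty \frac{(s^2;q)_{n_k}}{(q;q)_{n_k}}\right)\cdot \frac{(q;q)_J\,(sv;q)_{J+n_0}\,(q;q)_N}{(q;q)_{J-N+n_0}\,(sv;q)_N\,(q;q)_{n_0}\,(sv^{-1};q^{-1})_{N-n_0}}\cdot P,$$
where $P = \prod_{i=1}^J\bigl((1-svq^{i-1})^{-1}\bigl(\frac{vq^{i-1}-s}{1-svq^{i-1}}\bigr)^{\nu_i}\bigr)$ and we put $\nu_i=0$ for $i>N$.

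Next I would handle the infinite prefactor. Since $s^2 = q^{-1}$, one computes $(s^2;q)_{n_k} = (1-q^{-1})(1-1)(1-q)\cdots = 0$ whenever $n_k \geq 2$. This gives the ``$0$ otherwise'' clause of the lemma. When every $n_k \leq 1$ for $k \geq 1$, there are exactly $N-n_0$ indices $k$ with $n_k = 1$ (the positive parts of $\nu$ are distinct), and each contributes $(1-q^{-1})/(1-q) = -q^{-1}$, so the product collapses to $(-q^{-1})^{N-n_0} = (-q)^{n_0-N}$, matching the sign factor in the claimed RHS.

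Then I would split $P$ at the index $i = N-n_0$. Since $\nu$ is weakly decreasing with exactly $n_0$ vanishing parts, $\nu_i = 0$ for all $i > N-n_0$, so the tail of the product collapses to a Pochhammer:
$$\prod_{i=N-n_0+1}^J \frac{1}{1-svq^{i-1}} = \frac{1}{(svq^{N-n_0};q)_{J-N+n_0}},$$
leaving the head $\prod_{j=1}^{N-n_0}\bigl((1-svq^{j-1})^{-1}\bigl(\frac{vq^{j-1}-s}{1-svq^{j-1}}\bigr)^{\nu_j}\bigr)$, which is already the product in the target formula. What remains is a purely algebraic identity, namely
$$\frac{(q;q)_J\,(sv;q)_{J+n_0}}{(q;q)_{J-N+n_0}\,(svq^{N-n_0};q)_{J-N+n_0}} = (sv;q)_{N-n_0}\,(q^{J-N+n_0+1};q)_{N-n_0}\,(svq^J;q)_{n_0},$$
which follows from the three factorizations $(q;q)_J/(q;q)_{J-N+n_0}=(q^{J-N+n_0+1};q)_{N-n_0}$, $(sv;q)_{J}=(sv;q)_{N-n_0}(svq^{N-n_0};q)_{J-N+n_0}$, and $(sv;q)_{J+n_0}=(sv;q)_J(svq^J;q)_{n_0}$. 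Substituting and comparing with the claimed formula concludes the proof.

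There is no real obstacle here — the whole argument is a specialization of a formula the paper has already proved, together with the observation that setting $s^2=q^{-1}$ truncates the sum to signatures with distinct positive parts (cf.\ Remark \ref{remarkG}). The only care needed is bookkeeping of the Pochhammer factorizations above.
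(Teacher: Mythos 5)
Your proof is correct and takes essentially the same route as the paper's: apply Proposition \ref{qGeom1}(2) and rearrange the resulting $q$-Pochhammer symbols. The only cosmetic difference is that the paper keeps $s$ generic through the algebraic rearrangement and passes to the limit $s \to q^{-1/2}$ at the very end (invoking continuity of both sides), whereas you substitute $s=q^{-1/2}$ from the outset; since the Pochhammer factorizations you use do not depend on the value of $s$, the two orderings are interchangeable.
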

\begin{proof}
We begin by dropping the assumption that $s = q^{-1/2}$ and consider  $\mathsf{G}^c_{\nu}(v,qv,q^2,...,q^{J-1}v; s)$, where we record the dependence on $s$ in the notation. The latter is a {\em finite} sum of finite products of weights $w^c_{vq^j}$ and by continuity of the weights (see (\ref{weights2})) we have
$$\mathsf{G}^c_{\nu}(v,qv,q^2,...,q^{J-1}v; q^{-1/2}) = \lim_{s \rightarrow q^{-1/2}} \mathsf{G}^c_{\nu}(v,qv,q^2,...,q^{J-1}v; s).$$
Using Proposition \ref{qGeom1} we have
$$\mathsf{G}^c_{\nu}(u,qu,...,q^{J-1}u; s) =  \prod_{k = 1}^\infty \frac{(s^2;q)_{n_k}}{(q;q)_{n_k}} \cdot \frac{(q;q)_J (su;q)_{J+n_0}(q;q)_N  }{(q;q)_{J-N+n_0}(su;q)_N(q;q)_{n_0}(su^{-1};q^{-1})_{N-n_0}}\times $$
$$\prod_{i = 1}^{N - n_0} \left( \frac{1}{1 - sq^{i-1}u} \left( \frac{q^{i-1}u - s}{1 - sq^{i-1}u}\right)^{\nu_j} \right) \cdot \frac{1}{(svq^{N-n_0};q)_{J - N + n_0}}.$$
Performing some cancellation and rearrangement (see also the proof of Proposition 6.7 in \cite{BP}) we arrive at
$$\mathsf{G}^c_{\nu}(v,qv,q^2,...,q^{J-1}v; s) = \frac{(q;q)_N}{(q;q)_{n_0}}\prod_{k = 1}^\infty \frac{(s^2;q)_{n_k}}{(q;q)_{n_k}}  \frac{1}{(sv;q)_N}\frac{1}{(sv^{-1}; q^{-1})_{N - n_0}} \times $$
$$(q^{J - M + n_0 + 1};q)_{M-n_0} (sv;q)_{N - n_0}(svq^J;q)_{n_0}\prod_{j = 1}^{N-n_0}\left( \frac{1}{1 - svq^{j-1}} \left( \frac{vq^{j-1} - s}{ 1 - svq^{j-1}}\right)^{\nu_j} \right).$$

Finally, letting $s \rightarrow q^{-1/2}$ we see that $\prod_{k =1}^\infty(s^2;q)_{n_k} \rightarrow 0$ unless $n_k \leq 1$ for all $k \geq 1$, i.e. unless the non-zero parts of $\nu$ are all distinct. If $n_k \leq 1$ for all $k \geq 1$, then $\prod_{k = 1}^\infty \frac{(s^2;q)_{n_k}}{(q;q)_{n_k}} \rightarrow (-q)^{n_0 - N}$, which proves the lemma.

\end{proof}

Let us denote $q^J$ by $X$. Then, in view of Lemma \ref{GLemma}, $f$ becomes
\begin{equation}\label{feq-1}
\begin{split}
f_\epsilon(\lambda;{\bf v}, X) := \sum_{ \nu \in \mathsf{Sign}_N^+}  \frac{(q;q)_N(-q)^{n_0 - N}(s\epsilon;q)_{N - n_0}}{(q;q)_{n_0}(s\epsilon;q)_N(s\epsilon^{-1}; q^{-1})_{N - n_0}}
(Xq^{ -N + n_0 + 1};q)_{N-n_0} (s\epsilon X;q)_{n_0} \times \\
\prod_{i = 1}^{\infty}{\bf 1}_{\{ n_i \leq 1 \}}\prod_{j = 1}^{N-n_0}\left( \frac{1}{1 - s\epsilon q^{j-1}} \left( \frac{\epsilon q^{j-1} - s}{ 1 - s\epsilon q^{j-1}}\right)^{\nu_j} \right)\mathsf{G}^c_{\lambda/ \nu}(v_1,...,v_M),
\end{split}
\end{equation}
where $\nu = 0^{n_0} 1^{n_1}2^{n_2} \cdots$ and ${\bf 1}_E$ is the indicator function of an event $E$. In addition, specializing our $w$ variables in $Z^f({\bf u}; {\bf w})$ and replacing $q^J$ with $X$, we get
$$Z^{f_\epsilon}({\bf u}; {\bf v},X) :=  (q;q)_N \prod_{i = 1}^N \left( \frac{1}{1 - su_i}\frac{1 - X\epsilon u_i}{1 - \epsilon u_i} \prod_{j = 1}^M \frac{1 - qu_i v_j}{1 - u_i v_j} \right).$$
Our earlier results now yield  
\begin{equation}\label{PFEqn}
\sum_{\omega \in \mathcal{P}_N'}\prod_{i = 1}^N \prod_{j = 1}^\infty w_{u_i}(\omega(i,j)) \times f_{\epsilon}(\lambda^N(\omega);{\bf v}, X) = (q;q)_N \prod_{i = 1}^N \left( \frac{1}{1 - su_i}\frac{1 - X\epsilon u_i}{1 - \epsilon u_i} \prod_{j = 1}^M \frac{1 - qu_i v_j}{1 - u_iv_j}\right),
\end{equation}
provided $\epsilon$ is sufficiently small and $X = q^J$ with $J \geq N$.

In view of Lemma \ref{GLemma}, we have that $f_{\epsilon}(\lambda;{\bf v},X)$ is a {\em polynomial } in $X$. Moreover, one readily observes that as $X$ varies over compact sets in $\mathbb{C}$ the weights $\prod_{i = 1}^N \prod_{j = 1}^\infty w_{u_i}(\omega(i,j)) \times f(\lambda^N(\omega);{\bf v}, X)$ are absolutely summable (this is a consequence of the admissibility conditions and our choice for $\epsilon$). Hence the LHS of (\ref{PFEqn}) is an entire function in $X$. The RHS of (\ref{PFEqn}) is also clearly entire in $X$ and the two sides agree whenever $X = q^J$ with $J \geq N$. Since $q^J$ is a sequence with a limit point in $\mathbb{C}$, we conclude that (\ref{PFEqn}) holds for all $X$ and we will set $X = (s\epsilon)^{-1}$.\\

When we subsitute $X =  (s\epsilon)^{-1}$ in the expression for $f_\epsilon(\lambda; {\bf v}, X)$ we see that the factor $(s\epsilon X;q)_{n_0} $ vanishes unless $n_0 = 0$, in which case it equals $1$. Denoting $f_\epsilon(\lambda; {\bf v}, (s\epsilon)^{-1})$ by $f_\epsilon(\lambda; {\bf v})$ we thus obtain
$$f_\epsilon(\lambda; {\bf v}) = \sum_{ \nu \in \mathsf{Sign}_N^+}  \frac{(q;q)_N(-q)^{n_0 - N}(s\epsilon;q)_{N - n_0}}{(q;q)_{n_0}(s\epsilon;q)_N(s\epsilon^{-1}; q^{-1})_{N - n_0}}
((s\epsilon)^{-1}q^{ -N + n_0 + 1};q)_{N-n_0} \times $$
$${\bf 1}_{\{n_0 = 0\}}\prod_{i = 1}^{\infty}{\bf 1}_{\{ n_i \leq 1 \}}\prod_{j = 1}^{N-n_0}\left( \frac{1}{1 - s\epsilon q^{j-1}} \left( \frac{\epsilon q^{j-1} - s}{ 1 - s\epsilon q^{j-1}}\right)^{\nu_j} \right)\mathsf{G}^c_{\lambda/ \nu}(v_1,...,v_M),$$
and equation (\ref{PFEqn}) takes the form
\begin{equation}\label{PFEqn2}
\sum_{\omega \in \mathcal{P}_N'}\prod_{i = 1}^N \prod_{j = 1}^\infty w_{u_i}(\omega(i,j)) \times f_\epsilon(\lambda^N(\omega); {\bf v}) = (q;q)_N \prod_{i = 1}^N \left( \frac{1}{1 - su_i}\frac{1 - s^{-1} u_i}{1 - \epsilon u_i} \prod_{j = 1}^M \frac{1 - qu_i v_j}{1 - u_iv_j}\right).
\end{equation}

Since $\mathsf{G}^c_{\lambda/ \nu}(v_1,...,v_M) = 0$ unless $\lambda_i \geq \nu_i$ for $i = 1,...,N$, we conclude that the sum, defining $f_\epsilon(\lambda; {\bf v})$ is finite and taking the limit as $\epsilon$ goes to zero we have
\begin{equation}\label{feq}
f(\lambda; {\bf v}, \rho):= \lim_{\epsilon \rightarrow 0} f_\epsilon(\lambda; {\bf v}) =   (-1)^N(q;q)_N  \hspace{-3mm} \sum_{ \nu \in \mathsf{Sign}_N^+} \hspace{-3mm} {\bf 1}_{\{n_0 = 0\}}\prod_{i = 1}^{\infty}{\bf 1}_{\{ n_i \leq 1 \}}\prod_{j = 1}^{N} \left(-s \right)^{\nu_j} \mathsf{G}^c_{\lambda/ \nu}(v_1,...,v_M),
\end{equation}
where we used that $s^2 = q^{-1}$. Taking the limit as $\epsilon \rightarrow 0^+$ in equation (\ref{PFEqn2}) we conclude that 
\begin{equation}\label{PFEqn3}
\sum_{\omega \in \mathcal{P}_N'}\prod_{i = 1}^N \prod_{j = 1}^\infty w_{u_i}(\omega(i,j)) \times f(\lambda^N(\omega); {\bf v}, \rho) = (q;q)_N \prod_{i = 1}^N \left( \frac{1 - s^{-1} u_i}{1 - su_i} \prod_{j = 1}^M \frac{1 - qu_i v_j}{1 - u_iv_j}\right).
\end{equation}
The change of the order of the limit and the sum is justified, because $u_i$ and $v_j$ are admissible for $i = 1,...,N$ and $j = 1,...,M$, and $u_i > s$.\\

With $f(\lambda; {\bf v}, \rho)$ given by (\ref{feq}), we define the following weight of a collection of paths in $\mathcal{P}'_N$
\begin{equation}\label{WS2}
\mathcal{W}^f_{{\bf u}, {\bf v}}(\omega) = \prod_{i = 1}^N \prod_{j = 1}^\infty w_{u_i}(\omega(i,j)) \times f(\lambda^N(\omega);{\bf v}, \rho).
\end{equation}
So far, we only know that $\mathcal{W}^f_{{\bf u}, {\bf v}}(\omega)$ are finite real numbers, which are absolutely summable and their sum equals the RHS of (\ref{PFEqn3}). We will show below that $\mathcal{W}^f_{{\bf u}, {\bf v}}(\omega) = 0$ unless $\omega \in \mathcal{P}_N$, in which case it is non-negative. This will show that one can define an honest probability measure on $\mathcal{P}_N$ through these weights.

We first investigate when such a weight vanishes. Since $\mathsf{G}^c_{\lambda/ \nu}(v_1,...,v_M)$ vanishes unless $\lambda_i \geq \nu_i$ for $i = 1,...,N$, we see that $f(\lambda; {\bf v}, \rho) = 0$ unless $\lambda_N > 0$. Combining this with Remark \ref{remarkG}, we see that $f(\lambda; {\bf v}, \rho) = 0$ unless $\lambda^N(\omega)$ has all distinct and positive parts. Let $\omega \in \mathcal{P}'_N$, be such that $\lambda^N(\omega)$ has distinct and non-zero parts. Using that
$$\prod_{i = 1}^N \prod_{j = 1}^\infty w_{u_i}(\omega(i,j)) =  \mathsf{F}_{\lambda^{1}(\omega)}(u_1) \mathsf{F}_{\lambda^{2}(\omega) / \lambda^{1}(\omega)}(u_2) \cdots \mathsf{F}_{\lambda^{N}(\omega) / \lambda^{N-1}(\omega)}(u_N),$$
together with Remark \ref{remarkF}, we conclude that $\mathcal{W}^f_{{\bf u}, {\bf v}}(\omega) = 0$ unless $\lambda^i$ have distinct parts for all $i = 1,...,N$, i.e. unless $\omega \in \mathcal{P}_N$.\\

We next investigate the sign of $\mathcal{W}^f_{{\bf u}, {\bf v}}(\omega)$. Since the weight is $0$ otherwise, we may assume that $\omega \in \mathcal{P}_N$. Hence we have six possible choices for the vertices $\omega(i,j)$: $(0,0;0,0)$, $(0,1;1,0)$, $(1,0;1,0)$, $(0,1;1,0)$, $(0,1;0,1)$ and $(1,1;1,1)$. Using the formulas in (\ref{weights1}) we see that the sign of the weight of a vertex is precisely $(-1)^{j_1}$. Consequently, the sign of $ \prod_{i = 1}^N \prod_{j = 1}^\infty w_{u_i}(\omega(i,j))$ is precisely $(-1)^{K(\omega)}$, where $K(\omega)$ is the number of horizontal arrows in the configuration $\omega$. One readily observes that the number of horizontal arrows in $D_N$ is precisely $\sum_{i = 1}^N\lambda^N_i(\omega)$. In addition, we have $N$ horizontal arrows from $(-1,i)$ to $(0,i)$ for $i = 1,..., N$. Thus we conclude that $sign\left( \prod_{i = 1}^N \prod_{j = 1}^\infty w_{u_i}(\omega(i,j))\right) = (-1)^{N + \sum_{i = 1}^N \lambda^N_i(\omega)}$. 

We next consider the sign of $\mathcal{W}^c(\omega')$, where $\omega' \in \mathcal{P}^c_{\lambda / \nu}$ and $\nu$ has distinct and positive parts. Arguing as in Remark \ref{remarkG}, we can assume that no paths in $\omega'$ share a vertical piece, otherwise $\mathcal{W}^c(\omega') = 0$. Consequently, we may assume that $\omega'(i,j)$ is among the six vertex types we had before for all $(i,j) \in D_M$. From (\ref{weights2}) the sign of the conjugated weight of a vertex is again $(-1)^{j_1}$, and so the sign equals $(-1)^{K(\omega')}$, where $K(\omega')$ is the total number of horizontal arrows in $\omega'$. One readily observes that $K(\omega') = \sum_{i = 1}^N\lambda_i - \sum_{i = 1}^N\nu_i$ (notice that in this case we do not have horizontal arrows entering the $0$-th column). We conclude that all weights $\mathcal{W}^c(\omega')$ for $\omega' \in \mathcal{P}^c_{\lambda/ \nu}$ have the same sign, which implies that $sign \left( \mathsf{G}^c_{\lambda / \nu}(v_1,...,v_M)\right) = (-1)^{ \sum_{i = 1}^N\lambda_i - \sum_{i = 1}^N\nu_i}$. 

The result in the last paragraph implies that each summand in (\ref{feq}) has sign $ (-1)^{ \sum_{i = 1}^N\lambda_i}$ and so we conclude that $sign \left( f(\lambda; {\bf v}, \rho)\right) = (-1)^{N +  \sum_{i = 1}^N\lambda_i}$. Combining this with  $sign\left( \prod_{i = 1}^N \prod_{j = 1}^\infty w_{u_i}(\omega(i,j))\right) $ \\
$ =(-1)^{N + \sum_{i = 1}^N \lambda^N_i(\omega)}$, we conclude that $ \mathcal{W}^f_{{\bf u}, {\bf v}}(\omega)  \geq 0$ for all $\omega \in \mathcal{P}_N$. As $\mathcal{W}^f_{{\bf u}, {\bf v}}(\omega) = 0$ for $\omega \in \mathcal{P}_N' / \mathcal{P}_N$, equation (\ref{PFEqn3}) can be rewritten as
\begin{equation}\label{PFEqn4}
\sum_{\omega \in \mathcal{P}_N} \mathcal{W}^f_{{\bf u}, {\bf v}}(\omega) = (q;q)_N \prod_{i = 1}^N \left( \frac{1 - s^{-1} u_i}{1 - su_i} \prod_{j = 1}^M \frac{1 - qu_i v_j}{1 - u_iv_j}\right) =:Z^f({\bf u}).
\end{equation}
As weights are non-negative and the partition function $Z^f({\bf u})$ is positive and finite, we see that 
$$\mathbb{P}_{{\bf u}, {\bf v}}(\omega) := \frac{\mathcal{W}^f_{{\bf u}, {\bf v}}(\omega)}{Z^f({\bf u})},$$
defines an honest probability measure on $\mathcal{P}_N$. For future reference we summarize the parameter choices we have made in the following definition.

\begin{definition}\label{parameters}
Let $N,M \in \mathbb{N}$. We fix $q \in (0,1)$ and $s = q^{-1/2}$, ${\bf u} = (u_1,...,u_N)$ with $u_i > s$ and ${\bf v} = (v_1,...,v_M)$ with $v_j > 0$, and $\max_{i,j} u_iv_j < 1$. With these parameters, we denote $\mathbb{P}_{{\bf u}, {\bf v}}$ to be the probability measure on $\mathcal{P}_N$, defined above.
\end{definition}

\subsection{Projections of $\mathbb{P}_{{\bf u, v}}$}\label{Section2.3} We assume the same notation as in the previous section. Let us fix $k \in \mathbb{N}$, $1\leq m_1 < m_2 < \cdots < m_k \leq N$ and $\mu^{m_i} \in \mathsf{Sign}_{m_i}^+$. Our goal in this section is to derive formulas for the following probabilities 
$$\mathbb{P}_{{\bf u, v}} \left( \lambda^{m_1}(\omega) = \mu^{m_1},..., \lambda^{m_k}(\omega) = \mu^{m_k} \right).$$
Let $A = \{ \omega \in \mathcal{P}_N : \lambda^{m_1}(\omega) = \mu^{m_1},..., \lambda^{m_k}(\omega) = \mu^{m_k}  \}$. Then we have that 
$$\mathbb{P}_{{\bf u, v}}( A) = Z^f({\bf u})^{-1} \sum_{ \omega \in A} \mathcal{W}^f_{{\bf u}, {\bf v}}(\omega) =  Z^f({\bf u})^{-1} \sum_{ \omega \in A}\prod_{i = 1}^N \prod_{j = 1}^\infty w_{u_i}(\omega(i,j)) \times f(\lambda^N(\omega);{\bf v}, \rho) = $$
$$ = Z^f({\bf u})^{-1} \sum_{ \omega \in A} \mathsf{F}_{\lambda^{1}(\omega)}(u_1) \mathsf{F}_{\lambda^{2}(\omega) / \lambda^{1}(\omega)}(u_2) \cdots \mathsf{F}_{\lambda^{N}(\omega) / \lambda^{N-1}(\omega)}(u_N) f(\lambda^N(\omega);{\bf v}, \rho).$$
Let $M = \{m_1,...,m_k\}$. We observe that the rightmost sum above may be replaced with the sum over all $\lambda^i \in \mathsf{Sign}_i^+$, where $i \in \{1,...,N\} / M$. Indeed, from our work in the previous section, the extra terms that we are summing over are all $0$. We thus conclude that
\begin{equation}\label{marg1} 
\mathbb{P}_{{\bf u, v}}( A)  =  Z^f({\bf u})^{-1}\sum_{i \in \{1,...,N\} / M } \sum_{\lambda^i \in \mathsf{Sign}_i^+ } \mathsf{F}_{\lambda^{1}}(u_1) \mathsf{F}_{\lambda^{2}/ \lambda^{1}}(u_2) \cdots \mathsf{F}_{\lambda^{N} / \lambda^{N-1}}(u_N) f(\lambda^N;{\bf v}, \rho),
\end{equation}
where $\lambda^j = \mu^j$ for $j \in M$ are fixed. 

Let us first assume that $m_k = N$. Then the branching relations (\ref{branchF}) yield
$$\sum_{i = m_r + 1 }^{m_{r+1}-1} \sum_{\lambda^i \in \mathsf{Sign}_i^+ } \mathsf{F}_{\lambda^{m_r + 1}/ \lambda^{m_r}}(u_{m_r + 1}) \mathsf{F}_{\lambda^{m_r + 2}/ \lambda^{m_r + 1}}(u_{m_r + 2})\cdots  \mathsf{F}_{\lambda^{m_{r + 1}}/ \lambda^{m_{r+1}-1}}(u_{m_{r + 1}}) =  $$
$$ \mathsf{F}_{\lambda^{m_{r + 1}}/  \lambda^{m_r}}(u_{m_r + 1},..., u_{m_{r + 1}}),$$
when $r = 0,..., k-1$ with the convention that $m_0 = 0$ and $\lambda^0 = \varnothing$. Substituting these expressions in (\ref{marg1}) we see that if $m_k = N$ we have
$$\mathbb{P}_{{\bf u, v}}( A)  =  Z^f({\bf u})^{-1} \times \prod_{r = 0}^{k-1} \mathsf{F}_{\mu^{m_{r + 1}}/  \mu^{m_r}}(u_{m_r + 1},..., u_{m_{r + 1}}) \times  f(\mu^{m_k};{\bf v}, \rho).$$
In the remainder, we assume that $m_k < N$. In this case we may still apply the branching relations as above to conclude that 
\begin{equation}\label{specL}
\begin{split}
&\mathbb{P}_{{\bf u, v}}( A)  =  Z^f({\bf u})^{-1} \times \prod_{r = 0}^{k-1} \mathsf{F}_{\mu^{m_{r + 1}}/  \mu^{m_r}}(u_{m_r + 1},..., u_{m_{r + 1}}) \times  F(\mu^{m_k};{\bf v}, \rho),\mbox{ where }\\
& F(\mu^{m_k};{\bf v}, \rho) := \sum_{\lambda \in \mathsf{Sign}_N^+ } \mathsf{F}_{\lambda / \mu^{m_k}}(u_{m_k + 1},...,u_N) f(\lambda;{\bf v}, \rho).
\end{split}
\end{equation}
An alternative formula for $ F(\mu^{m_k};{\bf v}, \rho)$  is derived in the following lemma.
\begin{lemma}
Let $N, m \in \mathbb{N}$, $q \in (0,1)$, $s = q^{-1/2}$, $\lambda \in \mathsf{Sign}_N^+$ and $\mu \in \mathsf{Sign}_{m-1}^+$. Assume that $u_{m},...,u_N$ and $v_1,...,v_M$ are positive with $u_i > s$ and $u_i, v_j$ admissible with respect to $s$. Then
\begin{equation}\label{marg2}
\sum_{\lambda \in \mathsf{Sign}_N^+ } \mathsf{F}_{\lambda / \mu}(u_{m},...,u_N) f(\lambda;{\bf v}, \rho) =  \prod_{i = m}^N \frac{(1 - q^i)(1 - s^{-1}u_i)}{1 - su_i} \prod_{j = 1}^{M}\frac{ 1 - qu_iv_j}{1 - u_i v_j} f(\mu;{\bf v}, \rho).
\end{equation}
\end{lemma}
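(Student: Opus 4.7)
The identity (\ref{marg2}) is the natural skew generalization of the Cauchy-type identity (\ref{PFEqn3}), and I would prove it by exactly the same recipe used there: apply the skew Cauchy identity (\ref{CauchyId}) after enlarging the $v$-parameters by a finite geometric progression $w_k := q^{k-1}\epsilon$, $k=1,\ldots,J$; analytically continue in $X=q^J$ to $X=(s\epsilon)^{-1}$; and then let $\epsilon \to 0^+$.

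\textbf{Main computation.} In (\ref{CauchyId}) I would take the ``$\lambda$'' there to be $(0,\ldots,0)\in\mathsf{Sign}_N^+$, the ``$\nu$'' there to be $\mu\in\mathsf{Sign}_{m-1}^+$, the $u$-variables to be $(u_m,\ldots,u_N)$, and the $v$-variables to be the concatenated list $(w_1,\ldots,w_J,v_1,\ldots,v_M)$. Because an up-right path cannot decrease its $x$-coordinate, $\mathsf{F}_{(0,\ldots,0)/\mu'}(u_m,\ldots,u_N)$ vanishes for every $\mu' \in \mathsf{Sign}^+$ except $\mu'=(0,\ldots,0)\in\mathsf{Sign}_{m-1}^+$, so the $\mu'$-sum on the right-hand side of (\ref{CauchyId}) collapses to a single term. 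A direct computation from the weights (\ref{weights1}) identifies the unique non-vanishing path configuration as a stack of vertices of type $(g,1;g+1,0)$ along the column $x=0$ with $g$ running from $m-1$ at $y=1$ to $N-1$ at $y=N-m+1$, giving
\begin{equation*}
\mathsf{F}_{(0,\ldots,0)/(0,\ldots,0)}(u_m,\ldots,u_N) = \prod_{i=m}^N \frac{1-q^i}{1-su_i}.
\end{equation*}

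\textbf{Specialization, analytic continuation and limit.} Under $w_k = q^{k-1}\epsilon$ the product $\prod_{k=1}^J (1-qu_iw_k)/(1-u_iw_k)$ telescopes to $(1-q^Ju_i\epsilon)/(1-u_i\epsilon)$, so after inserting the evaluation of $\mathsf{F}_{(0,\ldots,0)/(0,\ldots,0)}$ both sides of the resulting identity are rational functions of $X=q^J$ and agree for all $J$ sufficiently large; they therefore agree at $X=(s\epsilon)^{-1}$ as well (cf.\ the analytic continuation used in the passage from (\ref{PFEqn}) to (\ref{PFEqn2})). After this substitution the factors $\mathsf{G}^c_\kappa(w_1,\ldots,w_J,v_1,\ldots,v_M)$ and $\mathsf{G}^c_\mu(w_1,\ldots,w_J,v_1,\ldots,v_M)$ appearing on the two sides become $f_\epsilon(\kappa;{\bf v})$ and $f_\epsilon(\mu;{\bf v})$ of (\ref{feq-1}), respectively (the latter with length $m-1$ in place of $N$). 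Sending $\epsilon \to 0^+$ and invoking $f_\epsilon(\cdot;{\bf v}) \to f(\cdot;{\bf v},\rho)$ from (\ref{feq}) produces exactly the identity (\ref{marg2}).

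\textbf{Obstacle.} The only genuinely nontrivial step is the interchange of $\lim_{\epsilon\to 0^+}$ with the infinite sum over $\kappa\in\mathsf{Sign}_N^+$ in the last paragraph. This is a dominated-convergence estimate of the same character as the one tacitly used in Section \ref{Section2.2} when passing from (\ref{PFEqn2}) to (\ref{PFEqn3}); the required uniform summability in $\epsilon$ (small) follows from the admissibility of the pairs $(u_i,v_j)$ recorded in Definition \ref{parameters} and the hypothesis $u_i > s$ in the statement of the lemma.
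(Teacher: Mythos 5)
Your proposal is correct and follows essentially the same route as the paper: specialize the skew Cauchy identity, substitute $w_k=q^{k-1}\epsilon$ and telescope, analytically continue in $X=q^J$ to $X=(s\epsilon)^{-1}$, and send $\epsilon\to 0^+$. The one cosmetic difference is that the paper simply cites Corollary 4.11 of \cite{BP} for the intermediate identity, whereas you derive it directly from (\ref{CauchyId}) by observing that $\mathsf{F}_{(0)^N/\mu'}$ vanishes unless $\mu'=(0)^{m-1}$ and then reading off the single surviving configuration; your explicit computation of $\mathsf{F}_{(0)^N/(0)^{m-1}}(u_m,\ldots,u_N)=\prod_{i=m}^N\frac{1-q^i}{1-su_i}$ is correct.
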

\begin{proof}
We start by considering the expression
$$ \sum_{\lambda \in \mathsf{Sign}_N^+ } \mathsf{F}_{\lambda / \mu}(u_{m+1},...,u_N) \mathsf{G}^c_\lambda(w_1,...,w_{J+M}),$$
where as in the previous section $w_i = \epsilon q^{i-1}$ for $i = 1,...,J$ and $w_{j+J} = v_j$ for $j = 1,...,M$. The skew Cauchy identity in (\ref{CauchyId}) yields (see also Corollary 4.11 in \cite{BP}):
$$\sum_{\lambda \in \mathsf{Sign}_N^+ } \mathsf{F}_{\lambda / \mu}(u_{m},...,u_N) \mathsf{G}^c_\lambda(w_1,...,w_{J+M}) = \prod_{i = m}^N \frac{1 - q^i}{1 - su_i} \prod_{j = 1}^{J+M}\frac{ 1 - qu_iw_j}{1 - u_i w_j}\mathsf{G}^c_\mu(w_1,...,w_{J+M}).$$
Substituting $w_i$ in the above expression and denoting $q^J$ by $X$ we arrive at
\begin{equation}\label{marg3}
\sum_{\lambda \in \mathsf{Sign}_N^+ } \mathsf{F}_{\lambda / \mu}(u_{m},...,u_N) f_\epsilon(\lambda;{\bf v}, X) =  \prod_{i = m}^N \frac{1 - q^i}{1 - su_i} \frac{1 - \epsilon Xu_i}{1 - \epsilon u_i} \prod_{j = 1}^{M}\frac{ 1 - qu_iv_j}{1 - u_i v_j}f_\epsilon(\mu;{\bf v}, X),
\end{equation}
where $f_\epsilon(\mu;{\bf v}, X)$ is given in (\ref{feq-1}). As in the previous section we argue that both sides of (\ref{marg3}) are entire functions in $X$, which are equal on a sequence with a limit point in $\mathbb{C}$, hence equality holds for all $X$. Setting $X = (s\epsilon)^{-1}$ and letting $\epsilon$ go to zero we obtain (\ref{marg2}).
\end{proof}
Substituting  $F(\mu^{m_k};{\bf v}, \rho)$  into (\ref{specL}) with the expression in (\ref{marg2}) and performing a bit of cancellations we see that 
\begin{equation}\label{marg5}
\begin{split}
\mathbb{P}_{{\bf u, v}} \left( \lambda^{m_1}(\omega) = \mu^{m_1},..., \lambda^{m_k}(\omega) = \mu^{m_k} \right) = \prod_{r = 0}^{k-1} \mathsf{F}_{\mu^{m_{r + 1}}/  \mu^{m_r}}(u_{m_r + 1},..., u_{m_{r + 1}})  \times  f(\mu^{m_k};{\bf v}, \rho) \times \\ Z^f({\bf u}, {\bf v}; m_k)^{-1} ,  \mbox{ where } Z^f({\bf u}, {\bf v}; m_k) =  (q;q)_{m_k} \prod_{i = 1}^{m_k} \left( \frac{1 - s^{-1} u_i}{1 - su_i} \prod_{j = 1}^M \frac{1 - qu_i v_j}{1 - u_iv_j}\right).
\end{split}
\end{equation}

\section{The operators $D_m^k$}\label{Section3}

In this section we fix a positive integer $m \geq 1$ and provide operators $D^k_m$ for $k = 1,...,m$ that act diagonally on the functions $\mathsf{F}_\lambda(u_1,...,u_m)$ with $\lambda \in \mathsf{Sign}^+_m$ and $\lambda_{i} > \lambda_{i+1}$ for $i = 1,..., m-1$. Specifically, we will show that 
$$D^k_m \mathsf{F}_\lambda(u_1,...,u_m) = {\bf 1}_{ \{\lambda_m = 0, \lambda_{m-1} = 1,..., \lambda_{m-k+1} = k-1 \}}\mathsf{F}_\lambda(u_1,...,u_m) .$$
In addition, we explain how the operators $D^k_m$ can be used to extract formulas for a set of observables and prove several properties that are relevant to the problem we consider.

\subsection{Definition of $D_m^k$ }\label{Section3.1}
 We start with the symmetrization formula for $\mathsf{F}_\lambda(u_1,...,u_m)$  (here $\lambda \in \mathsf{Sign}_m^+$), given in Proposition \ref{symmF}:
\begin{equation}\label{FSym}
\mathsf{F}_\lambda(u_1,...,u_m) = \frac{(1- q)^m}{ \prod_{i = 1}^m (1 - su_i)}\sum_{\sigma \in S_m}  \prod_{1 \leq \alpha < \beta \leq m}\frac{u_{\sigma(\alpha)} - qu_{\sigma(\beta)}}{u_{\sigma(\alpha)} - u_{\sigma(\beta)}} \prod_{i = 1}^m\left(\frac{u_{\sigma(i)} - s}{1 - su_{\sigma(i)}}\right)^{\lambda_i}.
\end{equation}
We are interested in setting $u_m = \cdots = u_{m-k+1} = s$ for each $k \in \{0,...,m\} $ in the above expression, which is the content of the following lemma.

\begin{lemma}\label{LemmaSub} Let $m \geq 1$, $k \in \{0,...,m\}$ and $\lambda \in \mathsf{Sign}_m^+$ with $\lambda_{i} > \lambda_{i+1}$ for $i = 1,..., m-1$. Then we have that 
\begin{equation}\label{FSub}
\begin{split}
\mathsf{F}_\lambda(u_1,...,u_{m-k}, s,...,s) =  \frac{1}{(1-s^2)^k}\frac{(1- q)^m}{ \prod_{i = 1}^{m-k} (1 - su_i)}\left( \frac{s (1-q)}{1-s^2}\right)^{k(k-1)/2} \times\\
 \left( \prod_{i = 1}^{m-k} \frac{u_{i} - qs}{1 - su_{i}}\right)^k \sum_{\sigma \in S_{m-k}}   \prod_{1 \leq \alpha < \beta \leq m-k}\frac{u_{\sigma(\alpha)} - qu_{\sigma(\beta)}}{u_{\sigma(\alpha)} - u_{\sigma(\beta)}}   \prod_{i = 1}^{m - k }\left(\frac{u_{\sigma(i)} - s}{1 - su_{\sigma(i)}}\right)^{\lambda_i -k},
\end{split}
\end{equation}
if $\lambda_{m} = 0$, $\lambda_{m-1} = 1$,..., $\lambda_{m-k+1} = k-1$ (if $k = 0$ this condition is empty). Otherwise $\mathsf{F}_\lambda(u_1,...,u_{m-k}, s,...,s)  = 0$. If $k = m$ the sum over $S_{m-k}$ is replaced by $1$.
\end{lemma}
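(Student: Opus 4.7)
The plan is to prove the lemma by induction on $k$. The base case $k=0$ is exactly the symmetrization formula \eqref{FSym}: all products carrying $k$ as an exponent are empty, $S_{m-0}=S_m$, and the staircase hypothesis on $\lambda$ is vacuous. For the inductive step, I apply the inductive hypothesis with $(u_1,\dots,u_{m-k+1},s,\dots,s)$ (so $k-1$ copies of $s$) and then substitute $u_{m-k+1}=s$ in the resulting $(k-1)$-formula. No limiting procedure is needed: at each inductive step only a single variable is being set to $s$, so no denominator $u_{\sigma(\alpha)}-u_{\sigma(\beta)}$ in the symmetrization vanishes.

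The heart of the argument is identifying which permutations in the $(k-1)$-formula survive the substitution $u_{m-k+1}=s$. For $\sigma\in S_{m-k+1}$, let $p=\sigma^{-1}(m-k+1)$; then the $p$-th single-variable factor becomes $\left(\frac{u_{\sigma(p)}-s}{1-su_{\sigma(p)}}\right)^{\lambda_p-k+1}=0^{\lambda_p-k+1}$. The $(k-1)$-staircase hypothesis $\lambda_{m-k+2}=k-2$ combined with $\lambda_1>\cdots>\lambda_{m-k+1}$ guarantees $\lambda_p\geq k-1$ for every $p\in\{1,\dots,m-k+1\}$, so the exponent is nonnegative and the factor vanishes unless $\lambda_p=k-1$. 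Distinctness of the parts then forces $p=m-k+1$ and additionally requires $\lambda_{m-k+1}=k-1$; if this fails, no permutation contributes and $\mathsf{F}_\lambda(u_1,\dots,u_{m-k},s,\dots,s)=0$, matching the vanishing clause of the lemma.

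For the surviving permutations (those fixing $m-k+1$), the symmetrization restricts to a sum over $S_{m-k}$, and the cross-terms of the product $\prod_{\alpha<\beta\le m-k+1}\frac{u_{\sigma(\alpha)}-qu_{\sigma(\beta)}}{u_{\sigma(\alpha)}-u_{\sigma(\beta)}}$ with $\beta=m-k+1$ combine (after $u_{m-k+1}=s$) into the symmetric factor $\prod_{i=1}^{m-k}\frac{u_i-qs}{u_i-s}$. The single-variable factor at $i=m-k+1$ becomes $1$; for $i=1,\dots,m-k$, split off one power of $\frac{u_{\sigma(i)}-s}{1-su_{\sigma(i)}}$ from the exponent $\lambda_i-k+1$ and symmetrize it into the global factor $\prod_{i=1}^{m-k}\frac{u_i-s}{1-su_i}$. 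The product of these two symmetric factors is $\prod_{i=1}^{m-k}\frac{u_i-qs}{1-su_i}$, which promotes the existing $(k-1)$-th power to a $k$-th power. Finally, the constants combine via $(k-1)(k-2)/2+(k-1)=k(k-1)/2$ and $(1-s^2)^{-(k-1)}\cdot(1-s^2)^{-1}=(1-s^2)^{-k}$, reproducing exactly the $k$-formula.

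The main obstacle is purely the bookkeeping of symmetric prefactors and their exponents; nothing conceptual is at stake, as all manipulations reduce to algebraic substitutions in a rational identity whose denominators remain regular thanks to the one-variable-at-a-time inductive scheme, and the distinct-parts hypothesis on $\lambda$ is used precisely to pin down $p=m-k+1$ as the unique position able to contribute.
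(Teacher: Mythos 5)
Your proof is correct and follows essentially the same inductive scheme as the paper's own argument: induct on $k$, substitute one additional variable equal to $s$ into the already-established formula, observe that the staircase hypothesis together with the strict decrease of $\lambda$ forces all non-fixing permutations to vanish (and forces $\lambda_{m-k+1}=k-1$ for anything to survive), and then bookkeep the symmetric prefactors. The only difference from the paper is cosmetic: you pass from $k-1$ to $k$ whereas the paper passes from $k$ to $k+1$, and you phrase the vanishing condition directly as $\lambda_p=k-1$ rather than first isolating divisibility by $(u_{m-k}-s)^{\lambda_{m-k}-k}$; these are the same observation.
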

\begin{proof}
We proceed by induction on $k$ with base case $k = 0$ true by (\ref{FSym}). Supposing the result for $k$ we now show it for $k+1$.\\

By induction hypothesis we may assume that  $\lambda_{m} = 0$, $\lambda_{m-1} = 1$,..., $\lambda_{m-k+1} = k-1$, for otherwise the expression is $0$ for all $u_{m-k}$ in particular for $u_{m-k} =s $ and there is nothing to prove. Consequently, we have that 
$$\mathsf{F}_\lambda(u_1,...,u_{m-k}, s,...,s) =  \frac{1}{(1-s^2)^k}\frac{(1- q)^m}{ \prod_{i = 1}^{m-k} (1 - su_i)}\left( \frac{s (1-q)}{1-s^2}\right)^{k(k-1)/2} \times$$
$$ \left( \prod_{i = 1}^{m-k} \frac{u_{i} - qs}{1 - su_{i}}\right)^k\sum_{\sigma \in S_{m-k}}   \prod_{1 \leq \alpha < \beta \leq m-k}\frac{u_{\sigma(\alpha)} - qu_{\sigma(\beta)}}{u_{\sigma(\alpha)} - u_{\sigma(\beta)}}   \prod_{i = 1}^{m - k }\left(\frac{u_{\sigma(i)} - s}{1 - su_{\sigma(i)}}\right)^{\lambda_i -k}.$$
Since $\lambda_{i} > \lambda_{i+1}$ we know $\lambda_{m-k} \geq \lambda_{m-k+1} + 1 = k$. We notice that $(u_{m-k} -s)^{\lambda_{m-k} - k}$ divides each summand and so the total sum will be $0$ unless $\lambda_{m-k} = k$. Let us assume that $\lambda_{m-k} = k$, which means $\lambda_i > k$ for $i < m-k$. The latter implies that each summand for which $\sigma(m-k) \neq m-k$ is divisible by $(u_{m-k} -s)$ and so vanishes when $u_{m-k} = s$. This reduces the sum over $S_{m-k}$ to a sum over $S_{m-k-1}$ and if we substitute $u_{m-k} =s $ we see that 
$$\mathsf{F}_\lambda(u_1,...,u_{m-k-1},s, s,...,s) =  \frac{1}{(1-s^2)^{k+1}}\frac{(1- q)^m}{ \prod_{i = 1}^{m-k-1} (1 - su_i)}\left( \frac{s (1-q)}{1-s^2}\right)^{k(k-1)/2} \left(  \frac{s - qs}{1 - s^2}\right)^k\times$$
$$ \left( \prod_{i = 1}^{m-k-1} \frac{u_{i} - qs}{1 - su_{i}}\right)^k\sum_{\sigma \in S_{m-k-1}}   \prod_{1 \leq \alpha < \beta \leq m-k-1}\frac{u_{\sigma(\alpha)} - qu_{\sigma(\beta)}}{u_{\sigma(\alpha)} - u_{\sigma(\beta)}} \prod_{i = 1}^{m - k-1 } \frac{u_{\sigma(i)} - qs}{u_{\sigma(i)} -s }  \prod_{i = 1}^{m - k-1 }\left(\frac{u_{\sigma(i)} - s}{1 - su_{\sigma(i)}}\right)^{\lambda_i -k}.$$
Upon rearrangement the above equals the expression in (\ref{FSub}) with $k+1$. The general result now proceeds by induction on $k$.

\end{proof}

Put $M = \{1,..., m\}$. We record the following alternative representation of $\mathsf{F}_\lambda(u_1,...,u_m)$, which can be obtained from (\ref{FSym}) by splitting the sum over the possible variable subsets formed by $\{ u_{\sigma(m)}, ..., u_{\sigma(m-k+1)}\}$ (these correspond to sets $I$ below and $I^c = M / I$)
\begin{equation}\label{FSymK}
\begin{split}
&\mathsf{F}_\lambda(u_1,...,u_m) = \frac{(1- q)^m}{ \prod_{i = 1}^m (1 - su_i)}\sum_{\sigma \in S_k}\sum_{\tau \in S_{m - k}} \sum_{\substack{I = \{i_1,...,i_k\} \subset M\\ I^c =\{j_1,...,j_{m-k}\} }} \prod_{ \alpha = 1}^k \prod_{ \beta = 1}^{m-k} \frac{u_{j_{\tau(\beta)}} - q u_{i_{\sigma(\alpha)}}}{u_{j_{\tau(\beta)}} -  u_{i_{\sigma(\alpha)}}} \times \\ \prod_{1\leq  \alpha_1 < \alpha_2 \leq k} &\hspace{-1mm} \frac{u_{i_{\sigma(\alpha_1)}} - q u_{i_{\sigma(\alpha_2)}}}{u_{i_{\sigma(\alpha_1)}} - u_{i_{\sigma(\alpha_2)}}} 
  \prod_{x= 1}^k \hspace{-1mm} \left(\frac{u_{i_{\sigma(x)}} - s}{1 - su_{i_{\sigma(x)}}}\right)^{\lambda_{m -x+1}} \hspace{-12mm}
 \prod_{1\leq  \beta_1 < \beta_2 \leq m-k} \hspace{-1mm}\frac{u_{j_{\tau(\beta_1)}} - q u_{j_{\tau(\beta_2)}}}{u_{j_{\tau(\beta_1)}} -  u_{j_{\tau(\beta_2)}}} \hspace{-2mm} \prod_{y= 1}^{m-k} \left(\frac{u_{j_{\tau(y)}} - s}{1 - su_{i_{\tau(y)}}}\right)^{\lambda_{y}} \hspace{-3mm}.
\end{split}
\end{equation}

We introduce some necessary notation. Define operators $T_{s,u_i} $ that act on functions of $m$ variables $(u_1,...,u_m)$, by setting $u_i$ to $s$. I.e.
$$T_{s,u_i}F(u_1,...,u_m) = F(u_1,...,u_{i-1},s,u_{i+1},...,u_m).$$
We also consider the function
$$F_k(u_1,...,u_k) =  \frac{(1-q)^k}{\prod_{i = 1}^k(1 - su_i)}\sum_{\sigma \in S_k}  \prod_{1 \leq \alpha < \beta \leq k}\frac{u_{\sigma(\alpha)} - qu_{\sigma(\beta)}}{u_{\sigma(\alpha)} - u_{\sigma(\beta)}} \prod_{i = 1}^k\left(\frac{u_{\sigma(i)} - s}{1 - su_{\sigma(i)}}\right)^{i-1}.$$
 Notice that $F_k(u_1,...,u_k) = \mathsf{F}_\lambda(u_1,...,u_k)$ with $\lambda = (k-1,k-2,...,0)$. In particular, $F_k$ is a symmetric rational function.

Let $k \in M$ be fixed. For a subset $I \subset M$ with $I =\{i_1,...,i_k\} $ we write $F(u_I)$ to mean $F(u_{i_1}, ... u_{i_k})$, whenever $F$ is a symmetric function in $k$ variables. We also write $F_k(S)$ to be $F_k(s,s,...,s)$ and from Lemma \ref{LemmaSub} we have
$$F_k(S) =s^{k(k-1)/2}\left( \frac{ 1-q}{1-s^2}\right)^{k(k+1)/2}.$$
With the above notation we define the following operators.
\begin{definition}\label{DiffOp}
Let $m \in \mathbb{N}$ and $M = \{1,...,m\}$. For $1 \leq k \leq m$ we define the operator $D^k_m$ on functions of $m$ variables to be
\begin{equation}\label{DiffOpEq}
D_m^k := \sum_{I \subset M : |I| = k} \prod_{i \in I; j \not \in I} \frac{u_j - qu_i}{u_j - u_i} \prod_{j \not \in I} \left( \frac{u_j - s}{u_j - sq}\right)^k \frac{F_k(u_I)}{F_k(S)}\prod_{i \in I} T_{u_i,s}.
\end{equation}
\end{definition}
\begin{remark}\label{remarkD}
One readily observes that $D_m^k$ is a linear operator on the set of functions in $m$-variables, and also satisfies the property that if $f_r(u_1,...,u_m)$ converge pointwise to $f(u_1,...,u_m)$, then $D^k_mf_r$ converge pointwise to $D^k_mf$ away from the points $u_i = u_j$ for $i \neq j$.
\end{remark}

The key property of $D^k_m$ is given in the following lemma.
\begin{lemma}\label{LemmaOpFs}
 Let $m \geq 1$, $k \in \{1,...,m\}$ and $\lambda \in Sign_m^+$ with $\lambda_{i} > \lambda_{i+1}$ for $i = 1,..., m-1$. Then we have that 
\begin{equation}\label{OpFs}
\begin{split}
D_m^k\mathsf{F}_\lambda(u_1,...,u_m) = {\bf 1}_{\{ \lambda_{m} = 0, \lambda_{m-1} = 1,..., \lambda_{m-k+1} = k-1\}}\mathsf{F}_\lambda(u_1,...,u_m).
\end{split}
\end{equation}
\end{lemma}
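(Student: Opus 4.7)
The plan is to combine Lemma \ref{LemmaSub} with the symmetrization formula (\ref{FSymK}) directly. By Proposition \ref{symmetricFun}, $\mathsf{F}_\lambda$ is symmetric, so for any $I \subset M$ with $|I| = k$,
$$\prod_{i \in I} T_{u_i, s}\, \mathsf{F}_\lambda(u_1, \ldots, u_m) = \mathsf{F}_\lambda\bigl(\{u_j\}_{j \in I^c}, s, \ldots, s\bigr).$$
Lemma \ref{LemmaSub} then shows that if $\lambda$ fails to satisfy $\lambda_m = 0,\, \lambda_{m-1} = 1,\, \ldots,\, \lambda_{m-k+1} = k-1$, each term in the sum defining $D_m^k \mathsf{F}_\lambda$ vanishes, which matches the indicator on the right-hand side of (\ref{OpFs}).

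Assume henceforth that this interlacing condition holds. For each subset $I = \{i_1,\ldots,i_k\}$ with complement $I^c = \{j_1,\ldots,j_{m-k}\}$, Lemma \ref{LemmaSub} gives an explicit product expression for $\prod_{i \in I} T_{u_i,s}\, \mathsf{F}_\lambda$. The key algebraic step is to multiply by the coefficient $c_I$ of $\prod_{i \in I} T_{u_i, s}$ in Definition \ref{DiffOp} and verify that the result equals the $I$-th summand of (\ref{FSymK}) applied to $\lambda$. Three cancellations drive the matching: (i) the ratio $F_k(u_I)/F_k(S)$ in $c_I$ multiplies the prefactor $F_k(S)/(1-q)^k$ coming out of Lemma \ref{LemmaSub}, producing precisely the $S_k$-sum over the variables in $I$ that appears in (\ref{FSymK}) with exponents $\lambda_{m-x+1} = x-1$; (ii) the factor $\prod_{j \in I^c}\bigl((u_j - s)/(u_j - sq)\bigr)^k$ in $c_I$ cancels the factor $\prod_{j \in I^c}\bigl((u_j - qs)/(1-su_j)\bigr)^k$ from Lemma \ref{LemmaSub}, leaving a residual $\prod_{j \in I^c}\bigl((u_j - s)/(1-su_j)\bigr)^k$ which is exactly the factor needed to convert the exponents $\lambda_i - k$ in the $S_{m-k}$-sum from Lemma \ref{LemmaSub} back into the exponents $\lambda_i$ appearing in the $I^c$-part of (\ref{FSymK}); (iii) the cross term $\prod_{i \in I, j \in I^c}(u_j - qu_i)/(u_j - u_i)$ in $c_I$ is already present in (\ref{FSymK}), and the remaining $(1-q)^m/\prod_{i \in M}(1-su_i)$ assembles from the $I$-prefactor from $F_k(u_I)$ and the $I^c$-denominator $\prod_{j \in I^c}(1-su_j)$ of Lemma \ref{LemmaSub}.

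Summing these identities over all $I \subset M$ with $|I| = k$ reproduces (\ref{FSymK}) term by term, and hence equals $\mathsf{F}_\lambda(u_1, \ldots, u_m)$, completing the proof. The only real obstacle is the bookkeeping in step (ii): one must confirm that the exponent shift from $\lambda_i$ to $\lambda_i - k$ produced by Lemma \ref{LemmaSub} is canceled precisely by the residual $k$-th power factor over $I^c$ that survives the cancellation between $c_I$ and the specialization. Once this balance is verified, the remaining manipulations are purely combinatorial matchings of prefactors with (\ref{FSymK}).
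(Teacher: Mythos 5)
Your proof is correct and follows essentially the same route as the paper: both use $\mathsf{F}_\lambda$'s symmetry together with Lemma \ref{LemmaSub} to handle the vanishing outside the support, then match the surviving terms against the expansion (\ref{FSymK}). The only cosmetic difference is that the paper routes the bookkeeping through the shifted signature $\mu=\lambda-(k)^{m-k}$ and Proposition \ref{shifting}, whereas you match prefactors with (\ref{FSymK}) directly — the underlying cancellations (i)--(iii) you identify are precisely the ones carried out in the paper.
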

\begin{proof}
Using Lemma \ref{LemmaSub} and that $\mathsf{F}_\lambda$ is symmetric we have that $D_m^k\mathsf{F}_\lambda(u_1,...,u_m)  = 0$ unless $\lambda_{m} = 0$, $\lambda_{m-1} = 1$,..., $\lambda_{m-k+1} = k-1$. We thus assume that $\lambda_{m} = 0$, $\lambda_{m-1} = 1$,..., $\lambda_{m-k+1} = k-1$. Let $\mu \in Sign^+_{m-k}$ be given by $\mu_i = \lambda_i - k$. It follows from Lemma \ref{LemmaSub} and (\ref{FSym}) that
$$D_m^k\mathsf{F}_\lambda(u_1,...,u_m) = \sum_{I \subset M : |I| = k} \prod_{i \in I; j \not \in I} \frac{u_j - qu_i}{u_j - u_i} \prod_{j \not \in I} \left( \frac{u_j - s}{u_j - sq}\right)^k \frac{F_k(u_I)}{F_k(S)} F_k(S) \left( \prod_{j \not \in I} \frac{u_{j} - qs}{1 - su_{j}}\right)^k\mathsf{F}_\mu(u_{I^c})$$
$$ = \sum_{I \subset M : |I| = k} \prod_{i \in I; j \not \in I} \frac{u_j - qu_i}{u_j - u_i}  F_k(u_I)  \left( \prod_{j \not \in I} \frac{u_j - s}{1-su_j}\right)^k\mathsf{F}_\mu(u_{I^c}),$$
where $I^c = M/I$.

Using Proposition \ref{shifting} and (\ref{FSym}) we can rewrite the above as
\begin{equation*}
\begin{split}
&\frac{(1- q)^m}{ \prod_{i = 1}^m (1 - su_i)}\sum_{\substack{I = \{i_1,...,i_k\} \subset M\\ I^c =\{j_1,...,j_{m-k}\} }}\sum_{\sigma \in S_k}\sum_{\tau \in S_{m - k}}  \prod_{ \alpha = 1}^k \prod_{ \beta = 1}^{m-k} \frac{u_{j_{\tau(\beta)}} - q u_{i_{\sigma(\alpha)}}}{u_{j_{\tau(\beta)}} -  u_{i_{\sigma(\alpha)}}} \times \\ \prod_{1\leq  \alpha_1 < \alpha_2 \leq k} & \frac{u_{i_{\sigma(\alpha_1)}} - q u_{i_{\sigma(\alpha_2)}}}{u_{i_{\sigma(\alpha_1)}} - u_{i_{\sigma(\alpha_2)}}} 
  \prod_{x= 1}^k\left(\frac{u_{i_{\sigma(x)}} - s}{1 - su_{i_{\sigma(x)}}}\right)^{x-1}
\hspace{-6mm} \prod_{1\leq  \beta_1 < \beta_2 \leq m-k}\frac{u_{j_{\tau(\beta_1)}} - q u_{j_{\tau(\beta_2)}}}{u_{j_{\tau(\beta_1)}} -  u_{j_{\tau(\beta_2)}}}\prod_{y= 1}^{m-k} \left(\frac{u_{j_{\tau(y)}} - s}{1 - su_{i_{\tau(y)}}}\right)^{\lambda_{y}} \hspace{-2mm}.
\end{split}
\end{equation*}
By virtue of (\ref{FSymK}) the latter is exactly $\mathsf{F}_\lambda(u_1,...,u_m)$ as desired. 
\end{proof}

\subsection{Observables from $D_m^k$}\label{Section3.2} This section is devoted to explaining how one can use the operators $D^k_m$ to analyze the probability measures $\mathbb{P}^f$ on $\mathcal{P}_N$. These measures were discussed in the beginning of Section \ref{Section1.2} and $\mathbb{P}_{{\bf u, v}} $ is a particular example. In addition, we will prove an interesting property for the first operator $D^1_m$, which we believe to be of separate interest. Throughout this section we require that $q = s^{-2}$.

Let us summarize the assumptions we need to make the statements in this section valid.

{\raggedleft {\bf Assumptions}:}
\begin{itemize}
\item $N \in \mathbb{N}$ and $u_1,...,u_N$ are pairwise distinct complex numbers;
\item $f: \mathsf{Sign}_N^+ \rightarrow \mathbb{R}$ is supported on signatures with distinct parts; 
\item for $\omega \in \mathcal{P}_N$ we define the weights
\begin{equation}\label{wt2}
\mathcal{W}^f(\omega; {\bf z}):= \prod_{i = 0}^{\infty} \prod_{j = 1}^N w_{z_j}(\omega(i,j)) \times f(\lambda^N(\omega));
\end{equation}
\item  the weights in (\ref{wt2}) are absolutely summable in some neighborhood of the point $(u_1,...,u_N)$ and we denote
$$\sum_{\omega \in \mathcal{P}_N} \mathcal{W}^f(\omega; {\bf z}) =: Z^f(z_1,...,z_N) = Z^f({\bf z});$$
\item  for every $\omega \in \mathcal{P}_N$ we have $\mathcal{W}^f(\omega; {\bf u}) \geq 0$ and $Z^f({\bf u}) > 0$.
\end{itemize}
Notice that under the above assumptions $\mathbb{P}^f(\omega): = \frac{\mathcal{W}^f(\omega; {\bf u})}{Z^f({\bf u}) }$ is a probability measure on $\mathcal{P}_N$. For the remainder of this section we will work under the above assumptions.

Let us introduce the following definitions 
\begin{definition}\label{S3DefObs}
For $m,r \geq 0$ we define
\begin{equation*}
\mathsf{Sign}_{m}^*=\{ \lambda \in \mathsf{Sign}_m^+:  \lambda_m < \lambda_{m-1} < \cdots < \lambda_{1}\},\hspace{2mm}
\mathsf{Sign}_{m,r}^*= \{ \lambda \in \mathsf{Sign}_m^*: \lambda_m = 0, ... ,\lambda_{m-r+1} = r - 1\}.
\end{equation*}
Suppose $k \in \{1,...,N\}$ and $1 \leq m_1 \leq m_2 \leq \cdots \leq m_k \leq N$ are given. Set $S_i = \mathsf{Sign}^*_{m_i,i}$ for $i = 1,...,k$ and define
\begin{equation}\label{ObservableEvent}
A({\bf m}) = A(m_1,...,m_k) = \{\omega :  \lambda^{m_i}(\omega) \in S_i, \hspace{1mm} i = 1,...,k\} \mbox{ and } \mathcal{W}^f({\bf m}; {\bf z}) = \sum_{\omega \in A({\bf m})} \mathcal{W}^f(\omega; {\bf z}).
\end{equation} 
\end{definition}

\begin{lemma}\label{S3Lemma}
Assume the same notation as in Definition \ref{S3DefObs}. Then we have
\begin{equation}\label{sum5}
\mathbb{P}^f\left( A({\bf m})  \right) = \frac{\mathcal{W}^f({\bf m}; {\bf u})}{Z^f({\bf u})} = \frac{(D^1_{m_1}D^{2}_{m_2} \cdots D^{k}_{m_k}Z^f)({\bf u})}{Z^f({\bf u})}.
\end{equation}
\end{lemma}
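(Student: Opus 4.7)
The plan is to recognize both sides of the second equality in (\ref{sum5}) as sums of skew $\mathsf{F}$-functions and then to apply Lemma \ref{LemmaOpFs} iteratively. The identity $\mathbb{P}^f(A({\bf m})) = \mathcal{W}^f({\bf m};{\bf u})/Z^f({\bf u})$ is immediate from the definitions, so the real content is the second equality. First I would note that the weight of a single row $j$ of $\omega$ is exactly $\mathsf{F}_{\lambda^j(\omega)/\lambda^{j-1}(\omega)}(z_j)$ (Definition \ref{defF} with $n=1$ and the convention $\lambda^0 = \varnothing$), so that $\mathcal{W}^f(\omega;{\bf z})$ factorizes into a product of $N$ such row weights times $f(\lambda^N(\omega))$. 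Summing the intermediate signatures between the prescribed heights $0 = m_0 \le m_1 \le \cdots \le m_k \le m_{k+1} = N$ via the branching rule of Proposition \ref{Branching}(1) yields
\begin{equation*}
\mathcal{W}^f({\bf m};{\bf z}) = \sum_{\substack{\lambda^{m_i}\in S_i,\, 1\le i\le k\\ \lambda^N\in \mathsf{Sign}_N^+}} f(\lambda^N) \prod_{i=0}^{k} \mathsf{F}_{\lambda^{m_{i+1}}/\lambda^{m_i}}(z_{m_i+1},\ldots,z_{m_{i+1}}),
\end{equation*}
with $\lambda^{m_0}=\varnothing$ and $\lambda^{m_{k+1}}=\lambda^N$; dropping the constraints $\lambda^{m_i}\in S_i$ (which are vacuous) gives the analogous decomposition of $Z^f({\bf z})$.

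Next I would apply the operators from the right. Using the branching rule to telescope over $\lambda^{m_1},\ldots,\lambda^{m_{k-1}}$ recombines the first $k$ factors into $\mathsf{F}_{\lambda^{m_k}}(z_1,\ldots,z_{m_k})$, so that
\begin{equation*}
Z^f({\bf z}) = \sum_{\lambda^{m_k},\,\lambda^N} f(\lambda^N)\, \mathsf{F}_{\lambda^N/\lambda^{m_k}}(z_{m_k+1},\ldots,z_N)\, \mathsf{F}_{\lambda^{m_k}}(z_1,\ldots,z_{m_k}).
\end{equation*}
Because $f$ is supported on signatures with distinct parts and $q=s^{-2}$ means $s=\pm q^{-1/2}$, Remark \ref{remarkF} forces $\lambda^{m_k}$ to have distinct parts for the skew factor to be non-zero, so Lemma \ref{LemmaOpFs} applies and gives $D^k_{m_k}\mathsf{F}_{\lambda^{m_k}} = {\bf 1}\{\lambda^{m_k}\in S_k\}\, \mathsf{F}_{\lambda^{m_k}}$. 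Rebranching $\mathsf{F}_{\lambda^{m_k}}(z_1,\ldots,z_{m_k})$ at position $m_{k-1}$ (again forcing $\lambda^{m_{k-1}}$ to have distinct parts via Remark \ref{remarkF}) and applying $D^{k-1}_{m_{k-1}}$ produces the indicator ${\bf 1}\{\lambda^{m_{k-1}}\in S_{k-1}\}$, and iterating this rebranch-and-apply procedure for $D^{k-2}_{m_{k-2}},\ldots,D^1_{m_1}$ inserts all remaining indicators. Re-expanding the resulting $\mathsf{F}_{\lambda^{m_1}}$ back by branching recovers exactly the branched sum above equal to $\mathcal{W}^f({\bf m};{\bf z})$. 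Evaluating at ${\bf z}={\bf u}$ and dividing by $Z^f({\bf u})$ yields (\ref{sum5}).

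The main obstacle is justifying the interchange of each $D^i_{m_i}$ with the (a priori infinite) sum defining $Z^f({\bf z})$. Each operator is a finite linear combination of rational prefactors times substitutions $T_{z_j,s}$; the prefactors are bounded near ${\bf z}={\bf u}$ since the $u_j$ are pairwise distinct, and the substitutions act termwise on rational functions in the $z_j$. The cleanest route is to treat the identity at the level of formal power series in suitable variables (where branching and the operator action commute with the sum termwise by construction) and then to invoke the absolute summability of $\mathcal{W}^f(\omega;{\bf z})$ in a neighborhood of ${\bf u}$, given by the assumptions, to pass to the numerical equality; a parallel approach used elsewhere in the paper (see the limit transitions in Section \ref{Section2.2}) is to enforce convergence by first substituting a geometric specialization, and then analytically continuing. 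A minor auxiliary point is the case $m_{i-1}=m_i$, where the branching step is trivial ($\mathsf{F}_{\lambda/\mu}$ with no variables is $\delta_{\lambda,\mu}$) and the two indicator constraints nest consistently because $S_i\subset S_{i-1}$.
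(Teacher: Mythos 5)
Your proposal is correct and follows essentially the same route as the paper's proof: decompose $Z^f({\bf z})$ and $\mathcal{W}^f({\bf m};{\bf z})$ via the branching rule, apply $D^i_{m_i}$ from the top down using Lemma \ref{LemmaOpFs} to insert indicators, and invoke Remark \ref{remarkF} to justify restricting to distinct-part signatures after each rebranching step. The paper handles the interchange of $D^i_{m_i}$ with the infinite sum more directly than your two suggested alternatives, by observing (Remark \ref{remarkD}) that the operators commute with pointwise limits of functions away from $u_i = u_j$, which combined with the assumed absolute summability of the weights gives the interchange immediately; your formal-power-series route would also work but is heavier than needed.
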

We view Lemma \ref{S3Lemma} as one of the main results of this article. Under very mild conditions on the function $f$ it provides formulas for the observables $\mathbb{P}^f\left( A({\bf m})  \right)$, which form a large class of correlation functions that can be used to analyze the six-vertex model. In the context of this paper (\ref{sum5}) plays the role of a starting point for our asymptotic analysis, and we hope that it will be useful for studying other six-vertex models in the future.

\begin{proof}
 Repeating some of the arguments from Section \ref{Section2.3}, we have that
\begin{equation}\label{Cauchy2}
\begin{split}
&\mathcal{W}^f({\bf m}; {\bf z}) = \sum_{\mu^k \in S_k} \sum_{\mu^{k-1} \in S_{k-1}} \cdots \sum_{\mu^1 \in S_1} \prod_{i = 1}^k \mathsf{F}_{\mu^i  / \mu^{i-1} }(z_{m_{i-1} + 1},...,z_{m_i})  F(\mu^k), \mbox{ and }\\
&Z^f({\bf z}) = \sum_{\mu \in \mathsf{Sign}_{m_k}^*} \mathsf{F}_{\mu}(z_1,...,z_{m_k}) F( \mu), \mbox{ where } F( \mu) = \sum_{\lambda \in \mathsf{Sign}_{N}^*} \mathsf{F}_{\lambda/\mu}(z_{m_k+1},...,z_N ) f(\lambda).
\end{split}
\end{equation}
The statement of the lemma will be produced if we apply $D^1_{m_1}D^{2}_{m_2} \cdots D^{k}_{m_k}$ (in the $z$-variables) to both sides of the second line of (\ref{Cauchy2}),  set ${\bf z} = {\bf u}$ and divide by $Z^f({\bf u})$. We provide the details below.

We start by applying $D^k_{m_k}$ to get
$$D_{m_k}^k\sum_{\mu \in \mathsf{Sign}_{m_k}^*} \mathsf{F}_{\mu}(z_1,...,z_{m_k}) F(\mu) =\sum_{\mu \in \mathsf{Sign}_{m_k}^*}D_{m_k}^k\mathsf{F}_{\mu}(z_1,...,z_{m_k}) F(\mu)= \sum_{\mu \in S_k}\mathsf{F}_{\mu}(z_1,...,z_{m_k}) F(\mu).$$
The change of the order of the sum and the operator is allowed by the linearity of $D^k_{m_k}$ and the absolute convergence of the sum (see Remark \ref{remarkD}), while the second equality follows from Lemma \ref{LemmaOpFs}. We next use Proposition \ref{Branching} and rewrite the above as
\begin{equation}\label{sum1}
\sum_{\mu^k \in S_k} \sum_{\lambda \in \mathsf{Sign}_{m_{k-1}}^+} \mathsf{F}_{\lambda}(z_1,...,z_{m_{k-1}}) \mathsf{F}_{\mu^k / \lambda}(z_{m_{k-1} + 1},...,z_{m_k}) F(\mu^k).
\end{equation}
If $\mu^{k} \in S_k$, we know that it has all distinct parts. The latter implies by Remark \ref{remarkF} that $ \mathsf{F}_{\mu^k  / \lambda}(z_{m_{k-1} + 1},...,z_{m_k}) = 0$ unless $\lambda$ has distinct parts. Consequently we may rewrite (\ref{sum1}) as
\begin{equation}\label{sum2}
\sum_{\mu^k \in S_k} \sum_{\lambda \in \mathsf{Sign}_{m_{k-1}}^*} \hspace{-5mm}  \mathsf{F}_{\lambda}(z_1,...,z_{m_{k-1}}) \mathsf{F}_{\mu^k / \lambda}(z_{m_{k-1} + 1},...,z_{m_k}) F(\mu^k).
\end{equation}
Applying $D^{k-1}_{m_{k-1}}$ to (\ref{sum2}), using its linearity and Lemma \ref{LemmaOpFs}, we get
\begin{equation}\label{sum3}
\begin{split}
 \sum_{\mu^k \in S_k} \sum_{\mu^{k-1} \in S_{k-1}}\hspace{-5mm} \mathsf{F}_{\mu^{k-1}}(z_1,...,z_{m_{k-1}}) \mathsf{F}_{\mu^k / \mu^{k-1}}(z_{m_{k-1} + 1},...,z_{m_k}) F(\mu^k).
\end{split}
\end{equation}
Repeating the above argument for $k-2$,...,$1$, we see that the result of applying $D^1_{m_1}D^{2}_{m_2} \cdots D^{k}_{m_k}$ to the RHS of (\ref{Cauchy2}) is
\begin{equation}\label{sum4}
 \sum_{\mu^k \in S_k} \sum_{\mu^{k-1} \in S_{k-1}} \cdots \sum_{\mu^1 \in S_1} \prod_{i = 1}^k \mathsf{F}_{\mu^i  / \mu^{i-1} }(z_{m_{i-1} + 1},...,z_{m_i})  F(\mu^k),
\end{equation}
with the convention that $m_0 = 0$ and $\mu^0 = \varnothing$. From (\ref{Cauchy2}) the latter equals $\mathcal{W}^f({\bf m}; {\bf z})$ and so $(D^1_{m_1}D^{2}_{m_2} \cdots D^{k}_{m_k}Z^f)({\bf u}) = \mathcal{W}^f({\bf m}; {\bf u})$. Dividing both sides by $Z^f({\bf u})$ and recalling that  $\mathbb{P}^f\left( A({\bf m})  \right) = Z^f({\bf u})^{-1} \mathcal{W}^f({\bf m}; {\bf u})$ proves the lemma.
\end{proof}

In the remainder of this section, we explain how our first order operator $D^1_m$ can be used to derive an interesting recurrence relation for $\mathcal{W}^f({\bf m}; {\bf z})$ in terms of the same quantity for a system of fewer parameters. The exact statement is given in the following lemma.
\begin{lemma}\label{S3LemmaNew}
Assume the same notation as in Definition \ref{S3DefObs}. Let $\hat {\bf m} = (m_2 - 1,...,m_k - 1)$ and ${\bf z}/ \{z_i\}$ be the variable set $(z_1,...,z_{i-1},z_{i+1},...,z_N)$. Then we have
\begin{equation}\label{recurrence}
\begin{split}
&\mathcal{W}^f({\bf m}; {\bf z})   =  \prod_{j = m_1 + 1}^N \frac{z_j - sq}{1 - s z_j}  \sum_{i = 1}^{m_1} \frac{1-q}{1-sz_i}  \prod_{j = 1, j\neq i}^{m_1}\left( \frac{z_j - qz_i}{z_j - z_i} \frac{z_j - s}{1 - sz_{j}}\right)  \mathcal{W}^g(\hat {\bf m}; {\bf z}/ \{z_i\}),
\end{split}
\end{equation}
where $g: \mathsf{Sign}_{N-1}^+ \rightarrow \mathbb{R}$ is given by $g(\mu) = f(\mu + 1^{N-1})$ and $\lambda = \mu + 1^{N-1}$ is such that $\lambda_i = \mu_i + 1$ for $i \leq N-1$ and $\lambda_N = 0$. 
\end{lemma}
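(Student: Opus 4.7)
The plan is to combine Lemma~\ref{S3Lemma} with an explicit evaluation of the first-order operator $D^1_{m_1}$ and a ``column-zero'' reduction identity for skew $\mathsf{F}$-functions. By Lemma~\ref{S3Lemma}, $\mathcal{W}^f({\bf m};{\bf z}) = D^1_{m_1} H({\bf z})$ where $H := D^2_{m_2}\cdots D^k_{m_k} Z^f$. After expanding $D^1_{m_1}$ via Definition~\ref{DiffOp}, the target identity will reduce to showing
$$H(z_1,\ldots, z_{i-1}, s, z_{i+1}, \ldots, z_N) = \frac{1-q}{1-s^2} \prod_{l=1,\, l \neq i}^N \frac{z_l - sq}{1-sz_l} \cdot \mathcal{W}^g(\hat{\bf m};\, {\bf z}/\{z_i\})$$
for each $i \in \{1,\ldots, m_1\}$; an elementary simplification of the $z$-factors produced by $D^1_{m_1}$ will then assemble exactly the lemma's right-hand side.

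Following the proof of Lemma~\ref{S3Lemma}, I will first write
$$H = \sum_{\mu^2 \in S_2,\,\ldots,\, \mu^k \in S_k} \mathsf{F}_{\mu^2}(z_1,\ldots, z_{m_2}) \prod_{j=3}^k \mathsf{F}_{\mu^j / \mu^{j-1}}(z_{m_{j-1}+1}, \ldots, z_{m_j})\, F(\mu^k),$$
with $F(\mu^k) = \sum_{\lambda} \mathsf{F}_{\lambda/\mu^k}(z_{m_k+1}, \ldots, z_N) f(\lambda)$. Since $i \leq m_1 < m_2$, substituting $z_i = s$ affects only the factor $\mathsf{F}_{\mu^2}(z_1,\ldots,z_{m_2})$; by Lemma~\ref{LemmaSub} with $k=1$ (applied to any single variable via the symmetry of Proposition~\ref{symmetricFun}) this factor vanishes unless $\mu^2_{m_2}=0$, and otherwise equals
$$\frac{1-q}{1-s^2}\prod_{l=1,\, l \neq i}^{m_2} \frac{z_l-sq}{1-sz_l}\; \mathsf{F}_{\mu^2 - 1^{m_2-1}}(z_1,\ldots, \hat{z}_i,\ldots, z_{m_2}).$$

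The technical core of the proof will be the following ``column-zero'' identity for skew $\mathsf{F}$-functions: whenever $\mu \in \mathsf{Sign}^+_N$ and $\lambda \in \mathsf{Sign}^+_{N+n}$ both have distinct parts and $\mu_N = 0$,
$$\mathsf{F}_{\lambda/\mu}(u_1,\ldots, u_n) = \mathbf{1}_{\{\lambda_{N+n} = 0\}} \prod_{y=1}^n \frac{u_y - sq}{1-su_y}\; \mathsf{F}_{(\lambda - 1^{N+n-1})/(\mu - 1^{N-1})}(u_1,\ldots, u_n).$$
I will prove this by analyzing the column $x = 0$ in any contributing path configuration. Each vertex $(0,y)$ sees a horizontal arrow from the left boundary ($j_1 = 1$), which forbids the higher-spin vertex types $(g+1, 0; g, 1)$ that are the only ones capable of decreasing vertical multiplicity. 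Hence that multiplicity is non-decreasing along column~$0$; since it starts at $i_1 = 1$ at the bottom (from $\mu_N = 0$) and is capped by $1$ at the top (by distinctness of $\lambda$), it is constant equal to $1$ throughout, which in particular forces $\lambda_{N+n} = 0$. In that case every $(0, y)$ is constrained to the $(1, 1; 1, 1)$ configuration of weight $(u_y - sq)/(1 - su_y)$ (using $sq = s^{-1}$), and stripping column~$0$ yields the displayed reduction.

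With this identity in hand, I will apply it to each $\mathsf{F}_{\mu^j/\mu^{j-1}}$ for $j \geq 3$ (both $\mu^{j-1}, \mu^j$ have smallest part~$0$) and to each $\mathsf{F}_{\lambda/\mu^k}$ appearing in $F(\mu^k)$. Under the bijection $\mu^j \in S_j \leftrightarrow \nu^j := \mu^j - 1^{m_j - 1} \in \mathsf{Sign}^*_{m_j-1,\, j-1}$, the expression for $H(z_1,\ldots,s,\ldots,z_N)$ then factors as $\frac{1-q}{1-s^2} \prod_{l \neq i} \frac{z_l - sq}{1-sz_l}$ multiplied by
$$\sum_{\nu^2,\ldots,\nu^k} \mathsf{F}_{\nu^2}(z_1,\ldots, \hat{z}_i, \ldots, z_{m_2}) \prod_{j=3}^k \mathsf{F}_{\nu^j/\nu^{j-1}}(z_{m_{j-1}+1}, \ldots, z_{m_j})\, \tilde{G}(\nu^k),$$
where $\tilde{G}(\nu^k) = \sum_{\lambda'} \mathsf{F}_{\lambda'/\nu^k}(z_{m_k+1}, \ldots, z_N) g(\lambda')$. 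Finally, applying formula~\eqref{Cauchy2} to $g$ and $\hat{\bf m}$ (whose leading $m_2 - 1$ variables from ${\bf z}/\{z_i\}$ are precisely $z_1,\ldots, \hat{z}_i, \ldots, z_{m_2}$), this inner sum is recognized as $\mathcal{W}^g(\hat{\bf m};\, {\bf z}/\{z_i\})$, finishing the proof. The hardest part will be the combinatorial analysis of column~$x=0$: one must carefully rule out higher-spin configurations such as $(1, 1; 2, 0)$ that would transiently raise the vertical multiplicity above~$1$, which relies on the distinctness of $\lambda$ together with the parameter relation $q = s^{-2}$ through the weights in~\eqref{weights1}.
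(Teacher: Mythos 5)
Your proposal is correct and follows essentially the same route as the paper. The two proofs are organized slightly differently: the paper observes that $D^1_{m_1}\mathcal{W}^f = \mathcal{W}^f$ (a consequence of the projection property $D^1_{m_1}\mathsf{F}_{\mu^1}=\mathsf{F}_{\mu^1}$ for $\mu^1\in S_1$) and then re-expands $D^1_{m_1}\mathsf{F}_{\mu^1}$ via its definition and Lemma~\ref{LemmaSub}, whereas you work directly from $\mathcal{W}^f = D^1_{m_1}H$ with $H:=D^2_{m_2}\cdots D^k_{m_k}Z^f$ and compute $T_{z_i,s}H$ by specializing $\mathsf{F}_{\mu^2}$. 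Since the paper's step (\ref{ok1}) expresses the same projection fact in a slightly different place, these two formulations are interchangeable, and in both cases the substantive content is identical: Lemma~\ref{LemmaSub} with $k=1$, the column-zero shift identity for skew $\mathsf{F}$-functions whose lower boundary signature has a part equal to $0$, and the recognition of the resulting sum as the Cauchy decomposition of $\mathcal{W}^g(\hat{\bf m};\cdot)$. The one place where you add content is in the justification of the shift identity: the paper simply asserts it (``We notice from the definition of $\mathsf{F}_{\lambda/\mu}$...''), while you give a careful combinatorial argument about the vertical multiplicity along column $x=0$, noting that with $j_1=1$ at every row only the types $(g,1;g,1)$ and $(g,1;g+1,0)$ are allowed so the multiplicity is non-decreasing, that it starts at $1$ from $\mu_N=0$ and must end at $\le 1$ by distinctness of $\lambda$, hence is constant $1$ and forces $\lambda_{N+n}=0$. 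That supporting argument is sound and makes explicit what the paper treats as evident. A minor quibble: you write ``$i\le m_1<m_2$'', but the hypothesis allows $m_1=m_2$, in which case the same argument applies verbatim since $z_i$ still appears only in the factor $\mathsf{F}_{\mu^2}(z_1,\ldots,z_{m_2})$; and for $k=1$ one should interpret $H=Z^f$ with the sum (\ref{Cauchy2}) running over $\mathsf{Sign}^*_{m_1}$, where the vanishing condition $\mu_{m_1}=0$ from Lemma~\ref{LemmaSub} then genuinely restricts the sum rather than being automatic.
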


 This result will not be used in the remainder of the paper, but we believe it to be of separate interest as we explain now. In order to use $\mathbb{P}^f\left( A({\bf m})  \right)$ to analyze a six-vertex model it is desirable to have closed formulas for these quantities.  In this paper we will work with a particular model, for which $Z^f$ has a product form. This will allow us to find contour integral formulas for the RHS of (\ref{sum5}) as will be explained in the next section. For other boundary conditions; however, one might not be able to use  (\ref{sum5}) to derive formulas for $\mathbb{P}^f\left( A({\bf m})  \right)$ and a different approach needs to be taken.  Having a recurrence relation for $\mathcal{W}^f({\bf m}; {\bf z})$ provides a possible route for finding closed formulas for these correlation functions. In the base case, which occurs when $k = 0$ or equivalently ${\bf m} = \varnothing$, we have that $\mathcal{W}^f(\varnothing; {\bf z}) = Z^f({\bf z})$. If one has a closed formula for $Z^f({\bf z})$ then (\ref{recurrence}) can be potentially used to guess a formula for $\mathcal{W}^f({\bf m}; {\bf z})$, by matching the base case and showing it satisfies the above recurrence relation. A similar approach was used in \cite{CPS}, where a determinant formula for $\mathcal{W}^f({\bf m}; {\bf z})$ was guessed for the six-vertex model with DWBC and shown to satisfy such a recurrence relation. The key point here, is that the recurrence relation we will prove holds for general boundary conditions.

\begin{proof}
For $\mu \in \mathsf{Sign}^*_{m}$ we define $\hat \mu \in \mathsf{Sign}^*_{m-1}$ by $\hat \mu_i = \mu_i-1$ for $i = 1,...,m - 1$. We apply $D^1_{m_1}$ (in the $z$-variables) to both sides of the first line of (\ref{Cauchy2}) and get 
$$D^1_{m_1}\mathcal{W}^f({\bf m}; {\bf z}) = \sum_{\mu^k \in S_k} \sum_{\mu^{k-1} \in S_{k-1}} \cdots \sum_{\mu^1 \in S_1} \prod_{i = 2}^k \mathsf{F}_{\mu^i  / \mu^{i-1} }(z_{m_{i-1} + 1},...,z_{m_i})  F(\mu^k) D^1_{m_1}\mathsf{F}_{\mu^1 } (z_1,...,z_{m_1}).$$
In obtaining the above we used the linearity of $D^1_{m_1}$ and the convergence of the sum to change the order of the sum and operator. Using that $ D^1_{m_1}\mathsf{F}_{\mu^1 } (z_1,...,z_{m_1}) = \mathsf{F}_{\mu^1 } (z_1,...,z_{m_1}) $ whenever $\mu^1 \in S_1$, we deduce
\begin{equation}\label{ok1}
D^1_{m_1}\mathcal{W}^f({\bf m}; {\bf z})  =  \mathcal{W}^f({\bf m}; {\bf z}) .
\end{equation}
On the other hand, using the definition of $D^1_{m_1}$ and Lemma \ref{LemmaSub}, we have
\begin{equation}\label{ok2}
\begin{split}
D^1_{m_1}\mathsf{F}_{\mu^1 } (z_1,...,z_{m_1}) = \sum_{i = 1}^{m_1} \frac{1-q}{1-sz_i}  \prod_{j = 1, j\neq i}^{m_1}\left( \frac{z_j - qz_i}{z_j - z_i} \frac{z_j - s}{1 - sz_{j}}\right) \mathsf{F}_{\hat \mu^{1}}({\bf z}_{m_1} / \{z_i\}),
\end{split}
\end{equation}
where ${\bf z}_{m_1} / \{z_i\}$ stands for the variable set $(z_1,...,z_{i-1}, z_{i+1},...,z_{m_1})$. Replacing (\ref{ok2}) in our earlier expression for $D^1_{m_1}\mathcal{W}^f({\bf m}; {\bf z}) $ and utilizing (\ref{ok1}) we conclude that 
\begin{equation}\label{ok3}
\begin{split}
&\mathcal{W}^f({\bf m}; {\bf z})   =   \sum_{i = 1}^{m_1} \frac{1-q}{1-sz_i}  \prod_{j = 1, j\neq i}^{m_1}\left( \frac{z_j - qz_i}{z_j - z_i} \frac{z_j - s}{1 - sz_{j}}\right) \times \\
& \sum_{\mu^k \in S_k} \sum_{\mu^{k-1} \in S_{k-1}} \cdots \sum_{\mu^1 \in S_1} \prod_{i = 2}^k \mathsf{F}_{\mu^i  / \mu^{i-1} }(z_{m_{i-1} + 1},...,z_{m_i})  F(\mu^k)\mathsf{F}_{\hat \mu^{1}}({\bf z}_{m_1} / \{z_i\}).
\end{split}
\end{equation}
We notice from the definition of $\mathsf{F}_{\lambda / \mu}$ that for $\lambda \in \mathsf{Sign}^*_{b,1}$ and $\mu \in \mathsf{Sign}^*_{a,1}$ we have
$$\mathsf{F}_{\lambda/ \mu}(z_{a+1},...,z_b)  = \prod_{j = a+1}^b \frac{z_j - sq}{1 - s z_j}\times \mathsf{F}_{\hat \lambda/ \hat \mu}(z_{a+1},...,z_b).$$
Substituting this and the definition of $F$ in (\ref{ok3}), we arrive at
\begin{equation*}
\begin{split}
\mathcal{W}^f({\bf m}; {\bf z})   =  &\prod_{j = m_1 + 1}^N \frac{z_j - sq}{1 - s z_j}  \sum_{i = 1}^{m_1} \frac{1-q}{1-sz_i} \prod_{j = 1, j\neq i}^{m_1}\left( \frac{z_j - qz_i}{z_j - z_i} \frac{z_j - s}{1 - sz_{j}}\right)   \sum_{\hat \mu^{k} \in T_{k-1}} \sum_{\hat \mu^{k-1} \in T_{k-2}} \cdots \sum_{\hat \mu^{2} \in T_1}\\
 &\sum_{\hat \mu^{1} \in \mathsf{Sign}_{m_1 - 1}^*}  \prod_{i = 2}^k \mathsf{F}_{\hat \mu^{i}  / \hat \mu^{i-1} }(z_{m_{i-1} + 1},...,z_{m_i})  G(\hat \mu^{k}) \mathsf{F}_{\hat \mu^{1}}({\bf z}_{m_1} / \{z_i\}).
\end{split}
\end{equation*}
In the above $T_i = \mathsf{Sign}^*_{m_{i+1} - 1,i}$ and $G(\hat \mu) = \sum_{\lambda \in \mathsf{Sign}_{N}^*} \mathsf{F}_{\hat \lambda/\hat \mu}(z_{m_k+1},...,z_N ) f(\lambda)$. Using the branching relations (\ref{branchF}), (\ref{Cauchy2}) and the definition of $g$ we recognize the above identity as (\ref{recurrence}).
\end{proof}

\begin{remark}\label{keyRem}
So far in this paper we have considered the vertically inhomogeneous six-vertex model; however, one can introduce horizontal inhomogeneities as well. A particular way to do this is given in \cite{BP}, where the weights depend on an additional set $\Xi = \{\xi_j\}_{j = 0,1,...}$ of inhomogeneity parameters (our model corresponds to setting $\xi_i = 1$ for all $i$). We denote the partition function in this case by $\mathsf{F}_\lambda(u_1,...,u_m|\Xi)$ and refer the reader to (1.4) in \cite{BP} for the exact formula (the variables $s_x$ in that formula need to be set to $q^{-1/2}$). In a certain sense, one can interpret $D^k_m$ as acting on the first $k$ columns of the six-vertex model. If the first $k$ inhomogeneity parameters $\xi_0,...,\xi_{k-1}$ are all the same, then we can find an equivalent to Lemma \ref{LemmaOpFs}, but in general no such extension seems possible. Let us explain how this can be done in the case $k = 1$. If we set
$$D_m^1 := \sum_{i = 1}^n \prod_{ j \neq i} \frac{u_j - qu_i}{u_j - u_i} \prod_{j  \neq i } \left( \frac{u_j - s\xi_0^{-1}}{u_j - sq\xi_0^{-1}}\right) \frac{\phi_0(u_i)}{\phi_0(s\xi_0^{-1})} T_{u_i,s\xi_0^{-1}},$$
then one readily verifies, as done above, that $D_m^1  \mathsf{F}_\lambda(u_1,...,u_m|\Xi) = {\bf 1}_{\{\lambda_m = 0\}}\mathsf{F}_\lambda(u_1,...,u_m|\Xi)$, whenever $\lambda$ has distinct parts. The latter can be used to derive a recurrence relation for $\mathcal{W}^f({\bf m}; {\bf u}| \Xi)$ in terms of $\mathcal{W}^g(\hat {\bf m}; {\bf u}/ \{u_i\}| \tau_1 \Xi)$ (here  $\tau_1 \Xi = \{\xi_j\}_{j = 1,2,...}$), which generalizes (\ref{recurrence}). The proof is essentially the same as the one presented above.
\end{remark}

\begin{remark}
In the case of the domain wall boundary condition for the six-vertex model, which corresponds to $f(\lambda) = {\bf 1}_{\{ \lambda_1 = N-1,...,\lambda_N = 0\}}$ above, the quantity $Z^f({\bf u})^{-1}\mathcal{W}^f({\bf m}; {\bf u})$ was investigated in \cite{CPS} under the name  generalized emptiness formation probability (GEFP). In this setting, (\ref{recurrence}) naturally corresponds to equation (3.6) of \cite{CPS}, which is the key ingredient in finding closed determinant formulas for the GEFP. The derivation of (3.6) in \cite{CPS} is based on the quantum inverse scattering method, and we see that the operators $D^1_m$ (and their generalization outlined in Remark \ref{keyRem}) provide an alternative route for establishing the recurrence relation.
\end{remark}

\subsection{Action on product functions}\label{Section3.3}
Equation (\ref{sum5}) shows that understanding $\mathbb{P}^f\left( A({\bf m})  \right) $ requires knowledge of how $D^k_m$ act on the partition function $Z^f$. In this section, we will see that if $Z^f$ has a product form, then the action of the operators is relatively simple.

 In the following sequence of lemmas we investigate how $D^1_{m_1}D^{2}_{m_2} \cdots D^{k}_{m_k}$ acts on a function $F({\bf z})$ of the form $F({\bf z}) = F(z_1,...,z_m) = \prod_{i = 1}^m f(z_i)$.
\begin{lemma}\label{LemmaSC}
Let $m \geq 1$ and $1 \leq k \leq m$ be given. Suppose that $q \in (0,1)$, $s > 1$, $ u_1,...,u_m > s$ and $u_i \neq u_j$ when $i\neq j$. Let $f(z)$ be a holomorphic non-vanishing function in a neighborhood of an interval containing $s, u_1,...,u_m$. Put $F({\bf z}) = F(z_1,...,z_m) = \prod_{i = 1}^m f(z_i)$. Then we have that 
\begin{equation}\label{Op}
\begin{split}
&(D^k_mF)({\bf u}) = F({\bf u})  \cdot q^{\frac{-k(k-1)}{2}}\prod_{i = 1}^m\left( \frac{u_i - s}{u_i - sq}\right)^k  \frac{f(s)^k}{(2\pi \iota)^k k!} \times\\
& \int_\gamma \cdots \int_{\gamma} \det \left[ \frac{1} { qz_i - z_j} \right]_{i,j = 1}^k \frac{F_k(z_1,...,z_k)}{F_k(S)} \prod_{j = 1}^k\left( \prod_{i = 1}^m \frac{qz_j - u_i}{z_j - u_i} \right)\left( \frac{z_j- sq}{z_j - s}\right)^k  \frac{dz_j}{f(z_j)}.
\end{split}
\end{equation}
The contour $\gamma$ is a positively oriented contour around the points $u_1,...,u_m$, and does not contain other singularities of the integrand. Such a contour will exist, provided $u_i$ are sufficiently close to each other.
\end{lemma}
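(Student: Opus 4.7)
My strategy is to apply $D_m^k$ directly to $F({\bf z})=\prod_i f(z_i)$ and show that the resulting sum over $k$-subsets $I\subset\{1,\ldots,m\}$ coincides with the $k$-fold residue expansion of the proposed contour integral. Since $T_{u_i,s}$ acts on the product function by replacing $f(u_i)$ with $f(s)$, Definition \ref{DiffOp} immediately gives
$$(D_m^kF)({\bf u})=F({\bf u})\,f(s)^k\sum_{|I|=k}\frac{F_k(u_I)}{F_k(S)}\prod_{\substack{i\in I\\ j\notin I}}\frac{u_j-qu_i}{u_j-u_i}\prod_{j\notin I}\left(\frac{u_j-s}{u_j-sq}\right)^{k}\prod_{i\in I}\frac{1}{f(u_i)}.$$
Pulling $\prod_{i=1}^m\left(\frac{u_i-s}{u_i-sq}\right)^k$ out of the sum, with a compensating $\prod_{i\in I}\left(\frac{u_i-sq}{u_i-s}\right)^k$ inside, reduces the problem to matching what remains with the integral in \eqref{Op}.

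Next I would evaluate the integral by residues. The integrand is symmetric in $z_1,\ldots,z_k$: the matrix $[1/(qz_i-z_j)]$ is invariant under simultaneous row/column permutations, and $F_k=\mathsf{F}_{(k-1,\ldots,0)}$ is symmetric by Proposition \ref{symmetricFun}. By Cauchy's determinant identity,
$$\det\left[\frac{1}{qz_i-z_j}\right]_{i,j=1}^{k}=\frac{(-q)^{k(k-1)/2}\prod_{i<j}(z_i-z_j)^2}{\prod_{i,j}(qz_i-z_j)},$$
so the integrand vanishes to second order whenever any two of the $z$'s coincide. Consequently, all ``diagonal'' residues where two $z$'s sit at the same $u_i$ contribute zero, and the $S_k$-symmetry together with the prefactor $1/k!$ collapses the integral to $\sum_{|I|=k}\mathrm{Res}_{z=u_I}$. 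The smallness of $\gamma$ ensures that it encloses only the simple poles at $z_j=u_i$ and misses $z_j=s$, $z_j=qz_i$, and $z_j=z_i/q$; this is where the hypothesis that the $u_i$ lie close to one another (and well away from $s$, $sq$) enters.

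Finally, for a fixed $I=\{i_1,\ldots,i_k\}$ I would compute the residue at $z_j=u_{i_j}$ and match it to the summand from the first step. The residue of $\prod_i\frac{qz_j-u_i}{z_j-u_i}$ at $z_j=u_{i_j}$ is $(q-1)u_{i_j}\prod_{i\neq i_j}\frac{qu_{i_j}-u_i}{u_{i_j}-u_i}$. Splitting the inner product according to whether $i\in I$ or $i\notin I$, the ``outside'' piece becomes precisely $\prod_{i\in I,\,j\notin I}\frac{u_j-qu_i}{u_j-u_i}$, while the ``inside'' piece $\prod_{a\neq b}\frac{qu_{i_a}-u_{i_b}}{u_{i_a}-u_{i_b}}$, combined with the evaluated Cauchy determinant $\det[1/(qu_{i_a}-u_{i_b})]$, simplifies to $q^{k(k-1)/2}/\prod_\ell(q-1)u_{i_\ell}$. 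This cancels the residue prefactors $\prod_j(q-1)u_{i_j}$ and the external $q^{-k(k-1)/2}$. The remaining factors $F_k(u_I)/F_k(S)$, $\prod_{j\notin I}\left(\frac{u_j-s}{u_j-sq}\right)^k$ (from the external $\prod_i$ combined with $\prod_j\left(\frac{z_j-sq}{z_j-s}\right)^k$ at $z_j=u_{i_j}$), and $\prod_{i\in I}f(u_i)^{-1}$ then reproduce the summand of $(D_m^kF)({\bf u})$ term by term.

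The main obstacle is the sign and $q$-power bookkeeping in this last step: tracking the $(-1)^{k(k-1)/2}$ and the $q^{k(k-1)/2}$ arising from the Cauchy determinant, the Vandermonde ``inside'' product, the residues $(q-1)u_{i_j}$, and the external $q^{-k(k-1)/2}$, and verifying that they conspire to an overall factor of $1$. A secondary technical point is the existence of the contour $\gamma$, which is where the closeness of the $u_i$'s to each other and their separation from $s$, $sq$, and $qu_i$ is used.
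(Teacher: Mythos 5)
Your proposal is correct and matches the paper's proof (which itself follows Proposition 2.11 of \cite{BorCor}): both expand $D_m^kF$ directly from Definition \ref{DiffOp}, substitute the Cauchy determinant identity to expose the squared Vandermonde that annihilates coincident residues, let the $1/k!$ absorb the permutations of each index set $I$, and then match residues at $z_j=u_{i_j}$ to the summands of $D_m^kF$ term by term. Your $(-q)^{k(k-1)/2}\prod_{i<j}(z_i-z_j)^2/\prod_{i,j}(qz_i-z_j)$ form of the determinant and the subsequent cancellation of the $(q-1)u_{i_j}$ residue factors against the Cauchy-determinant denominator and the external $q^{-k(k-1)/2}$ is exactly the bookkeeping the paper performs.
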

\begin{proof}
The proof is essentially the same as that of Proposition 2.11 in \cite{BorCor}. Firstly, we notice that the contours will always exist, provided $u_i$ are sufficiently close to each other. Indeed, the singularities of the integrand that are not singularities of $f$ are precisely at $u_i$, $s$, $0$, $z_i = q^{-1}z_j$ and at $s^{-1}$ (the latter one is a singularity of $F_k$). Since $u_i$ are bounded away to the right from $s$ (and hence $s^{-1}$ and $0$ ) and the function $f$ does not vanish in a neighborhood of an interval containing $u_i$ we may pick the contour $\gamma$ so as to exclude all singularities of the integrand, except possibly for $z_i = q^{-1}z_j$. However, if $u_i$ are sufficiently close then we can choose $\gamma$ to be a small circle around those points, which is disjoint from $q \cdot \gamma$. This excludes the remaining singularities. \\

We substitute in (\ref{Op}) the Cauchy determinant identity
$$\det \left[ \frac{1} { z_i - qz_j} \right]_{i,j = 1}^k = \frac{q^{\frac{k(k-1)}{2}} \prod_{ 1 \leq i < j \leq k} (z_i - z_j)(z_j - z_i)}{\prod_{i,j = 1}^k (qz_i - z_j)}$$
and calculate the residues at $z_j = u_{l_j}$. The Vandermonde determinants in the numerator prevent any of the $l_j$'s to be the same. If they are distinct and $I = \{l_1,...,l_k\}$ one calculates the residue to be 
$$\frac{1}{k!}\cdot \prod_{ j \not \in I}\left( \frac{u_j- s}{u_j- sq}\right)^k\frac{F_k(u_I)}{F_k(S)}\prod_{i \in I; j \not \in I} \frac{u_j - qu_i}{u_j - u_i} \left[  \prod_{i \in I} \frac{f(s)}{f(u_{l_i})}F({\bf u}) \right] . $$
The expression in the bracket is precisely $\prod_{i \in I} T_{u_i,s}F({\bf u})$. Summing over all permutations of $I$ removes the $k!$ above and summing over $I$ we recognize precisely $(D_m^kF)({\bf u})$ as desired.
\end{proof}

For $k \leq r \leq m$ we let $D^k_r$ be the operator that acts on the variables $u_1,...,u_r$. Then we have the following result.
\begin{lemma}
Suppose $1\leq k \leq m_1 \leq m_2 \leq \cdots \leq m_k \leq m$. Denote by $M_i = \{1,...,m_i\}$ for $i = 1,...,k$. Then 
\begin{equation}\label{EqMOp}
\begin{split}
D^1_{m_1}D^{2}_{m_2} \cdots D^{k}_{m_k} = \sum_{i_1 \in M_1} \sum_{i_2 \in M_2 / I_1} \cdots \sum_{i_k \in M_k / I_{k-1}} \prod_{r = 1}^k\left( \prod_{j \in M_{r} / I_r} \frac{u_j - q u_{i_r}}{u_j - u_{i_r}} \frac{u_j - s}{u_j - sq} \right) \times \\
\frac{\prod_{r = 1}^{k} \frac{1-q}{1 - su_{i_{r}}}\left( \frac{u_{i_{r}} - sq}{1 - su_{i_{r}}} \right)^{r-1} }{F_{k}(S)} \prod_{r = 1}^k T_{u_{i_r},s} \hspace{5mm} \mbox{, where } I_r = \{i_1,...,i_r\}.
\end{split}
\end{equation}
The above is understood as an equality of operators on functions in $m$ variables.
\end{lemma}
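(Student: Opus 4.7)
The strategy is induction on $k$. For the base case $k=1$, the RHS of (\ref{EqMOp}) reduces directly to Definition \ref{DiffOp}: one has $F_1(u_i) = (1-q)/(1-su_i)$, the exponent $r-1=0$ is trivial, and $I_1 = \{i_1\}$, so the two expressions agree term by term.

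For the inductive step, set $L_{k-1} := D^1_{m_1}\cdots D^{k-1}_{m_{k-1}}$ and write $L_k = L_{k-1}\circ D^k_{m_k}$. Expanding $D^k_{m_k}$ by Definition \ref{DiffOp} as $\sum_{|I_k|=k,\,I_k\subset M_k}\alpha_{I_k}(u)\,T_{I_k}$, expanding $L_{k-1}$ by the inductive hypothesis as $\sum_{\mathbf{i}'} C^{(k-1)}(\mathbf{i}')\,T_{I'_{k-1}}$, and composing via the rule $(C\cdot T_J)\circ(\alpha\cdot T_I) = C\cdot T_J(\alpha)\cdot T_{I\cup J}$ produces a double sum over $\mathbf{i}'=(i_1,\ldots,i_{k-1})$ and $I_k$. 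The first crucial step is the \emph{zero-factor argument}: the factor $\prod_{j\in M_k\setminus I_k}((u_j-s)/(u_j-sq))^k$ in $\alpha_{I_k}$ vanishes after applying $T_{I'_{k-1}}$ unless $I'_{k-1}\cap(M_k\setminus I_k)=\varnothing$. Since each $i_r\in M_r\subset M_k$, this forces $I'_{k-1}\subset I_k$, so $I_k=I'_{k-1}\cup\{i_k\}$ for a unique extra index $i_k\in M_k\setminus I'_{k-1}$, and the double sum collapses onto ordered tuples $(i_1,\ldots,i_k)$ with $i_r\in M_r\setminus I_{r-1}$, precisely the indexing of (\ref{EqMOp}).

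It then remains to match coefficients. Computing $T_{I'_{k-1}}(\alpha_{I_k})$ directly, the product $\prod_{i\in I_k,\,j\in M_k\setminus I_k}(u_j-qu_i)/(u_j-u_i)$ splits into contributions from $i=i_r$, $r<k$ (giving $\prod_j((u_j-qs)/(u_j-s))^{k-1}$ after substitution) and from $i=i_k$ (surviving unchanged); combined with $((u_j-s)/(u_j-sq))^k$ this telescopes to $\prod_{j\in M_k\setminus I_k}\frac{u_j-qu_{i_k}}{u_j-u_{i_k}}\cdot\frac{u_j-s}{u_j-sq}$, the desired $r=k$ factor. For the $F_k(u_{I_k})$ piece, the symmetry of $F_k$ (Proposition \ref{symmetricFun}) together with Lemma \ref{LemmaSub} applied with $k'=k-1$ substitutions gives an explicit form for $F_k(u_{i_k},s,\ldots,s)$, and a direct calculation using the closed form $F_k(S) = s^{k(k-1)/2}((1-q)/(1-s^2))^{k(k+1)/2}$ establishes the key identity
\[
\frac{F_k(u_{i_k},s,\ldots,s)}{F_{k-1}(S)} = \frac{1-q}{1-su_{i_k}}\left(\frac{u_{i_k}-sq}{1-su_{i_k}}\right)^{k-1},
\]
in which the $s$ and $(1-s^2)$ exponents cancel identically. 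The $1/F_{k-1}(S)$ from $C^{(k-1)}(\mathbf{i}')$ cancels against the $F_{k-1}(S)$ in this identity, leaving $1/F_k(S)$ together with the new $r=k$ factor, which appends onto the inductive product $\prod_{r=1}^{k-1}\frac{1-q}{1-su_{i_r}}((u_{i_r}-sq)/(1-su_{i_r}))^{r-1}$ to reproduce exactly the coefficient of (\ref{EqMOp}).

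The main bookkeeping obstacle is aligning the nested products $\prod_r\prod_{\ell\in M_r\setminus I_r}$ across the two formulas, and carrying out the telescoping of the $s$, $(1-q)$, and $(1-s^2)$ exponents in the ratio $F_k(u_{i_k},s,\ldots,s)/F_{k-1}(S)$. No conceptual difficulty arises, but one must carefully distinguish $M_r\setminus I_r$ for $r<k$ from $M_k\setminus I_k$ when splitting products, and keep track of which factors of $\alpha_{I_k}$ are affected by the substitution $T_{I'_{k-1}}$.
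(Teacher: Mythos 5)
Your proof is correct and follows essentially the same route as the paper's: induction on $k$, writing $D^1_{m_1}\cdots D^k_{m_k}=(D^1_{m_1}\cdots D^{k-1}_{m_{k-1}})\circ D^k_{m_k}$, invoking the vanishing of the factor $\left(\frac{u_{i_r}-s}{u_{i_r}-sq}\right)^k$ under $T_{u_{i_r},s}$ to force $I_{k-1}\subset I_k$, telescoping the cross-ratio products, and using Lemma~\ref{LemmaSub} to evaluate $F_k(u_{i_k},s,\ldots,s)/F_{k-1}(S)$. The paper phrases the induction step as going from $k$ to $k+1$, but the argument and all intermediate identities are the same.
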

\begin{proof}
We proceed by induction on $k$ with base case $k = 1$ being just the definition of $D_{m_1}^1$. Suppose the result is known for $k$ and we wish to show it for $k+1$. Substituting the definition of $D^{k+1}_{m_{k+1}}$ and the induction hypothesis we have
$$D^1_{m_1}D^{2}_{m_2} \cdots D^{k}_{m_k} D^{k+1}_{m_{k+1}}= \sum_{i_1 \in M_1} \sum_{i_2 \in M_2 / I_1} \cdots \sum_{i_k \in M_k / I_{k-1}} \prod_{r = 1}^k\left( \prod_{j \in M_{r} / I_r} \frac{u_j - q u_{i_r}}{u_j - u_{i_r}} \frac{u_j - s}{u_j - sq} \right) \times $$
$$ \frac{\prod_{r = 1}^{k} \frac{1}{1 - su_{i_{r}}}\left( \frac{u_{i_{r}} - sq}{1 - su_{i_{r}}} \right)^{r-1} }{F_{k}(S)} \prod_{r = 1}^{k} T_{u_{i_r},s} \sum_{\substack{ I \subset M_{k+1}\\  |I| = k+1}} \prod_{\substack{ i \in I \\ j \in M_{k+1}/I }} \hspace{-4mm} \frac{u_j - qu_i}{u_j - u_i} \hspace{-2mm} \prod_{ j \in M_{k+1}/I} \left( \frac{u_j - s}{u_j - sq}\right)^{k + 1} \frac{F_{k+1}(u_I)}{F_{k+1}(S)}\prod_{i \in I} T_{u_i,s}.$$
Suppose that $i_k \not \in I$. Then 
$$T_{u_{i_{k}},s } \prod_{i \in I;j \in M_{k+1}/I} \frac{u_j - qu_i}{u_j - u_i} \prod_{j \in M_{k+1}/I} \left( \frac{u_j - s}{u_j - sq}\right)^{k + 1} \frac{F_{k+1}(u_I)}{F_{k+1}(S)}\prod_{i \in I} T_{u_i,s} = 0,$$
since one of the factors in the above expressions is $(u_{i_{k}} - s)^{k+1}$ and it vanishes when $u_{i_k} = s$. It follows that to get a non-zero contribution we must have $i_k \in I$. Repeating the argument we see that $i_r \in I$ for all $r = 1,..., k$. Thus $I = I_k \sqcup \{i_{k+1}\}$ for some $i_{k + 1} \in M_{k+1}$ are the only cases that lead to a non-zero contribution. If $I$ does have this form we see that 
$$ \prod_{r = 1}^k T_{u_{i_r},s} \prod_{i \in I; j \in M_{k+1}/I} \frac{u_j - qu_i}{u_j - u_i} \prod_{j \in M_{k+1}/I} \left( \frac{u_j - s}{u_j - sq}\right)^{k + 1} \frac{F_{k+1}(u_I)}{F_{k+1}(S)}\prod_{i \in I} T_{u_i,s} = $$
$$ = \prod_{ j \in M_{k+1}/I} \frac{u_j - qu_{i+1}}{u_j - u_{i+1}} \left(\frac{u_j - qs}{u_j - s}\right)^k\prod_{j \in M_{k+1} / I } \left( \frac{u_j - s}{u_j - sq}\right)^{k + 1} \frac{F_{k+1}(s,...,s,u_{i_{k+1}})}{F_{k+1}(S)}\prod_{r = 1}^{k+1} T_{u_{i_r},s} = $$
$$ = \prod_{ j \in M_{k+1}/I} \frac{u_j - qu_{i+1}}{u_j - u_{i+1}}\prod_{j \in M_{k+1} / I } \left( \frac{u_j - s}{u_j - sq}\right) \frac{F_{k+1}(s,...,s,u_{i_{k+1}})}{F_{k+1}(S)}\prod_{r = 1}^{k+1} T_{u_{i_r},s}.$$
From Lemma \ref{LemmaSub} we know that $F_{k+1}(s,...,s,u_{i_{k+1}}) =F_k(S)\frac{1-q}{1 - su_{i_{k+1}}}\left( \frac{u_{i_{k+1}} - sq}{1 - su_{i_{k+1}}} \right)^k$. Subsituting this above and cancelling $F_{k}(S)$ we get
$$D^1_{m_1}D^{2}_{m_2} \cdots D^{k}_{m_k} D^{k+1}_{m_{k+1}}=\sum_{i_1 \in M_1} \sum_{i_2 \in M_2 / I_1} \cdots \sum_{i_k \in M_k / I_{k-1}}\sum_{i_{k+1} \in M_{k+1} / I_{k}} \prod_{r = 1}^{k+1}\left( \prod_{j \in M_{r} / I_r} \frac{u_j - q u_{i_r}}{u_j - u_{i_r}} \frac{u_j - s}{u_j - sq}  \right)$$
$$ \times \frac{\prod_{r = 1}^{k+1} \frac{1-q}{1 - su_{i_{r}}}\left( \frac{u_{i_{r}} - sq}{1 - su_{i_{r}}} \right)^{r-1} }{F_{k+1}(S)} \prod_{r = 1}^{k+1} T_{u_{i_r},s}.$$
This proves the case $k+1$ and the general result now follows by induction.
\end{proof}

\begin{lemma}\label{LemmaMC}
Suppose $1\leq k \leq m_1 \leq m_2 \leq \cdots \leq m_k \leq m$. Suppose that $q \in (0,1)$, $s > 1$, $ u_1,...,u_m > s$ and $u_i \neq u_j$ when $i\neq j$. Let $f(z)$ be a holomorphic non-vanishing function in a neighborhood of an interval containing $s, u_1,...,u_m$. Put $F({\bf z}) = F(z_1,...,z_m) = \prod_{i = 1}^m f(z_i)$. Then we have that
\begin{equation}\label{MultiOp}
\begin{split}
(D^1_{m_1}D^{2}_{m_2} \cdots D^{k}_{m_k}F)({\bf u}) = F({\bf u}) \cdot \prod_{r= 1}^k \hspace{-1mm}  \left( \prod_{i = 1}^{m_r}\frac{u_i - s}{u_i - sq}\right)\hspace{-1mm}    \frac{f(s)^k}{(2\pi \iota)^k} \hspace{-1mm} \int_\gamma \cdots \int_{\gamma}  \prod_{ 1 \leq i < j \leq k}  \frac{ z_i - z_j}{ z_i - qz_j} \times \\
  \frac{\prod_{r = 1}^{k} \frac{1-q}{1 - sz_{r}}\left( \frac{z_{r} - sq}{1 - sz_{r}} \right)^{r-1} }{F_{k}(S)}  \prod_{r = 1}^k\left( \prod_{i = 1}^{m_r} \frac{qz_r - u_i}{z_r - u_i} \right)\left( \frac{z_r- sq}{z_r - s}\right)^{k-r+1}  \frac{dz_r}{f(z_r)z_r(q-1)}.
\end{split}
\end{equation}
The contour $\gamma$ is a positively oriented contour around the points $u_1,...,u_m$, and does not contain other singularities of the integrand. Such a contour will exist, provided $u_i$ are sufficiently close to each other.
\end{lemma}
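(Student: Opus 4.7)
The proof plan is to combine the explicit composition identity (\ref{EqMOp}) with a residue computation, in the spirit of the proof of Lemma \ref{LemmaSC} but with the nested sum structure reinterpreted as a multiple contour integral. First I would apply (\ref{EqMOp}) to the product function $F({\bf u}) = \prod_{i=1}^m f(u_i)$. Since each shift $T_{u_{i_r},s}$ replaces the factor $f(u_{i_r})$ inside $F({\bf u})$ by $f(s)$, the action reduces to
$$
(D^1_{m_1} \cdots D^k_{m_k} F)({\bf u}) = \frac{F({\bf u})\, f(s)^k}{F_k(S)} \sum_{(i_1,\ldots,i_k)} W_k(i_1,\ldots,i_k;{\bf u}) \prod_{r=1}^k \frac{1}{f(u_{i_r})},
$$
where the sum is over $k$-tuples of distinct indices with $i_r \in M_r = \{1,\ldots,m_r\}$, and $W_k$ denotes the explicit rational coefficient in (\ref{EqMOp}). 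The task is to recognize this sum as the $k$-fold contour integral in (\ref{MultiOp}).

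Next I would verify the existence of the contour $\gamma$ and evaluate the integral by residues. In each variable $z_r$ the integrand has potential singularities at $u_1,\ldots,u_{m_r}$, at $s^{-1}, s, 0$, and at $q z_j$ for $j > r$. Because $s > 1$, $q \in (0,1)$ and $u_i > s$, the points $s^{-1}, s, 0$ and the scalings $q u_l$ all lie outside a sufficiently small positively oriented loop enclosing $u_1,\ldots,u_m$ (provided the $u_i$ lie in a small enough interval), so that residue evaluation picks up contributions only at $z_r = u_{l_r}$. The Vandermonde-type numerator $\prod_{i<j}(z_i - z_j)$ forces the $l_r$ to be distinct, so the tuples contributing to the residue sum coincide with those indexing the sum above. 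The case of general distinct $u_i > s$ then follows by analytic continuation, as both sides are rational in ${\bf u}$.

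The residue calculation splits naturally. At $z_r = u_{l_r}$, the pole of $(qz_r - u_{l_r})/(z_r - u_{l_r})$ combined with the prefactor $1/((q-1)z_r)$ yields exactly $1$; the remaining factors $(qz_r - u_i)/(z_r - u_i)$ for $i \in M_r \setminus \{l_r\}$ evaluate to $\prod_{i \in M_r \setminus \{l_r\}} (u_i - q u_{l_r})/(u_i - u_{l_r})$. The Vandermonde, evaluated at $z_{r'} = u_{l_{r'}}$ for $r' < r$, contributes the extra factor $\prod_{r' < r}(u_{l_{r'}} - u_{l_r})/(u_{l_{r'}} - q u_{l_r})$, which is precisely the inverse of the $r' < r$ terms and therefore converts the $M_r \setminus \{l_r\}$ product into the nested $M_r \setminus I_r$ product appearing in $W_k$. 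The factors $(1-q)/(1-sz_r)$, $\bigl((z_r-sq)/(1-sz_r)\bigr)^{r-1}$ and $1/F_k(S)$ match directly upon substitution, while the prefactor $\prod_{r=1}^k \prod_{i=1}^{m_r}(u_i - s)/(u_i - sq)$ extracted outside the integral combines with $\prod_r \bigl((z_r - sq)/(z_r - s)\bigr)^{k-r+1}$ evaluated at $z_r = u_{l_r}$: the $I_r$-indexed contributions telescope, leaving precisely $\prod_{r=1}^k \prod_{i \in M_r \setminus I_r}(u_i - s)/(u_i - sq)$, exactly as required by (\ref{EqMOp}).

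The main obstacle will be the combinatorial bookkeeping — in particular, recognizing the Vandermonde factor as the mechanism producing the nested index set $M_r \setminus I_r$ from what the naive residue computation naturally delivers, namely $M_r \setminus \{l_r\}$ — together with the parallel cancellation in the $s$-dependent prefactor that absorbs the exponents $(k-r+1)$. Once this identification is made, the remaining algebraic matching is a routine but lengthy factor-by-factor accounting.
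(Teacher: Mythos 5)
Your proposal follows essentially the same route as the paper: both take the composition identity (\ref{EqMOp}) as given, establish the existence of the contour by the same perturbative argument used for Lemma \ref{LemmaSC}, evaluate the multiple integral by residues at $z_r=u_{i_r}$ (with the Vandermonde-type factor $\prod_{i<j}(z_i-z_j)$ forcing distinct indices), and match each residue term with the corresponding summand in (\ref{EqMOp}). Your write-up is correct and in fact spells out the "performing some cancellations" step that the paper leaves implicit — namely, that the evaluated factor $\prod_{r'<r}(u_{i_{r'}}-u_{i_r})/(u_{i_{r'}}-qu_{i_r})$ strips the $I_{r-1}$-indexed terms from $\prod_{i\in M_r\setminus\{i_r\}}$ to produce the nested $M_r\setminus I_r$ product, and that the telescoping identity $\prod_{r=1}^k\prod_{j=1}^r=\prod_{j=1}^k(\cdot)^{k-j+1}$ cancels the $I_r$-part of the $(u_i-s)/(u_i-sq)$ prefactor against $((z_r-sq)/(z_r-s))^{k-r+1}$.
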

\begin{proof}
The proof is similar to that of Lemma \ref{LemmaSC} and by the same arguments we know that the contour $\gamma$ exists, provided $u_i$ are sufficiently close.\\

We calculate the residues at $z_{r} = u_{i_r}$. The Vandermonde determinant in the numerator prevents any of the $i_r$'s to be the same. The residue at $z_1 = u_{i_1}, ... ,z_k = u_{i_k}$ is given by 
\begin{equation*}
\begin{split}
\prod_{r= 1}^k\left( \prod_{i = 1}^{m_r}\frac{u_i - s}{u_i - sq}\right)\prod_{ 1 \leq r <p \leq k}  \frac{u_{i_r} - u_{i_p}}{ u_{i_r} - qu_{i_p}} \prod_{r = 1}^k\frac{1}{u_{i_r}(q-1)}\frac{\prod_{r = 1}^{k} \frac{1-q}{1 - su_{i_r}}\left( \frac{u_{i_r} - sq}{1 - su_{i_r}} \right)^{r-1} }{F_{k}(S)}\times\\
\prod_{r = 1}^k (q-1)u_{i_r} \left( \prod_{i = 1, i\neq i_r}^{m_r} \frac{qu_{i_r} - u_i}{u_{i_r} - u_i} \right)\left( \frac{u_{i_r}- sq}{u_{i_r} - s}\right)^{k-r+1}  \left[ \frac{F({\bf u}) f(s)^k }{\prod_{r = 1}^k f(u_{i_r})}\right].
\end{split}
\end{equation*}
Performing some cancellations and recognizing the term inside the square brackets as $\prod_{r = 1}^{k} T_{u_{i_r},s}F({\bf u})$ we recognize precisely the term on the RHS of (\ref{EqMOp}) corresponding to $i_1,...,i_k$. Summing over all the residues we arrive at the desired identity.
\end{proof}

\section{Weak convergence of $(Y_1^1,...,Y_k^k)$}\label{Section4}
In this section we use our results from Section \ref{Section3} to derive formulas for $\mathbb{P}_{{\bf u}, {\bf v}}( Y_1^1 \leq m_1, ..., Y_k^k \leq m_k)$. Afterwards we specialize our formulas to the case when all $u$ and all $v$ parameters are the same and show that under the scaling of Theorem \ref{theorem2} the joint CDFs of the vectors $ (Y_1^1 , ..., Y_k^k )$ converge to a fixed function as the size of the six-vertex model increases. We finish by identifying the limit as the joint CDF of the right edge of the GUE-corners process of rank $k$ and proving Theorem \ref{theorem1}.

\subsection{Pre-limit formulas}\label{Section4.1} The goal of this section is to use the results from Section \ref{Section3} to obtain formulas for $\mathbb{P}_{{\bf u}, {\bf v}}( Y_1^1 \leq m_1, ..., Y_k^k \leq m_k)$, where $m_i \in \mathbb{N}$ for $i = 1,...,k$ and $Y_i^j$ are defined in Section \ref{Section1.2}. We summarize the result in the following proposition.
\begin{proposition}\label{corePropProb} Fix parameters as in Definition \ref{parameters}. Let $k$ and $m_i$ for $i = 1,...,k$ be positive integers such that $1 \leq k \leq m_1 \leq m_2 \leq \cdots \leq m_k \leq N$. Then we have
\begin{equation}\label{coreProb}
\begin{split}
\mathbb{P}_{{\bf u}, {\bf v}}( Y_1^1 \leq m_1, ..., Y_k^k \leq m_k)  = \left(\prod_{j = 1}^{M} \frac{1 - qsv_j}{1 - sv_j} \right)^k \prod_{r= 1}^k   \left( \prod_{i = 1}^{m_r}\frac{u_i - s}{u_i - sq}\right)   \frac{q^{-k}}{(2\pi \iota)^k}   \int_\gamma   \cdots   \int_{\gamma}  \\
   \prod_{1 \leq i < j \leq k}  \frac{z_i - z_j}{z_i - qz_j}
\prod_{r = 1}^k \left( \prod_{i = 1}^{m_r}   \frac{qz_r - u_i}{z_r - u_i}  \right)   \left(  \frac{z_r- sq}{z_r - s} \right)^{k-r+1}  \prod_{j = 1}^M   \frac{1 - z_rv_j}{1 - qz_rv_j} \frac{dz_r}{z_r(1-sz_r)}.
\end{split}
\end{equation}
The contour $\gamma$ is a positively oriented contour that contains $u_i$'s and excludes all other singularities of the integrand. Such a contour will exist, provided $u_i$ are sufficiently close to each other.
\end{proposition}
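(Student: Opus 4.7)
The plan is to identify the probability with an operator application via Lemmas \ref{S3Lemma} and \ref{LemmaMC}, after a shift by one that converts the offset event $\{Y_i^i \le m_i\}$ into one matching the $\mathsf{Sign}^*$ indicator built into the diagonal action of $D^k_m$. As observed in Section \ref{Section1.2}, $\{Y_k^k \le m\}$ is exactly $\{\lambda^m_{m-j+1}=j,\ j=1,\ldots,k\}$, so letting $\mathsf{Sign}^{**}_{m,r}$ denote those $\lambda\in\mathsf{Sign}^*_m$ with $\lambda_{m-j+1}=j$ for $j=1,\ldots,r$, the joint event $\{Y_i^i\le m_i,\ 1\le i\le k\}$ equals $A^{**}({\bf m}):=\{\omega:\lambda^{m_i}(\omega)\in\mathsf{Sign}^{**}_{m_i,i}\}$, and $\mu\mapsto\mu-1^m$ is a bijection $\mathsf{Sign}^{**}_{m,r}\to\mathsf{Sign}^*_{m,r}$.

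First, I would expand $\mathcal{W}^{f,**}({\bf m};{\bf z}):=\sum_{\omega\in A^{**}({\bf m})}\mathcal{W}^f_{{\bf u},{\bf v}}(\omega)$ via the Cauchy-type expansion of Section \ref{Section2.3}, getting $\sum_{\mu^i\in\mathsf{Sign}^{**}_{m_i,i}}\prod_{i=1}^k\mathsf{F}_{\mu^i/\mu^{i-1}}(z)\cdot F(\mu^k)$ with $F(\mu^k):=\sum_\lambda\mathsf{F}_{\lambda/\mu^k}(z)f(\lambda;{\bf v},\rho)$. Since $\mu^i_{m_i}\ge1$ on $\mathsf{Sign}^{**}_{m_i,i}$ and $f(\cdot;{\bf v},\rho)$ is supported on $\lambda$ with $\lambda_N\ge1$ (Section \ref{Section2.2}), Proposition \ref{shifting} with $k=1$ applies to every $\mathsf{F}_{\mu^i/\mu^{i-1}}$ and every $\mathsf{F}_{\lambda/\mu^k}$. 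Setting $\tilde\mu^i:=\mu^i-1^{m_i}$, $\tilde\lambda:=\lambda-1^N$, and $g(\nu):=f(\nu+1^N;{\bf v},\rho)$, all shift factors collect into a single $\prod_{j=1}^N\frac{z_j-s}{1-sz_j}$, giving
\begin{equation*}
\mathcal{W}^{f,**}({\bf m};{\bf z})=\prod_{j=1}^N\frac{z_j-s}{1-sz_j}\cdot\mathcal{W}^g({\bf m};{\bf z}),
\end{equation*}
where $\mathcal{W}^g$ is the standard expression (with $\mathsf{Sign}^*$ in place of $\mathsf{Sign}^{**}$) for the boundary function $g$.

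The algebraic content of Lemma \ref{S3Lemma} (which depends only on distinct-parts support and absolute convergence, not on non-negativity) then gives $\mathcal{W}^g({\bf m};{\bf u})=(D^1_{m_1}\cdots D^k_{m_k}Z^g)({\bf u})$. The same shift applied to the full partition function yields $Z^g({\bf u})=\prod_i\frac{1-su_i}{u_i-s}Z^f({\bf u})$, and substituting (\ref{PFEqn4}) together with $1-s^{-1}u=-s^{-1}(u-s)$ produces the product form
\begin{equation*}
Z^g({\bf u})=(q;q)_N(-s)^{-N}\prod_{i=1}^N h(u_i),\qquad h(z):=\prod_{j=1}^M\frac{1-qzv_j}{1-zv_j}.
\end{equation*}
Lemma \ref{LemmaMC} applied to $h$ expresses $(D^1_{m_1}\cdots D^k_{m_k}Z^g)({\bf u})$ as a $k$-fold contour integral. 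Since $\prod_j\frac{u_j-s}{1-su_j}\cdot Z^g({\bf u})/Z^f({\bf u})=1$ by direct inspection, the probability equals the raw Lemma \ref{LemmaMC} output (with $f$ there replaced by $h$).

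The main obstacle is that this raw output carries an extra integrand factor $\prod_{r=1}^k\bigl(\frac{z_r-sq}{1-sz_r}\bigr)^{r-1}$ together with an overall constant $(-1)^k/F_k(S)$, which at first sight differs from the advertised $q^{-k}$. The resolution is the specialization $q=s^{-2}$ built into Definition \ref{parameters}, which forces $sq=s^{-1}$ and hence the rational-function identity $\frac{z-sq}{1-sz}=\frac{z-s^{-1}}{-s(z-s^{-1})}=-s^{-1}$. Consequently $\prod_{r=1}^k\bigl(\frac{z_r-sq}{1-sz_r}\bigr)^{r-1}=(-s^{-1})^{k(k-1)/2}=(-1)^{k(k-1)/2}q^{k(k-1)/4}$ is a constant, and a short arithmetic check using $F_k(S)=s^{k(k-1)/2}\bigl(\tfrac{1-q}{1-s^2}\bigr)^{k(k+1)/2}=(-1)^{k(k+1)/2}q^{k(k+3)/4}$ shows that this constant combined with $(-1)^k/F_k(S)$ produces exactly the $q^{-k}$ of the proposition, yielding the claimed contour integral formula.
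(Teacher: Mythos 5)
Your proposal is correct and follows essentially the same route as the paper's proof. The paper packages the shift-by-one as a bijection $h\colon E\to\mathcal{P}_N$ that deletes the zeroth column of the path collection and pushes $\mathbb{P}_{{\bf u},{\bf v}}$ forward to a genuine probability measure $\mathbb{P}^g$ (inserting the normalizing constant $\tfrac{(-s)^N}{(q;q)_N}$ into $g$ so the weights stay nonnegative), and then invokes (\ref{sum5}) and Lemma \ref{LemmaMC} directly. You instead perform the shift at the level of the Cauchy-type expansion of $\mathcal{W}^{f,**}({\bf m};{\bf z})$ via Proposition \ref{shifting}, working with the unnormalized $g(\nu)=f(\nu+1^N)$ and noting explicitly that the algebraic identity $\mathcal{W}^g({\bf m};{\bf u})=(D^1_{m_1}\cdots D^k_{m_k}Z^g)({\bf u})$ inside Lemma \ref{S3Lemma} does not depend on nonnegativity of the weights. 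These two presentations are algebraically identical (deleting column zero is the same shift by $-1^N$), and your version has the merit of making explicit the constant bookkeeping --- the $(-s^{-1})^{k(k-1)/2}$ factor from $q=s^{-2}$ and the $F_k(S)$ cancellation yielding $q^{-k}$ --- which the paper elides with ``follows from (\ref{sum6}) and Lemma \ref{LemmaMC}.'' The one thing you should add: both Lemma \ref{S3Lemma} and Lemma \ref{LemmaMC} assume the $u_i$ are pairwise distinct, so strictly your argument proves the formula only in that case, and the general statement (which is invoked later with all $u_i$ equal) is recovered by a one-line continuity argument in $u_1,\ldots,u_N$, as the paper does at the end of its proof.
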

\begin{proof}
In what follows we adopt notation from Sections \ref{Section1.2} and \ref{Section2.2}.

Let $E = \{\omega \in \mathcal{P}_N: \mbox{$\omega(0,j) = (0,1;0,1)$ for $i = 1,...,N$ }\}$. From our discussion in Section \ref{Section2}, we know that $\mathbb{P}_{{\bf u}, {\bf v}}(E) = 1$. Consider the map $h: E \rightarrow \mathcal{P}_N$, given by $h(\omega)(i,j) = \omega(i+1,j)$. I.e. $h(\omega)$ is just the collection of up-right paths $\omega$, with the zeroth column deleted. One readily observes that $h$ is a bijection and the distribution of $h(\omega)$, induced by the distribution of $\omega$, is given by $\mathbb{P}^g$, where $g(\mu) =  \frac{(-s)^N}{(q;q)_N}\times f(\mu + (1)^N;{\bf v}, \rho)$. We recall that $\lambda = \mu + (1)^N$ is the signature with $\lambda_i = \mu_i + 1$ for $i = 1,...,N$ and $f(\lambda; {\bf v}, \rho)$ is given in (\ref{feq}).

Indeed, we have for $\omega \in E$, that
$$\mathcal{W}^g(h(\omega); {\bf u}) = \frac{(-s)^N}{(q;q)_N}\prod_{i = 1}^{\infty} \prod_{j = 1}^N w_{u_j}(\omega(i,j)) f(\lambda^N(\omega);{\bf v}, \rho) =  \frac{(-s)^N}{(q;q)_N}\prod_{i = 1}^N\frac{1 - su_i}{u_i - s} \times \mathcal{W}^f(\omega; {\bf u} ).$$
The above shows that the weights $\mathcal{W}^g(h(\omega); {\bf u}) $ are constant multiples of $\mathcal{W}^f(\omega; {\bf u})$, and so the probability distributions they define are the same. The partition function $Z^g({\bf u})$ differs from $Z^f({\bf u})$ by the same constant factor $ \frac{(-s)^N}{(q;q)_N}\prod_{i = 1}^N\frac{1 - su_i}{u_i - s}$, and by (\ref{PFEqn4}) equals
\begin{equation}\label{partS}
Z^g({\bf u}) = \prod_{i = 1}^N \left( \prod_{j = 1}^M \frac{1 - qu_i v_j}{1 - u_iv_j}\right).
\end{equation}

One easily observes the following equality of events
$$ \{\omega \in E: \lambda^{m_i}(h(\omega)) \in \mathsf{Sign}_{m_i,i}^*, \hspace{1mm} i = 1,...,k\}  = \{\omega \in E :  Y_i^i(\omega) \leq m_i, \hspace{1mm} i = 1,...,k \}.$$
For example $\lambda^{m_1}(h(\omega))\in \mathsf{Sign}_{m_1,1}^*$ is equivalent to $\lambda^{m_1}_{m_1}(\omega) = 1$, which by the conservation of arrows in the region $\{ (1,y) \in D_N: y = 1,...,m_1\}$ is equivalent to $Y_1^1(\omega) \leq m_1$. The above equality of events, coupled with $\mathbb{P}_{{\bf u}, {\bf v}}(E) = 1$ and the previous two paragraphs implies 
\begin{equation}\label{S4eqEv}
\mathbb{P}_{{\bf u}, {\bf v}}( Y_1^1 \leq m_1, ..., Y_k^k \leq m_k) = \mathbb{P}^g(A({\bf m})),
\end{equation}
where $A({\bf m})$ is as in (\ref{ObservableEvent}). In view of (\ref{sum5}) and (\ref{partS}), we conclude that if $u_1,...,u_N$ are pairwise distinct
\begin{equation}\label{sum6}
  \hspace{-6mm} \mathbb{P}_{{\bf u}, {\bf v}}( Y_1^1 \leq m_1, ..., Y_k^k \leq m_k) = \frac{(D^1_{m_1}D^{2}_{m_2} \cdots D^{k}_{m_k}Z^g)({\bf u})}{Z^g({\bf u})}, \mbox{ where } Z^g({\bf u}) =\prod_{i = 1}^N  \hspace{-1mm} \left(\prod_{j = 1}^M \frac{1 - qu_i v_j}{1 - u_iv_j}  \right)  \hspace{-1mm}.
\end{equation}

The result of the proposition now follows from (\ref{sum6}) and Lemma \ref{LemmaMC} when $u_1,...,u_N$ are pairwise distinct. By continuity it also holds if some are equal.
\end{proof}

\subsection{Asymptotic analysis}\label{Section4.2}

While most of the results below can be extended to a more general choice of parameters, we keep discussion simple and assume that all $u$ and all $v$ parameters are the same, and that $\frac{s + s^3}{2} > u > s$. With this in mind we have the following definition.

\begin{definition}\label{parameters2}
Let $N,M \in \mathbb{N}$ and fix $q \in (0,1)$, $s = q^{-1/2}$, $\frac{s + s^3}{2} > u > s$ and $v \in (0, u^{-1})$. We denote by $\mathbb{P}_{u,v}^{N,M}$ the probability measure $\mathbb{P}_{{\bf u}, {\bf v}}$ of Definition \ref{parameters}, with $u_i = u$ and $v_j = v$ for $i = 1,...,N$ and $j = 1,...,M$. 
\end{definition}
With the above definition, we have the following consequence of Proposition \ref{corePropProb}. If $1 \leq k \leq m_1 \leq m_2 \leq \cdots \leq m_k \leq N$, then
\begin{equation}\label{mainEq}
\begin{split}
&\mathbb{P}_{{\bf u}, {\bf v}}( Y_1^1 \leq m_1, ..., Y_k^k \leq m_k) = \frac{q^{-k}}{(2\pi \iota)^k} \int_\gamma \cdots \int_{\gamma}
  \prod_{1 \leq i < j \leq k}\frac{z_i - z_j}{z_i - qz_j} \times
\\ 
&\prod_{r = 1}^k\left(  \frac{qz_r - u}{z_r - u}  \frac{u - s}{u- sq}\right)^{m_r}\left( \frac{z_r- sq}{z_r - s}\right)^{k-r+1}  \left(\frac{1 - z_rv}{1 - qz_rv}\frac{1 - qsv}{1 - sv} \right)^M \frac{dz_r}{z_r(1 - sz_{r})},
\end{split}
\end{equation}
where $\gamma$ is a contour, containing $u$ and excluding other singularities of the integrand. Equation (\ref{mainEq}) is prime for asymptotic analysis and we use it to prove the following proposition.
\begin{proposition}\label{propEq5}
Let $\mathbb{P}_{u,v}^{N,M}$ be as in Definition \ref{parameters2}. Put $a = \frac{v^{-1} (u - sq) (u-s)}{u(v^{-1}-sq)(v^{-1}-s)}$ and $c = (2a_2)^{1/2}b_1^{-1}$, where
$$a_2 = \frac{(1-q)v^{-1}}{(v^{-1}-s)(v^{-1}-sq)} \left[ \frac{(q+1)s - 2v^{-1}}{(v^{-1}-s)(v^{-1}-sq)} - \frac{(q+1)s - 2u}{(u-s)(u-sq)} \right] \mbox{ and }  b_1 = \frac{1}{u-s} - \frac{1}{q^{-1}u - s}.$$
Let $\gamma > a$ and assume that $N(M) \geq \gamma \cdot M$ for all $M >> 1$. Then for any $k \geq 1$ and $x_1\leq ...\leq x_k$, $x_i \in \mathbb{R}$ we have
\begin{equation}\label{mainEq5}
\lim_{ M \rightarrow \infty} \mathbb{P}_{u,v}^{N,M}\left(  \frac{ Y_i^i - aM}{c\sqrt{M}}  \leq x_i; i = 1,...,k\right) =  \det\left[\frac{1}{2\pi \iota} \int_{1 + \iota \mathbb{R}}y^{j-i-1 }e^{y^2/2 + x_jy } dy  \right]_{i,j = 1}^k.
\end{equation}
\end{proposition}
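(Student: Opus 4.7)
The plan is to carry out a steepest-descent analysis on the $k$-fold contour integral in (\ref{mainEq}), after substituting $m_r = \lfloor aM + c\sqrt{M}\, x_r\rfloor$. Define
\[
H(z) = a f_1(z) + f_2(z),\qquad f_1(z) = \log\frac{(qz-u)(u-s)}{(z-u)(u-sq)},\qquad f_2(z) = \log\frac{(1-zv)(1-qsv)}{(1-qzv)(1-sv)},
\]
so that the $M$-dependent exponential piece of the integrand is $\exp(MH(z_r) + c\sqrt{M}\, x_r f_1(z_r))$. A direct Taylor expansion verifies $H(s) = 0$, and the value of $a$ in the statement is exactly the one forcing $H'(s) = 0$, so $z = s$ is a critical point of $H$. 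The prescribed constant $c = (2a_2)^{1/2}/b_1$ is the scaling determined by the quadratic part of the Taylor expansion at $s$, so that under the rescaling $z_r = s + y_r/(cb_1\sqrt{M})$ the Gaussian contribution becomes the standard $e^{y_r^2/2 + x_r y_r}$ in the limit.

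I would deform $\gamma$ to a closed contour that still encloses only $u$ but whose main arc is a short segment of the vertical line $\{\mathrm{Re}(z) = s + (cb_1\sqrt{M})^{-1}\}$, closed by auxiliary sides contained in a rectangle $\mathrm{Re}(z)\in[s+\epsilon,R]$, $|\mathrm{Im}(z)|\le T$, with $u < R < (qv)^{-1}$. As $T\to\infty$ the horizontal sides contribute vanishingly because the integrand decays like $|z|^{-2}$ vertically, while the right vertical side contributes subleadingly since $\mathrm{Re}(H)$ is strictly smaller there than at $s$. Under the rescaling $z_r = s + y_r/(cb_1\sqrt{M})$, the main arc becomes $1 + \iota\mathbb{R}$. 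Three $M$-dependent mechanisms then appear: the polar cluster $\prod_r((z_r-sq)/(z_r-s))^{k-r+1}$ produces $M^{k(k+1)/4}$, the Vandermonde $\prod_{i<j}(z_i-z_j)/(z_i-qz_j)$ produces $M^{-k(k-1)/4}$, and the $k$ differentials produce $M^{-k/2}$; these exponents of $M$ sum exactly to zero, leaving a finite limit. Collecting constants with the prefactor $q^{-k}/(2\pi\iota)^k$ and the regular factor $\prod_r 1/(z_r(1-sz_r))\to 1/(s(1-s^2))^k$, and using $1-s^2 = -(1-q)/q$, the limit becomes
\[
\frac{1}{(2\pi\iota)^k}\int_{(1+\iota\mathbb{R})^k}\prod_{r=1}^k e^{y_r^2/2 + x_r y_r}\cdot\frac{\prod_{i<j}(y_i-y_j)}{\prod_r y_r^{k-r+1}}\prod_r dy_r,
\]
with the sign arising from $(1-s^2)$ absorbed into the orientation of the rescaled contour.

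To recognise the right-hand side of (\ref{mainEq5}), I would use the Vandermonde identity $\prod_{i<j}(y_i-y_j) = (-1)^{k(k-1)/2}\det[y_r^{j-1}]_{j,r=1}^k$ and absorb $\prod_r y_r^{-(k-r+1)}$ into column $r$ of the determinant, so the $(j,r)$ entry becomes $y_r^{j+r-k-2} e^{y_r^2/2 + x_r y_r}$. Since each column depends on only one variable, integration commutes with $\det$, giving $\det\bigl[\frac{1}{2\pi\iota}\int y^{j+r-k-2} e^{y^2/2 + x_r y}\, dy\bigr]_{j,r}$. The row reversal $i = k+1-j$ then changes the exponent into $r - i - 1$; renaming the column index $r\to j$ yields the target form $\det\bigl[\frac{1}{2\pi\iota}\int y^{j-i-1} e^{y^2/2 + x_j y}\, dy\bigr]_{i,j}$, with the sign from row reversal cancelling the Vandermonde sign.

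The main obstacle will be the rigorous justification of the saddle-point limit. This requires (i) proving that $\mathrm{Re}(H)$ is strictly maximized at $z = s$ along the deformed vertical contour and decays fast enough as $|\mathrm{Im}(z)|\to\infty$, so that tail contributions and the auxiliary sides of the deforming rectangle are negligible, and (ii) obtaining uniform Taylor control in the inner window $|z-s| = O(M^{-1/2})$ so that dominated convergence yields the pointwise limit of the integrand. The hypothesis $\frac{s+s^3}{2} > u > s$ is expected to play its role here: it is the parameter range in which $\mathrm{Re}(H)$ has the correct qualitative shape along vertical lines through $s$, and in which the second critical point of $H$ stays safely outside the deformed contour.
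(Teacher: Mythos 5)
Your power counting, the rescaling to $1+\iota\mathbb{R}$, and the determinant manipulations at the end are all correct and match the paper's endgame (compare with (\ref{mainEq42})--(\ref{mainEq6})). The gap is in the contour. Your item~(i) asserts that $\mathrm{Re}\,H$ is strictly maximized at $z=s$ along the vertical line through $s$ and decays sufficiently at $\pm\iota\infty$; this is not a technical estimate that remains to be checked, it is simply false. The paper's own computation $\frac{d}{dy}\mathrm{Re}[G(s+\iota y)]=w(1-q)y\,A(y)$ (with $w=v^{-1}$) shows that $A$ changes sign exactly once on $(0,\infty)$, so $\mathrm{Re}[G(s+\iota y)]$ first decreases and then \emph{increases} in $|y|$, tending to $G(\infty)$ as $|y|\to\infty$; and one checks that $G(\infty)>0$ throughout the admissible range (for instance $q=\tfrac12$, $u=\tfrac32$, $v=\tfrac25$ gives $G(\infty)\approx 1.0$). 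The tails of your vertical arc then carry a contribution of order $e^{M G(\infty)}|y|^{-2}\,dy$, which is exponentially large in $M$ rather than negligible, and the right vertical side at $\mathrm{Re}\,z=R$ has the same limit as $|\mathrm{Im}\,z|\to\infty$. The steepest-descent argument on the rectangle therefore fails at precisely the step you flagged as ``the main obstacle.''

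The paper's contour is genuinely different and avoids this issue by never going to infinity. Its $\gamma_M$ is a bounded closed curve: the arc of the Apollonius circle $\omega_1$ of the segment $[u,q^{-1}u]$ (the one passing through the origin) from $s-\iota y_1$ to $s+\iota y_1$, closed up along the vertical line through $s$ with an $M^{-1/2}$-semicircle around $s$. The key geometric fact that your rectangle has no analogue of is that the ratio $|z-q^{-1}u|/|z-u|$ is constant on $\omega_1$, so the real part of the $a$-weighted logarithm in $G$ is \emph{constant} on $\omega_1$; combined with the Apollonius bound for the second logarithm, this gives the control of $\mathrm{Re}\,G$ on $\gamma_M(1)$ that goes into Proposition~\ref{PropTr}. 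Since $\gamma_M$ is bounded, $G(\infty)$ plays no role. Your closing sentence also misattributes the hypothesis $\tfrac{s+s^3}{2}>u$: the paper does not use it to control the shape of $\mathrm{Re}\,G$ along vertical lines, but rather to ensure $q\cdot\gamma_M$ lies strictly to the left of $\gamma_M$, so the Cauchy-determinant poles $z_i=qz_j$ stay off the contour.
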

\begin{proof}
Put $m_i =  aM + c x_i\sqrt{M} + h_i$ for $i = 1,...,k$, where $h_i \in (-1,1)$ are such that $m_i \in \mathbb{N}$ and $m_1 \leq m_2\leq ... \leq m_k$ for $M$ sufficiently large. Using (\ref{mainEq}) we reduce the proof of (\ref{mainEq5}) to the following statement
\begin{equation}\label{mainEq2}
\begin{split}
\lim_{M \rightarrow \infty} \frac{q^{-k}}{(2\pi \iota)^k} \int_{\gamma_M}  \cdots \int_{\gamma_M} &
  \prod_{1 \leq i < j \leq k}\frac{z_i - z_j}{z_i - qz_j}  \prod_{r = 1}^{k}  \left(  \frac{z_r- sq}{z_r - s}   \right)^{k-r+1}  \frac{e^{MG(z_r) + c\sqrt{M}x_rg(z_r) + h_rg(z_r)}dz_r}{z_r(1 - sz_{r})} \\
&=  \det\left[\frac{1}{2\pi \iota} \int_{1 + \iota \mathbb{R}}y^{j-i-1 }e^{y^2/2 + x_jy } dy  \right]_{i,j = 1}^k,
\end{split}
\end{equation}
where $\gamma_M$ are contours that contain $u$ and do not include $0$, $q^{-1}v^{-1}$, $s$ or points from $q\cdot \gamma_M$ and 
\begin{equation}\label{DefG}
G(z) = a\log\left( \frac{qz - u}{z - u}  \frac{u - s}{u- sq} \right) + \log\left(\frac{v^{-1} - z}{v^{-1} - qz}\frac{v^{-1} - sq}{v^{-1} - s} \right) \mbox{ and } g(z) = \log\left( \frac{qz - u}{z - u}  \frac{u - s}{u- sq} \right).
\end{equation}

Our goal is to find the $M \rightarrow \infty$ limit of the LHS of $(\ref{mainEq2})$ and match it with the RHS. Let us briefly explain what the strategy is. We will find specific contours $\gamma_M = \gamma_M(0) \sqcup \gamma_M(1)$, such that $Re[ G(z)] < 0$ on $\gamma_M(1)$ and the integrand (upon a change of variables) has a clear limit on $\gamma_M(0)$. The condition $Re[G(z)] < 0$
will show that the integral over $\gamma_M(1)$ decays exponentially fast, and hence does not contribute to the limit. The non-vanishing contribution, coming from $\gamma_M(0)$, will then be shown to equal the RHS of (\ref{mainEq2}). The latter approach is typically referred to as the {\em method of steepest descent} in the literature. \\

To simplify formulas in the sequel we denote $v^{-1}$ by $w$. We start by analyzing the functions $G$ and $g$. From (\ref{DefG}) we have $G'(z) = a \frac{(1-q)u}{(z-u)(qz-u)} - \frac{(1-q)w}{(z-w)(qz-w)}$ and so $G(s) = 0$ and $G'(s) = 0$ by our choice of $a$. We observe 
$$\frac{d}{dy} Re \left[ G(s + \iota y)\right] \hspace{-0.5mm}=  Im \hspace{-1mm} \left[ \frac{(1-q)w}{(s + \iota y-w)(q(s + \iota y)-w)} - \frac{a(1-q)u}{(s + \iota y -u)(q(s + \iota y)-u)} \right] \hspace{-1.5mm}=w(1-q)y A(y),$$
where 
$$A(y) =  \frac{(u-s)(u-sq)}{(w-s)(w-sq)} \frac{2q - (q+1)u}{((s-u)^2 + y^2)( (qs - u)^2 + q^2y^2)} -   \frac{2q - (q+1)w}{((s-w)^2 + y^2)( (qs - w)^2 + q^2y^2)}.$$
We observe that 
$$ A(0) = \frac{1}{(w-s)(w-sq)} \left[ \frac{2q - (q+1)u}{(u-s)(u-sq)} - \frac{2q - (q+1)w}{(w-s)(w-sq)}\right] < 0,$$
where we used $u,w > s$, $q \in (0,1)$ and $w > u$. In addition, if we put the two fractions in the definition of $A(y)$ under a common denominator, we see that the sign of $A(y)$ agrees with the sign of a certain quadratic polynomial in $y^2$ with a positive leading coefficient. This implies that as $y$ goes from $0$ to $\infty$, $A(y)$ is initially negative and then becomes positive, i.e. $ Re \left[ G(s + \iota y)\right]$ initially decreases and then increases in $y > 0$. A similar statement holds when $y < 0$. In particular, we can find $\epsilon > 0$ small such that $Re \left[ G(s + \iota y) \right]\leq 0$ for $y \in [-\epsilon, \epsilon]$ and $Re \left[ G(s \pm  \iota \epsilon) \right] < 0$. 

Using $u,w > s$, $q \in (0,1)$ and $w > u$, we notice
$$G(0) = a\log\left(   \frac{u - s}{u- sq} \right) + \log\left(\frac{w - sq}{w - s} \right) =: -c_0 < 0.$$
We next observe that 
$$a_2 = G''(s) = \frac{(1-q)w}{(w-s)(w-sq)} \left[ \frac{(q+1)s - 2w}{(w-s)(w-sq)} - \frac{(q+1)s - 2u}{(u-s)(u-sq)} \right] > 0.$$
Consequently, we have that near $s$ we have $G(z) = a_2(z-s)^2 + a_3(z - s)^3 + \cdots$ and $g(z) = b_1(z-s) + b_2(z-s)^2 + \cdots$. In particular, if we choose $\epsilon$ sufficiently small we can ensure that 
\begin{equation}\label{expest}
\mbox{ $|G(z) - a_2(z-s)^2| \leq R|z-s|^3$ and $|g(z) - b_1(z-s)| \leq R|z-s|^2$, when $|z-s| < \epsilon$,}
\end{equation}
where $R$ can be taken to be $|b_2| + |a_3| + 1$. For the remainder we fix $\epsilon > 0$ sufficiently small so that (\ref{expest}) holds and $c_\epsilon := - Re \left[ G(s \pm  \iota \epsilon) \right] > 0$. \\

In what follows we define the contour $\gamma_M$. Let $B_1$ and $C_1$ be the points $u$ and $q^{-1}u$ in the complex plane respectively, and also denote  $w$ and $q^{-1}w$ by $B_2$ and $C_2$ respectively. For $i = 1,2$ we let $\omega_i$ be the Appolonius circle\footnote{For $r \in (0,1)$, the Appolonius circle of a segment $BC$ with ratio $r$ is the set of points $X$ such that $\frac{XB}{XC} = r$. For points inside the circle we have $\frac{XB}{XC} < r$ and for those outside $\frac{XB}{XC} > r$. If $X$ and $Y$ denote the (unique) points on the line $BC$, which satisfy $\frac{XB}{XC} = r = \frac{YB}{YC}$, with $X$ lying inside and $Y$ outside the segment $BC$, the Appolonius circle of $BC$ with ratio $r$, is the circle with diamater $XY$. } of the segment $B_iC_i$, which passes through the origin. By properties of the Appolonius circle, we know that $X_iY_i$ is a diameter for $\omega_i$, where $X_i = 0$, $Y_1 = \frac{2u}{1+q}$ and $Y_2 = \frac{2w}{1+q}$. Observe that since $w > u$ we have that $\omega_1$ is internally tangent to $\omega_2$ at $0$.

Let $s \pm \iota y_1$ be the points on $\omega_1$ that lie on the vertical line through $s$, with $y_1 > 0$. $\gamma_M$ starts from $s - \iota y_1$ and follows $\omega_1$ to $s + \iota y_1$ counterclockwise, afterwards it goes down to to $s + \iota M^{-1/2}$, follows the right half of the circle of radius $M^{-1/2}$ around $s$ to $s - \iota M^{-1/2}$, and then continues down to $s - \iota y_1$. See the left part of  Figure \ref{S4_1}. Observe that by construction $u$ is enclosed by $\gamma_M$ and $0,s,s^{-1}$ are not. In addition, we notice that since $q\cdot \omega_1$ lies to the left of $q \cdot Y_1 = \frac{2uq}{1+q}$ and the latter is less than $s$ if $u < \frac{s + s^3}{2}$, then $q \cdot \gamma_M$ lies to the left of $\gamma_M$. This means that $\gamma_M$ satisfies the conditions we stated after (\ref{mainEq2}).

\vspace{-4mm}
\begin{figure}[h]
\centering
\scalebox{0.6}{\includegraphics{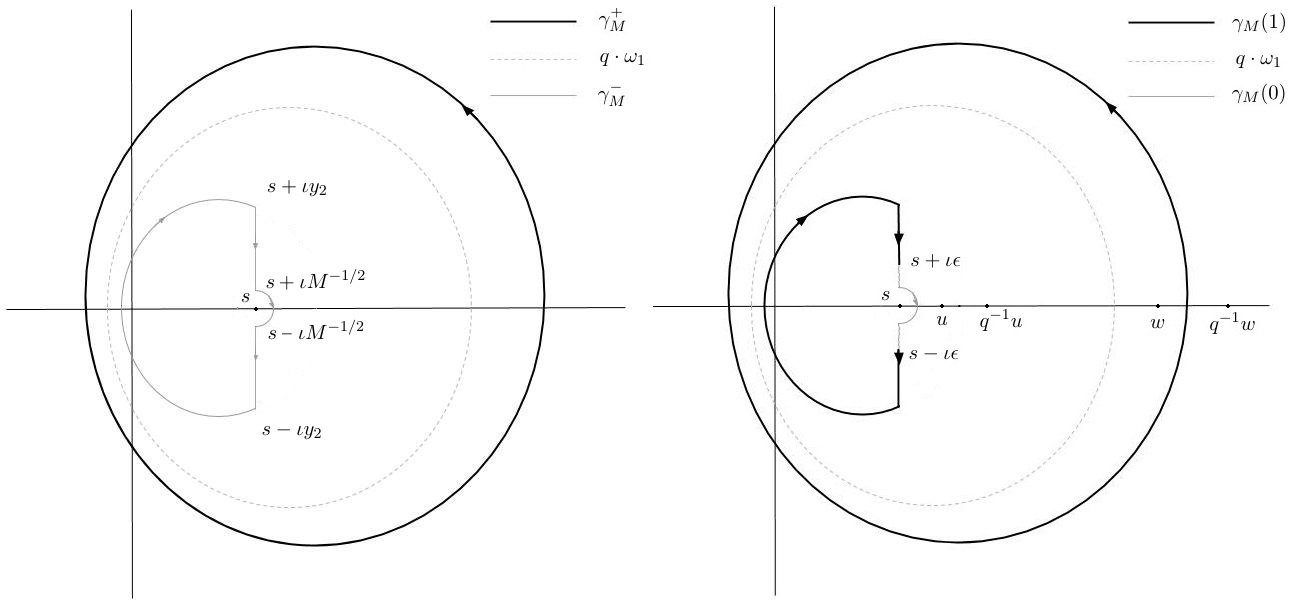}}
\caption{The contour $\gamma_M$ (left) and $\gamma_M(0)$ and $\gamma_M(1)$ (right). }
\label{S4_1}
\end{figure}

We now investigate the real part of $G(z)$ on $\gamma_M$. Using the properties of the Appolonius circle, we see that for $z \in \omega_1$ we have
$$Re \left[a\log\left( \frac{qz- u }{z - u}  \frac{u - s}{u- sq} \right)\right] = a\log\left( \frac{|z - q^{-1}u|}{|z - u|}\right) + a\log\left(  \frac{qu - qs}{u- sq} \right) =  a\log\left( \frac{|X_2C_2|}{|X_2B_2|}\right) + a\log\left(  \frac{qu - qs}{u- sq} \right),$$
while on the other hand
$$Re \left[\log\left( \frac{w - z}{w - qz}  \frac{w - sq}{w- s} \right)\right] = \log\left( \frac{|w - z|}{|q^{-1}w - z|}\right) + \log \left(  \frac{w - sq}{qw- qs} \right) \leq 
\log\left( \frac{|X_1B_1|}{|X_1C_1|}\right) + \log \left(  \frac{w - sq}{qw- qs} \right).$$
Adding the above inequalities we see that for $z \in \omega_1$, we have
\begin{equation}\label{estG}
Re\left[ G(z) \right] \leq Re\left[ G(0) \right] = -c_0< 0.
\end{equation}
Equation (\ref{estG}) in particular says that $Re[G(s \pm \iota y_1)] \leq - c_0$, and since $Re[G(s + \iota y)]$ decreases and then increases in $|y|$, while $Re[G(s \pm \iota \epsilon)] \leq - c_\epsilon < 0$, we know that $Re[G(s \pm \iota y)] \leq - \min(c_0, c_\epsilon)$, for $|y| \in [\epsilon, y_2]$. Let us denote by $\gamma_M(0)$ the portion of $\gamma_M^-$, which connects $s \pm \iota \epsilon$ near $s$, and by $\gamma_M(1)$, the rest of $\gamma_M$ - see the right part of Figure \ref{S4_1}.

The above estimates show that $Re[G(z)] \leq - \min(c_0, c_\epsilon)$ for $z \in \gamma_M(1)$. This suggests, that asymptotically, we may ignore $\gamma_M(1)$, as its contribution goes to zero exponentially fast. Explicitly, if we denote by $H(z_1,...,z_k)$ the integrand in (\ref{mainEq2}) then we have
\begin{equation}\label{truncateSpec}
\lim_{M \rightarrow \infty} \left| \int_{\gamma_M} \cdots  \int_{\gamma_M} H(z_1,...,z_k) d{\bf z} - \int_{\gamma_M(0)} \cdots  \int_{\gamma_M(0)} H(z_1,...,z_k) d{\bf z} \right| = 0.
\end{equation}
We isolate the proof of the above statement in Proposition \ref{PropTr} below and continue with the proof of (\ref{mainEq2}). \\

In view of (\ref{truncateSpec}), the limit as $M \rightarrow \infty$ of the LHS of (\ref{mainEq2}) is the same as that of
\begin{equation}\label{mainEq3}
\frac{q^{-k}}{(2\pi \iota)^k} \int_{\gamma_M(0)} \hspace{-5mm} \cdots \int_{\gamma_M(0)} 
  \prod_{1 \leq i < j \leq k}\frac{z_i - z_j}{z_i - qz_j}  \prod_{r = 1}^{k}  \left(  \frac{z_r- sq}{z_r - s}   \right)^{k-r+1}  \frac{e^{MG(z_r) + c \sqrt{M}x_rg(z_r) - a_rg(z_r)}dz_r}{z_r(1 - sz_{r})},
\end{equation}
We do the change of variables $y_i = (z_i - s)M^{1/2}$ and set $\Gamma$ to be the contour that goes up from $-\iota \infty$ to $-\iota $, follows the right half of the circle of radius $1$ around $0$ to $  \iota $, and then continues up to $ \iota \infty$. Using (\ref{expest}), we observe that (\ref{mainEq3}) equals
\begin{equation}\label{mainEq4}
\begin{split}
\frac{(-q)^{-k}}{(2\pi \iota)^k} \int_\Gamma \cdots \int_{\Gamma}
  \prod_{1 \leq i < j \leq k}\frac{y_i - y_j}{M^{-1/2}(y_i - qy_j) + s(1-q)} \times  \\
 \prod_{r = 1}^{k}  \left( \frac{M^{-1/2}y_r +s(1-q)}{y_r}\right)^{k-r+1} \frac{e^{a_2y_r^2 +c b_1x_ry_r + O(M^{-1/2})}{\bf 1}_{\{|y_r| \leq M^{-1/2}\epsilon\}}dy_r}{(1 - s^2 - M^{-1/2}sy_r)(M^{-1/2} y_r + s)}.
\end{split}
\end{equation}
 The pointwise limit of the integrand as $M \rightarrow \infty$ is given by
$$(-q)^k\prod_{1 \leq i < j \leq k}(y_i - y_j) \prod_{r = 1}^{k}  e^{a_2y_r^2 + cb_1x_ry_r }\frac{dy_r}{y_r^{k-r+1}}$$
Since $a_2 > 0$ we see that the integrand in (\ref{mainEq4}) is dominated by $C \prod_{r = 1}^ke^{-a_2 |y_r|^2/2}$. It follows from the Dominated Convergence Theorem that the limit as $M \rightarrow \infty$ of (\ref{mainEq3}), and hence the LHS (\ref{mainEq2}) is
\begin{equation}\label{mainEq42}
\frac{1}{(2\pi \iota)^k} \int_\Gamma \cdots \int_{\Gamma}\prod_{1 \leq i < j \leq k}(y_i - y_j) \prod_{r = 1}^{k} e^{a_2y_r^2 + cb_1x_ry_r }\frac{dy_r}{y_r^{k-r+1}}.
\end{equation}

What remains is to show that (\ref{mainEq42}) and the RHS of (\ref{mainEq2}) agree. We perform the change of variables $y_i \rightarrow (2a_2)^{-1/2} y_{i}$, replace $\prod_{1 \leq i < j \leq k}(y_i - y_j)\prod_{r = 1}^{k}\frac{1}{y_r^{k-r+1}} $ with $\det[ y_i^{i-j-1}]_{i,j = 1}^k$, set $\Gamma' = (2a_2)^{1/2}\Gamma$  and use $(2a_2)^{-1/2}b_1 c  = 1$. This allows us to rewrite (\ref{mainEq42}) as
$$\frac{1}{(2\pi \iota)^k} \int_{\Gamma'} \hspace{-1mm} \cdots \int_{\Gamma'} \det[ y_i^{i-j-1}]_{i,j = 1}^k\prod_{r = 1}^{k} e^{y_r^2/2 + x_ry_r }dy_r.$$
Using properties of determinants we rewrite the above as
\begin{equation}\label{mainEq6}
 \det\left[\frac{1}{2\pi \iota} \int_{\Gamma'}y^{i -  j-1}e^{y^2/2 + x_iy } dy  \right]_{i,j = 1}^k.
\end{equation}
By Cauchy's theorem and the rapid decay of $e^{y^2/2}$ near $\pm \iota \infty$, we may deform $\Gamma'$ to $1 + \iota \mathbb{R}$, without changing the value of the integral. Replacing the matrix in the determinant with its transpose, finally transforms (\ref{mainEq6}) into the RHS of (\ref{mainEq2}).

\end{proof}

\begin{proposition}\label{PropTr}
Denote by $H(z_1,...,z_k)$ the integrand in (\ref{mainEq2}). Then we have
\begin{equation}\label{truncate}
\lim_{M \rightarrow \infty} \left| \int_{\gamma_M} \cdots  \int_{\gamma_M} H(z_1,...,z_k) d{\bf z} - \int_{\gamma_M(0)} \cdots  \int_{\gamma_M(0)} H(z_1,...,z_k) d{\bf z} \right| = 0
\end{equation}
\end{proposition}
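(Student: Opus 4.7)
The argument is a standard tail truncation for a steepest descent integral. Write $\gamma_M = \gamma_M(0) \sqcup \gamma_M(1)$ and decompose
\begin{equation*}
\int_{\gamma_M^k} H\, d\mathbf{z} \;=\; \sum_{\alpha \in \{0,1\}^k} \int_{\gamma_M(\alpha_1) \times \cdots \times \gamma_M(\alpha_k)} H\, d\mathbf{z},
\end{equation*}
so the difference in (\ref{truncate}) is the sum of the $2^k - 1$ terms with $\alpha \neq \mathbf{0}$. It suffices to show that each such term is $o(1)$ as $M \to \infty$.

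Fix $\alpha \neq \mathbf{0}$ and pick an index $r_0$ with $\alpha_{r_0} = 1$. By the estimates established just before Proposition \ref{PropTr}, $\mathrm{Re}\,G(z) \leq -\delta$ on $\gamma_M(1)$ with $\delta := \min(c_0, c_\epsilon) > 0$; hence $|e^{M G(z_{r_0})}| \leq e^{-\delta M}$. On $\gamma_M(0)$ one also gets a uniform bound for $|e^{MG(z)}|$: on the small arc $|z-s| = M^{-1/2}$ the bound $|G(z) - a_2(z-s)^2| \leq R|z-s|^3$ from (\ref{expest}) gives $|MG(z)| \leq a_2 + RM^{-1/2} = O(1)$, while on the vertical segments $z - s = \pm \iota y$ with $y \in [M^{-1/2},\epsilon]$ one has $\mathrm{Re}\,G(z) = -a_2 y^2 + O(y^3) \leq 0$ for $\epsilon$ small. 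So $\prod_{r=1}^k |e^{MG(z_r)}| \leq C \cdot e^{-\delta M}$ on $\prod_r \gamma_M(\alpha_r)$.

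Now bound the remaining factors of $H$. Since $g$ is continuous on the compact set $\gamma_M$, one has $|e^{c\sqrt{M}\,x_r g(z_r) + h_r g(z_r)}| \leq e^{C\sqrt{M}}$. The assumption $u < (s+s^3)/2$ yields $qY_1 < s$, placing $q\gamma_M$ in $\{\mathrm{Re}\,z \leq qY_1\}$ while $\gamma_M \subseteq \{\mathrm{Re}\,z \geq s\}$, so $|z_i - qz_j|^{-1}$ is bounded uniformly in $M$; the factors $|z_r|^{-1}$, $|1 - sz_r|^{-1}$ and $|z_i - z_j|$ are bounded since $0, s^{-1} \notin \gamma_M$ and $\gamma_M$ stays in a bounded set. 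The only remaining potentially large factor is $\left|(z_r - sq)/(z_r - s)\right|^{k-r+1}$, which is $O(1)$ on $\gamma_M(1)$ (using $|z_r - s| \geq \epsilon$) and $O(M^{(k-r+1)/2})$ on $\gamma_M(0)$ (using $|z_r - s| \geq M^{-1/2}$ from the small arc radius). Combining, the integrand is majorized by $C M^K e^{C\sqrt{M}} e^{-\delta M}$ for constants $C, K$ depending only on $k$. Since the length of $\gamma_M$ is $O(1)$ (the shrinking arc contributes length $\pi M^{-1/2} \to 0$; all other pieces have fixed length), the integral over $\prod_r \gamma_M(\alpha_r)$ is bounded by $C M^K e^{C\sqrt{M}} e^{-\delta M} \to 0$, which proves (\ref{truncate}). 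The one step requiring care is the singular factor $(z_r - s)^{-(k-r+1)}$ on the shrinking arc near $s$, but its $M^{(k-r+1)/2}$ growth is merely polynomial and is easily absorbed by the genuine exponential decay $e^{-\delta M}$ supplied by the variable integrated over $\gamma_M(1)$.
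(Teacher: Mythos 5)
Your proof is correct and takes essentially the same approach as the paper's: decompose the difference into the $2^k-1$ cross-terms, extract the uniform exponential decay $e^{-\delta M}$ ($\delta = \min(c_0,c_\epsilon)$) from any coordinate on $\gamma_M(1)$, bound the singular factor $\prod_r|z_r-s|^{-(k-r+1)}$ by $CM^{k(k+1)/4}$ using the $M^{-1/2}$ arc radius, control the remaining exponential by $e^{O(\sqrt M)}$, and observe that the exponential decay dominates the polynomial and subexponential growth. The intermediate estimates you derive (boundedness of $e^{MG}$ on $\gamma_M(0)$ via (\ref{expest}), separation of $q\gamma_M$ from $\gamma_M$ via $qY_1<s$) are precisely the ones the paper records in (\ref{tr1})--(\ref{tr3}).
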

\begin{proof}
We adopt the same notation as in the proof of Proposition \ref{propEq5}. We write 
$$\int_{\gamma_M} \cdots  \int_{\gamma_M} H(z_1,...,z_k) d{\bf z} = \sum_{\epsilon_1,...,\epsilon_k \in \{0,1\}}  \int_{\gamma_M(\epsilon_1)} \cdots  \int_{\gamma_M(\epsilon_k)} H(z_1,...,z_k) d{\bf z},$$
and so we observe that the expression in the absolute value in (\ref{truncate}) is a finite sum of terms
$$ \int_{\gamma_M(\epsilon_1)} \cdots  \int_{\gamma_M(\epsilon_k)} H(z_1,...,z_k) d{\bf z},$$
where $\epsilon_i$ are not all equal to $1$. Recall from (\ref{mainEq2})
$$
 H(z_1,...,z_k)  = \prod_{1 \leq i < j \leq k}\frac{z_i - z_j}{z_i - qz_j}  \prod_{r = 1}^{k}  \left(  \frac{z_r- sq}{z_r - s}   \right)^{k-r+1}  \frac{e^{MG(z_r) + c\sqrt{M}x_rg(z_r) + h_rg(z_r)}}{z_r(1 - sz_{r})}$$
When $z_i \in \gamma_M$, we know that 
\begin{equation}\label{tr1}
\left|  \prod_{1 \leq i < j \leq k}\frac{z_i - z_j}{z_i - qz_j}  \prod_{r = 1}^{k}    \frac{\left( z_r- sq   \right)^{k-r+1}}{z_r(1 - sz_{r})} \right| \leq C \mbox{ and }  \left| \prod_{r = 1}^{k}  \left(  \frac{1}{z_r - s}   \right)^{k-r+1}  \right| \leq CM^{k(k+1)/4},
\end{equation}
for some constant $C > 0$, where we used that $z\in \gamma_M$ is at least a distance $M^{-1/2}$ from the point $s$, and is uniformly bounded away from other singularities.\\

Further, from our earlier analysis of the real part of $G(z)$ on $\gamma_M$, we know that when $z \in \gamma_M(1)$, we have for some (maybe different than before) constant $C> 0$
\begin{equation}\label{tr2}
\left| e^{MG(z_r) + c\sqrt{M}x_rg(z_r) + h_rg(z_r)}\right| \leq Ce^{-c'M}, \mbox{ where $c' = \min(c_0, c_\epsilon)$}.
\end{equation}
Finally, if $z \in \gamma_M(0)$, we know that 
\begin{equation}\label{tr3}
\begin{split}
&\left| e^{MG(z_r) + c\sqrt{M}x_rg(z_r) +h_rg(z_r)}\right| \leq Ce^{K\sqrt{M}}\hspace{3mm} \mbox{ if $M^{-1/2} \leq |Im(z)| \leq \epsilon$, and }\\
&\left| e^{MG(z_r) + c\sqrt{M}x_rg(z_r) +h_rg(z_r)}\right| \leq C \hspace{3mm} \mbox{ if $|Im(z)| \leq M^{-1/2}$}.
\end{split}
\end{equation}
In (\ref{tr3}), $K$ is a constant that dominates $|cx_rg(z)|$, for $z \in \gamma_M$ and $r = 1,..., k$. In obtaining the first estimate in (\ref{tr3}), we used that $Re[G(s + \iota y)] \leq 0$ for $|y| \in [M^{-1/2}, \epsilon]$, while for the second one we used (\ref{expest}). \\

If we combine the statements in (\ref{tr1}), (\ref{tr2}) and (\ref{tr3}) and use the compactness of $\gamma_M$, we see that 
$$\left| \int_{\gamma_M(\epsilon_1)} \cdots  \int_{\gamma_M(\epsilon_k)} H(z_1,...,z_k) d{\bf z} \right| \leq C e^{-c' (\epsilon_1 + \cdots + \epsilon_k)M } M^{k(k+1)/4} e^{k \cdot K \sqrt{M}},$$
and if $\epsilon_i$ are not all $0$, we see that the above decays to $0$ as $M \rightarrow \infty$.
\end{proof}

\subsection{Limit identification and proof of Theorem \ref{theorem1}}\label{Section4.3}

We start this section by showing that the RHS of (\ref{mainEq5}) equals $\mathbb{P}( \lambda^1_1 \leq x_1, \cdots , \lambda_k^k \leq x_k)$ when $x_1 \leq x_2 \leq \cdots \leq x_k$ and $x_i \in \mathbb{R}$. Here $\lambda_i^j$ $i = 1,...,j$, $j = 1,...,k$ is the GUE-corners process (see Section \ref{Section1.2}). The density of $\lambda_1^1,...,\lambda_k^k$ was calculated in \cite{War} to equal
$$\rho(x_1,...,x_k) = {\bf 1}_{\{x_1 \leq x_2 \leq \cdots \leq x_k\}} \det \left[ \Phi^{i- j}(x_j) \right]_{i,j = 1}^k.$$
In the above we have that $\Phi^n$ for $n \geq 1$ is the $n$-th order iterated integral of the Gaussian density $\phi(x) = \frac{e^{-x^2/2}}{\sqrt{2\pi}}$
\begin{equation}\label{phiPos}
\Phi^n(y) = \int_{- \infty}^y \frac{(y-x)^{n-1}}{(n-1)!} \phi(x) dx,
\end{equation}
and when $n \geq 0$, $\Phi^{-n}$ denotes the $n$-th order derivative of $\phi$. Let us denote
$$\Psi^m(y):= \frac{1}{2\pi \iota} \int_{1 + \iota \mathbb{R}}x^{m}e^{x^2/2 + yx} dx.$$
Then to show that the  RHS of (\ref{mainEq5}) equals $\mathbb{P}( \lambda^1_1 \leq x_1, \cdots , \lambda_k^k \leq x_k)$, it suffices to show that, when $x_1 \leq x_2 \leq \cdots \leq x_k$,
\begin{equation}\label{limitDist}
\int_{-\infty}^{x_1} dy_1 \int_{y_1}^{x_2} dy_2 \cdots \int_{y_{n-1}}^{x_n} dy_n \det \left[ \Phi^{i- j}(y_j) \right]_{i,j = 1}^k = \det\left[\Psi^{j-i-1}(x_j)  \right]_{i,j = 1}^k.
\end{equation}

The rapid decay of $e^{y^2/2}$ near $\pm \iota \infty$ shows that $\Psi^m(y)$ is differentiable, and its derivative equals
$$\frac{d}{dy} \Psi^m(y) = \frac{1}{2\pi \iota} \int_{1 + \iota \mathbb{R}}x^{m}\frac{d}{dy} e^{x^2/2 + yx} dx = \frac{1}{2\pi \iota} \int_{1 + \iota \mathbb{R}}x^{m+1} e^{x^2/2 + yx} dx = \Psi^{m+1}(y).$$
The other properties of $\Psi^m$ that we will need are that $\Psi^0(y) = \phi(y)$ and $\lim_{y \rightarrow -\infty} \Psi^m(y) = 0$. To see the former, we complete the square in the exponential of $\Psi^0(y)$ and change variables $x = 1 + \iota z$ to see
$$\Psi^0(y)= \frac{e^{-y^2/2}}{2\pi} \int_{ \mathbb{R}}\hspace{-1mm}e^{(1 + y + \iota z )^2/2} dz =  \frac{e^{-y^2/2}}{2\pi}\hspace{-1mm} \int_{ \mathbb{R}}\hspace{-1mm}e^{-(z - \iota (y+1) )^2/2} dz = \frac{e^{-y^2/2}}{2\pi}\hspace{-1mm} \int_{ \mathbb{R}}\hspace{-1mm}e^{-z^2/2} dz =  \frac{e^{-y^2/2}}{\sqrt{2\pi}} = \phi(y).$$
The middle equality follows from the usual shift of $\mathbb{R}$ to $\mathbb{R} + \iota (y +1)$, which does not change the integral by Cauchy's theorem. Performing the same change of variables we see that for any $m \in \mathbb{Z}$ and $y \leq -1$, we have that 
$$\Psi^m(y) = \frac{e^{-y^2/2}}{2\pi} \int_{ \mathbb{R}}(1 + \iota z)^m e^{-(z - \iota (y+1) )^2/2} dz = \frac{e^{-y^2/2}}{2\pi} \int_{ \mathbb{R}}(\iota z - y)^m e^{-z^2/2} dz, $$
where the last equality follows from the shift of $\mathbb{R}$ to $\mathbb{R} + \iota (y +1)$, which does not change the integral by Cauchy's theorem, as the possible pole at $z = \iota$ is never crossed when $y < 0$. When $m \leq 0$, we notice that $|(\iota z - y)^m| \leq 1$, when $y \leq -1$, while when $m \geq 0$, we can bound the same expression by $C(|y|^m + |z|^m + 1)$, uniformly in $z \in \mathbb{R}$ and $y \leq -1$. The upshot is that 
$$|\Psi^m(y)| \leq \frac{ e^{-y^2/2}}{2\pi}\int_{ \mathbb{R}}C(|y|^m + |z|^m + 1) e^{-z^2/2} dz \leq C(m)|y|^m e^{-y^2/2}, \mbox{ and hence} \lim_{y \rightarrow -\infty} \Psi^m(y) = 0.$$
Similar arguments also show that  $\lim_{y \rightarrow -\infty} \Phi^m(y) = 0$ for any $m \in \mathbb{Z}$. \\

We next show that $\Phi^{-m}(y) = \Psi^{m}(y)$ for all $m \in \mathbb{Z}$. From the previous paragraph we know this to be the case when $m = 0$. Since $\Psi^{m+1}(y) =\frac{d}{dy} \Psi^m(y) $ and $\Phi^{-m-1}(y) = \frac{d}{dy} \Phi^{-m}(y)$, when $m \geq 0$, we have equality when $m \geq 0$. Finally, we prove the result for $-m \geq 0$ by induction on $-m$. Suppose, we know that $\Phi^{k}(y) = \Psi^{-k}(y)$, for $k \geq 0$. Then we have
$$\frac{d}{dy}\Phi^{k+1}(y)  = \frac{d}{dy}\Psi^{-k-1}(y) \mbox{ and so } \Phi^{k+1}(y) - \Psi^{-k-1}(y) \mbox{ is constant.}$$
As both $ \Phi^{k+1}(y)$ and $\Psi^{-k-1}(y)$ vanish as $y \rightarrow -\infty$, we see that the constant is $0$, and we have $\Phi^{k+1}(y) = \Psi^{-k-1}(y)$. The general result now follows by induction.\\

We now turn to the proof of $(\ref{limitDist})$. From our discussion above we know that both sides define continuously differentiable functions in $x_1$. When $x_1$ goes to $-\infty$, we have that the first column in the matrix on the RHS goes to $0$ and so the determinant vanishes. The LHS also vanishes, as it is dominated by $\mathbb{P}( \lambda_1^1 \leq x_1)$. Consequently, it suffices to show that the derivatives w.r.t. $x_1$ on both sides agree. Replacing $\Phi^m$ with $\Psi^{-m}$, what we want is to show that when $x_1 \leq x_2 \leq \cdots \leq x_k$ and $y_1 = x_1$ 
$$\int_{x_1}^{x_2}dy_2 \int_{y_2}^{x_3} dy_3 \cdots \int_{y_{n-1}}^{x_n}dy_n \det \left[ \Psi^{j- i}(y_j) \right]_{i,j = 1}^k = \frac{d}{dx_1}\det\left[\Psi^{j-i-1}(x_j)  \right]_{i,j = 1}^k.$$
Using that $\frac{d}{dy} \Psi^m(y) = \Psi^{m+1}(y)$, we see that RHS above is the determinant of a matrix, whose first column is $\Psi^{0}(x_1),...,\Psi^{k-1}(x_1)$ and its $j$-th column for $2 \leq j \leq k$ is $\Psi^{j-2}(x_j),\Psi^{j-3}(x_j),...,\Psi^{j - k - 1}(x_j)$. In particular, when $x_2 = x_1$ the first two columns are the same and so the determinant vanishes. The LHS also vanishes because of the integral $\int_{x_1}^{x_2}dy_2$, and so to show equality it suffices to show equality of the derivatives w.r.t. $x_2$. I.e. we want when $x_1 \leq x_2 \leq \cdots \leq x_k$ and $y_1 = x_1$, $y_2 = x_2$ 
$$\int_{x_2}^{x_3} dy_3 \cdots \int_{y_{n-1}}^{x_n}dy_n \det \left[ \Psi^{j- i}(y_j) \right]_{i,j = 1}^k =\frac{d}{dx_2} \frac{d}{dx_1}\det\left[\Psi^{j-i-1}(x_j)  \right]_{i,j = 1}^k.$$
In this case, when $x_3 = x_2$, the RHS vanishes as the second and third column of the matrix become the same, while the LHS vanishes because of $\int_{x_2}^{x_3} dy_3$. Thus it is enough to show that the derivatives w.r.t. $x_3$ are equal. Continuing in this fashion for $x_3,...,x_k$, we see that (\ref{limitDist}) will follow if we know that
$$\det \left[ \Psi^{j- i}(x_j) \right]_{i,j = 1}^k =  \frac{d}{dx_k} \cdots \frac{d}{dx_2} \frac{d}{dx_1}\det\left[\Psi^{j-i-1}(x_j)  \right]_{i,j = 1}^k.$$
The above is now a trivial consequence of $\frac{d}{dy} \Psi^m(y) = \Psi^{m+1}(y)$ and so we conclude the validity of (\ref{limitDist}). \\

Our work above together with Proposition \ref{propEq5} show that when $x_1 \leq x_2 \leq \cdots \leq x_k$
$$\lim_{ M \rightarrow \infty} \mathbb{P}_{u,v}^{N,M}\left(  \frac{ Y_i^i - aM}{c\sqrt{M}}  \leq x_i; i = 1,...,k\right)  = \mathbb{P}( \lambda^1_1 \leq x_1, \cdots , \lambda_k^k \leq x_k).$$
Since with probability $1$, we have $Y_1^1 \leq Y_2^2 \leq \cdots \leq Y_k^k$ and $ \lambda^1_1 \leq \lambda_2^2 \leq \cdots \leq \lambda_k^k$, the above equality readily extends to all $x_1,...,x_k \in \mathbb{R}$. In particular, we obtain the following lemma.

\begin{lemma}\label{keyProbLemma}
Assume the same notation as in Theorem \ref{theorem2}. For any $k \geq 1$, we have that 
$$\frac{1}{c \sqrt{M}} (Y_1^1(M) - aM, \cdots, Y_k^k(M) - aM),$$
converge weakly to the vector $(\lambda_1^1,...,\lambda_k^k)$, where $\lambda_i^j$ for $i = 1,..., j$ and $j = 1,..., k$ is the GUE-corners process of rank $k$.
\end{lemma}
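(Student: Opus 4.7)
The plan is to assemble three ingredients already developed in the paper: (i) the explicit determinantal limit for the joint CDFs of $(Y_1^1,\dots,Y_k^k)$ from Proposition \ref{propEq5}; (ii) Warren's density formula for the joint law of $(\lambda_1^1,\dots,\lambda_k^k)$; and (iii) the almost sure monotonicity $Y_1^1\leq\cdots\leq Y_k^k$ and $\lambda_1^1\leq\cdots\leq \lambda_k^k$. Proposition \ref{propEq5} already gives, for ordered arguments $x_1\leq\cdots\leq x_k$,
\[
\lim_{M\to\infty}\mathbb{P}_{u,v}^{N,M}\!\left(\tfrac{Y_i^i-aM}{c\sqrt M}\leq x_i,\ i=1,\dots,k\right)=\det\!\left[\Psi^{j-i-1}(x_j)\right]_{i,j=1}^k,
\]
where $\Psi^m(y):=\frac{1}{2\pi\iota}\int_{1+\iota\mathbb{R}}x^m e^{x^2/2+xy}dx$. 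So the remaining task is to identify this limit with $\mathbb{P}(\lambda_1^1\leq x_1,\dots,\lambda_k^k\leq x_k)$, and then to promote the equality on the ordered chamber to all of $\mathbb{R}^k$ and upgrade the CDF convergence to weak convergence.

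The identification step I would execute by first proving the key identity $\Phi^{-m}(y)=\Psi^m(y)$ for every $m\in\mathbb{Z}$, where $\Phi^n$ denotes the $n$-th iterated antiderivative of the Gaussian density when $n\geq 1$ and the $|n|$-th derivative when $n\leq 0$. The case $m=0$ follows from shifting the contour $1+\iota\mathbb{R}$ to $\mathbb{R}$ after completing the square, and the equality for $m>0$ then follows from the differentiation rule $\frac{d}{dy}\Psi^m=\Psi^{m+1}$, which also yields $m<0$ by induction combined with the boundary condition $\lim_{y\to-\infty}\Psi^m(y)=0$ (verified via a Gaussian tail estimate after the same contour shift). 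Given this, $\det[\Psi^{j-i-1}(x_j)]=\det[\Phi^{i-j+1}(x_j)]$, and I would verify that integrating Warren's density $\det[\Phi^{i-j}(x_j)]$ over the chamber $\{y_1\leq x_1,\ y_1\leq y_2\leq x_2,\dots\}$ reproduces this determinant. The cleanest route is induction on $k$ via differentiation: show that both sides agree when $x_1\to-\infty$ (both vanish), then check that $\partial_{x_1}$ of the integrated density equals $\partial_{x_1}$ of the determinant; iterate by showing successive derivatives in $x_2,\dots,x_k$ match, with equality at coincidence $x_j=x_{j+1}$ following from two identical columns in the determinant on one side and from vanishing integration intervals on the other.

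To pass from ordered arguments to arbitrary $(x_1,\dots,x_k)\in\mathbb{R}^k$, I would use that $Y_1^1\leq\cdots\leq Y_k^k$ almost surely by construction and $\lambda_1^1\leq\cdots\leq \lambda_k^k$ almost surely by the interlacing structure of the corners process; consequently, for any $(x_1,\dots,x_k)$ both joint CDFs at $(x_1,\dots,x_k)$ equal their values at the componentwise minima $(\min(x_1,\dots,x_k),\min(x_2,\dots,x_k),\dots,x_k)$, which lie in the ordered chamber. Finally, since Warren's density is absolutely continuous and hence the limiting joint CDF is continuous everywhere, pointwise convergence of CDFs at every $(x_1,\dots,x_k)\in\mathbb{R}^k$ implies weak convergence of random vectors by the Portmanteau theorem.

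I expect the main obstacle to be the clean verification of the chamber-integral identity $\int_{y_1\leq x_1,\,y_1\leq y_2\leq x_2,\dots}\det[\Phi^{i-j}(y_j)]\,dy=\det[\Psi^{j-i-1}(x_j)]$; the algebra is not deep, but one must carefully track which columns become constant-in-$x_j$ versus depend on $x_j$ after each differentiation, and the boundary behavior at $x_j=x_{j+1}$ has to be matched on both sides. Everything else is a routine assembly.
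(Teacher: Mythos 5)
Your proposal is correct and follows essentially the same route as the paper's argument in Section \ref{Section4.3}: the determinantal limit from Proposition \ref{propEq5}, the identity $\Phi^{-m}=\Psi^{m}$ proved via contour shift plus the differentiation rule $\frac{d}{dy}\Psi^m=\Psi^{m+1}$ and the decay $\Psi^m(y)\to 0$ as $y\to-\infty$, the chamber-integral identity by iterated differentiation with coincidence checks at $x_j=x_{j+1}$, and the extension off the ordered chamber via the a.s.\ orderings $Y_1^1\leq\cdots\leq Y_k^k$ and $\lambda_1^1\leq\cdots\leq\lambda_k^k$. The paper leaves the last two steps (reduction to the ordered chamber and CDF-to-weak-convergence upgrade) as brief remarks, whereas you have spelled them out, but there is no substantive difference.
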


The above lemma will be one of the central ingredients necessary for the proof of Theorem \ref{theorem2} and we use it below to prove Theorem \ref{theorem1}

\begin{proof}(Theorem \ref{theorem1})
Assume the same notation as in Theorem \ref{theorem1}. It follows from our discussion in the proof of Proposition \ref{corePropProb} that 
$$\mathbb{P}_{u,v}^{N,M} \left( \{ \omega : \lambda^N_{N - i + 1}(\omega) = i, 1\leq i \leq k \} \right) = \mathbb{P}_{u,v}^{N,M} (Y^1_1 \leq N,..., Y_k^k \leq N) = \mathbb{P}_{u,v}^{N,M} ( Y_k^k \leq N) .$$
Let $x \in \mathbb{R}$ and notice that as $N \geq \gamma \cdot M$ with $\gamma > a$, we have that for all large $M$,
$$ \mathbb{P}_{u,v}^{N,M} (Y^1_1 \leq N,..., Y_k^k \leq N) \geq \mathbb{P}_{u,v}^{N,M}\left(  \frac{ Y_i^i - aM}{c\sqrt{M}}  \leq x; i = 1,...,k\right) .$$
By Lemma \ref{keyProbLemma}, the latter expression converges to $ \mathbb{P}\left(  \lambda_i^i \leq x; i = 1,...,k\right) = \mathbb{P}\left(  \lambda_k^k \leq x \right)$ as $M \rightarrow \infty$. Thus we have
$$ \liminf_{ M \rightarrow \infty} \mathbb{P}_{u,v}^{N,M} \left( \{ \omega : \lambda^N_{N - i + 1}(\omega) = i, 1\leq i \leq k \} \right)  \geq  \mathbb{P}\left(  \lambda_k^k \leq x \right).$$
The above being true for all $x \in \mathbb{R}$, we may send $x \rightarrow \infty$ and conclude the statement of the theorem.

\end{proof}

\section{Gibbs measures on Gelfand-Tsetlin cones} \label{Section5}
 In this section we investigate probability measures on Gelfand-Tsetlin cones in $\mathbb{R}^{n(n+1)/2}$, which satisfy what is known as the continuous Gibbs property (see Definition \ref{definitionGT} below). An example of such a measure is given by the GUE-corners process $\lambda_i^j$, $i = 1,...,j$, $j = 1,...,n$ of rank $n$. The main result of this section is Proposition \ref{pGibbs}, which can be understood as a classification result for the GUE-corners process. Essentially, it distinguishes the GUE-corners process as the unique probability measure on the Gelfand-Tsetlin cone $GT^n$ (defined in Section \ref{Section5.1} below), which satisfies the continuous Gibbs property and has a certain marginal distribution. A similar result, which we also use, is given by Proposition 6 in \cite{Gor14}.

It is well known that Gibbs measures on $\mathcal{C}_n$ are related to measures on $n\times n$ Hermitian matrices, that are invariant under the action of the unitary group $U(n)$ (see e.g.\cite{Def10}). The study of unitarily invariant measures on Hermitian matrices is a rich subject with connections to many branches of mathematics. A towering result in this area is the classification of the ergodic unitarily invariant Borel probability measures on infinite Hermitian matrices \cite{OV}, which can be viewed as the origin of our GUE-corners process classification result.

\subsection{The continuous Gibbs property}\label{Section5.1}
In what follows we adopt some of the terminology from \cite{Def10} and \cite{Gor14}. Let $\mathcal{C}_n$ be the {\em Weyl chamber} in $\mathbb{R}^n$ i.e.
$$\mathcal{C}_n := \{ (x_1,...,x_n) \in \mathbb{R}^n: x_1 \leq  x_2 \leq  \cdots \leq  x_n \}.$$
For $x \in \mathbb{R}^n$ and $y \in \mathbb{R}^{n-1}$ we write $x \succeq y$ to mean that 
$$x_1 \leq y_1 \leq x_2 \leq y_2 \leq \cdots \leq x_{n-1} \leq y_{n-1} \leq x_n.$$
For $x = (x_1,...,x_n) \in \mathcal{C}_n$ we define the {\em Gelfand-Tsetlin polytope} to be
$$GT_n(x): = \{ (x^{1},...,x^{n}): x^{n} = x, x^{k} \in \mathbb{R}^k, x^{k} \succeq x^{k-1}, 2 \leq k \leq n\}.$$
We explain what we mean by the uniform measure on a Gelfand-Tsetlin polytope $GT_n(x)$. The latter set is a bounded convex set $C$ of a real vector space. We define its volume, as we do for any bounded convex set, to be to be its measure according to the Lebesgue measure on the real affine subspace that it spans (if the subspace is of dimension $0$, i.e. $x_1 = \cdots = x_n$ the Lebesgue measure is given by the delta mass at $x_1 = \cdots = x_n$) and denote it by $vol(C)$. We define the Lebesgue measure on $C$ as this Lebesgue measure restricted to $C$ and the uniform probability measure on $C$ as the normalized Lebesgue measure on $C$ by $vol(C)$. The inclusion $x^{k} \in \mathbb{R}^k$ identifies $GT_n(x)$ as a subset of $\mathbb{R}^{n(n-1)/2}$ and we can naturally think of measures on $GT_n(x)$ as measures on $\mathbb{R}^{n(n-1)/2}$.\\ 

If $\lambda \in \mathcal{C}_n$ we denote by $\mu_\lambda$ the image of the uniform measure on $GT_n(\lambda)$ by the map $p_{n-1}: x \in GT_n(\lambda) \rightarrow x^{n-1} \in \mathcal{C}_{n-1}$. Let $l_\lambda$ be the Lebesgue measure on the convex set $p_{n-1}(GT_n(\lambda))$. Then Lemma 3.8 of \cite{Def10} shows that $\mu_\lambda$ is a probability measure on the set $\{ x^{n-1} \in \mathcal{C}_{n-1}: \lambda \succeq x^{n-1}\}$ and
$$\mu_\lambda(d\beta) = \frac{d_{n-1}(\beta)}{d_n(\lambda)} l_\lambda(d\beta),$$
where $d_k(\lambda)$ for $\lambda \in \mathcal{C}_k$ denotes $vol(GT_k(\lambda))$. Lemma 3.7 in \cite{Def10} shows that $d_n(\lambda)$ is explicitly given by
$$d_n(\lambda) =  \prod_{\substack{ 1 \leq i < j \leq n \\ \lambda_i \neq \lambda_j}}\frac{\lambda_j - \lambda_i}{j - i}.$$

For $\lambda \in \mathcal{C}_n$ we define $\mathbb{E}^{\mu_\lambda}$ to be the expectation with respect to $\mu_\lambda$ as defined above and we also set $\mathbb{E}^\lambda$ to be the expectation with respect to the uniform measure on $GT_n(\lambda)$ as defined above. We summarize some of the properties of these expectations in a sequence of lemmas, whose proof is deferred to Section \ref{Section5.2}.

\begin{lemma}\label{LemmaC1} Fix $n \geq 2$. Let $\lambda \in \mathcal{C}_n$ and $\lambda^k \in \mathcal{C}_n$ be such that $\lim_{k \rightarrow \infty} | \lambda - \lambda^k| = 0$. Suppose $f: \mathbb{R}^{n-1} \rightarrow \mathbb{C}$ is a bounded continuous function. Then we have
$$\lim_{k \rightarrow \infty}\mathbb{E}^{\mu_{\lambda^k}} \left[ f(x) \right] = \mathbb{E}^{\mu_{\lambda}} \left[ f(x) \right].$$
\end{lemma}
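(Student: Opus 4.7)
The plan is to couple all of the measures $\mu_{\lambda^k}$ and $\mu_\lambda$ to a single Haar-random unitary, thereby reducing the weak convergence to an almost-sure continuity statement for the spectrum of a random submatrix.

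Concretely, let $U$ be a Haar-distributed element of the unitary group $U(n)$ on some fixed probability space, and for each $\mu \in \mathcal{C}_n$ set $D(\mu) := \mathrm{diag}(\mu_1,\dots,\mu_n)$ and let $H(\mu)'$ denote the top-left $(n-1)\times(n-1)$ principal submatrix of the random Hermitian matrix $U D(\mu) U^*$. A classical result of Baryshnikov \cite{Bar01}, also presented in \cite{Def10}, asserts that for every $\mu \in \mathcal{C}_n$ the ordered spectrum $\mathrm{spec}(H(\mu)') \in \mathcal{C}_{n-1}$ has law precisely $\mu_\mu$; the density $\tfrac{d_{n-1}(\beta)}{d_n(\mu)}\,l_\mu(d\beta)$ emerges here through the Harish-Chandra / Itzykson-Zuber volume formula. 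This identification holds uniformly in $\mu$, and in particular covers the degenerate case in which $\mu$ has repeated entries and $\mu_\mu$ is supported on a proper affine subspace.

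Granting the identification, fix a realization of $U$. The convergence $\lambda^k \to \lambda$ in $\mathbb{R}^n$ gives $D(\lambda^k) \to D(\lambda)$ in operator norm, hence $U D(\lambda^k) U^* \to U D(\lambda) U^*$ entrywise, hence $H(\lambda^k)' \to H(\lambda)'$ entrywise. The ordered spectrum of a Hermitian matrix is a continuous function of its entries (by, e.g., Weyl's perturbation inequality), so we conclude that $\mathrm{spec}(H(\lambda^k)') \to \mathrm{spec}(H(\lambda)')$ in $\mathcal{C}_{n-1}$ for every realization of $U$, i.e.\ almost surely.

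Now for the bounded continuous $f: \mathbb{R}^{n-1} \to \mathbb{C}$ of the statement, the random variables $f(\mathrm{spec}(H(\lambda^k)'))$ are uniformly bounded and converge pointwise to $f(\mathrm{spec}(H(\lambda)'))$, so the bounded convergence theorem yields
\begin{equation*}
\mathbb{E}^{\mu_{\lambda^k}}[f] \;=\; \mathbb{E}\bigl[f\bigl(\mathrm{spec}(H(\lambda^k)')\bigr)\bigr] \;\longrightarrow\; \mathbb{E}\bigl[f\bigl(\mathrm{spec}(H(\lambda)')\bigr)\bigr] \;=\; \mathbb{E}^{\mu_\lambda}[f],
\end{equation*}
which is the desired conclusion. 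The main obstacle is the classical identification invoked above; once it is accepted, the remainder is a routine continuity-plus-dominated-convergence argument. The chief advantage of this coupling route over a direct analysis of the formula $\mu_\lambda(d\beta) = \tfrac{d_{n-1}(\beta)}{d_n(\lambda)}\, l_\lambda(d\beta)$ is that the formula-based approach requires separating the case in which $\lambda$ has all distinct coordinates (where simple dominated convergence on $\mathbb{R}^{n-1}$ suffices) from the degenerate case where the dimension of $\mathrm{supp}(\mu_\lambda)$ drops and the reference measure $l_\lambda$ itself changes character; the coupling handles both cases simultaneously.
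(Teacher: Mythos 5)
Your proof is correct and takes a genuinely different route from the paper. The paper works entirely within the Gelfand--Tsetlin picture: it writes out $\mathbb{E}^{\mu_{\lambda^k}}[f]$ explicitly via the density $\tfrac{d_{n-1}(\beta)}{d_n(\lambda^k)}\,l_{\lambda^k}(d\beta)$, reduces to the case $f(x)=\prod_i f_i(x_i)$ with $f_i\ge 0$, introduces blocks $M_q$ tracking which coordinates of the limit $\lambda$ coalesce, sandwiches the pre-limit integral $G_2(\lambda^k)$ between explicit upper and lower bounds $G_2^{\pm}$, and finally upgrades from products to general bounded continuous $f$ via Stone--Weierstrass. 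Your coupling through a Haar-random $U\in U(n)$ and a fixed diagonal $D(\mu)$ replaces all of this by Weyl's perturbation inequality plus bounded convergence; it is substantially shorter and more conceptual, and automatically treats the degenerate (coalescing) case on the same footing as the generic case, which is precisely where the paper's proof does the most work. The one thing you must be careful to source precisely is the identification $\mathrm{spec}(H(\mu)')\sim\mu_\mu$ for \emph{all} $\mu\in\mathcal{C}_n$, including $\mu$ with repeated coordinates. If your reference only proves this for distinct-entry $\mu$ and then appeals to ``continuity in $\mu$'' to extend it, that continuity is exactly Lemma \ref{LemmaC1} and the argument would be circular. Fortunately the orbit-integral treatment in \cite{Def10} does establish the identification uniformly in $\mu$ (the orbit measures are defined and analyzed for arbitrary, possibly degenerate, spectra), so your argument closes; you should cite the specific statement there that covers the degenerate case rather than Baryshnikov's formulation, which is stated for generic spectra.
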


\begin{lemma}\label{LemmaC3} Let $n \geq 2$ and $f: \mathbb{R}^{n(n+1)/2} \rightarrow \mathbb{C}$ be bounded and continuous. Then the function 
$$g (y) := \mathbb{E}^{y} \left[ f(y, x^{n-1},...,x^{1}) \right], \mbox{ is bounded and continuous on $\mathcal{C}_n$.}$$
\end{lemma}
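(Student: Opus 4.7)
Boundedness is immediate from $|g(y)| \le \|f\|_\infty$, so everything reduces to continuity, which I plan to establish by induction on $n$ after strengthening the statement to carry an auxiliary parameter. Concretely, I would prove by induction on $n \ge 1$ that for every $k \ge 0$ and every bounded continuous $F : \mathbb{R}^k \times \mathbb{R}^{n(n+1)/2} \to \mathbb{C}$, the function $G_F(p,y) := \mathbb{E}^y[F(p,y,x^{n-1},\ldots,x^1)]$ is bounded and continuous on $\mathbb{R}^k \times \mathcal{C}_n$. The lemma is the $k=0$ specialization, and the base $n=1$ is trivial because $GT_1(y)=\{y\}$ and $G_F(p,y)=F(p,y)$.

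For the inductive step, I would use the disintegration of the uniform measure on $GT_n(y)$ implicit in the preceding discussion (Lemma 3.8 of \cite{Def10}): the $x^{n-1}$-marginal is $\mu_y$, and conditional on $x^{n-1}$ the remaining coordinates are uniform on $GT_{n-1}(x^{n-1})$. Setting
$$H(p,y,z) := \mathbb{E}^z\bigl[F(p,y,z,x^{n-2},\ldots,x^1)\bigr], \qquad z \in \mathcal{C}_{n-1},$$
the tower property gives $G_F(p,y) = \mathbb{E}^{\mu_y}[H(p,y,x^{n-1})]$. Applying the inductive hypothesis with enlarged parameter $(p,y) \in \mathbb{R}^{k+n}$, to the bounded continuous function obtained from $F$ by treating its first $k+n$ slots as parameters, shows that $H$ is bounded and jointly continuous on $\mathbb{R}^k \times \mathcal{C}_n \times \mathcal{C}_{n-1}$.

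To conclude, I take a sequence $(p^{(j)},y^{(j)}) \to (p,y)$ in $\mathbb{R}^k \times \mathcal{C}_n$ and estimate
$$|G_F(p^{(j)},y^{(j)}) - G_F(p,y)| \le \mathbb{E}^{\mu_{y^{(j)}}}\!\bigl[|H(p^{(j)},y^{(j)},x) - H(p,y,x)|\bigr] + \bigl|\mathbb{E}^{\mu_{y^{(j)}}}[H(p,y,x)] - \mathbb{E}^{\mu_y}[H(p,y,x)]\bigr|.$$
The second term vanishes as $j \to \infty$ by Lemma \ref{LemmaC1} applied to the bounded continuous map $z \mapsto H(p,y,z)$. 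For the first term, the interlacing constraints together with boundedness of $\{y^{(j)}\}$ place the supports of all $\mu_{y^{(j)}}$ and of $\mu_y$ inside a common compact set $K \subset \mathcal{C}_{n-1}$; $H$ is then uniformly continuous on the compact set $\bigl(\{(p^{(j)},y^{(j)})\}_j \cup \{(p,y)\}\bigr) \times K$, so $\sup_{z \in K}|H(p^{(j)},y^{(j)},z) - H(p,y,z)| \to 0$, which bounds the first term since each $\mu_{y^{(j)}}$ is a probability measure on $K$.

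The main obstacle is the behavior at the boundary of $\mathcal{C}_n$, where coordinates of $y$ coincide and $GT_n(y)$ collapses to a lower-dimensional polytope. The weak-continuity input from Lemma \ref{LemmaC1} is tailored precisely to absorb these degeneracies in the second term above, after which the first term reduces to a routine uniform-continuity argument enabled by the strengthened inductive statement. Without the parameter $p$ in the inductive hypothesis, the naive tower-decomposition would produce an inner expectation that depends on $y$ both through the outer slot of $f$ and through the nested polytope, and the induction would fail to close; carrying $p$ along resolves this.
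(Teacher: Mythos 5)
Your proof is correct, but it takes a genuinely different route from the paper's. The paper first reduces to product functions $f(x^n,\ldots,x^1)=f_1(x^n)f_2(x^{n-1},\ldots,x^1)$, handles these via the continuity statement $\mathbb{E}^{\beta^k}[f_2(x^{n-1},\ldots,x^1)]\to\mathbb{E}^{\beta}[f_2(x^{n-1},\ldots,x^1)]$, then passes to general bounded continuous $f$ by linearity and a Stone--Weierstrass approximation on a compact box. The step for product functions is in fact the content of Lemma \ref{LemmaC2} (the paper cites Lemma \ref{LemmaC1} at that point, but $f_2$ depends on all of $x^{n-1},\ldots,x^1$, not just $x^{n-1}$), and since Lemma \ref{LemmaC2} for $n$ is itself proved using Lemma \ref{LemmaC3} for $n-1$, the paper's argument is implicitly a joint induction on $n$ whose structure is not spelled out. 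Your proof makes that induction explicit and, by strengthening the inductive statement to carry an auxiliary parameter $p$, closes it cleanly in a single pass without a separate density/approximation step and without the Lemma \ref{LemmaC2}/\ref{LemmaC3} interleaving. The disintegration $G_F(p,y)=\mathbb{E}^{\mu_y}[H(p,y,\,\cdot\,)]$, the joint continuity of $H$ from the parametrized inductive hypothesis, and the split into a ``uniform continuity on a common compact'' term plus a ``weak convergence via Lemma \ref{LemmaC1}'' term are all sound; the only microscopic gap is that Lemma \ref{LemmaC1} is stated for functions continuous on $\mathbb{R}^{n-1}$ while $z\mapsto H(p,y,z)$ is a priori defined only on $\mathcal{C}_{n-1}$, but since $\mathcal{C}_{n-1}$ is closed a Tietze extension (or the observation that the proof of Lemma \ref{LemmaC1} only sees the restriction to $\mathcal{C}_{n-1}$) disposes of this. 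Your remark that the parameter $p$ is what makes the induction close --- otherwise the inner expectation depends on $y$ both through $f$'s outer slot and through the nested polytope --- is exactly the right diagnosis of the difficulty, and it is precisely the issue the paper resolves instead by the Stone--Weierstrass detour.
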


\begin{lemma}\label{LemmaC2} Let $n \geq 2$, $\lambda \in \mathcal{C}_n$ and $\lambda^k \in \mathcal{C}_n$ be such that $\lim_{k \rightarrow \infty} | \lambda - \lambda^k| = 0$. Suppose $f: \mathbb{R}^{n(n-1)/2} \rightarrow \mathbb{C}$ is a bounded continuous function. Then we have
$$\lim_{k \rightarrow \infty}\mathbb{E}^{\lambda^k} \left[ f(x^{n-1},...,x^{1}) \right] = \mathbb{E}^{\lambda} \left[ f(x^{n-1},...,x^{1}) \right].$$
\end{lemma}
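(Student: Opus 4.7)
The plan is to reduce the statement to Lemmas \ref{LemmaC1} and \ref{LemmaC3} by disintegrating the uniform measure on $GT_n(\lambda)$ along the projection $p_{n-1}$. First I would handle the base case $n = 2$ directly: there $GT_2(\lambda)$ consists of pairs $(x^2, x^1) = (\lambda, x^1)$, the projection $p_1$ simply drops the top row, and so $\mathbb{E}^\lambda[f(x^1)]$ coincides tautologically with $\mathbb{E}^{\mu_\lambda}[f(x^1)]$. The conclusion then follows immediately from Lemma \ref{LemmaC1}.

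For $n \geq 3$ I would decompose via the tower property. Define $g : \mathcal{C}_{n-1} \to \mathbb{C}$ by
$$g(y) := \mathbb{E}^y\bigl[f(y, x^{n-2}, \ldots, x^{1})\bigr].$$
Using the explicit density $\mu_\lambda(d\beta) = \frac{d_{n-1}(\beta)}{d_n(\lambda)} l_\lambda(d\beta)$ recorded in Section \ref{Section5.1} together with Fubini's theorem, one obtains
$$\mathbb{E}^\lambda\bigl[f(x^{n-1}, \ldots, x^1)\bigr] = \mathbb{E}^{\mu_\lambda}\bigl[g(x^{n-1})\bigr],$$
and similarly for each $\lambda^k$. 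The content here is that, conditionally on $x^{n-1}$, the remaining rows $(x^{n-2}, \ldots, x^1)$ are uniformly distributed on $GT_{n-1}(x^{n-1})$, which is built into the definition of the uniform measure on $GT_n(\lambda)$.

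Next I would invoke Lemma \ref{LemmaC3} with $n$ replaced by $n-1$ (allowed since $n-1 \geq 2$) to conclude that $g$ is bounded and continuous on $\mathcal{C}_{n-1}$. Then Lemma \ref{LemmaC1} gives
$$\lim_{k \to \infty} \mathbb{E}^{\mu_{\lambda^k}}\bigl[g(x^{n-1})\bigr] = \mathbb{E}^{\mu_\lambda}\bigl[g(x^{n-1})\bigr],$$
and reading this through the tower decomposition above yields the desired convergence.

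The main obstacle will be making sure the tower decomposition remains valid when $\lambda$ or the $\lambda^k$ have coinciding coordinates, for then $GT_n(\lambda)$ lives in a proper affine subspace and both the uniform measure and $d_n(\lambda)$ must be interpreted with respect to the Lebesgue measure on that subspace. The density formula quoted above from \cite{Def10} is precisely what handles these degenerate cases uniformly, so once it is invoked the disintegration argument proceeds without case analysis. A secondary (minor) point is that one should verify that $g$ is actually measurable before applying Fubini, but this follows from the same density formula applied one level down, and in any event is subsumed by the continuity given by Lemma \ref{LemmaC3}.
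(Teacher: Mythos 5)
Your proposal is correct and takes the same route as the paper: disintegrate $\mathbb{E}^{\lambda^k}$ through the one-step kernel $\mu_{\lambda^k}$, apply Lemma \ref{LemmaC3} (at level $n-1$) to see that $y \mapsto \mathbb{E}^y[f(y,x^{n-2},\ldots,x^1)]$ is bounded and continuous, and then pass to the limit with Lemma \ref{LemmaC1}. The paper phrases this as an induction on $n$ whereas you invoke Lemma \ref{LemmaC3} directly as a prior result, but the two framings are logically equivalent since Lemma \ref{LemmaC3} is established before this one.
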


We define the {\em Gelfand-Tsetlin cone} $GT^n$ to be
$$GT^n = \{ y \in \mathbb{R}^{n(n+1)/2}: y_{i }^{j+1} \leq y_{i}^j \leq y_{i+1}^{j+1}, \hspace{2mm} 1 \leq i \leq j \leq n-1\}.$$
Alternatively, we have $GT^n = \cup_{\lambda \in \mathcal{C}_n} GT_n(\lambda)$. We make the following definition after \cite{Gor14}.
\begin{definition}\label{definitionGT}
A probability measure $\mu$ on $GT^n$ is said to satisfy the {\em continuous Gibbs property} if conditioned on $y^n$ the distribution of $(y^1,...,y^{n-1})$ under $\mu$ is uniform on $GT_n(y^n)$. Equivalently, for any bounded continuous function $f : \mathbb{R}^{n(n+1)/2} \rightarrow \mathbb{C}$ we have that 
$$\mathbb{E}^\mu \left[ f(y^n,...,y^1) \right] = \mathbb{E}^{\mu^n}\left[ \mathbb{E}^{y^n}\left[f(y^n, y^{n-1},...,y^1\right]\right],$$
where $\mu^n$ is the pushforward of $\mu$ to the top row $y^n$ of the Gelfand-Tsetlin cone $GT^n$.
\end{definition}
\begin{remark}
It follows from Lemma \ref{LemmaC3} that $ \mathbb{E}^{y^n}\left[f(y^n, y^{n-1},...,y^1\right]$ is a continuous function of $y^n$ and so its expectation with respect to $\mu^n$ is a well-defined quantity.
\end{remark}

The main result of this section is as follows.

\begin{proposition}\label{pGibbs} Suppose that $\mu$ is a probability distributions on $GT^n$, which satisfies the continuous Gibbs property (Definition \ref{definitionGT}). Suppose that the joint distribution of $(y_1^1,...,y_n^n)$ under $\mu$ agrees with the law of $(\lambda_1^1,...,\lambda_n^n)$, where  $\lambda_i^j$, $i = 1,...,j$, $j = 1,...,n$ is the GUE-corners process of rank $n$. Then $\mu$ is the GUE-corners process of rank $n$.
\end{proposition}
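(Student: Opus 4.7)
The plan is to induct on $n$, reducing the problem to a uniqueness statement for the top-row marginal. The base case $n=1$ is immediate: $GT^1 = \mathbb{R}$ and the whole array is just $y_1^1$, whose distribution is given by the hypothesis.

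For the inductive step, I assume the result for rank $n-1$ and let $\mu_{n-1}$ be the pushforward of $\mu$ onto the sub-pattern $(y^1,\ldots,y^{n-1}) \in GT^{n-1}$. First I would show $\mu_{n-1}$ itself satisfies the continuous Gibbs property on $GT^{n-1}$. The key input is that uniform measure on $GT_n(y^n)$, conditioned on the row $y^{n-1}$, restricts to uniform measure on $GT_{n-1}(y^{n-1})$---this is a direct consequence of the factorization $\mu_\lambda(d\beta) = \frac{d_{n-1}(\beta)}{d_n(\lambda)} l_\lambda(d\beta)$ recalled in Section~\ref{Section5.1} and its rank $n-1$ analog. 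Next I would observe that the diagonal $(y_1^1,\ldots,y_{n-1}^{n-1})$ of $\mu_{n-1}$ coincides with the first $n-1$ coordinates of the rank $n$ GUE-corners diagonal; by the minor property of the GUE (the top-left $(n-1)\times(n-1)$ block of an $n\times n$ GUE matrix is itself GUE of rank $n-1$), this is exactly the GUE-corners diagonal of rank $n-1$. The inductive hypothesis then yields that $\mu_{n-1}$ equals the GUE-corners process of rank $n-1$; in particular the marginal of $y^{n-1}$ under $\mu$ is the eigenvalue density of the rank $n-1$ GUE.

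Given this, the proof will be complete once I show the top-row marginal $\mu^n$ equals the eigenvalue density of the rank $n$ GUE, since the Gibbs property then reconstructs $\mu$ from $\mu^n$ via the uniform conditional on $GT_n(y^n)$. The hardest part will be this last step. Concretely, I will have to show that among probability measures $\sigma$ on $\mathcal{C}_n$, the constraint $\sigma \cdot K = \text{GUE}_{n-1}$, where $K(y^n,y^{n-1}) = \frac{d_{n-1}(y^{n-1})}{d_n(y^n)} \mathbf{1}(y^{n-1} \prec y^n)$ is the interlacing Markov kernel built from the Gibbs property, forces $\sigma$ to be the rank $n$ GUE eigenvalue density. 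This is an injectivity/uniqueness statement for $K$, directly analogous to Proposition~6 of \cite{Gor14}, and I expect to prove it by adapting Gorin's argument, which exploits the Vandermonde structure of $K$. If that does not suffice on its own, the hypothesis on the diagonal supplies further constraints---such as the joint distribution of $(y_{n-1}^{n-1}, y_n^n)$ and the full marginal of $y_n^n$---which together with the already-established marginal of $y^{n-1}$ can be used to pin $\sigma$ down.
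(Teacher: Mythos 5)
Your inductive reduction is valid: pushing a continuous Gibbs measure on $GT^n$ to $GT^{n-1}$ preserves the Gibbs property, and the minor property of the GUE lets the diagonal hypothesis descend, so by the inductive hypothesis the sub-pattern is the rank $n-1$ GUE-corners process. The gap is in the final step. The interlacing kernel $K: \mathcal{C}_n \dashrightarrow \mathcal{C}_{n-1}$ is \emph{not} injective. For $n=2$, the deterministic top row $(\lambda_1,\lambda_2)=(-1,1)$ and the measure placing mass $1/2$ each on $(-1,0)$ and $(0,1)$ both push forward to the uniform measure on $[-1,1]$, so $\sigma K = \mathrm{GUE}_{n-1}$ does not determine $\sigma$. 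Moreover, the analogy to Proposition~6 of \cite{Gor14} is off: that proposition is a uniqueness statement based on the right-edge diagonal $(y_1^1,\ldots,y_n^n)$, not on the map $K$, so ``adapting Gorin's argument'' does not give injectivity of $K$.

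You acknowledge this might not suffice and suggest bringing in more of the diagonal information, but that is a plan rather than a proof, and it turns out to be the hard part. The paper's argument does exactly the work you are missing: Lemmas \ref{lSym1} and \ref{lSym2} show that, for fixed $x^n$, the characteristic function of the row-sum increments $(|x^n|-|x^{n-1}|,\ldots,|x^1|)$ under the uniform Gibbs conditional $\mathbb{E}^{x^n}$ can be written as an explicit linear combination (with rational coefficients in $t$) of the characteristic functions of the diagonal $(x_1^1,\ldots,x_n^n)$ under the same conditional. Integrating against $\mu^n$ and using the hypothesis on the diagonal pins down the law of the row-sum increments, which turn out to be i.i.d.\ standard Gaussians; only then is Gorin's Proposition~6 invoked (continuous Gibbs plus i.i.d.\ Gaussian row-sum increments implies GUE-corners). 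Without a substitute for this computation, the proposal does not close.
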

The above proposition relies on the following lemmas, whose proof is deferred to Section \ref{Section5.3}.

\begin{lemma}\label{lSym1} For $x^i \in \mathcal{C}_i$, $i = 1,..,n$ and $t = (t_1,...,t_n)$ with $t_i \in \mathbb{R}$ define
$$f_n(t,x^n, x^{n-1},...,x^{1}) :=  \prod_{i = 1}^n \exp( \iota t_i (|x^{i}| - |x^{i-1}|)),$$
where $|x^k| = x^k_1 + \cdots + x^k_k$ and $|x^0| = 0$. 
Suppose $n \geq 2$ and $x^{n} \in \mathcal{C}_n$ with $x^n_n > x^n_{n-1} \cdots > x^n_1$ and $t = (t_1,...,t_n)$ with $t_i $ pairwise distinct. Then
$$d_n(x^n) \cdot \mathbb{E}^{x^n}\left[ f_n(t, x^n,...,x^1)\right] = \prod_{1\leq i < j \leq n}\frac{1}{\iota( t_j - t_i)} \times \sum_{\sigma \in S_n} sign(\sigma)\exp\left( \iota \sum_{i = 1}^n t_{\sigma(i)}x^n_{i}\right).$$
\end{lemma}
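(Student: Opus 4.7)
The plan is to prove the identity by induction on $n \geq 2$, with the base case $n = 2$ following from a direct one-dimensional integral computation: $GT_2(x^2)$ is the interval $[x^2_1, x^2_2]$, $d_2(x^2) = x^2_2 - x^2_1$, and evaluating $\int_{x^2_1}^{x^2_2} \exp(\iota t_2 |x^2| + \iota(t_1 - t_2) x^1)\, dx^1$ directly matches the claimed right-hand side.

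For the inductive step, I decompose the Gelfand--Tsetlin polytope as $GT_n(x^n) = \{y \in \mathbb{R}^{n-1} : x^n \succeq y\} \times GT_{n-1}(y)$, so the inner rows $x^{n-2},\ldots,x^1$ can be integrated out first. Using the factorization $f_n(t, x^n, \ldots, x^1) = \exp(\iota t_n(|x^n| - |y|)) \cdot f_{n-1}(t', y, x^{n-2}, \ldots, x^1)$, with $t' = (t_1,\ldots,t_{n-1})$ and $y = x^{n-1}$, the inner integration produces $d_{n-1}(y)\,\mathbb{E}^y[f_{n-1}(t', y, \ldots, x^1)]$. The inductive hypothesis applies for almost every $y$ in the integration region, since the interlacing $x^n_i \leq y_i \leq x^n_{i+1}$ combined with the strict distinctness of $x^n_1,\ldots,x^n_n$ forces $y_1 < \cdots < y_{n-1}$ off a set of measure zero.

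After substituting the inductive hypothesis, the outer integration over $\{y : x^n \succeq y\}$ factors into the product $\prod_{i=1}^{n-1}\int_{x^n_i}^{x^n_{i+1}} \exp(\iota(t_{\sigma(i)} - t_n) y_i)\, dy_i$ of elementary exponential integrals. Collecting the $\sigma$-independent product $\prod_{i=1}^{n-1}[\iota(t_{\sigma(i)} - t_n)]^{-1} = (-1)^{n-1}\prod_{i=1}^{n-1}[\iota(t_n - t_i)]^{-1}$ with the inductive prefactor $\prod_{1\leq i<j\leq n-1}[\iota(t_j - t_i)]^{-1}$ produces the full Vandermonde-type prefactor $\prod_{1\leq i<j\leq n}[\iota(t_j - t_i)]^{-1}$ up to an overall $(-1)^{n-1}$. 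The lemma then reduces to the purely algebraic identity
\begin{equation*}
\sum_{\tau \in S_n} sign(\tau)\exp\Bigl(\iota \sum_{i=1}^n t_{\tau(i)} x^n_i\Bigr) = (-1)^{n-1}\exp(\iota t_n |x^n|) \sum_{\sigma \in S_{n-1}} sign(\sigma) \prod_{i=1}^{n-1}\bigl[\exp(\iota(t_{\sigma(i)} - t_n) x^n_{i+1}) - \exp(\iota(t_{\sigma(i)} - t_n) x^n_i)\bigr].
\end{equation*}

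I would verify this final identity by recognizing the left-hand side as $\det[\exp(\iota t_j x^n_i)]_{i,j=1}^n$, factoring $\exp(\iota t_n x^n_i)$ from row $i$ to pull out the global factor $\exp(\iota t_n |x^n|)$ and obtain a matrix whose $n$-th column is identically $1$, then performing the row operations $R_i \mapsto R_i - R_{i+1}$ for $i = 1,\ldots,n-1$ and Laplace-expanding along the (now almost-zero) last column. The surviving $(n-1)\times(n-1)$ minor coincides with the displayed sum over $S_{n-1}$, modulo the sign $(-1)^{n-1}$ from negating each of its entries. The principal obstacle is not conceptual but careful bookkeeping: tracking the $(-1)^{n-1}$ factors at each stage so they cancel correctly, and justifying the almost-everywhere applicability of the inductive hypothesis. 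No genuine analytic difficulty arises.
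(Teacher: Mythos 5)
Your proof follows the same inductive decomposition as the paper's: base case $n=2$ by direct integration, factor $f_n$ as $e^{\iota t_n(|x^n|-|y|)}f_{n-1}(\cdot)$ with $y=x^{n-1}$, integrate inner rows via the inductive hypothesis, then integrate each $y_i$ over $[x^n_i,x^n_{i+1}]$, reducing the claim to a single algebraic identity. Both you and the paper arrive at the same identity (the paper's equation for the sum over $S_{n-1}$ of products of exponential differences equalling the antisymmetrized $S_n$ sum). What differs is how that identity is proved. The paper expands $\prod_{i=1}^{n-1}(A_i-B_i)$ into $2^{n-1}$ signed words, shows any word with a $B$ at position $r$ followed by an $A$ at position $r+1$ cancels against the word for the adjacent transposition $(r,r+1)$ (because $B_{r,\sigma}A_{r+1,\sigma}$ is symmetric in $\sigma(r),\sigma(r+1)$), and then matches the surviving blocks of $A$'s followed by $B$'s to the $S_n$ sum by sign inspection. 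You instead recognize the $S_n$ sum as $\det[e^{\iota t_j x^n_i}]_{i,j=1}^n$, factor $e^{\iota t_n x^n_i}$ out of each row so the $n$-th column becomes all ones, apply $R_i\mapsto R_i-R_{i+1}$ for $i<n$, and Laplace-expand along the resulting last column. That determinant route is tidier and avoids the cancellation bookkeeping, at the cost of an extra $(-1)^{n-1}$ sign that you correctly track. You also explicitly note that the inductive hypothesis applies only for almost every $y$ with $y_1<\cdots<y_{n-1}$ --- a measure-zero caveat the paper silently elides --- so your write-up is marginally more careful on that point. Both arguments are correct.
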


\begin{lemma}\label{lSym2}
Suppose $n \geq 2$ and $x^{n} \in \mathcal{C}_n$ with $x^n_n > x^n_{n-1} \cdots > x^n_1$. Let $t = (t_1,...,t_n)$ with $t_i \in \mathbb{R}$. For $\sigma \in S_n$ we define $t_\sigma := (t_{\sigma(1)},...,t_{\sigma(n)})$ and we set
\hspace{-5mm}
$$g_n(t,x^n, x^{n-1},...,x^{1}) :=  \prod_{i = 1}^n \exp( \iota t_ix^i_i).$$
If $t_i$ are all nonzero we have
$$d_n(x^n) \cdot \sum_{\sigma \in S_n}sign(\sigma)  \mathbb{E}^{x^n}\left[ g_n(t_\sigma, x^n,...,x^1)\right] \prod_{i = 1}^n ( \iota t_{\sigma(i)})^{n-i}  =  (-1)^{ \frac{n(n-1)}{2}}\sum_{\sigma \in S_n} sign(\sigma)\exp\left( \iota \sum_{i = 1}^n t_{\sigma(i)}x^n_{i}\right).$$
\end{lemma}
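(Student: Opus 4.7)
My plan is to proceed by induction on $n$. The base case $n = 2$ reduces to the single 1D integral $\int_{x_1^2}^{x_2^2} e^{\iota t_1 x_1^1} dx_1^1 = (e^{\iota t_1 x_2^2} - e^{\iota t_1 x_1^2})/(\iota t_1)$; multiplying by $e^{\iota t_2 x_2^2}$ and antisymmetrizing over $S_2$ with weights $\iota t_{\sigma(1)}$ yields after cancellation of the common $e^{\iota(t_1+t_2)x_2^2}$ term precisely the claimed right-hand side $e^{\iota t_2 x_1^2 + \iota t_1 x_2^2} - e^{\iota t_1 x_1^2 + \iota t_2 x_2^2}$.

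For the inductive step I denote the left-hand side of the lemma by $L(t)$ and set $Q_{ij} := e^{\iota t_j x_i^n}$, so that the target right-hand side equals $(-1)^{n(n-1)/2} \det Q$. Using the factorization $g_n(t_\sigma, x^n, \ldots, x^1) = e^{\iota t_{\sigma(n)} x_n^n} g_{n-1}(t_\sigma|_{[n-1]}, x^{n-1}, \ldots, x^1)$ (with trivial $(\iota t_{\sigma(n)})^0 = 1$), together with Fubini and the elementary Gibbs factorization of the uniform measure on $GT_n(x^n)$, I rewrite the expectation as an outer integral over $x^{n-1}$ in $\{x^n \succeq x^{n-1}\}$ of $d_{n-1}(x^{n-1}) \mathbb{E}^{x^{n-1}}[g_{n-1}(\ldots)]$. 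I then split the sum over $\sigma \in S_n$ by $m := \sigma(n)$, writing $\sigma|_{[n-1]} = c_m \circ \pi$ for the order-preserving bijection $c_m: [n-1] \to [n] \setminus \{m\}$ and $\pi \in S_{n-1}$, so $\mathrm{sign}(\sigma) = (-1)^{n-m}\mathrm{sign}(\pi)$. Setting $s := (t_1, \ldots, \widehat{t}_m, \ldots, t_n)$, the weight separates as
\[
\prod_{i=1}^{n-1} (\iota t_{\sigma(i)})^{n-i} = \iota^{n-1} \Bigl(\prod_{j \neq m} t_j\Bigr) \prod_{i=1}^{n-1}(\iota s_{\pi(i)})^{n-1-i},
\]
and the inner antisymmetric $\pi$-combination exactly matches the LHS of the lemma for rank $n-1$ with parameters $s$. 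By the induction hypothesis this inner combination simplifies pointwise almost everywhere to $(-1)^{(n-1)(n-2)/2} \sum_\pi \mathrm{sign}(\pi) e^{\iota \sum s_{\pi(i)} x_i^{n-1}}$.

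The remaining $x^{n-1}$-integral of this antisymmetric exponential sum over $\{x^n \succeq x^{n-1}\}$ factorizes across $i$ because the interlacing constraint decouples into $x_i^n \leq x_i^{n-1} \leq x_{i+1}^n$; each 1D integral evaluates to $(e^{\iota s_{\pi(i)} x_{i+1}^n} - e^{\iota s_{\pi(i)} x_i^n})/(\iota s_{\pi(i)})$. Expanding the product of these differences over $\epsilon \in \{0,1\}^{n-1}$ yields an alternating sum of $(n-1) \times (n-1)$ determinants $\det[e^{\iota s_j x_{i+\epsilon_i}^n}]_{i,j}$, which vanish unless the map $i \mapsto i+\epsilon_i$ is injective into $[n]$. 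This leaves $n$ surviving configurations $\epsilon^{(k)}_i = \mathbf{1}_{\{i \geq k\}}$ parameterized by the missing index $k \in [n]$, with sign $(-1)^{k-1}$, and the surviving determinant is precisely the $(k,m)$-minor $\det Q^{(k,m)}$ of $Q$.

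Collecting everything produces
\[
L(t) = (-1)^{(n-1)(n-2)/2} \sum_{k,m=1}^n (-1)^{n-m+k-1} e^{\iota t_m x_n^n} \det Q^{(k,m)}.
\]
For each fixed $k < n$ the inner $m$-sum equals $(-1)^{n-1}$ times the Laplace expansion along the $k$-th row of the matrix formed from $Q$ by replacing row $k$ with row $n$; this matrix has two identical rows, so the sum vanishes. Only the $k = n$ contribution survives, and it is the Laplace expansion of $\det Q$ along row $n$; the parity identity $n(n-1)/2 \equiv (n-1)(n-2)/2 + n + 1 \pmod{2}$ confirms that the accumulated prefactor is exactly $(-1)^{n(n-1)/2}$, matching the target. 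The principal technical obstacle is the sign bookkeeping: the factors $(-1)^{n-m}$ from the $\sigma \mapsto \pi$ decomposition, $(-1)^{k-1}$ from the surviving $\epsilon^{(k)}$, $(-1)^{n+m}$ from the final Laplace expansion, and the recursive prefactor $(-1)^{(n-1)(n-2)/2}$ must all combine with the correct parity to yield the target sign $(-1)^{n(n-1)/2}$.
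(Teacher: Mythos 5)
Your proof is correct and follows the same inductive skeleton as the paper: induct on $n$, peel off the top row via the Gibbs factorization, split the $S_n$-sum by $m=\sigma(n)$, apply the rank-$(n-1)$ hypothesis, and then analyze the product $\prod_i(e^{\iota s_{\pi(i)}x^n_{i+1}}-e^{\iota s_{\pi(i)}x^n_i})$. Where you diverge is in the bookkeeping of the final cancellations: the paper reuses the intermediate identity (its equation~(\ref{S5chsym})) established inside the proof of Lemma~\ref{lSym1} and kills the $l<n-1$ terms by a ``swap two indices and watch the sign flip'' argument, whereas you expand directly over $\epsilon\in\{0,1\}^{n-1}$, observe that the determinant $\det[e^{\iota s_j x^n_{i+\epsilon_i}}]$ vanishes unless $i\mapsto i+\epsilon_i$ is injective (forcing $\epsilon=\epsilon^{(k)}$), and then dispose of the $k<n$ contributions via the Laplace expansion of a matrix with a repeated row. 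The linear-algebraic packaging is arguably cleaner and more self-contained than the paper's (which also appears to contain a small typo in the transposition argument, writing $t_{\sigma(l+1)}$ where $t_{\sigma(n-1)}$ is meant), but logically the two proofs are the same antisymmetry cancellation viewed from two angles; your sign tally $(-1)^{(n-1)(n-2)/2+n+1}=(-1)^{n(n-1)/2}$ checks out.
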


\begin{proof} (Proposition \ref{pGibbs})
Suppose $t = (t_1,...,t_n)$ with $t_i \in \mathbb{R}$ is such that $t_i$ are pairwise distinct and non-zero. It follows from Lemmas \ref{lSym1} and \ref{lSym2} that if $x^n_n > x^n_{n-1} > \cdots > x^n_1$, we have
$$\mathbb{E}^{x^n}\left[ f_n(t, x^n,...,x^1)\right] = \prod_{1\leq i < j \leq n}\frac{(-1)}{\iota( t_j - t_i)}   \sum_{\sigma \in S_n}sign(\sigma)  \mathbb{E}^{x^n}\left[ g_n(t_\sigma, x^n,...,x^1)\right] \prod_{i = 1}^n ( \iota t_{\sigma(i)})^{n-i}  .$$
From Lemma \ref{LemmaC2} we know that both sides of the above equality are continuous in $x^n$ and so the equality holds for all $x^n \in \mathcal{C}_n$.\\

Taking the expectation with respect to $\mu$ on both sides we recognize the LHS as the characteristic function of $(|x^n| - |x^{n-1}|,...,|x^2| - |x^1|, |x^1|)$ under the law $\mu$. The RHS is a linear combination of the characteristic functions of $(x_1^1,...,x_n^n)$ under the law $\mu$. By assumption, $(x_1^1,...,x_n^n)$ has the same law under $\mu$ as $(\lambda_1^1,...,\lambda_n^n)$, from which we conclude that 
$$\mathbb{E}^{\mu}\left[ \exp \left(\sum_{i = 1}^n \iota t_i (|x^i| - |x^{i-1}| \right)\right]  = \mathbb{E}\left[ \exp \left( \sum_{i = 1}^n \iota t_i (|\lambda^i| - |\lambda^{i-1}| \right)\right] ,$$
whenever $t_i$ are pairwise distinct and non-zero (recall $|x^0| = 0$). Since the characteristic functions are continuous in $t$ it follows that the above equality holds for all $t \in \mathbb{R}^n$. As the characteristic function of a distribution uniquely defines it we conclude that $(|x^n| - |x^{n-1}|,...,|x^2| - |x^1|, |x^1|)$ are i.i.d. Gaussian random variables with mean $0$ and variance $1$. The latter together with the continuous Gibbs property, satisfied by $\mu$, implies that $\mu$ is the GUE corners process by Proposition 6 in \cite{Gor14}.
\end{proof}

\subsection{Proof of Lemmas \ref{LemmaC1}, \ref{LemmaC3} and \ref{LemmaC2}}\label{Section5.2}
We adopt the same notation as in Section \ref{Section5.1}.

\begin{proof}(Lemma \ref{LemmaC1})
We begin by first assuming that $f(x) = \prod_{i = 1}^{n-1}f_i(x_i)$ where $f_i$ are bounded, continuous and non-negative real-valued functions. Let $1\leq n_1 < n_2 <\cdots <n_r <  n-1$ and $m_1,...,m_r > 1$ be such that
\begin{itemize}
\item $\lambda_{i} = \lambda_{j} \mbox{ if } i,j \in M_q \mbox{ for some } q = 1,...,r; $
\item $\lambda_{i} < \lambda_{j} \mbox{ if } i < j \mbox{ and } \{i,j\} \not \subset M_q \mbox{ for any } q = 1,...,r,$
\end{itemize}
where $M_q = \{ n_q, ..., n_q + m_q -1\}$. We also set $J : = \{ j : 1\leq j \leq n-1, \mbox{ and } \{j , j+1\} \not \subset M_q \mbox{ for any } q = 1,...,r\}$ and $M'_q =  \{ n_q, ..., n_q + m_q -2\}$. Then by the definition of $\mu_\lambda$ we have that 
$$\mathbb{E}^{\mu_{\lambda}} \left[  \prod_{i = 1}^{n-1}f_i(x_i) \right] =  \prod_{\substack{ 1 \leq i < j \leq n \\ \lambda_i \neq \lambda_j}}\frac{j - i}{\lambda_j - \lambda_i} \times \prod_{q = 1}^r  \prod_{ j \in M_q'}f_{j}(\lambda_{n_q}) \times  \prod_{1 \leq i < j \leq r}\prod_{s \in M_i'}\prod_{t \in M_j'} \frac{\lambda_{t} - \lambda_{s}}{t - s} \times F(\lambda),$$
$$ \mbox{ where } F(\lambda) =   \left( \prod_{j \in J} \int_{ \lambda_{j}}^{\lambda_{j+1}} \hspace{-3mm} dx_j\right) \prod_{j \in J}f_j(x_j)\prod_{\substack{ 1 \leq i < j \leq n-1 \\ i,j \in J}}\frac{x_j - x_i}{j - i} \prod_{\substack{ 1 \leq i < j \leq n-1 \\ i \in J, j\not \in J}}\frac{ x_j - \lambda_i}{j - i}\prod_{\substack{ 1 \leq i < j \leq n-1 \\ i \not \in J, j \in J}}\frac{\lambda_j - x_i}{j - i}. $$

Let us assume that for each $k$ we have $\lambda_1^k < \lambda_{2}^k < \cdots < \lambda_n^k$. Then the above formula yields
$$\mathbb{E}^{\mu_{\lambda^k}} \left[ \prod_{i = 1}^{n-1}f_i(x_i) \right] = \prod_{1 \leq i < j \leq n }\frac{j - i}{\lambda^k_j - \lambda^k_i} \left( \prod_{j = 1}^{n-1} \int_{ \lambda^k_{j}}^{\lambda^k_{j+1}}dx_j\right) \prod_{j = 1}^{n-1}f_j(x_j) \prod_{ 1 \leq i < j \leq n -1}\frac{x_j - x_i}{j - i}.$$

Suppose $\epsilon > 0$ is given. Then if $k$ is sufficiently large we know by the continuity of the functions that for all $j \in J$ we have $|f_j(x_j) - f(\lambda_j)| < \epsilon$ for all $x_j \in [\lambda^k_{j}, \lambda^k_{j+1}]$. Using that $f_i$ are uniformly bounded by some $M$ we conclude that 
\begin{equation}\label{nr1}
\left| \mathbb{E}^{\mu_{\lambda^k}} \left[ \prod_{i = 1}^{n-1}f_i(x_i) \right] - \hspace{-2mm}\prod_{1 \leq i < j \leq n } \hspace{-1mm}\frac{j - i}{\lambda^k_j - \lambda^k_i} \hspace{-1mm}\left( \prod_{j = 1}^{n-1} \int_{ \lambda^k_{j+1}}^{\lambda^k_{j}}dx_j \hspace{-1mm}\right) \hspace{-2mm}\prod_{j \not \in J}f_j(\lambda_j) \prod_{j \in J}f_j(x_j) \hspace{-5mm}\prod_{ 1 \leq i < j \leq n -1}\frac{x_j- x_i}{j - i}\right| < C\epsilon,
\end{equation}
for all sufficiently large $k$, where $C$ can be taken to be $(n-1)(1+M)^{n-1}$. Observing that $\prod_{j \not \in J}f_j(\lambda_j) = \prod_{q = 1}^r \left( \prod_{ j = 0}^{m_q-2} f_{j + n_q}(\lambda_{n_q}) \right)$ and using (\ref{nr1}) we get
\begin{equation}\label{nr2}
\limsup_{k \rightarrow \infty} \left| \mathbb{E}^{\mu_{\lambda}} \left[  \prod_{i = 1}^{n-1}f_i(x_i) \right]  - \mathbb{E}^{\mu_{\lambda^k}} \left[ \prod_{i = 1}^{n-1}f_i(x_i) \right]\right| \leq C\epsilon + (M+1)^n \limsup_{k \rightarrow \infty} |G_1(\lambda) - G_2(\lambda^k)|, 
\end{equation} 
$$\mbox{ where }G_1(\lambda) = \bigl( \prod_{\substack{ i, j = 1,...,r \\ i < j}}\prod_{s \in M_i'}\prod_{t \in M_j'} \frac{\lambda_{t} - \lambda_{s}}{t - s}\bigr) \prod_{\substack{ 1 \leq i < j \leq n \\ \lambda_i \neq \lambda_j}}\frac{j - i}{\lambda_j - \lambda_i}F(\lambda), \mbox{ and }$$
$$G_2(\lambda^k) =  \prod_{1 \leq i < j \leq n }\frac{j - i}{\lambda^k_j - \lambda^k_i} \left( \prod_{j = 1}^{n-1} \int_{ \lambda^k_{j}}^{\lambda^k_{j+1}}dx_j\right)  \prod_{j \in J}f_j(x_j) \prod_{ 1 \leq i < j \leq n -1}\frac{x_j - x_i}{j - i}.$$

For $j \not \in J$ denote by $\phi(j)$ the $q$ such that $\{j, j+1\} \subset M_q$. We define $G_2^+$ and $G_2^-$ as follows
$$G^+_2(\lambda^k) =  \prod_{1 \leq i < j \leq n }\frac{j - i}{\lambda^k_j - \lambda^k_i} \left( \prod_{j = 1}^{n-1} \int_{ \lambda^k_{j}}^{\lambda^k_{j+1}}dx_j\right)  \prod_{j \in J}f_j(x_j) \prod_{\substack{ 1 \leq i < j \leq n-1 \\ i,j \in J}}\frac{x_j - x_i}{j - i}$$
$$\times \prod_{\substack{j \in J, i\not \in J\\  i < j }}\frac{x_j - \lambda^k_{i}}{j - i}  \prod_{\substack{i \in J, j\not \in J\\  i < j }}\frac{\lambda_{j+1}^k - x_i}{j - i}  \prod_{q = 1}^r\prod_{\substack{i < j, \\ i,j \in M_q'} }\frac{x_j - x_i}{j - i} \prod_{\substack{i,j \not \in J, i < j, \\j \not \in M'_{\phi(i)}}}\frac{\lambda^k_{j+1} -\lambda^k_{i}}{j - i};$$
$$G^-_2(\lambda^k) =  \prod_{1 \leq i < j \leq n }\frac{j - i}{\lambda^k_j - \lambda^k_i} \left( \prod_{j = 1}^{n-1} \int_{ \lambda^k_{j}}^{\lambda^k_{j+1}}dx_j\right)  \prod_{j \in J}f_j(x_j) \prod_{\substack{ 1 \leq i < j \leq n-1 \\ i,j \in J}}\frac{x_j - x_i}{j - i}$$
$$\times \prod_{\substack{j \in J, i\not \in J\\  i < j }}\frac{x_j - \lambda^k_{i+1}}{j - i}  \prod_{\substack{i \in J, j\not \in J\\  i < j }}\frac{\lambda_{j}^k - x_i}{j - i}   \prod_{q = 1}^r\prod_{\substack{i < j, \\ i,j \in M_q'} }\frac{x_j - x_i}{j - i} \prod_{\substack{i,j \not \in J, i < j, \\j \not \in M'_{\phi(i)}}}\frac{\lambda^k_{j} -\lambda^k_{i+1}}{j - i};$$
Using the non-negativity of $f_i$ we observe that $G^-_2(\lambda^k) \leq G_2(\lambda^k) \leq G^+_2(\lambda^k).$ 

Performing the integration over $x_j$ for $j \not\in J$ we may rewrite $G^+_2(\lambda^k)$ as
$$G^+_2(\lambda^k) = \prod_{\substack{ 1 \leq i < j \leq n \\ \lambda_i \neq \lambda_j}}\frac{j - i}{\lambda^k_j - \lambda^k_i} \prod_{\substack{i,j \not \in J, i < j, \\j \not \in M'_{\phi(i)}}}\frac{\lambda^k_{j+1} -\lambda^k_{i}}{j - i} \left( \prod_{j \in J} \int_{ \lambda^k_{j}}^{\lambda^k_{j+1}}dx_j\right) \prod_{j \in J}f_j(x_j) \times $$ 
$$ \prod_{\substack{ 1 \leq i < j \leq n-1 \\ i,j \in J}}\frac{x_j - x_i}{j - i} \prod_{\substack{j \in J, i\not \in J\\  i < j }}\frac{x_j - \lambda^k_i }{j - i}  \prod_{\substack{i \in J, j\not \in J\\  i < j }}\frac{ \lambda^k_{j+1} - x_i }{j - i}.$$
Similarly, we have
$$G^-_2(\lambda^k) = \prod_{\substack{ 1 \leq i < j \leq n \\ \lambda_i \neq \lambda_j}}\frac{j - i}{\lambda^k_j - \lambda^k_i} \prod_{\substack{i,j \not \in J, i < j, \\j \not \in M'_{\phi(i)}}}\frac{ \lambda^k_{j} - \lambda^k_{i+1}}{j - i} \left( \prod_{j \in J} \int_{ \lambda^k_{j}}^{\lambda^k_{j+1}}dx_j\right) \prod_{j \in J}f_j(x_j) \times $$ 
$$ \prod_{\substack{ 1 \leq i < j \leq n-1 \\ i,j \in J}}\frac{x_j - x_i}{j - i} \prod_{\substack{j \in J, i\not \in J\\  i < j }}\frac{ x_j - \lambda^k_{i+1}}{j - i}  \prod_{\substack{i \in J, j\not \in J\\  i < j }}\frac{\lambda^k_{j} - x_i }{j - i}.$$

We observe that 
\begin{equation}\label{nr3}
\begin{split}
\lim_{k \rightarrow \infty}\prod_{\substack{ 1 \leq i < j \leq n \\ \lambda_i \neq \lambda_j}}\frac{j - i}{\lambda^k_j - \lambda^k_i} &\prod_{\substack{i,j \not \in J, i < j, \\j \not \in M'_{\phi(i)}}}\frac{\lambda^k_{j+1} -\lambda^k_{i}}{j - i} =
 \lim_{k \rightarrow \infty} \prod_{\substack{ 1 \leq i < j \leq n \\ \lambda_i \neq \lambda_j}}\frac{j - i}{\lambda^k_j- \lambda^k_i} \prod_{\substack{i,j \not \in J, i < j, \\j \not \in M'_{\phi(i)}}}\frac{\lambda^k_{j} -\lambda^k_{i+1}}{j - i} = \\
&\prod_{\substack{ 1 \leq i < j \leq n \\ \lambda_i \neq \lambda_j}}\frac{j - i}{\lambda_j - \lambda_i} \times \prod_{\substack{ i, j = 1,...,r \\ i < j}}\prod_{s \in M_i'}\prod_{t \in M_j'} \frac{\lambda_{t} - \lambda_{s}}{t - s}  .
\end{split}
\end{equation}
Moreover, by the Bounded Convergence Theorem we conclude that 
\begin{equation}\label{nr4}
\begin{split}
\lim_{k \rightarrow \infty} \left( \prod_{j \in J} \int_{ \lambda^k_{j}}^{\lambda^k_{j+1}}dx_j\right) \prod_{j \in J}f_j(x_j)
\prod_{\substack{ 1 \leq i < j \leq n-1 \\ i,j \in J}}\frac{x_j - x_i}{j - i} \prod_{\substack{j \in J, i\not \in J\\  i < j }}\frac{x_j - \lambda^k_i }{j - i}  \prod_{\substack{i \in J, j\not \in J\\  i < j }}\frac{ \lambda^k_{j+1} - x_i }{j - i} = F(\lambda),\\
 \lim_{k \rightarrow \infty} \left( \prod_{j \in J} \int_{ \lambda^k_{j}}^{\lambda^k_{j+1}}dx_j\right) \prod_{j \in J}f_j(x_j) \prod_{\substack{ 1 \leq i < j \leq n-1 \\ i,j \in J}}\frac{x_j - x_i}{j - i} \prod_{\substack{j \in J, i\not \in J\\  i < j }}\frac{ x_j - \lambda^k_{i+1} }{j - i}  \prod_{\substack{i \in J, j\not \in J\\  i < j }}\frac{\lambda^k_{j} - x_i }{j - i} = F(\lambda).
\end{split}
\end{equation}
From (\ref{nr3}) and (\ref{nr4}) we conclude that $\lim_{k \rightarrow \infty}G^{\pm}_2(\lambda^k) = G_1(\lambda)$ and since $G^-_2(\lambda^k) \leq G_2(\lambda^k) \leq G^+_2(\lambda^k)$ we conclude that $\lim_{k \rightarrow \infty} G_2(\lambda^k) = G_1(\lambda)$. The latter implies from (\ref{nr2}) that 
$$\limsup_{k \rightarrow \infty} \left| \mathbb{E}^{\mu_{\lambda}} \left[  \prod_{i = 1}^{n-1}f_i(x_i) \right]  - \mathbb{E}^{\mu_{\lambda^k}} \left[ \prod_{i = 1}^{n-1}f_i(x_i) \right]\right| \leq C\epsilon.$$
Since $\epsilon > 0$ was arbitrary we conclude that 
$$\limsup_{k \rightarrow \infty} \left| \mathbb{E}^{\mu_{\lambda}} \left[  \prod_{i = 1}^{n-1}f_i(x_i) \right]  - \mathbb{E}^{\mu_{\lambda^k}} \left[ \prod_{i = 1}^{n-1}f_i(x_i) \right]\right| = 0.$$

We next suppose that $\lambda^k$ do not necessarily satisfy $\lambda_1^k < \lambda_{2}^k < \cdots < \lambda_n^k$. If we are given a $\lambda^k$, then from our earlier work we may find $\nu^k$ such that
\begin{enumerate}
\item $|\nu^k - \lambda^k| < 1/k$, 
\item $\nu_1^k < \nu_{2}^k < \cdots < \nu_n^k$,
\item $\left| \mathbb{E}^{\mu_{\lambda^k}} \left[  \prod_{i = 1}^{n-1}f_i(x_i) \right]  - \mathbb{E}^{\mu_{\nu^k}} \left[ \prod_{i = 1}^{n-1}f_i(x_i) \right]\right| < 1/k$.
\end{enumerate}
Condition (1) above implies that $\nu^k$ converges to $\lambda$ and by (2) our earlier work applies so we get
$$\limsup_{k \rightarrow \infty} \left| \mathbb{E}^{\mu_{\lambda}} \left[  \prod_{i = 1}^{n-1}f_i(x_i) \right]  - \mathbb{E}^{\mu_{\nu^k}} \left[ \prod_{i = 1}^{n-1}f_i(x_i) \right]\right| = 0.$$
Finally, by the triangle inequality and condition (3) we conclude that 
$$\limsup_{k \rightarrow \infty} \left| \mathbb{E}^{\mu_{\lambda}} \left[  \prod_{i = 1}^{n-1}f_i(x_i) \right]  - \mathbb{E}^{\mu_{\lambda^k}} \left[ \prod_{i = 1}^{n-1}f_i(x_i) \right]\right| = 0.$$
This proves the statement of the lemma, whenever $f(x) = \prod_{i = 1}^{n-1}f_i(x_i)$ with $f_i$ bounded, continuous and non-negative real-valued functions.\\

Using linearity of expectation and our earlier result we concude the statement of the lemma, whenever $f(x) $ is a finite linear combination of functions of the form $\prod_{i = 1}^{n-1}f_i(x_i)$ with $f_i$ bounded and continuous. In particular, we know the result whenever $f$ equals $P(x) \cdot  {\bf 1}_{B_R}$, where $R > 0$, $B_R = \{ x \in \mathbb{R}^{n-1} | |x_i| \leq R \mbox{ for } i = 1,...,n-1\}$ and $P(x)$ is a polynomial.

If $f(x)$ is any bounded continuous function, we may replace it with $f(x) {\bf 1}_{B_R}$, where $R = 1 + \max( |\lambda_1|, |\lambda_n|)$, without affecting the statement of the lemma, since for large $k$, the support of $\mu_{\lambda^k}$ lies in $ B_R$. By the Stone-Weierstrass Theorem we can find a polynomial $g(x)$ such that $\sup_{x \in \mathbb{R}^{n-1}} |f(x){\bf 1}_{B_R} - g(x) {\bf 1}_{B_R}|< \epsilon$. The triangle inequality and our result for polynomials now show  
$$\limsup_{k \rightarrow \infty} \left| \mathbb{E}^{\mu_{\lambda}} \left[  f(x) \right]  - \mathbb{E}^{\mu_{\lambda^k}} \left[f(x) \right]\right|  = \limsup_{k \rightarrow \infty} \left| \mathbb{E}^{\mu_{\lambda}} \left[  f(x){\bf 1}_{B_R} \right]  - \mathbb{E}^{\mu_{\lambda^k}} \left[f(x){\bf 1}_{B_R} \right]\right| \leq $$
$$ \limsup_{k \rightarrow \infty} ( \left| \mathbb{E}^{\mu_{\lambda}} \left[  f(x){\bf 1}_{B_R} \right]  - \mathbb{E}^{\mu_{\lambda}} \left[g(x) {\bf 1}_{B_R}\right]\right|  +\left| \mathbb{E}^{\mu_{\lambda^k}} \left[  f(x){\bf 1}_{B_R}\right]  - \mathbb{E}^{\mu_{\lambda^k}} \left[g(x){\bf 1}_{B_R} \right]\right| +$$
$$\left| \mathbb{E}^{\mu_{\lambda}} \left[  g(x) {\bf 1}_{B_R}\right]  - \mathbb{E}^{\mu_{\lambda^k}} \left[g(x){\bf 1}_{B_R} \right]\right| ) \leq 2\epsilon.$$
Since $\epsilon > 0$ was arbitrary we conclude that $\limsup_{k \rightarrow \infty} \left| \mathbb{E}^{\mu_{\lambda}} \left[  f(x) \right]  - \mathbb{E}^{\mu_{\lambda^k}} \left[f(x) \right]\right| = 0$.

\end{proof}

\begin{proof}(Lemma \ref{LemmaC3})
We begin by assuming that $ f(x^{n},...,x^{1}) = f_1(x^{n})f_2(x^{n-1},...,x^{1})$ with $f_1, f_2$ bounded and continuous. Fix $\beta \in \mathcal{C}_n$ and suppose $\mathcal{C}_n \ni \beta^k \rightarrow \beta$  as $k \rightarrow \infty$. From Lemma \ref{LemmaC1} we have 
$$ \lim_{k \rightarrow \infty}\mathbb{E}^{\beta^k}[ f(\beta^k, x^{n-1},...,x^{1})] =  \lim_{k \rightarrow \infty}f_1(\beta^k) \mathbb{E}^{\beta^k}[ f_2(x^{n-1},...,x^{1})] = f_1(\beta) \mathbb{E}^{\beta}[ f_2(x^{n-1},...,x^{1})] .$$
Using linearity of expectation and the above we have that $\mathbb{E}^{\beta}[ f(\beta, x^{n-1},...,x^{1})]$ is a continuous function in $\beta$, whenever $f$ is of the form $P(x^n, x^{n-1},...,x^{1}) \cdot  {\bf 1}_{B_R}$, where $R > 0$, $B_R = \{ x \in \mathbb{R}^{n(n+1)/2} | |x^j_i| \leq R \mbox{ for } i = 1,...,j; j = 1,...n\}$ and $P(x)$ is a polynomial.\\

Suppose now $f$ is any bounded continuous function, fix $\beta \in \mathcal{C}_n$ and suppose $\mathcal{C}_n \ni \beta^k \rightarrow \beta$  as $k \rightarrow \infty$. For all large $k$ we have that $\beta^k$ lie in the compact set $B_R$, with $R = 1 + \max( |\beta_1|, |\beta_{n}|)$. By the Stone-Weierstrass Theorem we can find a polynomial $g(x)$ such that $\sup_{x \in \mathbb{R}^{n(n+1)/2}} |f(x){\bf 1}_{B_R} - g(x) {\bf 1}_{B_R}|< \epsilon$. The triangle inequality and our result for polynomials now show  
$$ \limsup_{k \rightarrow \infty}\left| \mathbb{E}^{\beta^k}[ f(\beta^k, x^{n-1},...,x^{1})] - \mathbb{E}^{\beta}[ f(\beta, x^{n-1},...,x^{1})]\right| =   $$
$$\limsup_{k \rightarrow \infty}\left| \mathbb{E}^{\beta^k}[ f(\beta^k, x^{n-1},...,x^{1}){\bf 1}_{B_R}] - \mathbb{E}^{\beta}[ f(\beta, x^{n-1},...,x^{1}){\bf 1}_{B_R}]\right| \leq $$
$$ \limsup_{k \rightarrow \infty}\left| \mathbb{E}^{\beta^k}[ g(\beta^k, x^{n-1},...,x^{1}){\bf 1}_{B_R}] - \mathbb{E}^{\beta}[g(\beta, x^{n-1},...,x^{1}){\bf 1}_{B_R}]\right| + 2\epsilon = 2\epsilon.$$
As $\epsilon > 0$ was arbitrary we conclude continuity, while boundedness is immediate from the boundedness of $f$.\\
\end{proof}

\begin{proof}(Lemma \ref{LemmaC2})
We proceed by induction on $n$ with $n = 2$ being true by Lemma \ref{LemmaC1}. Suppose the result holds for $n-1 \geq 2$ and we want to prove it for $n$.

For any $\nu \in \mathcal{C}_n$ we have
$$\mathbb{E}^{\nu} \left[ f(x^{n-1},...,x^{1}) \right] = \int_{\mathcal{C}_{n-1}} \mu_\nu(d\beta) \mathbb{E}^{\beta}[ f(\beta, x^{n-2},...,x^{1})] = \mathbb{E}^{\mu_\nu} \left[ \mathbb{E}^{\beta}[ f(\beta, x^{n-2},...,x^{1})]\right]  .$$
By Lemma \ref{LemmaC3}, we have $\mathbb{E}^{\beta}[ f(\beta, x^{n-2},...,x^{1})]$ is a bounded and continuous function in $\beta \in \mathcal{C}_n$. From Lemma \ref{LemmaC1} we conclude that 
$$\lim_{k \rightarrow \infty}\mathbb{E}^{\mu_{\lambda^k}} \left[ \mathbb{E}^{\beta}[ f(\beta, x^{n-2},...,x^{1})]\right] = \mathbb{E}^{\mu_{\lambda}} \left[ \mathbb{E}^{\beta}[ f(\beta, x^{n-2},...,x^{1})]\right].$$
This proves the result for $n$ and the general result follows by induction.
\end{proof}

\subsection{Proof of Lemmas \ref{lSym1} and \ref{lSym2}}\label{Section5.3}
We adopt the same notation as in Section \ref{Section5.1}.

\begin{proof} (Lemma \ref{lSym1})
We proceed by induction on $n$. When $n = 2$ we have that 
$$d_2(x^n) \cdot \mathbb{E}^{x^2}\left[ f_2(t, x^2,x^1)\right] = e^{\iota t_2(x^2_1 + x^2_2)} \int_{x_1^2}^{x^2_2} e^{\iota(t_1 - t_2)x}dx =  e^{\iota t_2(x^2_1 + x^2_2)}\frac{ e^{\iota (t_1 - t_2)x_2^2} -  e^{\iota (t_1 - t_2)x_1^2}}{\iota (t_1 - t_2)} = $$
$$\frac{1}{\iota (t_1 - t_2)} \times \left[ \exp(\iota  (t_2 x_1^2 + t_1 x_2^2)) - \exp(\iota  (t_1 x_1^2 + t_2 x_2^2))\right],$$
which proves the base case.\\

Suppose the result holds for $n-1 \geq 2$ and we wish to prove it for $n$. We have
$$d_n(x) \cdot \mathbb{E}^{x^n}\left[ f(t, x^n,...,x^1)\right] = \int_{x^n_{n-1}}^{x^n_{n}} \cdots \int_{x^{n}_{1}}^{x^n_2}e^{it_n|x^n|}dy_{n-1}\cdots dy_1 d_{n-1}(y) \cdot \mathbb{E}^{y}\left[ f_{n-1}(s, y,x^{n-2},...,x^1)\right],$$
where $s = (t_1 - t_n,...,t_{n-2} - t_n, t_{n-1} - t_{n})$. By induction hypothesis the above becomes
$$e^{\iota t_n|x^n|}\int_{x^n_{n-1}}^{x^n_{n}} \cdots \int_{x^{n}_{1}}^{x^n_2}dy_{n-1}\cdots dy_1\prod_{1\leq i < j \leq {n-1}}\frac{1}{\iota( t_j - t_i)} \times \sum_{\sigma \in S_{n-1}} sign(\sigma)\exp\left( \iota \sum_{i = 1}^{n-1} s_iy_{\sigma(i)}\right) = $$
$$\prod_{1\leq i < j \leq {n-1}}\frac{1}{\iota( t_j - t_i)}  \sum_{\sigma \in S_{n-1}} sign(\sigma) e^{it_n|x^n|} \prod_{i = 1}^{n-1} \frac{\exp\left( \iota  s_ix^n_{\sigma(i)}\right) - \exp\left( \iota s_ix^n_{\sigma(i) + 1}\right) }{\iota (t_{n} - t_i)}  = $$
$$e^{\iota t_n|x^n|} \prod_{1\leq i < j \leq {n}}\frac{1}{\iota( t_j- t_i)}  \sum_{\sigma \in S_{n-1}} sign(\sigma) \prod_{i = 1}^{n-1}\left( \exp\left( \iota  s_{\sigma(i)}x^n_{i}\right) - \exp\left( \iota s_{\sigma(i)}x^n_{i + 1}\right) \right),$$
where in the last equality we used that $sign (\sigma) = sign(\sigma^{-1})$. 

The above equality reduces the induction step to showing 
\begin{equation}\label{chsym1}
 \sum_{\sigma \in S_{n-1}} sign(\sigma) \prod_{i = 1}^{n-1}\left( \exp\left( \iota  s_{\sigma(i)}x^n_{i}\right) - \exp\left( \iota s_{\sigma(i)}x^n_{i + 1}\right) \right) = \sum_{\sigma \in S_n} sign(\sigma)\exp\left( \iota \sum_{i = 1}^{n} s_{\sigma(i)}x^n_{i}  \right),
\end{equation}
where $s_n = 0$.

Put $A_{i, \sigma} = \exp\left( \iota  s_{\sigma(i)}x^n_{i}\right)$ and $B_{i, \sigma} = - \exp\left( \iota s_{\sigma(i)}x^n_{i + 1}\right)$. We open the brackets on the LHS of (\ref{chsym1}) and obtain a sum of words $sign(\sigma) C_{1,\sigma}\cdots C_{n-1, \sigma}$, where $C = A$ or $B$. We consider the words that have $B$ followed by an $A$ at positions $r,r+1$ and set $\tau$ to be the transposition $(r,r+1)$. Observe that 
$$sign(\sigma)B_{r,\sigma}A_{r+1,\sigma} + sign(\tau \sigma)B_{r,\tau \sigma}A_{r+1,\tau \sigma} = 0, \mbox{ and hence }$$
$$\sum_{\sigma \in S_{n-1}}sign(\sigma)C_{1,\sigma}\cdots C_{r-1,\sigma}B_{r,\sigma}A_{r+1,\sigma}C_{r+2,\sigma}\cdots C_{n-1, \sigma} = 0.$$
The latter implies that the only words that contribute to the LHS of (\ref{chsym1}) are $k$ $A$'s followed by $n- k -1$ $B$'s for $k = 0,..., n-1$. We conclude that the LHS of (\ref{chsym1}) equals
\begin{equation}\label{S5chsym}
\sum_{k = 0}^{n-1}(-1)^{n-1-k}\sum_{\sigma \in S_{n-1}} sign(\sigma)\prod_{i = 1}^{k}\exp\left( \iota  s_{\sigma(i)}x^n_{i}\right)\prod_{i = k+1}^{n-1}\exp\left( \iota  s_{\sigma(i)}x^n_{i+1}\right) 
\end{equation}
and the latter now clearly equals the RHS of (\ref{chsym1}) by inspecting the signs of the summands $\exp\left( \iota \sum_{i = 1}^n s_{\sigma(i)}x^n_{i}\right)$ on both sides for $\sigma \in S_n$.
\end{proof}

\begin{proof}(Lemma \ref{lSym2})
We proceed by induction on $n$. When $n = 2$ we have that 
$$d_n(x^2)\mathbb{E}^{x^2}\left[ g_n(t, x^2,x^1)\right] = e^{\iota t_2x^2_2} \int_{x_1^2}^{x^2_2} e^{\iota t_1x}dx = e^{\iota t_2x^2_2} \frac{e^{\iota t_1x_2^2} - e^{\iota t_1x_1^2}}{\iota t_1}.$$
Consequently, we have 
$$d_n(x^2) \cdot \sum_{\sigma \in S_2}sign(\sigma) ( \iota t_{\sigma(1)}) \mathbb{E}^{x^2}\left[ g_2(t_\sigma, x^2,...,x^1)\right] = e^{ \iota t_1x_2^2 + {\iota t_2x^2_1 } }- e^{ \iota t_2x_2^2 + \iota t_1x^2_1 },$$
from which we conclude the base case.

Suppose we know the result for $n-1 \geq 2$ and we wish to prove it for $n$. We have
$$d_n(x^n) \cdot \sum_{\sigma \in S_n} sign(\sigma)  \mathbb{E}^{x^n}\left[ g_n(t_\sigma, x^n,...,x^1)\right] \prod_{i = 1}^n( \iota t_{\sigma(i)})^{n-i}  = $$
$$ \sum_{\sigma \in S_n} sign(\sigma) e^{\iota t_{\sigma(n)} x^n_n} \prod_{i = 1}^n( \iota t_{\sigma(i)})^{n-i}  \cdot \int_{x^n_{n-1}}^{x^n_{n}} \cdots \int_{x^{n}_{1}}^{x^n_2}dy_{n-1}\cdots dy_1 d_{n-1}(y) \cdot \mathbb{E}^{y}\left[ g_{n-1}(s_{\sigma}, y,x^{n-2},...,x^1)\right],$$
where $s_\sigma = (t_{\sigma(1)}, ..., t_{\sigma(n-1)})$. Splitting the above sum over permutations of $t_{\sigma(1)},...,t_{\sigma(n-1)}$ and applying the induction hypothesis we see that the above equals
$$ (-1)^{ \frac{(n-1)(n-2)}{2}}\sum_{k = 1}^n (-1)^{n-k} e^{\iota t_{k} x^n_n} \prod_{r \neq k}( \iota t_{r})\int_{x^n_{n-1}}^{x^n_{n}} \cdots \int_{x^{n}_{1}}^{x^n_2}dy_{n-1}\cdots dy_1 \sum_{\tau \in S_{n-1}} sign(\tau)\exp\left( \iota \sum_{i = 1}^{n-1} s^k_{\tau(i)}y^n_{i}\right)  $$
$$(-1)^{ \frac{(n-1)(n-2)}{2}}\sum_{k = 1}^n(-1)^{n-k} e^{\iota t_{k} x^n_n} \sum_{\tau \in S_{n-1}} sign(\tau)\prod_{i = 1}^{n-1}\left( \exp\left( \iota  s^k_{\tau(i)}x^n_{i+1}\right) - \exp\left( \iota s^k_{\tau (i)}x^n_{i }\right)\right)$$
where $s^k = (t_1,...,t_{k-1}, t_{k+1},...,t_n)$.

Using equation (\ref{S5chsym}), we may rewrite the above as
\begin{equation*}\label{chsym2}
(-1)^{ \frac{(n-1)(n-2)}{2}}\sum_{k = 1}^n(-1)^{n-k} e^{\iota t_{k} x^n_1} \sum_{l = 0}^{n-1}(-1)^{l}\sum_{\tau \in S_{n-1}} sign(\tau)\prod_{i = 1}^{l}\exp\left( \iota  s^k_{\tau(i)}x^n_{i}\right)\prod_{i = l+1}^{n-1}\exp\left( \iota  s^k_{\tau(i)}x^n_{i+1}\right).
\end{equation*}
If $l < n-1$ we have
$$\sum_{k = 1}^n(-1)^{n-k} e^{\iota t_{k} x^n_n} \sum_{\tau \in S_{n-1}} sign(\tau)\prod_{i = 1}^{l}\exp\left( \iota  s^k_{\tau(i)}x^n_{i}\right)\prod_{i = l+1}^{n-1}\exp\left( \iota  s^k_{\tau(i)}x^n_{i+1}\right) = $$
$$  \sum_{\sigma \in S_{n}} sign(\sigma)\exp\left( \iota  (t_{\sigma(l+1)} + t_{\sigma(n)})x^n_{n}\right)\prod_{i = 1}^{l}\exp\left( \iota  t_{\sigma(i)}x^n_{i}\right)\prod_{i = l+1}^{n-2}\exp\left( \iota t_{\sigma(i)}x^n_{i+1}\right)  = 0.$$
To see the last equality we may swap $l+1$ and $n$ in the above sum by a transposition and observe that we get the same sum but with a flipped sign due to the factors $sign(\sigma)$. Hence, the sum is invariant under change of sign and must be $0$. The last argument shows that only $l = n-1$ contributes in our earlier formula and so we conclude that 
$$d_n(x^n) \cdot \sum_{\sigma \in S_n} sign(\sigma)  \mathbb{E}^{x^n}\left[ g_n(t_\sigma, x^n,...,x^1)\right] \prod_{i = 1}^n( \iota t_{\sigma(i)})^{n-i} =$$
$$ =  (-1)^{ \frac{n(n-1)}{2}}\sum_{k = 1}^n(-1)^{n-k} e^{\iota t_{k} x^n_n} \sum_{\tau \in S_{n-1}} sign(\tau)\prod_{i =1}^{n-1}\exp\left( \iota  s^k_{\tau(i)}x^n_{i}\right).$$
The latter expression is clearly equal to $(-1)^{ \frac{n(n-1)}{2}}\sum_{\sigma \in S_n} sign(\sigma)\exp\left( \iota \sum_{i = 1}^n t_{\sigma(i)}x^n_{i}\right)$, which proves the case $n$. The general result now follows by induction.
\end{proof}

\section{Gibbs measures on Gelfand-Tsetlin patterns}\label{Section6}
The purpose of this section is to analyze probability measures on half-strict Gelfand-Tsetlin patterns $\mathsf{GT}_n^+$, which satisfy what we call the {\em six-vertex Gibbs property} (see Definition \ref{DGP}). An example of such a measure is given by the distribution function of $(Y_i^j)_{1 \leq i \leq j; 1\leq j \leq n}$  (see Section \ref{Section1.2}). The main result of this section is Proposition \ref{S5Prop}, which roughly states that under weak limits the six-vertex Gibbs property becomes the continuous Gibbs property (Definition \ref{definitionGT}).

\subsection{Gibbs measures on the six-vertex model}\label{Section6.1} In this section we define the Gibbs property for the six-vertex model on a domain $D$. We also explain how to symmetrize such a model when $D$ is finite and relate the weight choice in this paper to the ferroelectric phase of the six-vertex model. In what follows we will adopt some of the notation from Appendix A in \cite{Amol2}.\\

Suppose we have a finite domain $D \subset \mathbb{Z}^2$. For $\Lambda \subset \mathbb{Z}^2$, we let $\partial \Lambda$ denote the {\em boundary} of $\Lambda$, which consists of all vertices in $\mathbb{Z}^2 / \Lambda$, which are adjacent to some vertex in $\Lambda$. We consider the six-vertex model on $D$ with fixed boundary condition. This is a probability measure on up-right paths in $D$ with fixed endpoints and we explain its construction below.

We start by assigning certain arrow configurations to the vertices in $\partial D$ and consider all up-right path configurations in $D$, which match the arrow assignments in $\partial D$. Call the latter set $\mathcal{P}(D, \partial D)$. Paths are not allowed to share horizontal or vertical pieces and as in Section \ref{Section1.2} we encode the arrow configuration at a vertex through the four-tuple $(i_1,j_1; i_2, j_2)$, representing the number of incoming and outgoing vertical and horizontal arrows. For $(i,j) \in D$ and $\omega \in \mathcal{P}(D,\partial D)$ we let $\omega(i,j)$ denote the arrow configuration at the corresponding vertex. We have six possible arrow configurations and we define corresponding positive vertex weights as follows
\begin{equation}\label{S6VW1}
\begin{split}
w(0,0;0,0) = w_1, \hspace{2mm}   &w(1,1;1,1) = w_2, \hspace{2mm} w(1,0;1,0) = w_3, \\
w(0,1; 0,1)  = w_4, \hspace{2mm} &w(1,0; 0,1) = w_5, \hspace{2mm} w(0,1;1,0) =w_6.
\end{split}
\end{equation}
The weight of a path configuration $\omega$ is defined through $\mathcal{W} (\omega) := \prod_{(i,j) \in D} w(\omega(i,j))$, and we define the six-vertex model as the the probability measure $\mu$ on $ \mathcal{P}(D,\partial D)$ with probability proportional to $\mathcal{W} (\omega)$. As weights are positive and $D$ is finite this is well-defined.\\

For $\omega \in \mathcal{P}(D, \partial D)$, $\Lambda \subset D$ and an arrow configuration $(i_1, j_1; i_2, j_2)$ we let $N_{\omega; \Lambda} (i_1,j_1; i_2, j_2)$ denote the number of vertices $(x,y) \in \Lambda$ with arrow configuration $(i_1,j_1; i_2, j_2)$. We abbreviate $N_1= N_{\omega;\Lambda}(0, 0; 0, 0)$, $N_2 =N_{\omega;\Lambda}(1, 1; 1, 1)$,  $N_3 = N_{\omega;\Lambda}(1, 0; 1, 0)$, $N_4 = N_{\omega;\Lambda}(0, 1; 0, 1)$, $N_5 = N_{\omega;\Lambda}(1, 0; 0, 1)$, and $N_6 = N_{\omega;\Lambda}(0, 1; 1, 0)$. With this notation we make the following definition.
\begin{definition}\label{GibbsPty}
Fix $w_1,w_2,w_3,w_4,w_5,w_6 > 0$. A probability measure $\rho$ on $\mathcal{P}(D;\partial D)$ is said to satisfy the {\em Gibbs property} (for the six-vertex model on $D$ with weights $(w_1,w_2, w_3, w_4, w_5,w_6)$) if for any finite subset $\Lambda \subset D$ the conditional probability $\rho_\Lambda(\omega)$ of selecting $\omega \in \mathcal{P}(D, \partial D)$ conditioned on $\omega | _{D / \Lambda}$ is proportional to $w_1^{N_1}w_2^{N_2}w_3^{N_3}w_4^{N_4}w_5^{N_5}w_6^{N_6}$. 
\end{definition}
Notice that Definition \ref{GibbsPty} makes sense even if $D$ is not finite. It is easy to see that the measure $\mu$ we defined earlier satisfies the Gibbs property with weights $(w_1,w_2,w_3,w_4,w_5,w_6)$. Similarly, let us consider the measure $\mathbb{P}^{N,M}_{u,v}$ from Definition \ref{parameters2} conditioned on the top row $\lambda^N(\omega)$ being fixed. The latter satisfies the Gibbs property for the domain $D_{N} = \mathbb{Z}_{\geq 0} \times \{1,...,N\}$ with weights
\begin{equation}\label{S6eqWT}
(w_1,w_2,w_3,w_4,w_5,w_6) =  \left( 1 , \frac{u - s^{-1}}{us - 1}, \frac{us^{-1} - 1}{us - 1}, \frac{u - s}{us - 1}, \frac{u(s^2-1)}{us - 1}, \frac{1 - s^{-2}}{us - 1} \right).
\end{equation}
The change of sign above compared to (\ref{VertW}) is made so that the above weights are positive (recall $u > s > 1$ in our case).

If we have $w_1 = w_2 = a$, $w_3 = w_4 = b$ and $w_5 = w_6 = c$ we call the resulting model a {\em symmetric} six-vertex model. Otherwise, we call the model {\em asymmetric}. An important point we want to make is that a single measure $\rho$ on $\mathcal{P}(D, \partial D)$ can satisfy a Gibbs property for many different $6$-tuples of weights $(w_1,w_2, w_3, w_4, w_5,w_6)$. The latter is a consequence of certain {\em conservation laws} satisfied by the quantities $N_{\omega; \Lambda} (i_1,j_1; i_2, j_2)$. As discussed in Appendix A of \cite{Amol2} we have the following conservation laws (see also Section 3 in \cite{BB15}).
\begin{enumerate}
\item The quantity $N_1 + N_2 + N_3 + N_4 + N_5 + N_6 = |\Lambda|$ is constant.
\item Conditioned on $\omega|_{D / \Lambda}$, the quantity $N_2 + N_4 + N_5$ is constant.
\item Conditioned on $\omega|_{D / \Lambda}$, the quantity $N_2 + N_3 + N_6$ is constant.	
\item Conditioned on $\omega|_{D / \Lambda}$, the quantity $N_5 - N_6$ is constant.
\end{enumerate}
The latter imply that if a measure $\rho$ satisfies the Gibbs property with weights  $(w_1,w_2, w_3, w_4, w_5,w_6)$ then  $\rho$ also satisfies the Gibbs property with weights $(xw_1,xyzw_2, xzw_3, xyw_4, xytw_5,xzt^{-1}w_6)$ for any $x,y,z,t > 0$.
 
Let us fix $x = \frac{w_2}{\sqrt{w_1 w_2}}$, $y = \frac{\sqrt{w_1w_2w_3w_4}}{w_2w_4}$, $z = \frac{\sqrt{w_1w_2w_3w_4}}{w_3w_4}$ and $t = \frac{\sqrt{w_4w_6}}{\sqrt{w_3w_5}}$. Then one directly checks that 
$$(xw_1,xyzw_2, xzw_3, xyw_4, xytw_5,xzt^{-1}w_6) = (a,a,b,b,c,c),$$
where  $a = \sqrt{w_1w_2}$, $b = \sqrt{w_3w_4}$ and $c = \sqrt{w_5w_6}.$ The latter shows that any six-vertex model on a finite domain with prescribed boundary condition can be realized as a symmetric six vertex model.

The above arguments can be repeated for other (e.g. periodic) boundary conditions and the consequence is that when working in a finite domain, one can always assume that the six-vertex model is symmetric. This is how the model typically appears in the literature. An important parameter for the symmetric six-vertex model with weights $(a,a,b,b,c,c)$ is given by 
$$\Delta := \frac{a^2 + b^2 - c^2}{2ab}.$$
As discussed in Chapters 8 and 9 in \cite{Bax} (see also \cite{Res10}) the symmetric six-vertex model has several phases called {\em ferroelectric} ($\Delta > 1$), {\em disordered} ($|\Delta| < 1)$ and {\em antiferroelectric} ($\Delta < -1$). 

Based on our earlier discussion, we may extend the definition of $\Delta$ to any (not necessarily symmetric) six-vertex model by
$$\tilde \Delta := \frac{w_1w_2 + w_3w_4 - w_5w_6}{2\sqrt{w_1w_2w_3w_4}}.$$
Observe that the latter quantity is invariant under the transformation of $(w_1,w_2, w_3, w_4, w_5,w_6)$ into $ (xw_1,xyzw_2, xzw_3, xyw_4, xytw_5,xzt^{-1}w_6)$. This implies that the parameter $\tilde \Delta$ for any  six-vertex model on a finite domain agrees with the parameter $\Delta$ for its symmetric realization.

For the six-vertex model we defined in Section \ref{Section1.2} a crucial assumption is that $w_1 = 1$, since our configurations contain infinitely many vertices of type $(0,0;0,0)$. This restriction forbids us from freely rescaling our vertex weights and forces us to work with an asymmetric six-vertex model. However, the above extension of $\Delta$ allows us to investigate to which phase our parameter choice $u > s > 1$ corresponds. As remarked $\mathbb{P}^{N,M}_{u,v}$ satisfies the Gibbs property for the domain $D_{N} = \mathbb{Z}_{\geq 0} \times \{1,...,N\}$ with weights as in (\ref{S6eqWT}). For these weights we find that $\tilde \Delta = (s + s^{-1})/2.$ The latter expression covers $(1,\infty)$ when $s > 1$ and so we conclude that our parameter choice $u > s > 1$ corresponds to the ferroelectric phase of the six-vertex model. 

A natural question that arises from the above discussion is whether we can find different parameter choices for $u$ and $s$, which would land us in the disordered or antiferroelectric phase. If this is achieved one could potentially use the methods of this paper to study the macroscopic behavior of this new model. It would be very interesting to see if the limit shape in Figure \ref{S1_5} changes when we move to a different phase - like in the six-vertex model with periodic (or domain wall) boundary condition. We leave these questions outside of the scope of this paper. 

\subsection{The six-vertex Gibbs property}\label{Section6.2}
We define several important concepts, adopting some of the notation from \cite{Gor14}. Let $\mathsf{GT}_n$ denote the set of $n$-tuples of {\em distinct} integers
$$\mathsf{GT}_{n} = \{ \lambda \in \mathbb{Z}^n: \lambda_1 < \lambda_2 < \cdots < \lambda_n\}.$$
We let $\mathsf{GT}_n^+$ be the subset of $\mathsf{GT}_n$ with $\lambda_1 \geq 0$.  We say that $\lambda \in \mathsf{GT}_n $ and $\mu \in \mathsf{GT}_{n-1}$ {\em interlace} and write $\mu \preceq \lambda$ if 
$$\lambda_1 \leq \mu_1 \leq \lambda_2 \leq \cdots \leq \mu_{n-1} \leq \lambda_n.$$

Let $\mathsf{GT}^{n}$ denote the set of sequences 
$$\mu^1 \preceq \mu^2 \preceq \cdots \preceq \mu^n, \hspace{3mm} \mu^i \in \mathsf{GT}_i, \hspace{2mm} 1 \leq i \leq n.$$
We call elements of $\mathsf{GT}^{n}$ {\em half-strict} Gelfand-Tsetlin patterns (they are also known as monotonous triangles, cf. \cite{MRR}). We also let $\mathsf{GT}^{n+}$ be the subset of $\mathsf{GT}^n$ with $\mu^n \in \mathsf{GT}_n^+$. For $\lambda \in \mathsf{GT}_n$ we let $\mathsf{GT}_\lambda \subset \mathsf{GT}^{n}$ denote the set of half-strict Gelfand-Tsetlin patterns $\mu^1 \preceq \cdots \preceq \mu^n$ such that $\mu^n = \lambda$.\\

We turn back to the notation from Section \ref{Section1.2} and consider $\omega \in \mathcal{P}_n$. For $k = 1,...,n$ we have that $\mu^k_i(\omega) = \lambda^k_{k-i+1}(\omega)$ for $i = 1,..,k$ satisfy $\mu^n \in \mathsf{GT}_n^+$ and $\mu^{k+1} \succeq \mu^k$ for $k = 1,...,n-1$. Consequently, the sequence $\mu^1 ,...,\mu^n$ defines an element of $\mathsf{GT}^{n+}$. It is easy to see that the map $h : \mathcal{P}_n  \rightarrow \mathsf{GT}^{n+}$, given by $h(\omega) = \mu^1(\omega)  \preceq  \cdots \preceq \mu^n(\omega)$, is a bijection. For $\lambda \in \mathsf{GT}_n^+$ we let 
$$\mathcal{P}^{\lambda}_n = \{ \omega \in \mathcal{P}_n : \lambda^n_i(\omega) = \lambda_{n-i+1} \mbox{ for } i = 1,...,n\}.$$
One observes that the map $h$ by restriction is a bijection between $\mathsf{GT}_\lambda$ and $\mathcal{P}^{\lambda}_n$. With the above notation we make the following definition.

\begin{definition}\label{DGP}
Fix $w_1,w_2,w_3,w_4,w_5,w_6 > 0$. A probability distribution $\rho$ on $\mathsf{GT}^{n+}$ is said to satisfy the {\em six-vertex Gibbs property} (with weights $(w_1,w_2,w_3,w_4,w_5,w_6)$) if the following holds. For any $\lambda \in \mathsf{GT}_n^+$ such that $\rho(\mu^n(\omega) = \lambda) > 0$ we have that the measure $\nu$ on $\mathcal{P}^\lambda_n$ defined through
$$\nu( h^{-1}(\omega)) = \rho(\omega| \mu^n = \lambda)$$
satisfies the Gibbs property for the six-vertex model on $D_{n}$ with weights $(w_1,w_2, w_3, w_4, w_5,w_6)$. In the above $ \rho(\cdot | \mu^n = \lambda)$ stands for the measure $\rho$ conditioned on $\mu^n = \lambda$. 
\end{definition}
\begin{remark}
If $w_1 = \cdots = w_6 = 1$ and $\rho$ satisfies the six-vertex Gibbs property with these weights then the conditional distribution $\rho( \cdot | \mu^n = \lambda) $ becomes the uniform distribution on $\mathsf{GT}_\lambda$. In this case the six-vertex Gibbs property reduces to the {\em discrete Gibbs property} of \cite{Gor14}.
\end{remark}
For a probability distribution $\rho$ on $\mathsf{GT}^{n+}$ and an element $\lambda \in \mathsf{GT}_n^+$ such that $\rho(\mu^n(\omega) = \lambda) > 0$, we denote by $\rho_\lambda$ the distribution on $\mathsf{GT}_{n-1}^+$ given by $\rho(\mu^1,...,\mu^{n-1}|\mu^n = \lambda)$. We let $\rho^k$ denote the projection of $\rho$ onto $\mu^k$ for $k = 1,...,n$. Then the six-vertex Gibbs property is equivallent to the following statement. If $f: \mathbb{Z}^{n(n+1)/2} \rightarrow \mathbb{C}$ is a bounded function, then
\begin{equation}\label{SVGB}
\mathbb{E}^\rho\left[ f(\mu^n,...,\mu^1)\right] = \mathbb{E}^{\rho^n} \left[ \mathbb{E}^{\rho_{\mu^n}} \left[f(\mu^n,...,\mu^1) \right] \right].
\end{equation}

We record two lemmas whose proof is deferred to Section \ref{Section6.3}.

\begin{lemma}\label{LemmaBound}
Fix $w_1,w_2,w_3,w_4,w_5,w_6 > 0$. Let $n \in \mathbb{N}$ and $\rho$ be a measure on $\mathsf{GT}^{n+}$, which satisfies the six-vertex Gibbs property. Then we can find a positive constant $c \in (0,1)$ (depending on $n$ and $w_1,...,w_6$) such that for all $\lambda \in \mathsf{GT}_n^+$ with $\rho(\mu^n = \lambda) > 0$, and $(\alpha^1,...,\alpha^{n-1}, \lambda), (\beta^1,...,\beta^{n-1}, \lambda) \in \mathsf{GT}_\lambda$ we have
$$ c^{-1} \geq \frac{\rho_{\lambda}(\alpha^1,...,\alpha^{n-1} )}{\rho_{\lambda}(\beta^1,...,\beta^{n-1})} \geq c.$$
\end{lemma}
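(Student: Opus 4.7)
My plan has three steps: (i) use the six-vertex Gibbs property to write $\rho_\lambda(\alpha)/\rho_\lambda(\beta)$ as a product of six-vertex weights $\prod_{i=1}^{6} w_i^{\Delta N_i}$; (ii) apply the four conservation laws from Section~\ref{Section6.1} to collapse those six exponents down to two integers $a$ and $b$; (iii) bound $|a|$ and $|b|$ uniformly in $\lambda$ using the interlacing structure.

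For step (i), put $\omega_\alpha := h^{-1}(\alpha^1,\ldots,\alpha^{n-1},\lambda)$, $\omega_\beta := h^{-1}(\beta^1,\ldots,\beta^{n-1},\lambda)$, and $\Lambda := \{0,1,\ldots,\lambda_n\}\times\{1,\ldots,n\}$. The interlacing $\mu^j \preceq \mu^{j+1}$ combined with $\mu^n = \lambda$ yields $\mu^j_j \le \lambda_n$ for every $j$, and a short arrow-conservation check then shows that every vertex of any $\omega \in \mathcal{P}^\lambda_n$ with $x$-coordinate above $\lambda_n$ must carry the empty configuration $(0,0;0,0)$. Consequently the event $\omega|_{D_n \setminus \Lambda}\equiv(0,0;0,0)$ has $\nu$-probability one, where $\nu := \rho(\cdot \mid \mu^n = \lambda)$, and applying the six-vertex Gibbs property for $\nu$ to this $\Lambda$ gives
\begin{equation*}
\frac{\rho_\lambda(\alpha^1,\ldots,\alpha^{n-1})}{\rho_\lambda(\beta^1,\ldots,\beta^{n-1})} \,=\, \prod_{i=1}^{6} w_i^{\Delta N_i}, \qquad \Delta N_i := N_i(\omega_\alpha|_\Lambda) - N_i(\omega_\beta|_\Lambda).
\end{equation*}

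For step (ii), the four conservation laws from Section~\ref{Section6.1} imply $\Delta N_5 = \Delta N_6 =: a$, $\Delta N_3 = \Delta N_4 =: b$, and $\Delta N_1 = \Delta N_2 = -(a+b)$, so the previous ratio reduces to $\bigl(w_5 w_6/(w_1 w_2)\bigr)^{a}\bigl(w_3 w_4/(w_1 w_2)\bigr)^{b}$. The task therefore reduces to finding $\lambda$-uniform bounds on $|a|$ and $|b|$.

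Step (iii) is the main content of the proof. Denote by $b(x),l(x),t(x),r(x)\in\{0,1\}$ the bottom/left/top/right edge occupations at a vertex $(x,j)$. Matching the arrow-conservation equation $b+l=t+r$ against the list of allowed configurations (\ref{S6VW1}) shows that in row $j$ the type-$5$ vertices occupy exactly $\mu^{j-1}\setminus\mu^j$, the type-$6$ vertices occupy $\mu^j\setminus\mu^{j-1}$, and the type-$2$ and type-$3$ vertices together occupy $\mu^j\cap\mu^{j-1}$. Since $|\mu^{j-1}| = j-1$, summation over $j$ yields the $\lambda$-independent bounds $N_5(\omega), N_3(\omega) \le n(n-1)/2$ valid for any $\omega \in \mathcal{P}^\lambda_n$, whence $|a|, |b| \le n(n-1)/2$. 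Setting
\begin{equation*}
c := \tfrac{1}{2}\Bigl[\max\!\bigl(\tfrac{w_5 w_6}{w_1 w_2},\tfrac{w_1 w_2}{w_5 w_6}\bigr)\,\max\!\bigl(\tfrac{w_3 w_4}{w_1 w_2},\tfrac{w_1 w_2}{w_3 w_4}\bigr)\Bigr]^{-n(n-1)/2} \in (0,1)
\end{equation*}
then gives a constant that depends only on $n$ and $w_1,\ldots,w_6$ and satisfies the required two-sided bound. The one nontrivial point in the argument is the row-by-row vertex-type identification in step (iii), which is what converts the apparently $\lambda$-dependent counts $N_i(\omega)$ into $\lambda$-free bounds on the differences $\Delta N_i$; steps (i) and (ii) are direct applications of the Gibbs property and the conservation laws.
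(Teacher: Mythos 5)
Your proof is correct, and it takes a genuinely different route from the paper's. The paper transforms an arbitrary $\omega \in h^{-1}(\mathsf{GT}_\lambda)$ into a fixed reference configuration $\omega_\lambda$ by a sequence of \emph{elementary moves} (increment one $\lambda^j_i$ by $1$), observes that each move multiplies the weight by one of finitely many ratios of vertex weights, and then shows that all but at most $n(n-1)$ of these moves are ``good'' in the sense that they preserve the weight exactly; the ratio $\mathcal{W}(\omega)/\mathcal{W}(\omega_\lambda)$ therefore lies in a finite $\lambda$-independent set and can be bounded. You instead work directly with the exponents $\Delta N_i$: the Gibbs property gives $\rho_\lambda(\alpha)/\rho_\lambda(\beta)=\prod_i w_i^{\Delta N_i}$ after conditioning on the (a.s.\ empty) configuration outside $\Lambda=\{0,\dots,\lambda_n\}\times\{1,\dots,n\}$, the four conservation laws collapse this to two free exponents $a=\Delta N_5=\Delta N_6$ and $b=\Delta N_3=\Delta N_4$ with $\Delta N_1=\Delta N_2=-(a+b)$, and the row-by-row identification of type-$5$ vertices with $\mu^{j-1}\setminus\mu^j$, type-$6$ with $\mu^j\setminus\mu^{j-1}$, and types $2,3$ with $\mu^j\cap\mu^{j-1}$ yields $0\le N_5,N_3\le\sum_{j}(j-1)=n(n-1)/2$, hence $|a|,|b|\le n(n-1)/2$. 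Your approach is arguably tighter and more transparent: it produces an explicit constant with no case analysis, and it makes the role of the conservation laws and the interlacing structure directly visible, whereas the paper's move-based argument requires care about which of the four vertices surrounding the incremented coordinate change type at each step. Both arguments ultimately exploit the same fact (the $N_i$'s can differ only by an $O(n^2)$ amount), but your reduction via $\Delta N_5,\Delta N_3$ is a cleaner way to see it. No gaps.
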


\begin{definition}\label{DefVG} We consider sequences $\lambda^k \in \mathsf{GT}_n$. We call the sequence {\em very good} if for $i = 1,...,n-1$ each sequence $\lambda^k_{i+1} - \lambda^k_i$ has a limit in $\mathbb{N} \cup \{\infty \}$ and for $i = 1,...,n-2$, each sequence $\lambda^k_{i+2} - \lambda^k_i$ goes to $\infty$. We call the sequence {\em good} if every subsequence of $\lambda^k$ has a further subsequence that is very good. 
\end{definition}

\begin{lemma}\label{LemmaWeak}
Fix $n \in \mathbb{N}$. Let $a(k)$ and $b(k)$ be sequences in $\mathbb{R}$ such that $b(k) \rightarrow \infty$ as $k \rightarrow \infty$. Suppose that $\lambda^k$ is a good sequence in $\mathsf{GT}_n^+$ and $f$ is a bounded uniformly continuous function on $\mathbb{R}^{n(n+1)/2}$. Put $g_k: \mathbb{R}^{n(n+1)/2} \rightarrow  \mathbb{R}^{n(n+1)/2}$ to be
$$g_k(x) =  \frac{1}{b(k)}\left( x - a(k) \cdot {\bf 1}_{\frac{n(n+1)}{2}} \right).$$
Then we have
\begin{equation}\label{goodSeq}
\lim_{k \rightarrow \infty} \mathbb{E}^{\rho_{\lambda^k}} \left[ f \circ g_k (\lambda^k, \mu^{n-1}, \mu^{n-2},...,\mu^1) \right] - \mathbb{E}^{\lambda^k} \left[ f \circ g_k (\lambda^k, x^{n-1}, x^{n-2},...,x^1) \right]  = 0,
\end{equation}
where $ \mathbb{E}^{\rho_{\lambda^k}}$ is defined above while $\mathbb{E}^{\lambda^k}$ is as in Section \ref{Section5.1}.
\end{lemma}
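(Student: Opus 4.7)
I would proceed by a subsequence argument followed by induction on $n$.

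If (\ref{goodSeq}) were to fail, I would extract a subsequence along which the absolute value of the difference is bounded below by some $\epsilon > 0$; by the good hypothesis, I could then extract a further very good subsequence and relabel. So assume $\lambda^k$ is very good with $\tilde\lambda := \lim_k g_k(\lambda^k) \in \mathcal{C}_n$, where coincident coordinates of $\tilde\lambda$ correspond precisely to indices $i$ for which the gap $\lambda^k_{i+1}-\lambda^k_i$ stays bounded. For the continuous expectation, the substitution $y^i = g_k(x^i)$ sends the uniform measure on $GT_n(\lambda^k)$ to the uniform measure on $GT_n(g_k(\lambda^k))$, so $\mathbb{E}^{\lambda^k}[f\circ g_k(\lambda^k,\cdot)] = \mathbb{E}^{g_k(\lambda^k)}[f(g_k(\lambda^k), y^{n-1},\ldots,y^1)]$, which by Lemma \ref{LemmaC2} converges to $\mathbb{E}^{\tilde\lambda}[f(\tilde\lambda,\cdot)]$.

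It then remains to prove that $\mathbb{E}^{\rho_{\lambda^k}}[f\circ g_k]$ converges to the same limit. I would do this by induction on $n$, the base case $n=1$ being trivial since $\mathsf{GT}_{\lambda^k}$ is a singleton. For the inductive step, I would condition on $\mu^{n-1}$: applying the six-vertex Gibbs property on the subdomain $\Lambda = \mathbb{Z}_{\geq 0}\times\{1,\ldots,n-1\}\subset D_n$, the conditional distribution of $(\mu^1,\ldots,\mu^{n-2})$ given $\mu^n=\lambda^k$ and $\mu^{n-1}=\nu$ depends only on $\nu$ and is itself a six-vertex Gibbs measure on $\mathsf{GT}^{(n-1)+}$ with the same weights $(w_1,\ldots,w_6)$ and top row $\nu$. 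For typical $\nu$ in the support (which form a good sequence in $k$ by the tightness and support considerations below), the inductive hypothesis identifies the inner expectation with $\mathbb{E}^{\tilde\nu}[f(\tilde\lambda,\tilde\nu,\cdot)]$, where $\tilde\nu = \lim g_k(\nu)$. Combined with Lemma \ref{LemmaC1} to absorb the dependence on $g_k(\nu)$ versus $\tilde\nu$ with a vanishing error, the whole problem reduces to showing that the marginal law of $g_k(\mu^{n-1})$ under $\rho_{\lambda^k}$ converges weakly to the marginal $\mu_{\tilde\lambda}$ of $x^{n-1}$ under the uniform measure on $GT_n(\tilde\lambda)$.

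\textbf{The main obstacle} is identifying this marginal limit. Lemma \ref{LemmaBound} gives tightness and, for any subsequential weak limit, a bound of $[c^2, c^{-2}]$ on the density with respect to $\mu_{\tilde\lambda}$ (using that the discrete uniform on $\mathsf{GT}_{\lambda^k}$ rescaled by $g_k$ converges to $\mu_{\tilde\lambda}$ via a Riemann sum argument, with careful treatment of the directions in which $\tilde\lambda$ has repeated entries and so the continuous polytope is degenerate). However, comparability alone does not yield equality of the limiting marginals. Closing this gap requires exploiting more of the six-vertex Gibbs property than Lemma \ref{LemmaBound}: the hard step is showing that any subsequential weak limit of $\rho_{\lambda^k}\circ g_k^{-1}$ satisfies the continuous Gibbs property of Definition \ref{definitionGT}, i.e., that the local uniformity at the continuous level emerges from the six-vertex Gibbs property because the combinatorial growth of configurations consistent with a fixed macroscopic shape dwarfs the bounded multiplicative fluctuations contributed by the vertex weights. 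Once this is established, Definition \ref{definitionGT} applied to a measure with Dirac top row at $\tilde\lambda$ forces the limit to equal the uniform measure on $GT_n(\tilde\lambda)$, matching the continuous-side limit and completing the argument.
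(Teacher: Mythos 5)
Your proposal takes a genuinely different route from the paper's proof, which is \emph{not} by induction on $n$: instead, the paper proves (\ref{goodSeq}) directly for all $n$ at once by a counting argument. You correctly observe that the crux is to identify the limiting marginal of $g_k(\mu^{n-1})$ under $\rho_{\lambda^k}$, and you correctly observe that Lemma \ref{LemmaBound} alone gives only a two-sided density bound, not equality. But you leave this crux unproved, and the route you sketch for closing it is circular: showing that a subsequential weak limit of a six-vertex Gibbs sequence satisfies the continuous Gibbs property of Definition \ref{definitionGT} is precisely the content of Proposition \ref{S5Prop}, and the proof of that proposition invokes the very lemma you are trying to establish. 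You cannot prove Lemma \ref{LemmaWeak} by appealing to its consequence.

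The mechanism the paper actually uses is different in kind and bypasses marginal identification entirely. The key ingredient is Lemma \ref{LEq}: on the ``bulk'' region $\widehat{\mathsf{GT}}_{\lambda}(M;f_M)$, the conditional Gibbs weight $\rho_{\lambda^k}$ is \emph{exactly} constant once the finitely many stuck coordinates $\mu^{n-1}_i$, $i\in M$, are fixed. Thus the discrete expectation splits as a weighted average over slices indexed by ${\bf c}\in B(M)$ (the values of the stuck coordinates relative to $\lambda^k$), with within-slice contributions already unweighted. Combined with Lemma \ref{Lvol}, which gives sharp asymptotics for $|\mathsf{GT}_{\lambda^k}|$, $|\overline{\mathsf{GT}}_{\lambda^k}|$ and $|\widehat{\mathsf{GT}}_{\lambda^k}(M;f_M)|$ (and in particular shows the bulk region carries almost all the mass), and Lemma \ref{LemmaBound} to control the weights on the negligible boundary, both sides of (\ref{goodSeq}) reduce to averages of $f\circ g_k$ over $\mathsf{GT}^*_{\lambda^k}({\bf c})$. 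The slices differ only by a bounded shift of the stuck coordinates, so since $b(k)\to\infty$ and $f$ is uniformly continuous, $f\circ g_k$ is asymptotically the same on all slices; hence the weighted slice-average (discrete Gibbs side) and the unweighted slice-average (continuous uniform side, via the cube decomposition $Q(y)$) agree in the limit, regardless of what the slice weights $w({\bf c},k)$ actually are. Your proposal does anticipate the intuition that ``combinatorial growth dwarfs the vertex-weight fluctuations,'' but the paper does not run an approximate argument of that sort: it exploits the \emph{exact} constancy from Lemma \ref{LEq} plus uniform continuity, which is what makes the argument close without needing to know the limiting marginal.

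A secondary gap in your inductive set-up: when you condition on $\mu^{n-1}=\nu$ and invoke the inductive hypothesis, $\nu$ is a random sequence and you would need to show it is a.s.\ good (in the sense of Definition \ref{DefVG}) before the hypothesis applies; tightness alone does not give this, and there are also boundary effects when $\nu$ hits the endpoints of its allowed range. The paper's non-inductive argument avoids these issues.
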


With the above lemma we can prove the main result of this section.

\begin{proposition}\label{S5Prop}
Fix $w_1,w_2,w_3,w_4,w_5,w_6 > 0$ and $n \in \mathbb{N}$. Let $\rho(k)$ be a sequence of probability measures on $\mathsf{GT}^{n+}$, satisfying the six-vertex Gibbs property with weights $(w_1,w_2,w_3,w_4,w_5,w_6)$. Let $a(k)$ and $b(k)$ be sequences in $\mathbb{R}$ such that $b(k) \rightarrow \infty$ as $k \rightarrow \infty$. Put $g_k: \mathbb{R}^{n(n+1)/2} \rightarrow  \mathbb{R}^{n(n+1)/2}$ to be
$$g_k(x) =  \frac{1}{b(k)}\left( x - a(k) \cdot {\bf 1}_{\frac{n(n+1)}{2}} \right),$$
and suppose that $\rho(k) \circ g_k^{-1}$ converges weakly to a probability distribution $\mu$ on $GT^n$ (Gelfand-Tsetlin cone), such that
\begin{equation}\label{techCond}
\mathbb{P}^{\mu}( y_i^n = y_{i+1}^n = y_{i+2}^n \mbox{ for some } i = 1,...,n-2) = 0.
\end{equation}
Then $\mu$ satisfies the continuous Gibbs property (Definition \ref{definitionGT}).
\end{proposition}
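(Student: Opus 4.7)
The plan is to leverage the six-vertex Gibbs property of each $\rho(k)$, which by the equivalent formulation (\ref{SVGB}) applied to $f \circ g_k$ reads
$$\mathbb{E}^{\rho(k)}[f \circ g_k(\mu^n,\ldots,\mu^1)] = \mathbb{E}^{\rho(k)^n}\!\left[\mathbb{E}^{\rho(k)_{\mu^n}}\!\left[f \circ g_k(\mu^n,\ldots,\mu^1)\right]\right],$$
and pass to the limit in $k$. By the assumed weak convergence $\rho(k) \circ g_k^{-1} \Rightarrow \mu$, the left side tends to $\mathbb{E}^\mu[f]$ for any bounded continuous $f$. I will argue that the right side converges to $\mathbb{E}^{\mu^n}[G]$, where $G(y) := \mathbb{E}^y[f(y, x^{n-1},\ldots,x^1)]$, which by Definition \ref{definitionGT} is exactly the continuous Gibbs property for $\mu$.

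First I would reduce to uniformly continuous $f$ (needed to invoke Lemma \ref{LemmaWeak}) by multiplying against a continuous compactly supported cut-off $\phi_R$, equal to $1$ on $\{|x| \leq R\}$ and supported in $\{|x| \leq 2R\}$; verifying the Gibbs identity for $f\phi_R$ and then sending $R \to \infty$ recovers the identity for $f$ by dominated convergence under $\mu$ and under each $\mathbb{E}^{y^n}[\,\cdot\,]$. Next I rewrite the right side as $\int \tilde F_k\,d\nu_k$, where $\nu_k := \rho(k)^n \circ (g_k^{\mathrm{top}})^{-1}$ and $\tilde F_k(y) := \mathbb{E}^{\rho(k)_{\lambda(y)}}[f \circ g_k(\lambda(y), \mu^{n-1},\ldots,\mu^1)]$ with $\lambda(y) := (g_k^{\mathrm{top}})^{-1}(y)$; continuity of projection yields $\nu_k \Rightarrow \mu^n$. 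The central claim is: if $y_k \in \mathrm{supp}(\nu_k)$ converges to $y \in \mathcal{C}_n$ and $y_{i+2} > y_i$ for all $i = 1,\ldots,n-2$, then $\tilde F_k(y_k) \to G(y)$. For its proof, set $\lambda^k := \lambda(y_k)$; then $\lambda^k_{i+2} - \lambda^k_i = b(k)(y_k^{i+2} - y_k^i) \to \infty$, while $\lambda^k_{i+1} - \lambda^k_i$ is a non-negative integer sequence that either diverges (when $y^{i+1} > y^i$) or, along a subsequence, stabilizes at a finite integer (when $y^{i+1} = y^i$), so $\lambda^k$ is good in the sense of Definition \ref{DefVG}. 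Lemma \ref{LemmaWeak} then gives
$$\tilde F_k(y_k) - \mathbb{E}^{\lambda^k}\!\left[f \circ g_k(\lambda^k, x^{n-1},\ldots,x^1)\right] \longrightarrow 0,$$
and since $g_k$ is an affine map sending $GT_n(\lambda^k)$ bijectively onto $GT_n(y_k)$ and carrying uniform measure to uniform measure, the second expectation equals $G(y_k)$, which converges to $G(y)$ by continuity of $G$ on $\mathcal{C}_n$ (Lemma \ref{LemmaC3}).

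With the claim established, Skorohod's representation theorem furnishes $Y_k \sim \nu_k$ and $Y \sim \mu^n$ on a common probability space with $Y_k \to Y$ almost surely. Hypothesis (\ref{techCond}) places the full mass of $\mu^n$ on the event $\{Y_{i+2} > Y_i \text{ for all } i\}$, so the claim gives $\tilde F_k(Y_k) \to G(Y)$ almost surely, and uniform boundedness of $\tilde F_k$ (inherited from $\|f\|_\infty$) combined with dominated convergence yields $\int \tilde F_k\,d\nu_k \to \int G\,d\mu^n$. Matching the limits on both sides of the Gibbs identity completes the argument. The main obstacle is precisely the claim $\tilde F_k(y_k) \to G(y)$: one must replace the intricate discrete conditional measure $\rho(k)_{\lambda^k}$ by the uniform measure on a continuous Gelfand-Tsetlin polytope, and this replacement (Lemma \ref{LemmaWeak}) requires $\lambda^k$ to be good, which in turn requires the three-consecutive-entry non-degeneracy supplied by (\ref{techCond}). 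Without it, a positive fraction of the limit configurations would have adjacent triples $Y_i = Y_{i+1} = Y_{i+2}$, forcing the corresponding $\lambda^k$ to have bounded triple differences and breaking the hypothesis of Lemma \ref{LemmaWeak}.
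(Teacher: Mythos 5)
Your proposal is correct and follows essentially the same route as the paper: reduce the Gibbs identity to the limit via Skorohod's representation theorem, use Lemma \ref{LemmaWeak} to replace the discrete conditional measure $\rho(k)_{\lambda^k}$ by the uniform measure on the continuous Gelfand--Tsetlin polytope, and pass to the limit by the continuity of $y \mapsto \mathbb{E}^y[f(y,\ldots)]$ from Lemma \ref{LemmaC3}, with condition (\ref{techCond}) ensuring almost-sure goodness of the top-row sequences. One minor but genuine improvement in your write-up over the paper's version: you make explicit the reduction to uniformly continuous $f$ (via the cut-off $\phi_R$) before invoking Lemma \ref{LemmaWeak}, which does require uniform continuity; the paper applies the lemma directly to a bounded continuous $f$ without commenting on this, leaving that small step to the reader.
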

\begin{remark}
The statement of the proposition remains true if we remove the condition (\ref{techCond}) on $\mu$; however, its proof requires a stronger statement than Lemma \ref{LemmaWeak}. For the applications we have in mind Proposition \ref{S5Prop} is sufficient and we will not pursue the most general possible result here.
\end{remark}

\begin{proof}
 By Skorohod's theorem, we may find random vectors $Y(k)$ for $k\in \mathbb{N}$ and $X$, defined on the same probability space $(\Omega, \mathcal{F}, \mathbb{P})$, such that $Y(k)$ have distribution $\rho(k)$, $X$ has distribution $\mu$, and
$$ \mathbb{P}\left( \{ \omega \in \Omega| \lim_{k\rightarrow \infty} g_k(Y(k)(\omega)) = X(\omega) \}\right) = 1.$$

Let $f$ be a bounded continuous function on $\mathbb{R}^{n(n+1)/2}$. As usual we write $Y(k) = Y^1(k) \preceq \cdots \preceq Y^n(k)$ and $X = X^1 \preceq \cdots \preceq X^n$. We want to show that 
$$\mathbb{E} \left[ f(X) \right] = \mathbb{E}\left[ \mathbb{E}^{X^n}\left[f(X)\right]\right].$$
From the Bounded Convergence Theorem we know that
\begin{equation}\label{s51}
\mathbb{E} \left[ f(X) \right]  = \lim_{ k \rightarrow \infty} \mathbb{E}\left[ f(g_k(Y(k)))\right].
\end{equation}
We now let $A =\{ \omega \in \Omega | \lim_{k\rightarrow \infty} g_k(Y(k)(\omega)) = X(\omega) \mbox{ and } X^n_i(\omega) =  X^n_{i+1}(\omega)  = X^n_{i+2}(\omega) \mbox{ for no }i \}$. One readily observes that for $\omega \in A$, $Y(k)(\omega)$ is a good sequence and so by Lemma \ref{LemmaWeak}, we conclude that 
\begin{equation*}
\lim_{k \rightarrow \infty} \mathbb{E}^{Y^n(k)(\omega)}\left[f(g_k(Y(k)))\right] - \mathbb{E}^{\rho_{Y^n(k)}(\omega)}\left[f(g_k(Y(k)))\right] = 0.
\end{equation*}
Taking expectations on both sides above (which is justified by the Bounded convergence theorem) and using that $\rho(k)$ satisfy the six-vertex Gibbs property (see also (\ref{SVGB})), we conclude that
\begin{equation}\label{s52}
\lim_{k \rightarrow \infty} \mathbb{E} \left[\mathbb{E}^{Y^n(k)(\omega)}\left[f(g_k(Y(k)))\right]\right] - \mathbb{E}\left[f(g_k(Y(k)))\right] = 0.
\end{equation}
Finally, if $\omega \in A$ and $Z(k) = g_k(Y(k))$, we have by Lemma \ref{LemmaC3} that
$$\lim_{k \rightarrow \infty} \mathbb{E}^{Y^n(k)(\omega)}\left[f(g_k(Y(k)))\right] = \lim_{k \rightarrow \infty} \mathbb{E}^{Z^n(k)(\omega)}\left[f(Z(k))\right] = \mathbb{E}^{X^n(\omega)}\left[f(X)\right].$$
Taking expectations on both sides above (which is justified by the Bounded convergence theorem) we conclude that 
\begin{equation}\label{s53}
\lim_{k \rightarrow \infty} \mathbb{E} \left[\mathbb{E}^{Y^n(k)(\omega)}\left[f(g_k(Y(k)))\right]\right]= \mathbb{E}\left[\mathbb{E}^{X^n(\omega)}\left[f(X)\right]\right].
\end{equation}
Combining (\ref{s51}), (\ref{s52}) and (\ref{s53}) proves the proposition.

\end{proof}

\subsection{Proof of Lemmas \ref{LemmaBound} and \ref{LemmaWeak}} \label{Section6.3}
We adopt the same notation as in Section \ref{Section6.2}.

\begin{proof}(Lemma \ref{LemmaBound}) Introduce vertex weights as in (\ref{S6VW1}). For $\lambda \in \mathsf{Sign}_n^+$ we fix $\omega_\lambda \in \mathcal{P}_n$, such that $\lambda^{j}_i(\omega_\lambda) = \lambda_i$ for $i = 1,...,j$ and $j = 1,...,n$. We also define for $\omega \in \mathcal{P}_n$ the weight
$\mathcal{W}(\omega) := \prod_{i = 1}^{n} \prod_{j = 1}^{\lambda_1} w(\omega(i,j) )$.

Since $\rho$ satisfies the conditions of  Definition \ref{DGP}, it is enough to show that for each $\lambda \in \mathsf{Sign}_n^+$, and any collection of paths $\omega \in \mathcal{P}_n,$ with $\lambda^n_i(\omega) =  \lambda_i$ for $i = 1,...,n$, we have
$$ c^{-1} \geq \frac{\mathcal{W}(\omega) }{\mathcal{W}(\omega_\lambda) } \geq c,$$
for some $c \in (0,1)$, which depends on $n$ and $w_1,...,w_6$. The strategy is to apply elementary moves to the configuration $\omega$ that transform it to $\omega_\lambda$, and record how the weight changes at each step. We will see that the number of changes is at most $n(n-1)$ and each change is given by a multiplication by some factor, which can take finitely many values, depending on $w_1,...,w_6$. This will show that $\frac{\mathcal{W}(\omega)}{\mathcal{W}(\omega_\lambda)} $ belongs to a finite set of numbers, which then can be upper and lower bounded, proving the lemma.\\

Let $\mathcal{P}_n^\lambda$ denote the set of $ \omega \in \mathcal{P}_n$ such that $\lambda^n(\omega) = \lambda$. Starting from any $\omega \in \mathcal{P}^\lambda_n$ an elementary move consists of increasing one of $\lambda_i^j(\omega)$ by $1$ so that the resulting element still lies in $\mathcal{P}^\lambda_n$. If we apply an elementary move to $\omega$, increasing $m = \lambda_i^j(\omega)$ by $1$ and obtain $\omega^+ \in \mathcal{P}_n^\lambda$ as a result, we observe that 
$$ \frac{\mathcal{W}(\omega)}{\mathcal{W}(\omega^+)} = \frac{w(\omega(m,j))w(\omega(m,j+1))w(\omega(m+1,j))w(\omega(m+1,j+1))}{w(\omega^+(m,j))w(\omega^+(m,j+1))w(\omega^+(m+1,j))w(\omega^+(m+1,j+1))}.$$
Since we have only finitely many possible vertex weights we see that $ \frac{\mathcal{W}(\omega)}{\mathcal{W}(\omega^+)}$ can take finitely many values.

The way we transform $\omega$ to $\omega_\lambda$ is as follows. We consider the complete order on pairs $(x,y)$ given by $(x,y) < (x',y')$ if and only if $x < x'$ or $x = x'$ and $y > y'$. We traverse the pairs $(i,j)$: $i = 1,...,j$, $j = 1,..,n-1$ in increasing order, and for each $(i,j)$ we increase $\lambda^j_i(\omega)$ by $1$ until it reaches $\lambda^j_i(\omega_\lambda)$. One readily observes that each such move is elementary and the result of applying all these moves to $\omega$ is indeed $\omega_\lambda$. We continue to denote the result of applying an elementary move to $\omega$ by $\omega$ - this should cause no confusion.\\

An important situation occurs when prior to the application of the move $m =\lambda_i^j(\omega) \rightarrow m +1$ we have that there are no vertical arrows coming in $(m, j)$ and $(m + 1, j)$ and coming out of $(m,j+1)$ and $(m + 1, j+1)$. The latter situation determines the types of the four vertices:
$$\mbox{ $\omega(m, j) = (0,1;1,0)$, $\omega(m+1, j) = (0,0;0,0)$, $\omega(m, j + 1) = (1,0;0,1)$, $\omega(m + 1, j+ 1) = (0,1;0,1)$}.$$
After the application of the move they become
$$\mbox{ $\omega(m, j) = (0,1;0,1)$, $\omega(m+1, j)= (0,1;1,0)$, $\omega(m, j + 1) = (0,0;0,0)$, $\omega(m + 1, j+ 1)= (1,0;0,1)$}.$$
We thus see that the product of these weights stays the same and so $\mathcal{W}(\omega)$ remains unchanged. We call such a situation {\em good}. 

Suppose that in the string of elementary moves, transforming $\omega$ to $\omega_\lambda$, we have reached the pair $(i,j)$, and we are increasing $\lambda^j_i(\omega)$ to $\lambda^j_i(\omega_\lambda)$. Let us denote $A = \lambda^j_i(\omega)$ and $B = \lambda^j_i(\omega_\lambda)$. The condition that we can increase $\lambda^j_i(\omega)$ to $B$ via elementary moves, implies that there are no arrows from $(k, j)$ to $(k, j+1)$ or from $(k, j - 1)$ to $(k,j)$  for $k = A+1,...,B-1$. Consequently, in the process of increasing $\lambda^j_i(\omega)$ to $B$, we encounter at most two non-good situations (corresponding to the first and last move). As we have $n(n-1)/2$ pairs $(i,j)$, we see that in our string of elementary moves the situation is good in all but at most $n(n-1)$ moves. This proves our desired result.

\end{proof}

Before we go to the proof of Lemma \ref{LemmaWeak}, we introduce some notation and prove a couple of facts. \vspace{1mm}  Let $\overline{\mathsf{GT}}_n$ denote the set of $n$-tuples of integers
$$\overline{\mathsf{GT}}_{n} = \{ \lambda \in \mathbb{Z}^n: \lambda_1 \leq \lambda_2 \leq \cdots \leq \lambda_n\}.$$
Let $\overline{\mathsf{GT}}^{n}$ denote the set of sequences 
\vspace{1mm}
$$\mu^1 \preceq \mu^2 \preceq \cdots \preceq \mu^n, \hspace{3mm} \mu^i \in \overline{\mathsf{GT}}_i, \hspace{2mm} 1 \leq i \leq n.$$
We call elements of $\overline{\mathsf{GT}}^{n}$ Gelfand-Tsetlin patterns. For $\lambda \in \overline{\mathsf{GT}}_n$ we let $\overline{\mathsf{GT}}_\lambda \subset \overline{\mathsf{GT}}^{n}$ denote the set of Gelfand-Tsetlin patterns $\mu^1 \preceq \cdots \preceq \mu^n$ such that $\mu^n = \lambda$.

We say that $\lambda \in \mathsf{GT}_n $ and $\mu \in \mathsf{GT}_{n-1}$ {\em strictly interlace} and write $\mu \prec \lambda$ if 
$$\lambda_1 < \mu_1 < \lambda_2 < \cdots < \mu_{n-1} < \lambda_n.$$
Let $\widehat{\mathsf{GT}}^{n}$ denote the set of sequences 
$$\mu^1 \prec \mu^2 \prec \cdots \prec \mu^n, \hspace{3mm} \mu^i \in \mathsf{GT}_i, \hspace{2mm} 1 \leq i \leq n.$$
We call elements of $\widehat{\mathsf{GT}}^{n}$ {\em strict} Gelfand-Tsetlin patterns. For $\lambda \in \mathsf{GT}_n$ we let $\widehat{\mathsf{GT}}_\lambda \subset \widehat{\mathsf{GT}}^{n}$ denote the set of Gelfand-Tsetlin patterns $\mu^1 \prec \cdots \prec \mu^n$ such that $\mu^n = \lambda$.

For $\lambda \in \overline{\mathsf{GT}}_n$ we consider the size of $\overline{\mathsf{GT}}_\lambda$, which is equal to the dimension of the representation of the unitary griup $U(n)$ with the highest weight $\lambda$ and is given by the well-known formula
\vspace{1mm}
\begin{equation}\label{Dvol1}
\left|\overline{\mathsf{GT}}_\lambda \right|= \prod_{i < j} \left( \frac{\lambda_j - \lambda_i + j - i}{j-i}\right).
\end{equation}
For $\lambda \in \overline{\mathsf{GT}}_n$ we let $\lambda^* = (\lambda_1 + (n - 1), \lambda_2 + (n-3),..., \lambda_n + (-n + 1))$. It is easy to check that if $\lambda \in \overline{\mathsf{GT}}_n$ and $\mu \in \overline{\mathsf{GT}}_{n-1}$ then $\mu \preceq \lambda$ if and only if $\mu^* \prec \lambda^*$. Consequently, we have that the map \vspace{1mm}\\
$  f: \overline{\mathsf{GT}}_\lambda  \rightarrow \widehat{\mathsf{GT}}_{\lambda^*}$, given by $f(\mu^1, ..., \mu^{n-1}, \lambda) = ((\mu^{1})^*,...,(\mu^{n-1})^*, \lambda^*)$, is a bijection.

It follows from (\ref{Dvol1}) that for $\lambda \in \mathsf{GT}_n$, the size of $ \widehat{\mathsf{GT}}_{\lambda}$ is given by
\begin{equation}\label{Dvol2}
\left| \widehat{\mathsf{GT}}_{\lambda} \right|= \prod_{i < j} \left( \frac{\lambda_j - \lambda_i  - j + i}{j-i}\right).
\end{equation}

Let us recall from Definition \ref{DefVG} that a sequence in $\mathsf{GT}_n$ is very good if for $i = 1,...,n-1$ each sequence $\lambda^k_{i+1} - \lambda^k_i$ has a limit in $\mathbb{N} \cup \{\infty \}$, while  for $i = 1,...,n-2$, each sequence $\lambda^k_{i+2} - \lambda^k_i$ goes to $\infty$. For each very good sequence $\lambda^k$, we let $M \subset \{1,...,n-1\}$, be the set of indices $i$, such that $\lambda^k_{i+1} - \lambda^k_i$ is bounded, and for $i \in M$, denote by $m_i \in \mathbb{N}$ the limit of the sequence $\lambda^k_{i+1} - \lambda^k_i$, which exists by assumption.

Given a subset $M \subset \{1,...,n-1\}$ and $\lambda \in \mathsf{GT}_n$, we let 
$$\widehat{\mathsf{GT}}_\lambda(M): = \{ (\mu^1,..., \mu^n ) \in \mathsf{GT}_\lambda: \mu^n = \lambda, \mu^{n-1} \succ \mu^{n-2} \succ \cdots \succ \mu^1, \mu^{n-1}_i \in (\lambda_i, \lambda_{i+1}) \mbox{ for } i\not \in M\}.$$
For $ (\mu^1,..., \mu^n ) \in \widehat{\mathsf{GT}}_\lambda(M)$ and numbers $x_i$ for $i \in M$, we define the function
$$f^{\bf x}_M(\mu^1,...,\mu^n) = (\nu^1,...,\nu^n) \mbox{ with } \nu^{j}_i = \begin{cases} x_i &\mbox{ if $j = n-1$ and $i \in M,$} \\ \mu^j_i &\mbox{ else.} \end{cases}$$
We now define
$$\widehat{\mathsf{GT}}_\lambda(M; f_M):=  \{ (\mu^1,..., \mu^n ) \in \widehat{\mathsf{GT}}_\lambda(M): f^{\bf x}_M(\mu^1,..., \mu^n ) \in \widehat{\mathsf{GT}}_\lambda(M) \mbox{ for all } x_i \in [\lambda_{i}, \lambda_{i+1}], i \in M \}.$$

The first key result we need is the following.
\begin{lemma}\label{Lvol}
Let $\lambda^k \in \mathsf{GT}_n$ be a very good sequence and $M$ as above. As $k \rightarrow \infty$, we have
$$\left| \mathsf{GT}_{\lambda^k} \right| \sim \left| \overline{ \mathsf{GT}}_{\lambda^k} \right| \sim \left| \widehat{ \mathsf{GT}}_{\lambda^k}(M;f_M) \right| \sim \prod_{\substack{ 1 \leq i < j \leq n \\ i + 1 <   j} }  \left( \frac{\lambda^k_j - \lambda^k_i }{j-i}\right) \times \prod_{i \not \in M, i < n} (\lambda^k_{i+1} - \lambda^k_i ) \times \prod_{i \in M} (m_i + 1).$$
\end{lemma}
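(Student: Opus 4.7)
The plan is to anchor everything on the closed product (\ref{Dvol1}) for $|\overline{\mathsf{GT}}_{\lambda^k}|$ and then bootstrap to the other three quantities via elementary inclusions. Observe that by construction $\widehat{\mathsf{GT}}_{\lambda^k}(M;f_M) \subseteq \mathsf{GT}_{\lambda^k} \subseteq \overline{\mathsf{GT}}_{\lambda^k}$, so once we prove $|\widehat{\mathsf{GT}}_{\lambda^k}(M;f_M)| \sim |\overline{\mathsf{GT}}_{\lambda^k}|$ and identify $|\overline{\mathsf{GT}}_{\lambda^k}|$ with the explicit product on the right, the two-sided asymptotic squeeze yields all three stated equivalences.

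The identification of $|\overline{\mathsf{GT}}_{\lambda^k}|$ with the target product is essentially a direct computation from (\ref{Dvol1}). Examining each factor $\frac{\lambda^k_j - \lambda^k_i + j - i}{j-i}$ as $k \to \infty$ under the very-good hypothesis: if $j \geq i+2$, then $\lambda^k_j - \lambda^k_i \to \infty$, so the factor is asymptotic to $\frac{\lambda^k_j - \lambda^k_i}{j-i}$; if $j = i+1$ and $i \notin M$, the factor is asymptotic to $\lambda^k_{i+1} - \lambda^k_i$; and if $j = i+1$ and $i \in M$, the factor converges to the constant $m_i + 1$. Multiplying over all pairs $(i,j)$ reproduces the target. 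A useful byproduct: the very-good hypothesis forces $M$ to contain no two consecutive indices, since $\lambda^k_{i+2} - \lambda^k_i \to \infty$ would fail if both $i$ and $i-1$ lay in $M$.

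For the first task, the plan is to show $|\overline{\mathsf{GT}}_{\lambda^k} \setminus \widehat{\mathsf{GT}}_{\lambda^k}(M;f_M)| = o(|\overline{\mathsf{GT}}_{\lambda^k}|)$ by a union bound over the various failure modes. The forbidden events for a pattern to lie in $\widehat{\mathsf{GT}}_{\lambda^k}(M;f_M)$ are: \textbf{(a)} some row $\mu^j$ with $j < n$ has two equal entries; \textbf{(b)} some interlacing $\mu^j \preceq \mu^{j+1}$ with $j \leq n-2$ is not strict; \textbf{(c)} $\mu^{n-1}_i \in \{\lambda^k_i, \lambda^k_{i+1}\}$ for some $i \notin M$; and \textbf{(d)} $\mu^{n-2}_{i-1} \geq \lambda^k_i$ or $\mu^{n-2}_i \leq \lambda^k_{i+1}$ for some $i \in M$ (failure of $f_M$-extension). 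For each such equality or inequality, the standard approach is to split the pattern into two halves at the offending level and apply (\ref{Dvol1}) to each half. This produces a product in which one of the large factors $\lambda^k_j - \lambda^k_i$ (with $j \geq i+2$, or $j = i+1$ and $i \notin M$) is effectively replaced by a bounded quantity, so the resulting count is $o(|\overline{\mathsf{GT}}_{\lambda^k}|)$, and a union bound over the finitely many choices of the offending position finishes the estimate.

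The main obstacle will be event \textbf{(d)}: the $f_M$-extension condition restricts $\mu^{n-2}_{i-1}$ (respectively $\mu^{n-2}_i$) to avoid a bounded window of size $\sim m_i$ near $\lambda^k_i$ (respectively $\lambda^k_{i+1}$) for each $i \in M$. To bound the count of patterns that land in these windows, I need the total range of $\mu^{n-2}_{i-1}$ to grow to infinity. That range is controlled by $\mu^{n-1}_i - \mu^{n-1}_{i-1}$, which in turn is bounded above and below by $\lambda^k_{i+1} - \lambda^k_{i-1}$; this tends to infinity precisely because $i-1 \notin M$ (which follows from the non-consecutivity of $M$ noted above), and gives the required $o$-estimate. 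Once event \textbf{(d)} is handled, the union bound closes and the chain of asymptotics in the lemma follows.
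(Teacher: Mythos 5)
Your proposal has the right skeleton (the chain of inclusions $\widehat{\mathsf{GT}}_{\lambda^k}(M;f_M) \subseteq \mathsf{GT}_{\lambda^k} \subseteq \overline{\mathsf{GT}}_{\lambda^k}$, the asymptotic evaluation of $|\overline{\mathsf{GT}}_{\lambda^k}|$ via (\ref{Dvol1}), and the observation that very-goodness forces $M$ to contain no two consecutive indices), and this part matches the paper. The gap is in the proposed union bound over failure modes. You say that for each offending equality one should "split the pattern into two halves at the offending level and apply (\ref{Dvol1}) to each half." But (\ref{Dvol1}) counts Gelfand--Tsetlin patterns with a fixed \emph{top} row only; the number of patterns sandwiched between a fixed top row $\lambda$ and a fixed middle row $\mu^j$ is a skew count with no such product formula (it is a determinant, and summing it over constrained $\mu^j$ does not obviously collapse to the desired $o(|\overline{\mathsf{GT}}_{\lambda^k}|)$ bound). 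Even the weaker statement that the probability of an equality $\mu^j_a = \mu^j_{a+1}$ under the uniform measure on $\overline{\mathsf{GT}}_{\lambda^k}$ tends to $0$ is not immediate from the inputs you cite — the natural "shift part of the pattern down by one" injections fail because decreasing entries at level $j$ can violate interlacing with the untouched level $j+1$.

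The paper avoids this entirely by working from the opposite direction: rather than bounding the bad set from above, it produces a lower bound on $|\widehat{\mathsf{GT}}_{\lambda^k}(M;f_M)|$ directly. Concretely, it constructs an explicit injection from $\widehat{\mathsf{GT}}_\lambda \times \prod_{i \in M}\{0,\dots,m_i\}$ into $\widehat{\mathsf{GT}}_\nu(M;f_M)$ for a related top row $\nu$, using coherent "increase $\mu^y_x$ by $1$ on a triangular region" moves that visibly preserve strict patterns. Combined with the product formula (\ref{Dvol2}) for $|\widehat{\mathsf{GT}}_\lambda|$ (the strict analogue of (\ref{Dvol1})), this yields exactly the one-sided estimate (\ref{enough}) needed to close the squeeze. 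If you want to salvage your approach, the minimum you would need is a usable upper bound on skew GT counts (or an explicit injection of the bad set into something smaller), which is precisely the technical content your sketch is missing.
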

\begin{proof}
\vspace{1mm}

We observe that $\widehat{\mathsf{GT}}_{\lambda^k}(M;f_M) \subset \mathsf{GT}_{\lambda^k} \subset \overline{ \mathsf{GT}}_{\lambda^k} $, and so by (\ref{Dvol1}), it suffices to show that
\begin{equation}\label{enough}
\limsup_{k \rightarrow \infty} \left| \widehat{ \mathsf{GT}}_{\lambda^k}(M;f_M) \right|^{-1}  \prod_{\substack{ 1 \leq i < j \leq n \\ i + 1 <   j} }  \left( \frac{\lambda^k_j - \lambda^k_i }{j-i}\right) \times \prod_{i \not \in M, i < n} (\lambda^k_{i+1} - \lambda^k_i ) \times \prod_{i \in M} (m_i + 1) \leq 1. 
\end{equation}

For $i \in \{ 1,..., n \}$, we let $X_i = \{ (x,y):  i \leq x \leq y \leq n\}$, and $Y_i = \{ (x,y): 1\leq x \leq y \leq n \mbox{ and } y - x < n - i - 1 \}$. Let $(\mu^1,..., \mu^{n}) \in \mathsf{GT}^{n}$ and let $i \in \{ 0,..., n \}$ be given. Then it is easy to see that if we increase $\mu^y_x$ by $1$ for all $(x,y) \in X_i$ or, alternatively, for all $(x,y) \in Y_i$, we still get an element that belongs to $\mathsf{GT}^{n}$. See Figure \ref{S7_1}. 

\begin{figure}[h]
\centering
\scalebox{0.6}{\includegraphics{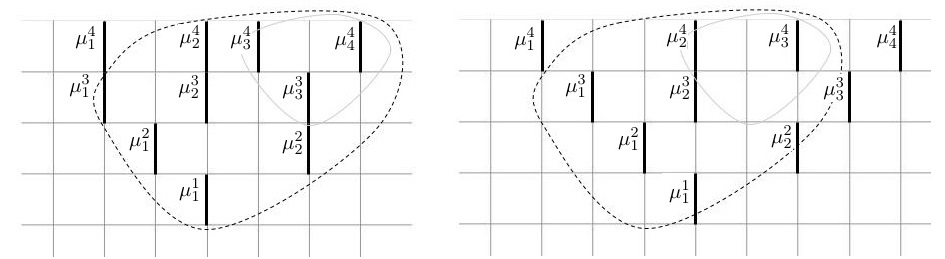}}
\caption{An element $\mu \in \mathsf{GT}^4$. In left picture the grey curve encloses $\mu^y_x$ for $(x,y) \in X_3$ and the dashed curve  for $(x,y) \in Y_0$. The right picture is the result of increasing $\mu^y_x$ by $1$ for $(x,y) \in X_3$ and then for $(x,y) \in Y_0$.  }
\label{S7_1}
\end{figure}

Let $M  \subset \{1,...,n-1\}$, be such that $M$ does not contain adjacent elements. Suppose that $\lambda \in \mathsf{GT}_n$ is such that $\lambda_{i+1} - \lambda_{i} = 2$ when $i \in M$. Suppose $m_i \in \mathbb{N}$ for $i \in M$ are given. Let $c_i \in \{0,...,m_i\}$ for $i \in M$. Starting from an element $\mu^1,...,\mu^n$ in $\widehat{\mathsf{GT}}_\lambda$, and given $c_i$ for $i \in M$ as above, we construct a new element in $\mathsf{GT}^n$ as follows:

\begin{enumerate}
\item We traverse the elements in $M$ in increasing order.
\item For each element $i \geq 2, i \in M$, we increase the values $\mu^y_x$ for each $(x,y) \in X_i$ by $1$.
\item Afterwards we increase $\mu^y_x$ by $m_i$ for each $(x,y) \in Y_i$.
\item Finally, we set $\mu^{n}_{i+1} = \mu^{n}_i + m_i$ and set $\mu^{n-1}_i$  to equal $\mu^{n}_i + c_i$.
\end{enumerate}
For a simple application of the above algorithm see Figure \ref{S7_2}.
\begin{figure}[h]
\centering
\scalebox{0.6}{\includegraphics{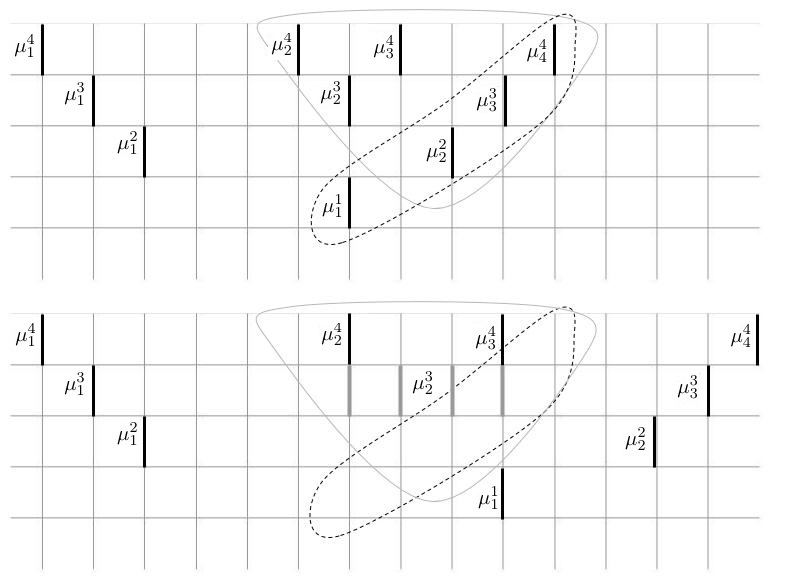}}
\caption{The top picture gives an element $\mu \in \widehat{\mathsf{GT}}^4$; the grey curve encloses $\mu^y_x$ for $(x,y) \in X_2$ and the dashed curve for $(x,y) \in Y_2$. If $M = \{2\}$ and $m_2 = 3$ the bottom picture gives the output of applying our algorithm to $\mu$. The position $\mu_2^3$ can be any element in $[ \mu^4_2 , \mu_3^4]$. }
\label{S7_2}
\end{figure}

One readily verifies that each element that was constructed with the above algorithm belongs to $\widehat{\mathsf{GT}}_\nu(M;f_M)$, where $\nu_1 = \lambda_1$ and for $i = 2,...,n$, we have
$$\nu_{i+1} - \nu_i = \begin{cases} m_i &\mbox{ if $i \in M$,} \\ \lambda_{i+1} - \lambda_i + 1 & \mbox{ if $i + 1 \in M$ and $i - 1 \not \in M$,} \\ \lambda_{i+1} - \lambda_i + 2 &\mbox{ if $i + 1 \not \in M$ and $i -1 \in M$,}   \\ \lambda_{i+1} - \lambda_i + 3 &\mbox{ if $i + 1 \in M$ and $i -1 \in M$} \\  \lambda_{i+1} - \lambda_i &\mbox{ if $i, i+1, i - 1 \not \in M$.}\end{cases} $$
We thus obtain a map from $\widehat{\mathsf{GT}}_\lambda \times \prod_{i \in M} \{0,...,m_i\}$ into $\widehat{\mathsf{GT}}_\nu(M;f_M)$, and it is easy to see that it is injective. The latter implies that 
$$ \left| \widehat{\mathsf{GT}}_\nu(M;f_M)\right| \geq \prod_{i \in M} (m_i + 1) \times \left| \widehat{\mathsf{GT}}_{\lambda} \right|.$$
Combining the above with (\ref{Dvol2}) we see that if $\nu \in \mathsf{GT}_n$ is such that $\nu_{i+1} - \nu_i = m_i$ for $i \in M$ and $\nu_{i+1} - \nu_{i} \geq 4$ for $i \not \in M$, then 
$$ \left| \widehat{\mathsf{GT}}_\nu(M;f_M)\right| \geq  \prod_{i \in M} (m_i + 1) \times  \prod_{\substack{ 1 \leq i < j \leq n \\ i + 1 <   j} }  \left( \frac{\nu_j - \nu_i - 4(j-i)}{j-i}\right) \times \prod_{i \not \in M, i < n} (\nu_{i+1} - \nu_i - 4 ).$$
This readily implies (\ref{enough}) and hence the lemma.
\end{proof}

An important property of $\widehat{\mathsf{GT}}_\lambda(M)$ is contained in the following lemma.
\begin{lemma}\label{LEq}
Fix $w_1,w_2,w_3,w_4,w_5,w_6 > 0$ and $n \in \mathbb{N}$. Suppose $\rho$ is a probability distribution on $\mathsf{GT}^{n+}$, which satisfies the six-vertex Gibbs property with weights $(w_1,w_2,w_3,w_4,w_5,w_6)$. Let $\lambda \in \mathsf{GT}_n$ be such that $\rho(\mu^n(\omega) = \lambda ) > 0$ and define $\rho_\lambda$ as in Section \ref{Section6.2}. Let $M  \subset \{1,...,n-1\}$ and suppose that $(\mu^1,...,\mu^n),$
$ (\nu^1,...,\nu^n) \in \widehat{\mathsf{GT}}_\lambda(M)$, are such that $\mu^{n-1}_i = \nu^{n-1}_i$ for $i \in M$. Then $\rho_\lambda(\mu^1,...,\mu^{n-1}) = \rho_\lambda(\nu^1,...,\nu^{n-1}) $.
\end{lemma}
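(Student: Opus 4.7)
The six-vertex Gibbs property (Definition~\ref{DGP}) identifies $\rho_\lambda$, via the bijection $h: \mathsf{GT}_\lambda \to \mathcal{P}^\lambda_n$, with the six-vertex Gibbs measure on $\mathcal{P}^\lambda_n$ with weights $(w_1,\ldots,w_6)$, so that
\[
\rho_\lambda(\kappa^1,\ldots,\kappa^{n-1}) \;\propto\; \mathcal{W}(\omega_\kappa) := \prod_{(i,j) \in D_n} w(\omega_\kappa(i,j)),
\qquad \omega_\kappa = h^{-1}(\kappa^1,\ldots,\kappa^n).
\]
Hence it suffices to prove $\mathcal{W}(\omega_\mu) = \mathcal{W}(\omega_\nu)$. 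My plan is to connect $\mu$ to $\nu$ via a sequence of elementary moves (each changing one coordinate $\kappa^j_i$ by $\pm 1$) that stay inside
\[
S := \{\kappa \in \widehat{\mathsf{GT}}_\lambda(M) : \kappa^{n-1}_i = \mu^{n-1}_i \text{ for all } i \in M\},
\]
and to show that every such move preserves $\mathcal{W}$.

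\textit{Invariance under a single move.} For the move $\kappa^j_i = m \to m+1$ (with $1 \le j \le n-1$, and $i \notin M$ if $j = n-1$) between configurations in $S$, the strict interlacings $\kappa^{j-1} \prec \kappa^j \prec \kappa^{j+1}$ (using $\kappa^n = \lambda$ and the open-interval condition $\kappa^{n-1}_i \in (\lambda_i,\lambda_{i+1})$ for $i \notin M$ at the top level) force $\{m,m+1\} \cap (\kappa^{j-1} \cup \kappa^{j+1}) = \varnothing$. A direct path-picture analysis then shows that the four affected vertices $(m,j),(m+1,j),(m,j+1),(m+1,j+1)$ have uniquely determined types: pre-move they are $(0,1;1,0),(0,0;0,0),(1,0;0,1),(0,1;0,1)$ with weights $w_6,w_1,w_5,w_4$; post-move they are $(0,1;0,1),(0,1;1,0),(0,0;0,0),(1,0;0,1)$ with weights $w_4,w_6,w_1,w_5$. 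Both orderings yield the same product $w_1w_4w_5w_6$, and no other vertex in $\omega_\kappa$ changes; this is precisely the ``good move'' scenario highlighted in the proof of Lemma~\ref{LemmaBound}, except that here strict interlacing in $S$ guarantees that \emph{every} allowed elementary move is good.

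\textit{Connectivity of $S$.} To connect two elements of $S$ by elementary moves within $S$, I would process levels from the top down. First, adjust the free entries $\kappa^{n-1}_i$, $i \notin M$, one unit at a time until $\kappa^{n-1}$ matches $\nu^{n-1}$; each intermediate value stays in $(\lambda_i,\lambda_{i+1})$ by construction, and the pinned coordinates at $M$ are untouched. If an adjustment would violate strict interlacing with $\kappa^{n-2}$, first perform compensating moves on $\kappa^{n-2}$ (and recursively on lower levels) to open the required gap; strict interlacing in $S$ provides enough room for such moves to always be available. Iterating, align $\kappa^{n-2},\ldots,\kappa^1$ with $\nu$ in the same manner. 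A clean formalization is to induct on $d(\mu,\nu) := \sum_{j,i}|\mu^j_i-\nu^j_i|$, exhibiting at each step an elementary move in $S$ that strictly decreases $d$ (possibly after finitely many bookkeeping moves at lower levels that temporarily increase it).

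The chief obstacle is the connectivity step: weight-invariance is essentially an extraction of the good-move computation of Lemma~\ref{LemmaBound}, but the combinatorial orchestration of moves that respect both the pinning of $\kappa^{n-1}|_M$ and strict interlacing throughout requires careful case analysis. The strict interlacing at all intermediate levels in $\widehat{\mathsf{GT}}_\lambda(M)$ is precisely what makes this orchestration possible.
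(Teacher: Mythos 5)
Your overall structure — reduce to showing $\mathcal{W}(\omega_\mu)=\mathcal{W}(\omega_\nu)$ and connect the two by weight-preserving ``good'' elementary moves — is the paper's approach, and your analysis of what a single good move does to the four affected vertices (and why the product $w_1w_4w_5w_6$ is unchanged) is correct and matches the paper's computation from the proof of Lemma~\ref{LemmaBound}. The gap, which you yourself flag, is the connectivity step, and it is not a minor technicality: the top-down level-by-level scheme you propose genuinely does require ``compensating moves'' on lower levels, which increase $d(\mu,\nu)$ and therefore break the induction on $d$. For a concrete obstruction take $n=3$, $\lambda=(1,5,8)$, $M=\varnothing$, $\mu=\bigl((3),(2,6),\lambda\bigr)$, $\nu=\bigl((5),(4,6),\lambda\bigr)$: trying to push $\kappa^{n-1}_1$ from $2$ toward $4$ first immediately collides with $\kappa^1_1=3$, and you must alternate between the two levels, so ``adjust $\kappa^{n-1}$, then $\kappa^{n-2}$, \ldots'' does not terminate in one pass and a naive distance induction stalls. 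The paper avoids cascades entirely by fixing a total order on coordinate positions $(x,y)$ (in the $\lambda$-indexing, $x$ is the rank, $y$ the level) given by $(x,y)<(x',y')$ iff $x<x'$, or $x=x'$ and $y>y'$ — i.e., one processes the largest part on each level working from top to bottom, then the second largest, etc. It then takes the \emph{minimal} position where $\omega_1,\omega_2$ disagree and increases $\omega_2$ there by one; minimality forces the three neighboring positions $(x,y+1),(x-1,y),(x-1,y-1)$ to already agree, and from $\lambda^y_x(\omega_1)>\lambda^y_x(\omega_2)$ together with the strict interlacing in $\widehat{\mathsf{GT}}_\lambda(M)$ (and, at the top level, the constraint $(x,y)\neq(i,n-1)$ for $n-i\in M$), one derives exactly the inequalities $\lambda^y_{x-1}(\omega_2),\lambda^{y-1}_{x-1}(\omega_2),\lambda^{y+1}_x(\omega_2)>\lambda^y_x(\omega_2)+1$ that make the move both legal and good, strictly decreasing $|\omega_1-\omega_2|$ by $1$. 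Supplying this ordering (or an equivalent one with the same monotonicity property) is precisely what is needed to turn your sketch into a complete proof. One smaller point: your phrase ``the strict interlacings \ldots\ force $\{m,m+1\}\cap(\kappa^{j-1}\cup\kappa^{j+1})=\varnothing$'' is slightly circular as stated — that disjointness is equivalent to the post-move configuration lying in $\widehat{\mathsf{GT}}_\lambda(M)$, not a free consequence of the pre-move state; it is the minimality argument above that actually certifies it.
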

\begin{proof}
We set $\omega_ 1 = h^{-1} ( (\mu^1,...,\mu^{n-1}, \lambda)$ and $\omega_2 = h^{-1} ( (\nu^1,...,\nu^{n-1}, \lambda)$ (the function $h$ was defined in Section \ref{Section6.2}). By definition we know that $\lambda^n_i(\omega_1) = \lambda^n_i(\omega_2) = \lambda_{n-i+1}$ for $i = 1,...,n$. As in the proof of Lemma \ref{LemmaBound} we introduce vertex weights as in (\ref{S6VW1}) and define for $\omega \in \mathcal{P}_n$ the weight $\mathcal{W}(\omega) := \prod_{i = 1}^{n} \prod_{j = 1}^{\lambda_n} w(\omega(i,j) )$. Since $\rho$ satisfies the six-vertex Gibbs property we see that to prove the lemma it suffices to show that $\mathcal{W}(\omega_1) = \mathcal{W}(\omega_2)$.

Recalling the proof of Lemma \ref{LemmaBound}, we see that it suffices to show that we can transform $\omega_1$ to $\omega_2$ via good elementary moves. I.e. we wish to show that any two elements in $h^{-1}(\widehat{\mathsf{GT}}_\lambda(M))$, that satisfy $\lambda^{n-1}_i(\omega_1) = \lambda^{n-1}_i(\omega_2)$ for $n - i  \in M$ are connected via good elementary moves. We prove the latter by induction on $|\omega_1 - \omega_2| := \sum_{j = 1}^{n-1}\sum_{i = 1}^j |\lambda^j_i(\omega_1) - \lambda^j_i(\omega_2)|$, the base case $|\omega_1 - \omega_2| = 0$ being obvious.\\

Suppose we know the result for $|\omega_1 - \omega_2| = k-1 \geq 0$, and we wish to show it for $k$. Since $|\omega_1 - \omega_2| = k \geq 1$, we know that there exist $(x,y)$ such that $\lambda^y_x(\omega_1) - \lambda^y_x(\omega_2) \neq 0$. Let $(x,y)$ be the smallest such index (in the order considered in the proof of Lemma \ref{LemmaBound}), and without loss of generality we assume that $\lambda^y_x(\omega_1) > \lambda^y_x(\omega_2)$. Notice that by assumption $(x,y) \neq (i,n-1)$ for any $n-i \in M$ and also $y \leq n-1$. 

We want to increase $\lambda^y_x(\omega_2)$ by $1$ and show that this is a good elementary move. In order for this to be the case we must have that $\lambda^y_{x-1}(\omega_2)$,  $\lambda^{y-1}_{x-1}(\omega_2)$  (if $x > 1$) and $\lambda^{y+1}_x(\omega_2)$ are all strictly bigger than $ \lambda^y_x(\omega_2) + 1$. Observe that 
$$\lambda^{y+1}_x(\omega_2) = \lambda^{y+1}_x(\omega_1) \geq  \lambda^{y}_x(\omega_1)  + 1\geq \lambda^{y}_x(\omega_2)  + 2,$$
where in the first equality we used the minimality of $(x,y)$, in the second one we used that $(x,y) \neq (i,n-1)$ for any $n-i \in M$ and in the third that $\lambda^y_x(\omega_1) > \lambda^y_x(\omega_2)$. Similarly, we have for $x > 1$ that
$$\lambda^y_{x-1}(\omega_2)= \lambda^{y}_{x-1}(\omega_1) \geq  \lambda^{y}_x(\omega_1)  + 1\geq \lambda^{y}_x(\omega_2)  + 2,$$
and 
$$\lambda^{y-1}_{x-1}(\omega_2)= \lambda^{y-1}_{x-1}(\omega_1) \geq  \lambda^{y}_x(\omega_1)  + 1\geq \lambda^{y}_x(\omega_2)  + 2.$$
Thus increasing $\lambda^y_x(\omega_2)$ by $1$ is a good elementary move, and does not change $\mathcal{W}(\omega_2)$, while it reduces $|\omega_1 - \omega_2|$ by $1$. Applying the induction hypothesis proves the result for $k$, and the general result follows by induction.
\end{proof}

With the above two results, we now turn to the proof of Lemma \ref{LemmaWeak}.
\begin{proof}(Lemma \ref{LemmaWeak})
Clearly it suffices to prove the lemma when $\lambda^k$ is very good. As before we let $M \subset \{1,...,n-1\}$, be the set of indices $i$, such that $\lambda^k_{i+1} - \lambda^k_i$ is bounded, and for $i \in M$, denote by $m_i \in \mathbb{N}$ the limit of the sequence $\lambda^k_{i+1} - \lambda^k_i$. By ignoring finitely many elements of the sequence $\lambda^k$, we may assume that $\lambda^k_{i+1} - \lambda^k_i = m_i$ for all $k$. We denote by $M_k = \prod_{\substack{ 1 \leq i < j \leq n \\ i + 1 <   j} }  \left( \frac{\lambda^k_j - \lambda^k_i }{j-i}\right)\times \prod_{i \not \in M, i < n} (\lambda^k_{i+1} - \lambda^k_i )$.

For $y = y_i^j$, $i = 1,...,j$ and $j = 1,...,n$, we let $Q(y)$ be the cube in $\mathbb{R}^{n(n-1)/2}$, given by $\prod_{j = 1}^{n-1} \prod_{i = 1}^j (y_i^j, y_i^j + 1)$. For $\lambda \in \mathsf{GT}_n$, we define
$$ \mathsf{GT}^*_{\lambda} := \{ (\mu^1,...,\mu^n) \in \widehat{\mathsf{GT}}_\lambda(M;f_M) : \mu^{n-1}_i < \lambda_i \mbox{ for } i \in M\}.$$
It is easy to see that if $(\mu^1,...,\mu^n) \in \mathsf{GT}^*_{\lambda}$, and $y^j_i = \mu^j_i$ for $i = 1,...,j$ and $j = 1,...,n$, then 
$$x^1 \preceq x^2 \preceq \cdots \preceq x^{n-1} \preceq \lambda \mbox{, for any } (x^1,...,x^{n-1}) \in Q(y).$$

Fix $c_i \in \{0,...,m_i\}$ for $i \in M$, and set $\mathsf{GT}^{*}_{\lambda^k}({\bf c}) = \{ (\mu^1,...,\mu^n)  \in \mathsf{GT}^{*}_{\lambda}: \mu_i^{n-1} = \lambda^k_i + c_i \mbox{ for $i \in M$} \}$. Then from the proof of Lemma \ref{LemmaBound} we know that
\begin{equation}\label{fin1}
\left| \mathsf{GT}^{*}_{\lambda^k}({\bf c}) \right| \sim M_k \mbox{ and }\left| \mathsf{GT}^{*}_{\lambda^k}\right| \sim  M_k\prod_{i \in M} m_i \sim d_n(\lambda^k) = vol(GT_n(\lambda^k)),  \mbox{ as $k \rightarrow \infty$. }
\end{equation}

Let us denote $B(M) = \prod_{i \in M} \{0,...,m_i\}$ and $B^*(M) = \prod_{i \in M} \{0,...,m_i-1\}$. In view of (\ref{fin1}) and the boundedness of $f$, we know that 
\begin{equation*}
\lim_{k \rightarrow \infty} \mathbb{E}^{\lambda^k} \left[ f \circ g_k (\lambda^k, x^{n-1}, x^{n-2},...,x^1) \right] -  \frac{\sum_{{\bf c} \in B^*(M)}\sum_{ y \in \mathsf{GT}^{*}_{\lambda^k}({\bf c})}\int_{ Q(y)}  f \circ g_k(\lambda^k, x^{n-1},...,x^1) dx}{M_k \times \prod_{i \in M} m_i} = 0.
\end{equation*}
Moreover, using the uniform continuity of $f$ and the fact that $b(k) \rightarrow \infty$ as $k \rightarrow \infty$, we conclude
\begin{equation}\label{fin2}
\lim_{k \rightarrow \infty} \mathbb{E}^{\lambda^k} \left[ f \circ g_k (\lambda^k, x^{n-1}, x^{n-2},...,x^1) \right] -  \frac{\sum_{{\bf c} \in B^*(M)}\sum_{ y \in \mathsf{GT}^{*}_{\lambda^k}({\bf c})}f \circ g_k(y) }{M_k \times \prod_{i \in M} m_i} = 0.
\end{equation}

Given $c_i \in \{0,...,m_i\}$ for $i \in M$, we let $w({\bf c},k) = \rho_{\lambda^k}(\mu^1,...,\mu^{n-1})$, where $(\mu^1,...,\mu^{n-1},\mu^n) \in \mathsf{GT}^{*}_{\lambda^k}({\bf c})$.  By Lemma \ref{LEq}, we know that $w({\bf c},k)$ is well-defined. The boundedness of $f$, Lemma \ref{Lvol}, and Lemma \ref{LemmaBound} now imply that
\begin{equation}\label{fin3}
\lim_{k \rightarrow \infty}  \mathbb{E}^{\rho_{\lambda^k}} \left[ f \circ g_k (\lambda^k, \mu^{n-1}, \mu^{n-2},...,\mu^1) \right] - \frac{\sum_{{\bf c} \in B(M)}w({\bf c},k) \sum_{ y \in \mathsf{GT}^{*}_{\lambda^k}({\bf c})}f \circ g_k(y)}{M_k\times \sum_{{\bf c} \in B(M)} w({\bf c},k)} = 0.
\end{equation}

Let ${\bf c}^0$, be such that $c_i^0 = 0$ for $i \in M$. From the uniform continuity of $f$ and the fact that $b(k) \rightarrow \infty$, we note that for any ${\bf c}$ we have
$$\lim_{k \rightarrow \infty} \frac{\sum_{ y \in \mathsf{GT}^{*}_{\lambda^k}({\bf c}^0)} f \circ g_k(y) }{M_k}  - \frac{\sum_{ y \in \mathsf{GT}^{*}_{\lambda^k}({\bf c})} f \circ g_k(y) }{M_k} = 0.$$
The latter, together with the boundedness of $f$ and Lemma \ref{LemmaBound}, implies that
\begin{equation*}
\begin{split}
&\lim_{k \rightarrow \infty} \frac{\sum_{{\bf c} \in B^*(M)}\sum_{ y \in \mathsf{GT}^{*}_{\lambda^k}({\bf c})}f \circ g_k(y) }{M_k \times \prod_{i \in M} m_i} - \frac{1}{ \prod_{i \in M}  m_i} \sum_{{\bf c} \in B^*(M)}  \frac{\sum_{ y \in \mathsf{GT}^{*}_{\lambda^k}({\bf c}^0)} f \circ g_k(y) }{M_k} = 0 \mbox{, and } \\
&\lim_{k \rightarrow \infty} \frac{\sum_{{\bf c} \in B(M)}w({\bf c},k) \sum_{ y \in \mathsf{GT}^{*}_{\lambda^k}({\bf c})}f \circ g_k(y)}{M_k\times\sum\limits_{{\bf c} \in B(M)} w({\bf c},k)} -\frac{1}{\sum\limits_{{\bf c} \in B(M)} \hspace{-4mm} w({\bf c},k)} \hspace{-2mm}  \sum_{{\bf c} \in B(M)}w({\bf c},k)  \frac{\sum_{y \in \mathsf{GT}^{*}_{\lambda^k}({\bf c}^0)} f \circ g_k(y) }{M_k} = 0.
\end{split}
\end{equation*}
and so
\begin{equation}\label{fin4}
\lim_{k \rightarrow \infty} \frac{\sum_{{\bf c} \in B^*(M)}\sum_{ y \in \mathsf{GT}^{*}_{\lambda^k}({\bf c})}f \circ g_k(y) }{M_k \times \prod_{i \in M} m_i} - \frac{\sum_{{\bf c} \in B(M)}w({\bf c},k) \sum_{ y \in \mathsf{GT}^{*}_{\lambda^k}({\bf c})}f \circ g_k(y)}{M_k\times \sum_{{\bf c} \in B(M)} w({\bf c},k)} =  0.
\end{equation}
Combining (\ref{fin2}), (\ref{fin3}) and (\ref{fin4}) concludes the proof.
\end{proof}

\section{Proof of Theorem \ref{theorem2} }\label{Section7}
In this section we give the proof of Theorem \ref{theorem2}. We will split the proof into several steps and outline here the flow of the argument. We assume the same notation as in Section \ref{Section1.2} and define $g_M: \mathbb{R}^{k(k+1)/2} \rightarrow  \mathbb{R}^{k(k+1)/2}$ as
$$g_M(x) =  \frac{1}{c \sqrt{M}}\left( x - aM \cdot {\bf 1}_{\frac{k(k+1)}{2}} \right).$$
In addition, we replace $Y(N,M;k)$ with $Y(M)$ for brevity. The statement of Theorem \ref{theorem2} is that $g_M(Y(M))$ converge weakly to the GUE-corners process or rank $k$.\\

In the first step of the proof we show that we may replace the distribution of $Y(M)$ with the distribution $\nu_M$, given by the distribution of $Y(M)$ conditioned on $Y(M)_k^k \leq N$, without affecting the statement of the theorem. The latter is a consequence of Theorem \ref{theorem1}. The measures $\nu_M$ are probability measures on $\mathsf{GT}^{k+}$, which satisfy the six-vertex Gibbs property with certain weights. 

In the second step we check that the sequence of measures $\nu_M \circ g_M^{-1}$ on $\mathbb{R}^{k(k+1)/2}$ is tight. This is shown by using the six-vertex Gibbs property satisfied by $\nu_M$ and Lemma \ref{keyProbLemma}. The proof we present is similar to the proof of Proposition 7 in \cite{Gor14}.

In the third step we prove that $\nu_M \circ g_M^{-1}$ converge weakly to the GUE-corners process of rank $k$ by induction on $k$. The base case is proved via Lemma \ref{keyProbLemma}. When going from $k$ to $k+1$ we use the induction hypothesis and Proposition \ref{S5Prop} to show that any weak limit of $\nu_M \circ g_M^{-1}$ satisfies the continuous Gibbs property. The latter is combined with Proposition \ref{pGibbs} to prove the result for $k+1$.\\

{\raggedleft {\bf Step 1.}} Let $E_M$ be the event that $Y(M)_k^k \leq N$. It follows from Theorem \ref{theorem1} (see also (\ref{S4eqEv})) that $\mathbb{P}^{N,M}_{u,v}(E_M) \rightarrow 1$ as $M \rightarrow \infty$. Let $\nu_M$ be the distribution of $Y(M)$, conditioned on $E_M$. Since $\mathbb{P}^{N,M}_{u,v}(E_M) \rightarrow 1$ , we see that it suffices to prove that $\nu_M \circ g_M^{-1}$ converge weakly to the GUE-corners process of rank $k$.

We will show that $\nu_M$ is a probability distribution on $\mathsf{GT}^{k+}$, which satisfies the six-vertex Gibbs property with weights
\begin{equation}\label{SVW}
\begin{split}
w_1  = \frac{u - s^{-1}}{us-1}, \hspace{2mm}   w_2 = 1, \hspace{2mm} w_3=  \frac{u - s}{us-1},  \hspace{2mm} w_4 =\frac{us^{-1} - 1}{us-1} , \hspace{2mm} w_5 = \frac{(s^2-1)u}{us-1} , \hspace{2mm}w_6 =\frac{1 - s^{-2}}{us-1}.
\end{split}
\end{equation}
\vspace{-4mm}
\begin{figure}[h]
\centering
\scalebox{0.60}{\includegraphics{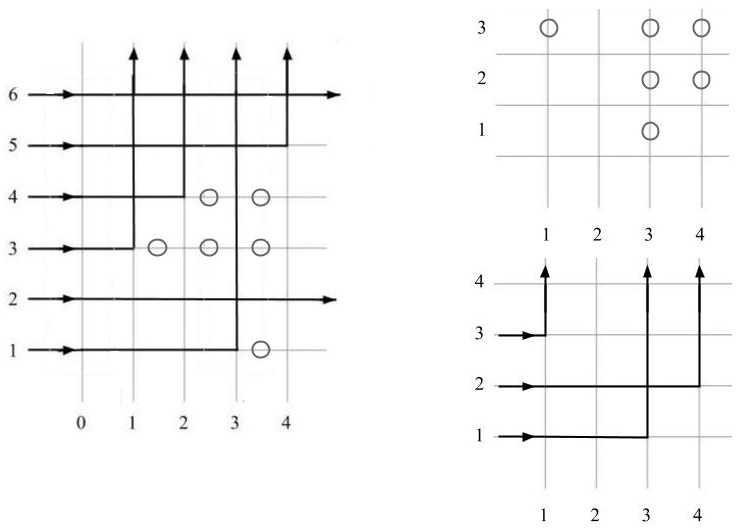}}
\caption{The left figure shows a path collection $\omega \in E_N$ with $N = 6$ and $k = 3$. Circles indicate the positions of the empty edges. The top right figure shows the array $(Y_i^j)_{1 \leq i \leq j \leq 3}$; $j$ varies vertically and position is measured horizontally. The bottom right figure shows the image of $(Y_i^j)_{1 \leq i \leq j \leq 3}$ under $h^{-1}$.}
\label{S8_1}
\end{figure}
Recall from Section \ref{Section1.2} that for $\omega \in E_N$, $(Y_i^j)_{1 \leq i \leq j \leq k}$ were the vertical positions of the empty horizontal edges in the first $k$ columns of $\omega$ (see the left part of Figure \ref{S8_1}). The condition $\omega \in E_M$ ensures that no $Y(M)_i^j$ are infinite, so that $\nu_M$ is a valid probability distribution on $\mathsf{GT}^{k+}$. 

The fact that $\nu_M$ satisfies a six-vertex Gibbs property is a consequence of the fact that $\mathbb{P}^{N,M}_{u,v}$ satisfies a Gibbs property for the six-vertex model on $D_N$ with weights  $(\tilde w_1, \tilde w_2, \tilde w_3, \tilde w_4, \tilde w_5, \tilde w_6)$ $= \left( 1 , \frac{u - s^{-1}}{us - 1}, \frac{us^{-1} - 1}{us - 1}, \frac{u - s}{us - 1}, \frac{u(s^2-1)}{us - 1}, \frac{1 - s^{-2}}{us - 1} \right)$ (see Section \ref{Section6.2}). We observe that there is a simple relationship between $\omega$ and $h^{-1}((Y_i^j)(\omega))$ (here $h$ is as in Section \ref{Section6.2}). Namely, $h^{-1}((Y_i^j)(\omega))$ is obtained by reflecting $\omega$ with respect to the line $x = y$ and then flipping filled and empty edges (see Figure \ref{S8_1}). This transformation has the followng effect on arrow configurations at a vertex
$$(0,0;0,0) \leftrightarrow (1,1;1,1) \mbox{ and } (1,0;1,0) \leftrightarrow (0,1;0,1),$$
while the vertices $(0,1;1,0)$ and $(1,0;0,1)$ are sent to themselves. This vertex transformation implies that the measure $h^{-1}((Y_i^j)(\omega))$ satisfies the Gibbs property for the six-vertex model on $D_k$ with weights $(\tilde w_2, \tilde w_1,\tilde w_4, \tilde w_3, \tilde w_5, \tilde w_6)$, which are the weights in (\ref{SVW}).\\

{\raggedleft {\bf Step 2.}} In this step we show that $\nu_M\circ g_M^{-1}$ is tight, which is equivalent to showing that $\eta(M)_i^j := \frac{Y(M)_i^j - aM}{c\sqrt{M}}$ is tight for each $i = 1,...,j$ and $j = 1,...,k$. We proceed by induction on $k$, with base case $k = 1$, true by Lemma \ref{keyProbLemma}.

Suppose the result is known for $k-1 \geq 1$ and we wish to show it for $k$. By induction hypothesis $\eta(M)_i^j$ is tight for each $i = 1,...,j$ and $j = 1,...,k-1$. In addition, by Lemma \ref{keyProbLemma} $\eta(M)_k^k$ is also tight. Using  the interlacing condition $Y(M)^{k-1}_{i-1} \leq Y(M)^k_i \leq Y(M)^{k-1}_{i}$ for $i = 2,...,k-1$, and the induction hypothesis we conclude that $\eta(M)_i^k$ is tight for $i = 2,...,k-2$. What remains to be seen is that $\eta(M)_1^k$ is tight.

We argue by contradiction and suppose that $\eta(M)_1^k$ is not tight as $M \rightarrow \infty$. Then we may find a positive number $p >0$, a subsequence $M_r$ and an increasing sequence $L_r$ going to $\infty$ such that
\begin{equation}\label{notTight}
\mathbb{P} \left( \left| \eta(M_r)_1^k \right|  > L_r \right)> p.
\end{equation}

Since $Y(M_r)_1^k \leq Y(M_r)_1^{k-1}$ and by induction hypothesis $\eta(M_r)_1^{k-1}$ is tight,  we see that if (\ref{notTight}) holds then we must have (by potentially passing to a further subsequence) that 
\begin{equation*}
\mathbb{P} \left( \eta(M_r)_1^k < - L_r \right)> p/2.
\end{equation*}

Let us denote by $B(M) = \min \left( Y(M)^{k}_2, Y(M)^{k-1}_{2} - 1, Y(M)^{k-2}_{1} \right)$ (if $k = 2$, $B(M) = Y(M)^{k}_2$). $B(M)$ is the rightmost positition that $Y^{k-1}_1(M)$ can take. From the tightness result established for $\eta(M)^{k}_2$, $\eta(M)^{k-1}_{2}$ and $\eta(M)^{k-2}_{1}$ we know that 
$$\lim_{r \rightarrow \infty} \mathbb{P} \left( \left| \frac{B(M_r) - aM_r}{c\sqrt{M_r}} \right|  < \sqrt{L_r}\right)   = 1.$$
Thus by further passing to a subsequence we know that 
\begin{equation}\label{notTight2}
\mathbb{P} \left(\eta(M_r)_1^k < - L_r;  \left| \frac{B(M_r) - aM_r}{c\sqrt{M_r}} \right|  < \sqrt{L_r} \right) > p/4.
\end{equation}

We know that $Y(M)_1^{k-1}$ is supported on $A(M), A(M)+1,...,B(M)$, where $A(M) =Y^{k}_1(M)$ and $B(M)$ is as above. Moreover, if 
$$p_i(M) = \mathbb{P}\left (Y^{k-1}_1(M) = i|Y^{k}_1(M),Y^{k}_2(M),  Y^{k-1}_2(M), Y^{k-2}_1(M) \right)\mbox{ for  $i = A(M),...,B(M)$, then we }$$
know by Lemma \ref{LemmaBound} that $c^{-1} >\frac{p_i(M)}{p_j(M)} > c$ for some constant $c \in (0,1)$ that depends on $k$. The latter implies that $p_i(M) \geq c^2/(B-A + 1)$ and so 
$$\mathbb{P}\left (Y^{k-1}_1(M)  \leq  \frac{A(M)+B(M)}{2}|Y^{k}_1(M),Y^{k}_2(M),  Y^{k-1}_2(M), Y^{k-2}_1(M) \right)\geq \frac{c^2}{2}.$$
 This together with (\ref{notTight2}) implies 
\begin{equation}\label{notTight3}
\mathbb{P} \left(\eta(M_r)_{1}^{k-1} < -  L_r/3\right) > (p/4)(c^2/2),
\end{equation}
which clearly contradicts the tightness of $\eta(M_r)_{1}^{k-1}$. The contradiction arose from our assumption that $\eta(M)_1^k$  is not tight as $M \rightarrow \infty$. This proves the induction step. By induction we conclude that $\nu_M\circ g_M^{-1}$ is tight for any $k \in \mathbb{N}$.\\

{\raggedleft {\bf Step 3.}} In this step we prove that $g_M(Y(M))$ converge to $\Lambda = \lambda_i^j$ $i = 1,...,j$, $j = 1,...,k$ as $M \rightarrow \infty$, where $\Lambda$ is the GUE-corners process of rank $k$. We proceed by induction on $k$, with base case $k = 1$ true by Lemma \ref{keyProbLemma}.

Suppose the result is known for $k-1 \geq 1$ and we wish to show it for $k$. From our earlier work we know that $\nu_M\circ g_M^{-1}$ is tight. Let $\mu$ be any subsequential limit and $\nu_{M_r}\circ g_{M_r}^{-1}$ converge weakly to $\mu$ for some sequence $M_r \rightarrow \infty$ as $r\rightarrow \infty$.

We observe that $\mu$ is a probability measure on $GT^{k} = \{ y \in \mathbb{R}^{k(k+1)/2}: y_{i }^{j+1} \leq y_{i}^j \leq y_{i+1}^{j+1}, \hspace{2mm} 1 \leq i \leq j \leq k-1\}$. We have by induction hypothesis that the restriction to $GT^{k-1}$ of $\mu$ is the GUE-corners process of rank $k-1$. In particular, we have
$$\mu ( y \in GT^{k}: y_{i}^{k-1} = y_{i+1}^{k-1} \mbox{ for some } i =1,...,k-2 ) = 0.$$
The above together with the interlacing property of elements in $GT^k$, shows that $\mu$ satisfies 
$$\mathbb{P}^{\mu}( y_i^n = y_{i+1}^n = y_{i+2}^n \mbox{ for some } i = 1,...,n-2) = 0.$$
From Step 1. in this proof we know that $\nu_{M_r}$ satisfy the six-vertex Gibbs property with weights as in (\ref{SVW}). We may thus apply Proposition \ref{S5Prop} to conclude that $\mu$ satisfies the continuous Gibbs property.

From Lemma \ref{keyProbLemma}, we know that under $\mu$ the distribution of $(y_1^1,...,y^k_k)$  is the same as $(\lambda_1^1,...,\lambda_k^k)$. This together with Proposition \ref{pGibbs} shows that $\mu$ is the  GUE-corners process of rank $k$. The above work shows that any subsequential limit of $\nu_M\circ g_M^{-1}$ has the same law as $\Lambda$. As $\nu_M\circ g_M^{-1}$ is tight we conclude it (and hence $g_M(Y(M))$) weakly converge to the GUE-corners process of rank $k$. The general result now follows by induction.

\newpage
\section{Exact sampling algorithm for $\mathbb{P}_{{\bf u}, {\bf v}} $}\label{Section8}

In this section, we describe an exact sampling algorithm for $\mathbb{P}_{{\bf u}, {\bf v}}$ (see Definition \ref{parameters}), which is based on discrete time dynamics on $\mathcal{P}_N$. We provide details on how this algorithm can be implemented efficiently and give some examples of typical path collections sampled from $\mathbb{P}_{{\bf u}, {\bf v}}$.

\subsection{Markov kernels and sequential update}
We start by recalling some notation from Section 6.2 in \cite{BP}. 
For any $n$ we define
\begin{equation}\label{L}
\Lambda^-_{u|{\bf u}}(\nu \rightarrow \mu):= \frac{\mathsf{F}_{\mu}(u_1,...,u_n)}{\mathsf{F}_{\nu}(u_1,...,u_n,u)} \mathsf{F}_{\nu / \mu}(u),
\end{equation}
where ${\bf u} = (u_1,...,u_n)$, and $\nu \in \mathsf{Sign}_{n+1}^+$, $\mu \in \mathsf{Sign}^+_n$. Let us also define 
\begin{equation}\label{Q}
Q^{\circ}_{{\bf u};v}(\mu \rightarrow \nu):= \left( \prod_{i = 1}^m \frac{1 - u_i v}{1 - qu_i v} \right) \frac{\mathsf{F}_{\nu}(u_1,...,u_n)}{\mathsf{F}_{\mu}(u_1,...,u_n)} \mathsf{G}^c_{\nu / \mu}(v),
\end{equation}
where $u_i$ and $v$ are admissible with respect to $s = q^{-1/2}$ for all $i$ and $\mu,\nu \in \mathsf{Sign}^+_n$. It follows from Propositions \ref{Branching} and \ref{PPieri} that 
$\Lambda^-_{u|{\bf u}}: \mathsf{Sign}_{m+1}^+ \dashrightarrow \mathsf{Sign}^+_m$ and $Q^{\circ}_{{\bf u};v}: \mathsf{Sign}^+_m \dashrightarrow \mathsf{Sign}^+_m$ define Markov kernels.\footnote{We use the notation ``$\dashrightarrow$'' to indicate that $\Lambda^-_{u|{\bf u}}$ and $Q^{\circ}_{{\bf u};v}$ are Markov kernels, i.e., they are functions in the first variable (belonging to the space on the left of ``$\dashrightarrow$')' and probability distributions in the second variable (belonging to the space on the right of ``$\dashrightarrow$'').} 

For $\omega \in \mathcal{P}_N$, we let $\lambda^n = \lambda^n(\omega)$ for $n = 1,...,N$ be as in Section \ref{Section1.2}, we also let $\lambda^0(\omega) = \varnothing$. Let $\mathbb{P}_{{\bf u}, {\bf v}}^n$ be the projection of $\mathbb{P}_{{\bf u}, {\bf v}}$ on $\lambda^n$. As direct consquences of Proposition \ref{PPieri}, we have 
\begin{equation}\label{S61}
\mathbb{P}^n_{{\bf u} \cup u, {\bf v}} \Lambda^-_{u| {\bf u}} = \mathbb{P}^n_{{\bf u}, {\bf v}}, \hspace{5mm} \mathbb{P}^n_{{\bf u}, {\bf v}} Q^\circ_{{\bf u};v} = \mathbb{P}^n_{{\bf u}, {\bf v} \cup v} \mbox{, and    }  \hspace{3mm}  Q^\circ_{{\bf u} \cup u ;v} \Lambda^-_{u| {\bf u}}  =  \Lambda^-_{u| {\bf u}}    Q^\circ_{{\bf u} ;v},
\end{equation}
where for a variable set ${\bf w} = (w_1,...,w_k)$ we write ${\bf w} \cup w = (w_1,...,w_k, w)$. \\

Our next goal is to define a stochastic dynamics on $\mathcal{P}_N$. The construction we use is parallel to those of \cite{BF} (see also \cite{Bor11, BorCor,BorGor1}) and it is based on an idea going back to \cite{DF}, which allows to couple the dynamics on signatures of different sizes. 

Suppose that $\omega$ is distributed accoriding to $\mathbb{P}_{{\bf u}, {\bf v}}$ as in Definition \ref{parameters}, and let $v$ be such that $0 < v$ and $u_i v < 1$ for all $i$. We consider a random $\mu^1 \preceq \mu^2 \preceq \cdots \preceq \mu^N$, with $\mu^i \in \mathsf{Sign}_i^+$, whose distribution depends on $\lambda^n(\omega)$ for $n = 1,..., N$ and the parameters ${\bf u}$, $v$, and is defined through the following {\em sequential update} rule. 

We start with $\mu^1$ and sample it according to the distribution
\begin{equation}\label{seqUp0}
\mathbb{P}( \mu^1 = \nu | \lambda^1 = \lambda) = \frac{\mathsf{F}_{\nu}(u_1) \mathsf{G}^c_{\nu / \lambda}(v)} { \sum_{\kappa \in \mathsf{Sign}^+_1 }\mathsf{F}_{\kappa}(u_1) \mathsf{G}^c_{\kappa / \lambda}(v)}.
\end{equation}
If $\mu^1,...,\mu^{k-1}$ are sampled, we sample $\mu^k$ for $k \geq 2$ according to
\begin{equation}\label{seqUp1}
\mathbb{P}( \mu^k = \nu | \lambda^k = \lambda, \mu^{k-1} = \mu) =   \frac{\mathsf{F}_{\nu/ \mu}(u_k) \mathsf{G}^c_{\nu / \lambda}(v)} { \sum_{\kappa \in \mathsf{Sign}^+_k }\mathsf{F}_{\kappa/ \mu}(u_k) \mathsf{G}^c_{\kappa / \lambda}(v)}.
\end{equation}
We now let $\omega'$ be the resulting element in $\mathcal{P}_N$, i.e., $\lambda^n(\omega') = \mu^n$ for $n = 1,...,N$. The key observation is that if $\omega$ is distributed accoridng to $\mathbb{P}_{{\bf u}, {\bf v}}$, then $\omega'$ is distributed according to $\mathbb{P}_{{\bf u}, {\bf v}\cup v}$. The latter is a consequence of (\ref{S61}) and a Gibbs property satisfied by $\mathbb{P}_{{\bf u}, {\bf v}}$, which states that conditioned on $\lambda^k$, the distribution of $\lambda^1,...,\lambda^{k-1}$ is independent of ${\bf v}$ and is given by
\begin{equation}
\Lambda^-_{u_k|(u_1,...,u_{k-1})}(\lambda^k \rightarrow \lambda^{k-1}) \cdots \Lambda^-_{u_3|(u_1,u_2)}(\lambda^3 \rightarrow \lambda^{2})\Lambda^-_{u_2|(u_1)}(\lambda^2 \rightarrow \lambda^{1}).
\end{equation}
For a more detailed description of the above procedure in analogous contexts we refer the reader to Section 2 of \cite{BF} and Section 2 of \cite{BorCor}.\\

For $m = 0,...,M$ we let $\mathbb{P}_{{\bf u}, {\bf v}_m}$, denote the probability distribution as in Definition \ref{parameters} with ${\bf v}_m = (v_1,...,v_m)$. Equations (\ref{seqUp0}) and (\ref{seqUp1}) provide a mechanism for sampling $\omega'$ distributed according to $\mathbb{P}_{{\bf u}, {\bf v}_{m+1}}$, given $\omega$ distributed as $\mathbb{P}_{{\bf u}, {\bf v}_{m}}$. Our strategy to sample $ \mathbb{P}_{{\bf u}, {\bf v}} = \mathbb{P}_{{\bf u}, {\bf v}_M}$ is to first sample $\mathbb{P}_{{\bf u}, {\bf v}_0}$ and then use the above mechanism to sequentially sample $\mathbb{P}_{{\bf u}, {\bf v}_{m+1}}$ for $m = 0,...,M-1$. 

We now turn to an algorithmic description of the above strategy. We assume we have the following samplers, which will be described in the following section. For $N \geq 1$, $q \in (0,1)$ and ${\bf u} = (u_1,...,u_N)$ such that $u_i > q^{-1/2}$ for $i = 1,...,N$, we let {\tt ZeroSampler}$(N, q, {\bf u})$ produce a random element $\omega \in \mathcal{P}_N$, distributed according to $\mathbb{P}_{{\bf u}, {\bf v}_0}$. For $k \in \{1,...,N\}$, $v > 0$ such that $u_i v < 1$ for all $i$, $\lambda \in \mathsf{Sign}^+_k$ and $\mu \in \mathsf{Sign}_{k-1}^+$, we let {\tt RowSampler}$ (k,q, {\bf u}, v, \lambda, \mu)$ produce a random signature $\mu^k \in \mathsf{Sign}_k^+$, distributed according to (\ref{seqUp1}). With this notation we have the following exact sampler for $\mathbb{P}_{{\bf u}, {\bf v}}$. 

\vspace{3mm}
\begin{tabular}{l}
\hline
{\bf Algorithm} {\tt SixVerexSampler}$(N, M,q, {\bf u}, {\bf v})$ \\
\hline
 {\bf Input:} $q \in (0,1)$, ${\bf u} = (u_1,...,u_N)$ and ${\bf v} = (v_1,...,v_M)$ - parameters of the distribution.\\
\hspace{5mm}\\
$\omega$ := {\tt ZeroSampler}$(N, q, {\bf u})$;\\
{\tt initialize} $\mu^i$ for $i = 0,...,N$;\\
$\mu^ 0 = \varnothing$; \\
{\bf for} ($i = 1$, $i \leq M$, $i = i + 1$) {\bf do}\\
\hspace{5mm} {\bf for} ($k = 1$, $k \leq N$, $k = k + 1$) {\bf do}\\
\hspace{12mm}  $\mu^{k}$ = {\tt RowSampler}$ (k,q, u_k, v_i, \lambda^k(\omega), \mu^{k-1})$; \\
\hspace{5mm} {\bf end}\\
\hspace{5mm}$\omega$ = $(\mu^1 \preceq \mu^2 \preceq \cdots \preceq \mu^N)$;\\
{\bf end}\\
{\bf Output:} $\omega$.\\
\hline
\end{tabular}

\vspace{5mm}

\subsection{ The algorithms  {\tt ZeroSampler} and {\tt RowSampler}}
From the definition of $\mathbb{P}_{{\bf u}, {\bf v}_0}$, we know that it agrees with the distribution of the vertically inhomogeneous stochastic six vertex model of Section 6.5 in \cite{BP}, except that all columns are shifted by $1$ to the right so that all vertices in the $0$-th column are of the form $(0,1;0,1)$. The vertically inhomogeneous six vertex model has a known sampling procedure, which we now describe - see Section 6.5 \cite{BP} and \cite{BCG14} for details. 

For $u > q^{-1/2}$ and $q \in (0,1)$, we let 
$$b_1(u) = \frac{1 - u q^{1/2} }{1 - u q^{-1/2}} \mbox{ and }b_2(u) = \frac{ -uq^{-1/2}+ q^{-1}}{ 1 - uq^{-1/2}}.$$
Notice that $b_1(u), b_2(u) \in (0,1)$. We construct a random element $\omega \in \mathcal{P}_N$ by choosing the types of vertices sequentially: we start from the corner vertex at $(1,1)$, then proceed to $(1,2)$ and $(2,1),...$, then proceed to all vertices $(x,y)$ with $x + y = k$, then with $x + y = k + 1$ and so forth. The combinatorics of the model implies that when we choose the type of the vertex $(x,y)$, either it is uniquely determined by the types of its previously chosen neighbors, or we need to choose between vertices of type $(1,0;1,0)$ and $(1,0;0,1)$, or we need to choose between vertices of type $(0,1; 0,1)$ and $(0,1; 1,0)$. We do all choiced independently and choose type $(1,0;1,0)$ with probability $b_1(u_y)$ and type $(1,0;0,1)$ with probability $1- b_1(u_y)$. Similarly, we choose type $(0,1; 0,1)$ with probability $b_2(u_y)$ and $(0,1; 1,0)$ with probability $1 - b_2(u_y)$. We denote by {\tt Bernoulli}$(p)$ a Bernoulli random variable sampler with parameter $p \in (0,1)$. For a vertex $\alpha = (i_1, j_1; i_2, j_2)$, we let {\tt I2}$(\alpha)$ = $i_2$ and {\tt J2}$(\alpha)$ = $j_2$. With this notation we have the following algorithm for {\tt ZeroSampler}.

\vspace{3mm}
\begin{tabular}{l}
\hline
{\bf Algorithm} {\tt ZeroSampler}$(N, q, {\bf u})$ \\
\hline
 {\bf Input:} $q \in (0,1)$, ${\bf u} = (u_1,...,u_N)$ - parameters of the distribution.\\
\hspace{5mm}\\
{\tt initialize} $\omega$;\\
$c$ := $0$;\\
$k$ := $2$;\\
{\bf while} ($c < N$) {\bf do} \\
\hspace{5mm} {\bf for} ($x$ = $1$, $x< k$, $x$ = $x+1$) {\bf do }\\
\hspace{12mm} $y$ = $k - x$;\\
\hspace{12mm}  {\bf  if } ($y > N$) do nothing \\ 
\hspace{12mm}  {\bf else if } ($x$ == $1$ and $y$ == $1$) \\ 
\hspace{19mm}  {\bf if } ({\tt Bernoulli}$(b_2(u_y))$ == $1$) $\omega(x,y)$ = $(0,1;0,1)$;\\
\hspace{19mm}  {\bf else} $\omega(x,y)$ = $(0,1;1,0)$;\\
\hspace{19mm}  {\bf end}\\
\hspace{12mm} {\bf else if } ($x$ == $1$) \\
\hspace{19mm}  {\bf if } ({\tt I2}$(\omega(x,y-1)) == 1$) $\omega(x,y)$ = $(1,1;1,1)$;\\
\hspace{19mm}  {\bf else if} ({\tt Bernoulli}$(b_2(u_y))$ == $1$) $\omega(x,y)$ = $(0,1;0,1)$;\\
\hspace{19mm}  {\bf else} $\omega(x,y)$ = $(0,1;1,0)$;\\
\hspace{19mm}  {\bf end}\\
\hspace{12mm} {\bf else if} ($y$ == $1$) \\
\hspace{19mm}  {\bf if } ({\tt J2}$(\omega(x-1,y))$ == $0$) $\omega(x,y)$ = $(0,0;0,0)$;\\
\hspace{19mm}  {\bf else if} ({\tt Bernoulli}$(b_2(u_y))$ == $1$) $\omega(x,y)$ = $(0,1;0,1)$;\\
\hspace{19mm}  {\bf else} $\omega(x,y)$ = $(0,1;1,0)$;\\
\hspace{19mm}  {\bf end}\\
\hspace{12mm} {\bf else } \\
\hspace{19mm}  {\bf if } ({\tt I2}$(\omega(x,y-1)) == 0$ and {\tt J2}$(\omega(x-1,y))$ == $0$)  $\omega(x,y)$ = $(0,0;0,0)$;\\
\hspace{19mm}  {\bf else if } ({\tt I2}$(\omega(x,y-1)) == 1$ and {\tt J2}$(\omega(x-1,y))$ == $1$)  $\omega(x,y)$ = $(1,1;1,1)$;\\
\hspace{19mm}  {\bf else if } ({\tt I2}$(\omega(x,y-1)) == 0$ and {\tt J2}$(\omega(x-1,y))$ == $1$) \\
\hspace{26mm}  {\bf if } ({\tt Bernoulli}$(b_2(u_y))$ == $1$) $\omega(x,y)$ = $(0,1;0,1)$;\\
\hspace{26mm}  {\bf else} $\omega(x,y)$ = $(0,1;1,0)$;\\
\hspace{26mm}  {\bf end}\\
\hspace{19mm}  {\bf else  } \\
\hspace{26mm}  {\bf if} ({\tt Bernoulli}$(b_1(u_y))$ == $1$) $\omega(x,y)$ = $(1,0;1,0)$;\\
\hspace{26mm}  {\bf else} $\omega(x,y)$ = $(1,0;0,1)$;\\
\hspace{26mm}  {\bf end}\\
\hspace{19mm}  {\bf end}\\
\hspace{12mm} {\bf end } \\
\hspace{12mm} {\bf if } ($y$ == $N$ and {\tt I2}($\omega(x,y)$) == $1$) $c$ = $c + 1$;\\
\hspace{5mm} {\bf end } \\
\hspace{5mm}$k$ = $k + 1$;\\
{\bf end}\\
{\tt initialize} $\mu^i$ for $i = 1,...,N$;\\
{\bf for} ($i = 1$, $i \leq M$, $i = i + 1$) {\bf do}\\
\hspace{5mm} $\mu^i$ = $\lambda^i(\omega)$ + $1^i$;\\
{\bf end}\\
{\bf Output:} $(\mu^1 \preceq \mu^2 \preceq \cdots \preceq \mu^N)$.\\
\hline
\end{tabular}

\vspace{5mm}

Let us fix $k \geq 1$, parameters $q,u,v$ such that $q \in (0,1)$, $u > q^{-1/2}$, $v > 0$ and $uv < 1$. We also fix $\mu \in \mathsf{Sign}^+_{k-1}$ and $\lambda \in \mathsf{Sign}^+_k$. We now discuss how to sample the distribution from (\ref{seqUp1}) with the above parameters, which we denote by $\mathbb{P}$ for brevity. Let us define the numbers $a_i  = \max(\mu_{i}, \lambda_i, 0)$ and $b_i = \min( \mu_{i-1}, \lambda_{i-1})$, where we agree that $\mu_k = -\infty$, $\lambda_0 = \mu_0 = \infty$. We also set $A({\bf a}, {\bf b}) = \cup_{i = 1}^k [a_i, b_i]$. The definition of $\mathbb{P}$ implies that $\mathbb{P} ( \{ \nu \in \mathsf{Sign}_k^+ : \nu_i \in [a_i,b_i] \mbox{ for } i = 1,...,k \}) = 1$. Moreover, if $\nu \in \mathsf{Sign}_k^+ $ is such that $ \nu_i \in [a_i,b_i]$ for $i = 1,...,k$ then $\mathcal{P}^c_{\nu/\lambda}$ and $\mathcal{P}_{\nu/\mu}$ (see Definitions \ref{defG} and \ref{defF}) consist of single elements $\omega$ and $\omega^c$, which implies that 
$$\mathbb{P}( \{\nu \}) \propto  \prod_{j \in A({\bf a}, {\bf b}) \cap [0,\nu_1]} w_u(\omega(j,1)) w^c_v(\omega^c(j,1)), \mbox{ where $w_u$ and $w_u^c$ are as in (\ref{weights1}) and (\ref{weights2}).}$$

Sampling $\mathbb{P}$ is rather hard because there are infinite possible signautes $\nu$ that are allowed. Even if we consider only signatures, whose parts are bounded by some large constant $L$, their number is still exponentially large in $L$ and we cannot hope to efficiently enumerate possible cases and calculate their weight.

The key observation that allows one to sample this distribution is that if $l \in \{1,...,k\}$, then conditional on $\nu_l$, the distributions of $\nu_1,...,\nu_{l-1}$ and $\nu_{l+1},...,\nu_k$ are independent and similar to the one of $\nu_1,...,\nu_k$. Let us make the last statement more precise. Fix an integer $l \in \{1,...,k\}$, suppose we have fixed $\nu_{l} = x \in [a_{l}, b_{l}]$ and that there is at least one possible signature $\nu_1,...,\nu_k$ with $\nu_l = x$. We modify $a_i$ and $b_i$ as follows
$$b_{i}^x= \begin{cases} b_{i} &\mbox{ if } b_{i} \neq x \\ b_{i} - 1 &\mbox{ else,} \end{cases} \hspace{10mm}  a_{i}^x = \begin{cases} a_{i} &\mbox{ if } a_{i} \neq x \\ a_{i} + 1 &\mbox{ else.} \end{cases}$$
Let us fix $y_i \in [a^x_i, b^x_i]$ for $i \neq l$, put $y_l = x$ and denote $A_R =\cup^{l-1}_{i =1}[a_i^x, b_i^x]$ and $A_L = \cup^{k}_{i =l+1}[a_i^x, b_i^x]$. Then we have 
$$\mathbb{P}( \{ \nu  \in \mathsf{Sign}_k^+ : \nu_i = y_i \mbox{ for } i \neq l | \nu_l = x )  = \mathbb{P}( \{ \nu  \in \mathsf{Sign}_{k}^+ : \nu_i = y_i \mbox{ for } i = 1,...,l-1 | \nu_l = x)\times  $$
$$\mathbb{P}( \{ \nu  \in \mathsf{Sign}_{k-l}^+ : \nu_i = y_{i} \mbox{ for } i = l+1,...,k | \nu_l = x).$$
Moreover, we have
$$ \mathbb{P}( \{ \nu  \in \mathsf{Sign}_{k}^+ : \nu_i = y_i \mbox{ for } i = 1,...,l-1 \}| \nu_l = x )\propto \prod_{j \in A_R \cap [0,y_1]} w_u(\omega(j,1)) w^c_v(\omega^c(j,1)),$$
$$\mathbb{P}( \{ \nu  \in \mathsf{Sign}_{k-l}^+ : \nu_i = y_{i} \mbox{ for } i = l+1,...,k \} | \nu_l = x) \propto \prod_{j \in A_L \cap [0,y_1]} w_u(\omega(j,1)) w^c_v(\omega^c(j,1)).$$

The above arguments imply that we can sample $\mathbb{P}$, by first sampling $\nu_{\lfloor k/2 \rfloor}$, conditioning on its value and recursively sampling $\nu_1,...,\nu_{\lfloor k/2 \rfloor - 1}$ and $\nu_{\lfloor k/2 \rfloor +1},..., \nu_k$. The recursion reduces runtime from exponentially large to polynomial in $N$.\\

We begin by explaining how to sample $\nu_l$ for $l \in \{1,...,k\}$. Suppose that $I = \{i_1,...,i_r\} \subset \{1,...,k\}$, $[c_i,d_i] \subset [a_i, b_i]$ for $i \in I$ and $x_i \in [c_i,d_i]$. We define
$$\mathcal{W}({\bf c}, {\bf d }, I,  {\bf x}):= \prod_{i \in I} \prod_{j  = c_i}^{x_i}  w_u(\omega(j,1)) w^c_v(\omega^c(j,1)),$$
where $\omega^c$ and $\omega$ are the single elements of $\mathcal{P}^c_{\nu/\lambda}$ and $\mathcal{P}_{\nu/\mu}$, where $\nu_i = x_i$ for $i \in I$ and $\nu_{j} < c_i$ if $j > i$, $i \in I$ and $j \not \in I$ (if no such $\nu \in \mathsf{Sign}_k^+$ exists $\mathcal{W}({\bf c}, {\bf d }, I,  {\bf x}) = 0$). We also define 
$$\mathcal{W}({\bf c}, {\bf d};I) := \sum_{x_{i_1} = c_{i_1}}^{d_{i_1}} \cdots   \sum_{x_{i_r} = c_{i_r}}^{d_{i_r}} \mathcal{W}({\bf c}, {\bf d }, I,  {\bf x}).$$

Let us denote by $p = w_u((0,1;0,1)) w^c_v((0,1;0,1))$, and suppose $[c_l,d_l] \subset [a_l,b_l]$. We wish to sample $\nu_l$ conditioned on it belonging to $[c_l,d_l]$ according to $\mathbb{P}$. We define
\begin{equation}
\begin{split}
wL1 &= \mathcal{W}({\bf c}, {\bf d};\{1,...,l-1\}) \mbox{, where $c_i = a_i$ and $d_i = b_i$;}\\
wL2 &= \mathcal{W}({\bf c}, {\bf d};\{1,...,l-1\}) \mbox{, where $c_i = a_i$ and $d_i = b_i$, except $c_{l-1} = a_{l-1} + 1$};\\
wR1 &= \mathcal{W}({\bf c}, {\bf d};\{l+1,...,k\}) \mbox{, where $c_i = a_i$ and $d_i = b_i$;}\\
wR2 &= \mathcal{W}({\bf c}, {\bf d};\{l+1,...,k\}), \mbox{ where $c_i = a_i$ and $d_i = b_i$, except $d_{l+1} = d_{l+1} - 1$}.\\
\end{split}
\end{equation}
\begin{equation}\label{s6bs}
\begin{split}
bLL &= 1 \mbox{ if } c_l = \lambda_l \mbox{ and } bLL = 0 \mbox{ otherwise};\\
bRL &= 1 \mbox{ if } d_l = \lambda_{l-1} \mbox{ and } bRL = 0 \mbox{ otherwise};\\
bLM &= 1 \mbox{ if } c_l = \mu_l \mbox{ and } bLM = 0 \mbox{ otherwise};\\
bRM &= 1 \mbox{ if } d_l = \mu_{l-1} \mbox{ and } bRM = 0 \mbox{ otherwise};\\
\end{split}
\end{equation}
The conditional distribution of $\nu_l$ depends on $c_l, d_l$ and the above four variables $bLL, bRL, bLM$ and $bRM$. 

If $c_l = d_l$ then we have $\nu_l = c_l$ with probability $1$. If $d_l = c_l + 1$, then we have sixteen possible cases for the variables $bLL, bRL, bLM$ and $bRM$, which lead to different probability distributions. To give one example, if $bLL = bRL = bLM = bRM = 0$, then
\begin{equation*}
\mathbb{P}( \nu_l = c_l) \propto wL2\cdot wR1, \hspace{5mm} \mathbb{P}( \nu_l = d_l) \propto    p\cdot wL1 \cdot wR2.
\end{equation*}
If $d_l - c_l = n \geq 2$, then we have sixteen possible cases for the variables  $bLL, bRL, bLM$ and $bRM$, which lead to different probability distributions. To give one example, if $bLL = bRL = bLM = bRM = 0$, then
\begin{equation*}
\begin{split}
&\mathbb{P}( \nu_l = c_l) \propto w_u(0,1;1,0) w_u(0,1;1,0) \cdot wL2\cdot wR1, \\
& \mathbb{P}( \nu_l = d_l) \propto w_u(0,1;1,0) w_v^c(0,1;1,0) w_u(0,1;0,1) w_v^c(0,1;0,1) p^{n-1} \cdot wL1 \cdot wR2 ,\\
& \mathbb{P}( \nu_l = c_l + i) \propto w_u(0,1;1,0) w_v^c(0,1;1,0) w_u(0,1;0,1) w_v^c(0,1;0,1) p^{i-1} \cdot wL1 \cdot wR1 \mbox{, $1 \leq i \leq k-1$}.\\
\end{split}
\end{equation*}
 There are altogether thirty-three cases (sixteen corresponding to $d_l = c_l + 1$, sixteen for $d_l > c_l + 1$ and the trivial case of $c_l = d_l$) and we will not write them out explicitly. The important point is that the conditional distribution of $\nu_l$, given $wL1, wL2, wR1$ and $wR2$ is explicit and can be sampled. We let ${\tt ArrowSampler}(u,v,c,d, bLL,bRL, bLM, bRM, wL1,wL2,wR1,wR2)$ denote an algorithm that samples the above probability distribution when $c_l = c$, $d_l = d$. 

Suppose that we have an algorithm ${\tt Weight}(u,v,{\bf c}, {\bf d}, x,y, \lambda, \mu) := \mathcal{W}({\bf c}, {\bf d};\{x,x + 1...,y\})$, then we have the following algorithm for ${\tt RowSampler}$. 

\vspace{3mm}
\begin{tabular}{l}
\hline
{\bf Algorithm} {\tt RowSampler}$(u,v, {\bf c}, {\bf d}, x,y, \lambda, \mu)$ \\
\hline
 {\bf Input:} $u,v$ - parameters, ${\bf c} = (c_{x}, c_{x+1}, ..., c_{y})$, ${\bf d} = (d_x,d_{x+1},...,d_y)$, $\lambda \in \mathsf{Sign_k}^+$, $\mu \in \mathsf{Sign}_{k-1}^+$.\\
\hspace{5mm}\\
{\bf if }($x$ == $y$) \\
\hspace{5mm} $bLL$ := $0$; {\bf if } ($c_x$ == $\lambda_x$) $bLL$ = $1$; {\bf end}\\
\hspace{5mm} $bRL$ := $0$; {\bf if } ($d_x$ == $\lambda_{x-1}$) $bRL$ = $1$; {\bf end}\\
\hspace{5mm} $bLM$ := $0$; {\bf if } ($c_x$ == $\mu_x$) $bLM$ = $1$; {\bf end}\\
\hspace{5mm} $bRM$ := $0$; {\bf if } ($d_x$ == $\mu_{x-1}$) $bRM$ = $1$; {\bf end }\\
\hspace{5mm} $\nu_x$ = {\tt ArrowSampler}($u,v,c_x,d_x, bLL, bRL, bLM, bRM,1,1,1,1$);\\
{\bf else }  \\
\end{tabular}
\newpage

\begin{tabular}{l}
\hspace{5mm} $s$ := $\lfloor (x+y) /2 \rfloor$; \\
\hspace{5mm} ${\bf c}'$ := $\{c_x,...,c_{s-1}\}$;\hspace{2mm} ${\bf c}''$ := $\{ c_{s+1},..., c_y\}$;\hspace{2mm} ${\bf d}'$ := $\{d_x,...,d_{s-1}\}$;\hspace{2mm} ${\bf d}''$ := $\{ d_{s+1},..., d_y\}$;\\
\hspace{5mm} $wR1$ := {\tt Weight}($u,v,{\bf c}', {\bf d}', x, s-1, \lambda, \mu$);\hspace{2mm} $wR2$ := $wR1$;\\
\hspace{5mm} {\bf if } ($d_{s}$ == $c_{s-1}$) \\
\hspace{12mm} ${\bf c}'$ = $\{ c_x,...,c_{s-2}, c_{s-1} +1\}$; \hspace{2mm} $wR2$ = {\tt Weight}($u,v,{\bf c}', {\bf d}', x, s-1, \lambda ,\mu$);\\
\hspace{5mm} {\bf end}\\
\hspace{5mm} $wL1$ := {\tt Weight}($u,v,{\bf c}'', {\bf d}'', s+1, y$);\hspace{2mm} $wL2$ := $wL1$;\\
\hspace{5mm} {\bf if } ($c_{s}$ == $d_{s+1}$) \\
\hspace{12mm} ${\bf d}'' = \{ d_{s+1}- 1,d_{s+2},...,d_y\}$; \hspace{2mm} $wL2$ = {\tt Weight}($u,v,{\bf c}'', {\bf d}'', s+1, y, \lambda, \mu$);\\
\hspace{5mm} {\bf end}\\
\hspace{5mm} $bLL$ := $0$; {\bf if } ($c_s$ == $\lambda_s$) $bLL$ = $1$; {\bf end}\\
\hspace{5mm} $bRL$ := $0$; {\bf if } ($d_s$ == $\lambda_{s-1}$) $bRL$ = 1; {\bf end}\\
\hspace{5mm} $bLM$ := $0$; {\bf if } ($c_s$ == $\mu_s$) $bLM$ = $1$; {\bf end}\\
\hspace{5mm} $bRM$ := $0$; {\bf if } ($d_s$ == $\mu_{s-1}$) $bRM$ = $1$; {\bf end}\\
\hspace{5mm} $\nu_s$ = {\tt ArrowSampler}($u,v,c_s,d_s, bLL, bRL, bLM, bRM,wL1,wL2,wR1,wR2$);\\
\hspace{5mm} ${\bf c}'$ = $\{c_x,...,c_{s-1}\}$;\hspace{2mm} ${\bf c}''$ = $\{ c_{s+1},..., c_y\}$;\hspace{2mm} ${\bf d}'$ = $\{d_x,...,d_{s-1}\}$;\hspace{2mm} ${\bf d}''$ = $\{ d_{s+1},..., d_y\}$;\\
\hspace{5mm} {\bf if } ($\nu_s$ == $d_{s+1}$) ${\bf d}'' = \{ d_{s+1}- 1,d_{s+2},...,d_y\}$; {\bf end}\\
\hspace{5mm} {\bf if } ($\nu_s$ == $c_{s-1}$) ${\bf c}'$ = $\{ c_x,...,c_{s-2}, c_{s-1} +1\}$;  {\bf end}\\
\hspace{5mm}  {\tt RowSampler}$(u,v, {\bf c}', {\bf d}', x,s-1, \lambda, \mu)$; \\ 
\hspace{5mm}  {\tt RowSampler}$(u,v, {\bf c}'', {\bf d}'', s+1,y, \lambda, \mu)$; \\ 
{\bf Output:} $\nu$ \\
\hline
\end{tabular}

\vspace{3mm}

In the algorithm ${\tt RowSampler}$ $\nu = \nu_1 \geq \nu_2 \geq \cdots \geq \nu_k$ is a global variable that we are updating through the recursive calls to the same algorithm. Going back to the notation of {\tt SixVerexSampler}, we have that  {\tt RowSampler}$ (k,q, u, v, \lambda, \mu)$ = {\tt RowSampler}$(u, v, {\bf a}, {\bf b}, 1,k, \lambda, \mu)$. Thus what remains is to show how to calculate  {\tt Weight}($u,v,{\bf c}, {\bf d}, x, y$).\\

The function ${\tt Weight}$ can be calculated recursively by again conditioning on the middle arrow and summing over the weights corresponding to its possible positions. We first discuss the base case of having a single interval $[c_l,d_l]$. We will consider a reweighted version of $\mathcal{W}({\bf c}, {\bf d};\{l\})$, where we have additional four weights $wL1,wL2, wR1$ and $wR2$, which are fixed. By definition $\mathcal{W}({\bf c}, {\bf d};\{l\})$ is the sum of weights over the possible positions of the arrow in $[c_l,d_l]$. Our reweighed version will be the same sum, however we will multiply each term by $wL1 \cdot wR1$, $wL1\cdot wR2$, $wL2 \cdot wR1$ or $wL2 \cdot wR2$ according to the following rules.

We multiply the weight by $wL1$ unless the arrow is in location $c_l$, in which case we multiply it by $wL2$, we then multiply the weight by $wR1$ unless the arrow is in location $d_l$, in which case we multiply it by $wR2$.
One observes that the weight of the interval $[c_l,d_l]$, depends on $d_l - c_l$. We have three cases for $d_l - c_l$ - when it is $0$, $1$ and $\geq 2$, and the weights are as follows.
\begin{itemize}
\item $ d_l - c_l = 0: \mathcal{W}({\bf c}, {\bf d};\{l\}) = w_u(1,1;1,1)\cdot w_v^c(1,0;1,0) \cdot wL2 \cdot wR2; $
\item $d_l - c_l = 1: \mathcal{W}({\bf c}, {\bf d};\{l\}) = w_u(0,1;1,0)\cdot w_u(1,0;0,1)\cdot w_v^c(1,0;1,0) \cdot wL2 \cdot wR1+ 
w_u(0,1;0,1)\cdot w_u(1,1;1,1) \cdot w_v^c(1,0;0,1) \cdot w_v^c(0,1;1,0)  \cdot wL1 \cdot wR2 ;$
\item $d_l - c_l = n \geq 2: \mathcal{W}({\bf c}, {\bf d};\{l\}) =  w_u(0,1;1,0)\cdot w_u(1,0;0,1)\cdot w_v^c(1,0;1,0)  \cdot wL2 \cdot wR1 + w_u(0,1;0,1)\cdot w_u(1,1;1,1) \cdot w_v^c(1,0;0,1) \cdot w_v^c(0,1;1,0) \cdot p^{n-1} \cdot wL1 \cdot wR2  + w_u(0,1;1,0) \cdot w_u(1,0;0,1) \cdot w_u(0,1;0,1) \cdot w_v^c(1,0;0,1)  \cdot w_v^c(0,1;1,0)\cdot \frac{1 - p^{n-1}}{1 - p} \cdot wL1 \cdot wR1. $
\end{itemize}
We let ${\tt BaseWeight}(u,v,c,d, wL1, wL2,wR1,wR2)$ denote the above single interval weight function and with it we define  {\tt Weight}($u,v,{\bf c}, {\bf d}, x, y, \lambda, \mu$) as follows.

\vspace{3mm}
\begin{tabular}{l}
\hline
{\bf Algorithm} {\tt Weight}($u,v,{\bf c}, {\bf d}, x, y, \lambda, \mu$) \\
\hline
 {\bf Input:} $u,v$ - parameters, ${\bf c} = (c_{x}, c_{x+1}, ..., c_{y})$, ${\bf d} = (d_x,d_{x+1},...,d_y)$, $\lambda \in \mathsf{Sign_k}^+$, $\mu \in \mathsf{Sign}_{k-1}^+$.\\
\hspace{5mm}\\
{\tt initialize} w;\\
{\bf if }($x$ == $y$) \\
\hspace{5mm} $w$ =  ${\tt BaseWeight}(u,v,c_x,d_x,1,1,1,1)$;\\
{\bf else }  \\
\hspace{5mm} $s$ := $\lfloor (x+y) /2 \rfloor$; \\
\hspace{5mm} ${\bf c}'$ := $\{c_x,...,c_{s-1}\}$;\hspace{2mm} ${\bf c}''$ := $\{ c_{s+1},..., c_y\}$;\hspace{2mm} ${\bf d}'$ := $\{d_x,...,d_{s-1}\}$;\hspace{2mm} ${\bf d}''$ := $\{ d_{s+1},..., d_y\}$;\\
\hspace{5mm} $wR1$ := {\tt Weight}($u,v,{\bf c}', {\bf d}', x, s-1, \lambda, \mu$);\hspace{2mm} $wR2$ := $wR1$;\\
\hspace{5mm} {\bf if } ($d_{s}$ == $c_{s-1}$) \\
\hspace{12mm} ${\bf c}'$ = $\{ c_x,...,c_{s-2}, c_{s-1} +1\}$; \hspace{2mm} $wR2$ = {\tt Weight}($u,v,{\bf c}', {\bf d}', x, s-1, \lambda ,\mu$);\\
\hspace{5mm} {\bf end}\\
\hspace{5mm} $wL1$ := {\tt Weight}($u,v,{\bf c}'', {\bf d}'', s+1, y$);\hspace{2mm} $wL2$ := $wL1$;\\
\hspace{5mm} {\bf if } ($c_{s}$ == $d_{s+1}$) \\
\hspace{12mm} ${\bf d}'' = \{ d_{s+1}- 1,d_{s+2},...,d_y\}$; \hspace{2mm} $wL2$ = {\tt Weight}($u,v,{\bf c}'', {\bf d}'', s+1, y, \lambda, \mu$);\\
\hspace{5mm} {\bf end}\\
\hspace{5mm} $w$ = {\tt BaseWeight}($u,v,c_s,d_s, bLL, bRL, bLM, bRM,wL1,wL2,wR1,wR2$);\\
{\bf end}\\
{\bf Output:} $w$ \\
\hline
\end{tabular}

\vspace{3mm}

\subsection{Simulations}
In this section we use the sampling algorithm developed above to produce some simulations. We will be interested in demonstrating that there is a limit shape for the six-vertex model that we have considered. In addition, we will provide some empirical evidence supporting the validity of Theorem \ref{theorem2}.\\

For the simulations we fix $N = 100$ and consider different choices for $q,u$ and $v$. From Theorem \ref{theorem2} we know that $Y_1^1$ asymptotically looks like $a\cdot M$, with $a$ as in Theorem \ref{theorem1}. We pick the parameter $M$ in our simulations so that $a\cdot M$ is roughly $N/2$. The results are summarized in Figures \ref{S6_1} and \ref{S6_2} (see also Figure \ref{S1_5} in Section \ref{Section1}). As discussed in Section \ref{Section1}, there is a macroscopic frozen region, made of $(0,1;0,1)$ vertices in the bottom left corner and another one, made of $(1,1;1,1)$ vertices in the top left corner. The two regions are separated by a disordered region containing all six types of vertices. It would be interesting to see if the methods of this paper can be utilized to rigorously confirm the existence of a limit shape, and to find suitable parametrizations for it.\\

\begin{figure}[h]
\centering
\scalebox{0.96}{\includegraphics{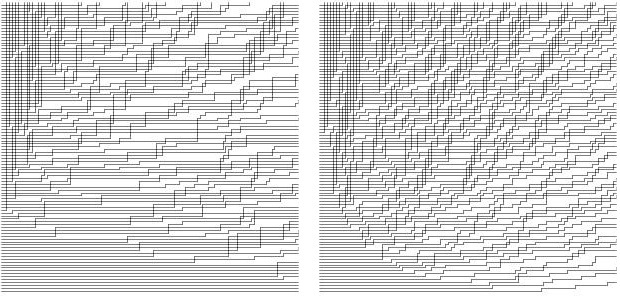}}
\caption{Random paths in $\mathcal{P}_N$, sampled according to $\mathbb{P}_{u,v}^{N,M}$ with $N = 100$. For the left picture $s^{-2} = q = 0.5$, $u = 5$, $v = 0.1$ and $M = 100$; for the right $s^{-2} = q = 0.5$, $u = 2$, $v = 0.1$ and $M = 1000$.  }
\label{S6_1}
\end{figure}

\begin{figure}[h]
\centering
\scalebox{0.96}{\includegraphics{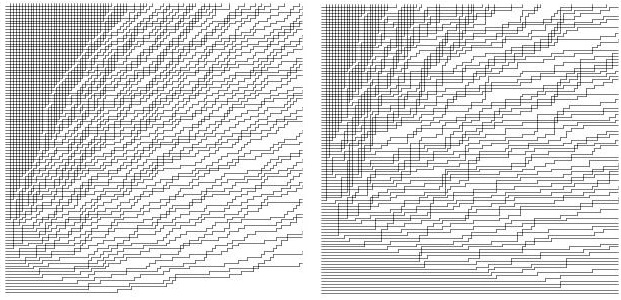}}
\caption{Random paths in $\mathcal{P}_N$, sampled according to $\mathbb{P}_{u,v}^{N,M}$ with $N = M = 100$. For the left picture $s^{-2} = q = 0.25$, $u = 2.5$ and $v = 0.25$; for the right $s^{-2} = q = 0.25$, $u = 5$, and $v = 0.1$ . }
\label{S6_2}
\end{figure}

A particular implication of Theorem \ref{theorem2} is that $ \frac{1}{c\sqrt{M}}\left( Y_1^1- aM \right)$ converges to the standard Gaussian distribution as $M \rightarrow \infty$. In what follows, we provide some numerical simulations supporting this fact. We took $1000$ samples from $\mathbb{P}_{u,v}^{N,M}$ with $N = M = 200$ and different values for $q,u,v$, and calculated $\frac{1}{c\sqrt{M}}\left( Y_1^1- aM \right)$. The empirical distribution of the samples is compared with the standard normal cdf, and the results are given in Figure \ref{S6_3}. As can be seen, the distributions appear to be quite close, as is expected.

\begin{figure}[h]
\centering
\scalebox{0.58}{\includegraphics{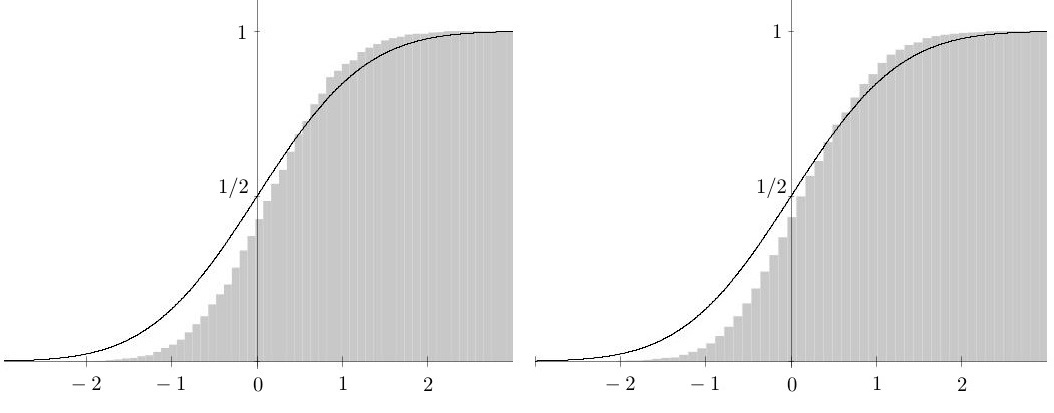}}
\caption{Empirical distribution of $1000$ samples of $\frac{1}{c\sqrt{M}}\left( Y_1^1(\omega)- aM \right)$ with $\omega$ distributed as $\mathbb{P}_{u,v}^{N,M}$ with $N = M = 200$. For the left picture $s^{-2} = q = 0.8$, $u = 1.2$ and $v = 0.8$; for the right $s^{-2} = q = 0.5$, $u = 1.5$, $v = 0.6$. }
\label{S6_3}
\end{figure}

\clearpage

\bibliographystyle{amsplain}
\bibliography{PD}

\end{document}